\documentclass[12pt]{article}
\linespread{2}
\setlength{\textwidth}{6.3in} \setlength{\textheight}{8.75in}
\setlength{\evensidemargin}{0.4in} \setlength{\oddsidemargin}{0.1in}
\setlength{\topmargin}{-0.5in} \setlength{\parindent}{0.35in}

\interfootnotelinepenalty=10000 
\usepackage{scrextend} 

\usepackage{hyperref}
\usepackage{amsmath}
\usepackage{amsfonts}
\usepackage{amssymb}
\usepackage{mathtools}
\mathtoolsset{showonlyrefs}
\usepackage{graphicx}
\usepackage{subcaption}
\usepackage{natbib}
\usepackage{authblk}
\usepackage{color}
\usepackage{xcolor}
\usepackage{xr}
\usepackage{mathrsfs}
\usepackage[ruled]{algorithm2e}

\usepackage{bm}
\bibliographystyle{plainnat}

\usepackage[referable]{threeparttablex}
\usepackage{booktabs, ltablex}
\keepXColumns
\usepackage{lineno}

\newtheorem{theorem}{Theorem}[section]
\newtheorem{proposition}{Proposition}[section]
\newtheorem{corollary}{Corollary}[section]
\newtheorem{definition}{Definition}[section]

\newtheorem{remark}{Remark}[section]
\newtheorem{lemma}{Lemma}[section]
\newenvironment{proof}[1][Proof]{\noindent\textbf{#1.} }{\hfill \rule{0.5em}{0.5em}}
\numberwithin{equation}{section}

\newcommand{\norm}[1]{\left\lVert#1\right\rVert}
\newcommand{\abs}[1]{\left\lvert#1\right\rvert}
\newcommand\nonumberthis{\nonumber\refstepcounter{equation}}

\DeclareMathOperator{\dist}{dist}

\DeclarePairedDelimiterX{\inp}[2]{\langle}{\rangle}{#1, #2}

\DeclareMathOperator*{\argmax}{argmax}

\SetKwInput{KwInput}{Input}
\SetKwInput{KwOutput}{Output} 


\begin{document}

\title{Analytical and statistical properties of local depth functions motivated by clustering applications}

\author[1,2]{Giacomo Francisci}
\author[1]{Claudio Agostinelli}
\author[2]{Alicia Nieto-Reyes}
\author[3]{Anand N. Vidyashankar}

\affil[1]{Dipartimento di Matematica, Universit\`a di Trento, Trento, Italy}
\affil[2]{Departamento de Matem\'aticas, Estad{\'\i}stica y Computaci\'on, Universidad de Cantabria, Santander, Spain}
\affil[3]{Department of Statistics, George Mason University, Fairfax, Virginia}

\date{\today}

\maketitle

\begin{abstract}
Local depth functions (LDFs) are used for describing the local geometric features of multivariate distributions, especially in multimodal models. In this paper, we undertake a rigorous systematic study of the LDFs and use it to develop a theoretically validated algorithm for clustering. For this reason, we establish several analytical and statistical properties of LDFs. First, we show that, when the underlying probability distribution is absolutely continuous,  under an appropriate scaling that converge to zero (referred to as \emph{extreme localization}), LDFs converge uniformly to a power of the density and obtain a related rate of convergence result. Second, we establish that the centered and scaled sample LDFs converge in distribution to a centered Gaussian process, uniformly in the space of bounded functions on $\mathbb{R}^p \times [0, \infty]$, as the sample size diverges to infinity. Third, under an extreme localization that depends on the sample size, \emph{we determine the correct centering and scaling} for the sample LDFs to possess a limiting normal distribution.
Fourth, invoking the above results, we develop a new clustering algorithm that uses the LDFs and their differentiability properties. Fifth, for the last purpose, we establish several results concerning the gradient systems related to LDFs. Finally, we illustrate the finite sample performance of our results using simulations and apply them to two datasets.
\end{abstract}

\noindent {{\bf{Key Words:}} Local depth, extreme localization, Hoeffding's decomposition, sample local depth, uniform central limit theorem, clustering, modes, gradient system, Lyapunov's stability Theorem}

\section{Introduction}

Investigation of data depths is gaining momentum due to its applicability in a variety of machine learning problems such as non-parametric classification and clustering. This concept, formalized in \citet{Liu-1990} and \citet{Zuo-Serfling-2000}, dates back to \citet{Tukey-1975} and serves to identify a center for multivariate distributions and a multidimensional center-outward order similar to that of a real line. This ordering enables a description of quantiles of multivariate distributions (see \citet{Zuo-Serfling-2000-c}) and aids in using depth functions (DFs) for clustering as described, for instance, in \cite{Torrente-2020, Joernsten-2004} (see also the references therein). The focus of these methods is to improve k-means based clustering via the use of DFs by identifying the ``deepest points'', the depth median of a dataset. The current paper develops the intuitive notion that local depths possess properties that help in identifying peaks and valleys, and hence clustering based on such identification can improve the quality and stability of the clustering algorithm.

The notion of local depth, as first described in \citet{Agostinelli-2011} and investigated in one-dimension, provides a framework to describe the local multidimensional features of multivariate distributions. Section \ref{section_local_depth_and_extreme_localization} of this paper extends this analysis to multidimensions and provides a detailed study of the analytical properties of local depth functions (LDFs) and their scaled versions (referred to as $\tau$-approximation). These results form the basic ingredients for a gradient system analysis developed in the later sections. Recent additional work on local depth include that of \citet{Dutta-2016} and \citet{Chandler-2020}, where  the focus is on different types of data (spatial and functional, for instance) and construction of \emph{supervised} classification methods. However, none of these methods apply to the more challenging problem studied here, {\it{viz.,}} unsupervised classification or clustering. We now turn to a precise description of our contributions starting with depth and local depth.

In the following, we denote by ${\mathscr{B}}(\mathscr{X})$  the Borel subsets of a topological space $\mathscr{X}$ and by $(\mathbb{R}^p)^{k}$ the $k$-fold cartesian product of the $p$-dimensional Euclidean space $\mathbb{R}^p$. Let $P$ be a probability distribution on $\mathbb{R}^p$. Then for a set $Z^G(x) \in {\mathscr{B}((\mathbb{R}^p)^{k})}$, where $x \in \mathbb{R}^p$, the bounded non-negative function $GD(x, P)$ defined by
\begin{equation} \label{general_depth}
GD(x,P) = \int_{(\mathbb{R}^p)^{k}} \mathbf{I} ((x_1, \dots, x_k) \in Z^G(x)) \, dP(x_1) \dots dP(x_k)
\end{equation}
is referred to as the depth function at $x$ through $Z^G(x)$ for $P$. Depending on the choice of $P$ and $Z^G(\cdot)$, they can also satisfy the following properties: affine invariance, maximality at the center, monotonically non-increasing along the rays from the center, and vanishing at infinity. These are sometimes stated as required properties of a ``good'' depth function (see \citet{Liu-1990} and \citet{Zuo-Serfling-2000}). As an example, if $P$ is absolutely continuous with respect to the Lebesgue measure and is angularly symmetric, the so-called simplicial depth (see below) satisfies all the properties of a good depth function.

As expected, different choices of $Z^G(\cdot)$ yield different DFs. Specifically, choosing $k=2$ and $Z^G(\cdot)$ to be
\begin{equation*}
	Z^G(x) = \{ (x_1, x_2) \in \mathbb{R}^p \times \mathbb{R}^p \, : \max_{ i=1,2} \norm{x-x_i} \leq \norm{x_1 - x_2} \} \eqqcolon Z^L(x)
\end{equation*}
we obtain the lens depth, $LD(x, P)$ \citep{Liu-2011}. Also, for the choice $Z^G(\cdot)$ (with $k=2$) given by
\begin{equation*}
Z^{G}(x) = \{ (x_1, x_2) \in \mathbb{R}^p \times \mathbb{R}^p \, : \norm{2 x-(x_1+x_2)} \leq \norm{x_1 - x_2} \} \eqqcolon Z^B(x)
\end{equation*}
we get the spherical depth, $BD(x, P)$ \citep{Elmore-2006}. Finally, choosing $k=(p+1)$ and
\begin{equation*}
	Z^G(x) = \{ (x_1, \dots, x_{p+1}) \in (\mathbb{R}^p)^{(p+1)} \, : \, x \in \triangle[x_1, \dots, x_{p+1}] \}, 
\end{equation*}
where $\triangle[x_1, \dots, x_{p+1}]$ is the closed simplex with vertices $x_1, \dots, x_{p+1} \in \mathbb{R}^p$, we get the well-known simplicial depth, $SD(x, P)$ given in \citet{Liu-1990}. We denote the corresponding set by $Z^S(\cdot)$.

The local versions of the depths described above, involve an additional nonnegative parameter $\tau$, referred to as the \emph{localizing parameter}, yielding the LDFs, namely, $LGD(\cdot, P, \tau)$, where $G=L, B, S$. The corresponding sets now take the form $Z^G_{\tau}(x)$. As in \eqref{general_depth}, the LDFs can be expressed as:
\begin{equation} \label{general_local_depth}
LGD(x,P,\tau) = \int_{(\mathbb{R}^p)^{k}} \mathbf{I} ((x_1, \dots, x_k) \in Z^G_{\tau}(x)) \, dP(x_1) \dots dP(x_k).
\end{equation}
For the sake of clarity, we provide the expressions for ``the local set'' for different choices of $G=L, B, S$. Specifically, the local set associated with lens depth is
\begin{equation*}
	Z_{\tau}^L(x) = \{ (x_1, x_2) \in \mathbb{R}^p \times \mathbb{R}^p \, : \max_{ i=1,2} \norm{x-x_i} \leq \norm{x_1 - x_2} \leq \tau \}, 
\end{equation*}
while that for the spherical depth is given by
\begin{equation*}
Z_{\tau}^{B}(x) = \{ (x_1, x_2) \in \mathbb{R}^p \times \mathbb{R}^p \, : \norm{2 x-(x_1+x_2)} \leq \norm{x_1 - x_2} \leq \tau \}.
\end{equation*}
The local set for the simplicial depth is given by
\begin{equation*}
	Z_{\tau}^S(x) = \{ (x_1, \dots, x_{p+1}) \in (\mathbb{R}^p)^{(p+1)} \, : \, x \in \triangle[x_1, \dots, x_{p+1}], \, \max_{\substack{ i,j=1, \dots, p+1 \\ i > j}} \norm{x_i-x_j} \le \tau \}.
\end{equation*}
We observe here that while the definitions of LSD and LLD first appear in \citet{Agostinelli-2008} and \citet{Kleindessner-2017} respectively, the  LBD, as defined here, seems new. Of course, when $p=1$ all of the above three local depths coincide. Finally, when $\tau=\infty$, the LDFs reduce to the DFs.

A useful and important aspect of local depth is its behavior under \emph {extreme localization}, {\it{i.e.\ }}when $\tau \to 0^+$. When $P$ is absolutely continuous, LDFs investigated in this paper, under appropriate scaling, converge to a power of the probability density of $P$. Under additional conditions, one obtains convergence to the derivatives of the density which facilitates an enquiry into the modes of the density {\emph{via}} a gradient system analysis. This, in turn, allows one to characterize the related \emph{stable manifolds} paving the way for cluster analysis. Related ideas about clustering appear in \citet{Chazal-2013}, \citet{Chen-2016}, and \citet{Genovese-2016}. Our methodology differs from the existing literature in that we take advantage of the local depth notion, specifically the $\tau$-approximation and its properties, developed in Sections \ref{section_local_depth_and_extreme_localization} and \ref{section_clustering} below.

Statistical enquiry about local depth requires an investigation into their sample versions, {\it{viz.,}} estimators of local depth,  which are described in Subsection \ref{subsection_sample_local_depth}. We establish that, for all choices of $G=L,B, S$, the estimators are uniformly consistent across the pair $(x, \tau)$. The key issue is the additional uniformity in $\tau$ which requires investigation of the behavior of the classes of sets $\{Z^G_{\tau}(x): (x, \tau) \in \mathbb{R}^p \times [0, \infty]\}$  under the empirical measure. Specifically, we study the behavior of the estimator of LDFs in the space $\ell^{\infty}(\Gamma)$, of bounded functions on subsets $\Gamma$ of $\mathbb{R}^p \times [0, \infty]$, which is not separable. Borrowing tools from empirical process theory, we establish uniform consistency for all choices of $G=L, B, S$. Turning to the limit distributions of the estimator, we establish a uniform central limit theorem, in $\ell^{\infty}(\Gamma)$. This involves establishing convergence of finite dimensional distributions, admissible Suslin property, and use of Alexander's Lemma (see \citet{Alexander-1987}) for establishing the finiteness of the bracketing entropy for the above described VC classes of sets. Both the uniform consistency and the uniform central limit results rely on the Hoeffding's decomposition of U-statistics representation of the local depth which incidentally is a critical component of our analysis.

A natural next question concerns the distributional behavior of sample local depth when the \emph{localizing parameter} $\tau$
converges to zero as a function of the sample size. In this case, contrary to expectations, the correct centering is not the limiting density. In Subsection \ref{subsection_sample_local_depth}, we identify the correct centering and scaling for the sample local depth to converge, in distribution, to a centered normal distribution.

As the title of the manuscript indicates, the primary motivation of the paper is clustering in multidimensions. This involves two distinct but connected steps: (i) identifying the modes and (ii) identifying the stable manifold generated by them. For (i), we establish sufficient conditions in terms of the eigenvalues of the Hessian matrix. For (ii), which involves defining the clusters, one needs to identify the points whose trajectories (which result from the gradient system) converge to the mode, also referred to as the $\omega$-limit of the path. Thus from a practical perspective, an algorithm can be developed based on the above two considerations. While this is feasible, what is unclear is that the resulting sets indeed form non-trivial clusters; that is, they have positive Lebesgue measure. We establish that this is the case using tools from dynamical systems. Specifically, since $\omega$-limits are stationary points of the density, to validate the algorithm, it is required to show that the trajectories that converge to non-modal points have  Lebesgue measure zero. In contrast, those that converge to modal points have positive Lebesgue measure. Lyapunov’s stability Theorem plays a significant first step for this analysis. To the best of our knowledge, the current manuscript seems to be the first to provide such strong theoretical guarantees for clustering in multidimensional problems.

We now provide a brief description of the organization of the rest of the paper. In Subsection \ref{subsection_analytic_properties}, we establish the analytical properties of LLD while the statistical properties are established in \ref{subsection_sample_local_depth}. In Subsection \ref{subsection_other_instances_of_local_depth}, we discuss the modifications required for other local depths, including local half-space depth and local half-region depth.  In Subsections \ref{subsection_mathematical_background}, \ref{subsection_convergence_gradient_system} and \ref{subsection_modes_identification}, we focus on the mathematical underpinnings behind the algorithm for clustering which itself is described in Subsection \ref{subsection_numerical_implementation}. In Section \ref{section_simulations_and_data_analysis}, we provide several numerical experiments illustrating the finite sample behavior of the proposed methods and apply them to data examples while, in Section \ref{section_concluding_remarks}, we provide a few concluding remarks. The proofs of all the results are relegated to the appendix. We end this introduction with a comment about  the notations. When there is no scope for confusion, we suppress $P$ in $GD(x, P)$ and $LGD(x, P, \tau)$ and use the notation $GD(x)$ and $LGD(x, \tau)$ for all $G=L, S, B$.

\section{Local depth and extreme localization} \label{section_local_depth_and_extreme_localization}

We recall that the local lens depth is given by 
\begin{equation} \label{local_depth}
	LLD(x,\tau) = \int_{\mathbb{R}^p \times \mathbb{R}^p} \mathbf{I}( (x_1, x_2) \in Z_{\tau}(x) ) \, dP(x_1) \, dP(x_2),
\end{equation}
where $Z_{\tau}(x) = \{ (x_1, x_2) \in \mathbb{R}^p \times \mathbb{R}^p \, : \max_{ i=1,2} \norm{x-x_i} \leq \norm{x_1 - x_2} \leq \tau \}$, and the index L is suppressed for simplicity.

\subsection{Analytic properties} \label{subsection_analytic_properties}
We begin this section by describing properties of the local depth. These results will be required when investigating the properties of the estimator of the local depth. 
\begin{proposition} \label{proposition_local_depth}
  (i) For any fixed $x \in \mathbb{R}^p$, $LLD(x,\, \cdot \, )$ is a non-decreasing and right-continuous function of $\tau$ satisfying
\begin{equation*}
    \lim_{\tau \to 0^{+}} LLD(x,\tau) = P^{2}(\{ x \}) \text{ and } \lim_{\tau \to \infty} LLD(x,\tau) = LD(x).
\end{equation*}
(ii) For $\tau \in [0,\infty]$, $\sup_{x \in \mathbb{R}^p \, : \, \norm{x} \geq M } LLD(x,\tau) \rightarrow 0 \text{ as } M \rightarrow \infty$. \\
(iii) For $\tau \in [0,\infty]$, $LLD(\, \cdot \, ,\tau)$ is upper semicontinuous as a function of $x$. If $P$ is absolutely continuous with respect to the Lebesgue measure, then it is jointly continuous in $x$ and $\tau$. \\
(iv) If $P$ is absolutely continuous with respect to the Lebesgue measure, with $k$-times continuously differentiable density $f(\cdot)$, then $LLD(\,\cdot \,, \tau)$ is $k$-times continuously differentiable for each fixed $\tau \in [0,\infty)$.
\end{proposition}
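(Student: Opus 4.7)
The plan is to shift the integration variables so that the indicator function in \eqref{local_depth} becomes independent of $x$, and then to differentiate under the integral sign. Setting $y_i = x_i - x$ for $i = 1, 2$, one has $\norm{x - x_i} = \norm{y_i}$ and $\norm{x_1 - x_2} = \norm{y_1 - y_2}$, and the absolute continuity of $P$ yields
\begin{equation*}
LLD(x, \tau) = \int_{A_\tau} f(x + y_1) \, f(x + y_2) \, dy_1 \, dy_2,
\end{equation*}
where $A_\tau = \{(y_1, y_2) \in \mathbb{R}^p \times \mathbb{R}^p : \max_{i = 1, 2} \norm{y_i} \le \norm{y_1 - y_2} \le \tau\}$ no longer depends on $x$. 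Since $\norm{y_i} \le \tau$ on $A_\tau$, this set is bounded and has finite Lebesgue measure; this is where the hypothesis $\tau < \infty$ enters crucially.

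Next, for any multi-index $\alpha$ with $\abs{\alpha} \le k$, Leibniz's rule gives
\begin{equation*}
\partial_x^\alpha \bigl[ f(x + y_1) \, f(x + y_2) \bigr] = \sum_{\beta \le \alpha} \binom{\alpha}{\beta} \, \bigl(\partial^\beta f\bigr)(x + y_1) \, \bigl(\partial^{\alpha - \beta} f\bigr)(x + y_2),
\end{equation*}
which is jointly continuous in $(x, y_1, y_2)$ by the $k$-times continuous differentiability of $f$. For $x$ ranging over any compact neighborhood $U \subset \mathbb{R}^p$ and $(y_1, y_2) \in A_\tau$, both $x + y_1$ and $x + y_2$ lie in a common compact set, so the right-hand side above is uniformly bounded by a constant $C_{\alpha, U}$, which serves as an integrable dominating function on $A_\tau$. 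Applying the standard differentiation-under-the-integral-sign theorem inductively for $\abs{\alpha} = 1, 2, \ldots, k$, and then invoking dominated convergence to obtain continuity of each resulting derivative in $x$, yields that $LLD(\cdot, \tau)$ is $k$-times continuously differentiable on $\mathbb{R}^p$.

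The crux of the argument is conceptual rather than technical: the indicator in \eqref{local_depth} is not differentiable in $x$, which initially appears to block classical differentiation under the integral. The substitution $y_i = x_i - x$ sidesteps this by transferring the entire $x$-dependence into the smooth integrand $f(x + y_1) f(x + y_2)$, while the constraint $\tau < \infty$ compactifies the domain of integration and supplies the required dominating function. No further probabilistic or geometric input is needed.
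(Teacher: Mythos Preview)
Your proposal addresses only part (iv); parts (i)--(iii) are not treated. For (iv), the argument is correct and essentially identical to the paper's (Appendix~\ref{section_proof_proposition_local_depth}): both shift variables so that the domain of integration---your $A_\tau$ is the paper's $Z_\tau(0)$---no longer depends on $x$, then use its compactness to dominate the partial derivatives of $f(x+y_1)f(x+y_2)$ and differentiate under the integral sign; the paper's additional rescaling to $Z_1(0)$ and its explicit finite-difference/mean-value-theorem induction are cosmetic variants of the same idea.
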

(i) of the above proposition was studied for the case $p=1$ in \citet{Agostinelli-2011}[Proposition 6] and (ii) was investigated for the extreme case, namely $\tau=\infty$, in \citet{Liu-2011}[Theorem 2].
If $P$ is absolutely continuous with respect to the Lebesgue measure  on $\mathbb{R}^p$ with density $f(\cdot)$, then Proposition \ref{proposition_local_depth} (i) shows that LLD converges to $0$ as $\tau \rightarrow 0^{+}$. Notice that for $x \in \mathbb{R}^p$, $Z_{\tau}(x)=(x,x)+Z_{\tau}(0)$ and $Z_{\tau}(0)=\tau Z_1(0)$. Also, by a change of variable in \eqref{local_depth}, it follows that
\begin{equation} \label{local_depth_for_absolutely_continuous_distribution}
\begin{split}
	LLD(x,\tau) &= \int_{ Z_{\tau}(x) } \, f(x_1) \, f(x_2) \, d x_1 \, d x_2 \\
	 &=  \int_{Z_{\tau}(0)} f(x+x_1) f(x+x_2) \, dx_1 dx_2 \\
	&= \int_{Z_{1}(0)} \tau^{2 p} f(x + \tau x_1) f(x + \tau x_2) \, dx_1 dx_2.
\end{split}
\end{equation}
For $\tau>0$, let $\Lambda_{\tau} \coloneqq \lambda^{\otimes 2}(Z_{\tau}(0))$ denote the Lebesgue measure in $\mathbb{R}^p \times \mathbb{R}^p$ of $Z_{\tau}(0)$. By the translation and dilatation invariance of the Lebesgue measure, it follows that $\lambda^{\otimes 2}(Z_{1}(x))=\Lambda_1$ and $\Lambda_{\tau}=\tau^{2p} \Lambda_{1}$. Our next result is concerned with extreme localization. Specifically, we establish the rate of convergence of the local depth to the square of the density as $\tau$ approaches $0$. In the following, we use \emph{a.e.\ }to mean almost everywhere with respect to the Lebesgue measure on $\mathbb{R}^p$ and $\inp{\cdot}{\cdot}$ is the inner  product on $\mathbb{R}^p$. Also, $\nabla f$ and $H_f$ represent the gradient and the Hessian matrix of $f(\cdot)$, respectively.

\begin{theorem} \label{theorem_local_depth_tau_to_0}
Let $P$ be absolutely continuous with respect to the Lebesgue measure on $\mathbb{R}^p$, with density $f(\cdot)$. Then the following hold: \\
(i) Let $x \in \mathbb{R}^p$ be fixed, then
\begin{equation} \label{local_depth_tau_to_0}
	\lim_{\tau \rightarrow 0^{+}} \frac{1}{\tau^{2p} \Lambda_1} LLD(x,\tau) = f^{2}(x), \text{ a.e.\ }
\end{equation}
Furthermore, if $f(\cdot)$ is continuous, then \eqref{local_depth_tau_to_0} holds for all $x \in \mathbb{R}^p$. \\
(ii) Suppose that $f(\cdot)$ is three times continuously differentiable. Then
\begin{equation*}
	\lim_{\tau \rightarrow 0^{+}} \frac{1}{\tau^2} \left( \frac{1}{\tau^{2 p}} LLD(x,\tau) - \Lambda_1 f^2(x) \right) = h(x), \text{ where}
\end{equation*}
\begin{equation*}
\begin{split}
h(x) &\coloneqq f(x) \int_{Z_1(0)} x_{1}^{\top} H_f(x) x_{1} \, dx_1 dx_2 + \int_{Z_1(0)} \inp{\nabla f(x)}{x_1} \inp{\nabla f(x)}{x_2} \, dx_1 dx_2.
\end{split}
\end{equation*}
\end{theorem}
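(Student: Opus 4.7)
The plan is to start from the scaling identity in equation~\eqref{local_depth_for_absolutely_continuous_distribution},
\[
\frac{1}{\tau^{2p}} LLD(x,\tau) = \int_{Z_1(0)} f(x+\tau x_1)\, f(x+\tau x_2)\, dx_1\, dx_2,
\]
and to analyze the integrand as $\tau \to 0^+$. Throughout I would exploit two elementary facts about $Z_1(0)$: it is bounded (since $\max_i \norm{x_i} \le \norm{x_1-x_2} \le 1$ forces $Z_1(0) \subset B_1(0)\times B_1(0)$), and it is symmetric under both the antipodal map $(x_1,x_2)\mapsto -(x_1,x_2)$ and the coordinate swap $(x_1,x_2)\mapsto (x_2,x_1)$.

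For part~(i) in the continuous case, $f$ is uniformly continuous on any compact neighborhood of $x$, so $f(x+\tau x_1) f(x+\tau x_2) \to f(x)^2$ uniformly over $(x_1,x_2)\in Z_1(0)$, and dominated convergence delivers the limit. For the a.e.\ statement assuming only absolute continuity of $P$, I would fix a Lebesgue point $x$ of $f$ and decompose
\[
f(x+\tau x_1) f(x+\tau x_2) - f(x)^2 = [f(x+\tau x_1)-f(x)]\, f(x+\tau x_2) + f(x)\,[f(x+\tau x_2)-f(x)].
\]
Enlarging the domain to $B_1(0)\times B_1(0)$, Fubini reduces the integral of the second summand to $f(x)\int_{B_1(0)} \abs{f(x+\tau x_2)-f(x)}\, dx_2$, which after the change of variables $u=\tau x_2$ equals a constant times $\lambda(B_\tau)^{-1}\int_{B_\tau(x)} \abs{f(y)-f(x)}\, dy$, and hence tends to zero by the Lebesgue differentiation theorem. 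For the first summand the same slicing leaves the factor $\int_{B_1(0)} f(x+\tau x_2)\, dx_2 = \tau^{-p}\int_{B_\tau(x)} f(y)\, dy$, which at a Lebesgue point is bounded by a constant multiple of $f(x)$ for $\tau$ small, and the product is then controlled by $\int_{B_1(0)} \abs{f(x+\tau x_1)-f(x)}\, dx_1\to 0$.

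For part~(ii), three times continuous differentiability of $f$ and the boundedness of $Z_1(0)$ permit a Taylor expansion
\[
f(x+\tau x_i) = f(x) + \tau\,\inp{\nabla f(x)}{x_i} + \frac{\tau^2}{2}\, x_i^\top H_f(x)\, x_i + O(\tau^3),
\]
with remainder uniform in $(x_1,x_2)\in Z_1(0)$. Multiplying the two expansions and collecting powers of $\tau$ yields an integrand expanded through order $\tau^2$ plus an $O(\tau^3)$ error. Integrating over $Z_1(0)$ then produces the zeroth-order term $\Lambda_1 f(x)^2$; a first-order contribution that vanishes because antipodal symmetry makes $\int_{Z_1(0)} \inp{\nabla f(x)}{x_i}\, dx_1 dx_2 = 0$ for $i=1,2$; and a $\tau^2$ coefficient which, after the swap symmetry is used to identify $\int_{Z_1(0)} x_1^\top H_f(x) x_1\, dx_1 dx_2 = \int_{Z_1(0)} x_2^\top H_f(x) x_2\, dx_1 dx_2$, collapses to exactly $h(x)$. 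Subtracting $\Lambda_1 f^2(x)$, dividing by $\tau^2$ and sending $\tau\to 0^+$ then gives the stated limit.

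The principal technical obstacle is the a.e.\ statement in~(i): with only $L^1_{\mathrm{loc}}$ control on $f$, the factor $f(x+\tau x_2)$ in the split above cannot be dominated by a constant and must be absorbed through the Lebesgue-point bound $\tau^{-p}\int_{B_\tau(x)} f(y)\, dy \le C\, f(x)$ valid for small $\tau$. Reconciling the non-product set $Z_1(0)$ with the ball averages that drive Lebesgue's theorem is the subtle step, and enlarging the domain to $B_1(0)\times B_1(0)$ before slicing is the device that resolves it.
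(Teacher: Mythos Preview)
Your proof is correct. Part~(ii) and the continuous case of part~(i) are essentially identical to the paper's argument: Taylor expansion, antipodal symmetry to kill the linear term, swap symmetry to merge the two Hessian contributions.

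For the a.e.\ statement in~(i), your route differs from the paper's. The paper applies the Lebesgue differentiation theorem directly in $\mathbb{R}^{2p}$ to the function $g(x_1,x_2)=f(x_1)f(x_2)$, observing that the sets $Z_\tau(x)$ shrink nicely to the diagonal point $(x,x)$ (they sit inside the $\sqrt{2}\tau$-ball in $\mathbb{R}^{2p}$ with a fixed positive ratio of measures). You instead stay in $\mathbb{R}^p$: enlarge $Z_1(0)$ to the product $B_1(0)\times B_1(0)$, split the difference $f(x+\tau x_1)f(x+\tau x_2)-f(x)^2$ telescopically, and use Fubini together with the $p$-dimensional Lebesgue differentiation theorem at a Lebesgue point of $f$. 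The paper's approach is shorter, but it tacitly uses that $(x,x)$ is a Lebesgue point of $f\otimes f$ in $\mathbb{R}^{2p}$ whenever $x$ is a Lebesgue point of $f$ in $\mathbb{R}^p$---and verifying that fact is precisely your telescoping-plus-Fubini computation. So your argument is more explicit and self-contained, while the paper's is a cleaner one-line appeal to differentiation over nicely shrinking sets.
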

It is worth noticing here that $h(\cdot)$ vanishes outside the support of $f(\cdot)$, $S \coloneqq \{ x \in \mathbb{R}^p \, : \, f(x)>0 \}$, while it is, in general, non-zero in the interior of the support of $f(\cdot)$. When $p=1$ certain simplifications occur. Specifically, $\Lambda_1=1$. This is summarized in the following corollary.

\begin{corollary} \label{corollary_local_depth_tau_to_0}
Under the conditions of Theorem \ref{theorem_local_depth_tau_to_0} and for $p=1$, 
\begin{equation} \label{simplified_form_local_depth}
	LLD(x,\tau) = 2 \int_{T_{++}^{\tau}} f(x+x_1) f(x-x_2) \, dx_1 dx_2,
\end{equation}
where $T_{++}^{\tau} \coloneqq \{ (x_1, x_2) \, : x_1 \geq 0, \, x_2 \geq 0, \, x_1 + x_2 \leq \tau \}$. Furthermore, for any $x \in \mathbb{R}$,
\begin{equation} \label{lim_local_depth_tau_to_0_p_1}
	\lim_{\tau \rightarrow 0^{+}} \frac{1}{\tau^2} LLD(x,\tau) = f^2(x) \text{ a.e.\ }
\end{equation}
If $f(\cdot)$ is continuous then \eqref{lim_local_depth_tau_to_0_p_1} holds everywhere.
\end{corollary}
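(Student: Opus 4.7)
The claim has two parts: the simplified one-dimensional formula \eqref{simplified_form_local_depth}, and the limit \eqref{lim_local_depth_tau_to_0_p_1}. I would handle them in that order, since the limit reduces to Theorem \ref{theorem_local_depth_tau_to_0} once the constant $\Lambda_1$ is identified.

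\textbf{Proof of the simplified formula.} The plan is to start from \eqref{local_depth} and decompose the integration region $Z_\tau(x) \subset \mathbb{R}^2$ according to which of $x_1, x_2$ is larger. Since the integrand $f(x_1) f(x_2)$ is symmetric under the swap $(x_1, x_2) \mapsto (x_2, x_1)$, and the defining conditions of $Z_\tau(x)$ are also symmetric, the contributions from $\{x_1 \ge x_2\}$ and $\{x_1 \le x_2\}$ coincide (with the diagonal $\{x_1=x_2\}$ being a null set). This yields a factor of $2$. On the half $\{x_1 \ge x_2\}$ we have $|x_1 - x_2| = x_1 - x_2$, and the condition $\max(|x-x_1|, |x-x_2|) \le x_1 - x_2$ is equivalent to $x_2 \le x \le x_1$. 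Indeed, if $x > x_1$ then $|x-x_2| = x - x_2 > x_1 - x_2$, contradicting the constraint, and similarly $x < x_2$ is ruled out. Under the change of variables $u_1 = x_1 - x \ge 0$, $u_2 = x - x_2 \ge 0$, the Jacobian is $1$, the upper bound $x_1 - x_2 \le \tau$ becomes $u_1 + u_2 \le \tau$, and the remaining constraints are automatic, so the region transforms exactly into $T_{++}^{\tau}$. This gives \eqref{simplified_form_local_depth}.

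\textbf{Proof of the limit.} To invoke Theorem \ref{theorem_local_depth_tau_to_0}(i), which states $\tau^{-2p} \Lambda_1^{-1} LLD(x,\tau) \to f^2(x)$ a.e., I first verify that $\Lambda_1 = 1$ when $p=1$. Applying the already-proved formula \eqref{simplified_form_local_depth} with $x=0$ (or, equivalently, noting that the same decomposition carries through for the Lebesgue measure of $Z_1(0)$), we get
\begin{equation*}
	\Lambda_1 = \lambda^{\otimes 2}(Z_1(0)) = 2\,\lambda^{\otimes 2}(T_{++}^{1}) = 2\cdot \tfrac{1}{2} = 1,
\end{equation*}
since $T_{++}^{1}$ is the right triangle with vertices $(0,0), (1,0), (0,1)$. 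With $\Lambda_1 = 1$ and $p=1$, Theorem \ref{theorem_local_depth_tau_to_0}(i) immediately yields \eqref{lim_local_depth_tau_to_0_p_1} for almost every $x$, and for every $x \in \mathbb{R}$ when $f(\cdot)$ is continuous.

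\textbf{Main obstacle.} The only nontrivial step is the geometric reduction: verifying that the constraint $\max_{i=1,2}|x-x_i| \le |x_1-x_2|$ collapses (on the half $x_1 \ge x_2$) to the simple two-sided inequality $x_2 \le x \le x_1$, so that the change of variables cleanly maps $Z_\tau(x)$ onto two copies of $T_{++}^{\tau}$. Once that identification is in hand, both parts of the corollary follow by straightforward bookkeeping; there is no need to redo the Lebesgue differentiation argument underlying Theorem \ref{theorem_local_depth_tau_to_0}.
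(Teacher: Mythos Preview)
Your proposal is correct and follows essentially the same approach as the paper: identify the one-dimensional region $Z_\tau(x)$ as two congruent triangles, use a change of variables to map onto $T_{++}^\tau$, and then invoke Theorem \ref{theorem_local_depth_tau_to_0}(i) together with $\Lambda_1=1$ for the limit. The only cosmetic difference is that the paper first shifts to $Z_\tau(0)$ and uses the sign-flip symmetry $(x_1,x_2)\mapsto(-x_1,-x_2)$ to pair the triangles $T_{-+}^\tau$ and $T_{+-}^\tau$, whereas you work directly with $Z_\tau(x)$ and use the swap symmetry $(x_1,x_2)\mapsto(x_2,x_1)$; both routes encode the same geometric decomposition.
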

One of the motivations of this paper is to perform clustering using LLD. With this aim, we introduce the $\tau$-approximation of $f(\cdot)$ as follows:

\begin{definition}[$\tau$-approximation] \label{definition_tau_approximation}
For any $\tau>0$,
\begin{equation*}
f_{\tau}(x) \coloneqq \frac{1}{\tau^p \sqrt{\Lambda_1} } \sqrt{LLD(x,\tau)}. 
\end{equation*}
\end{definition}
\begin{remark} \label{remark_f_tau_continuous}
From Proposition \ref{proposition_local_depth} (iii), it follows that when $P$ has a density $f(\cdot)$ then, $f_\tau(\cdot)$ is continuous. Let $S_{\tau} \coloneqq \{ x \in \mathbb{R}^p \, : \, f_{\tau}(x)>0 \}$ be the support of $f_{\tau}(\cdot)$. If $f(\cdot)$ is $k$-times continuously differentiable, Proposition \ref{proposition_local_depth} (iv) implies that $f_{\tau}(\cdot)$ is $k$-times continuously differentiable in $S_{\tau}$.
\end{remark}
We note that in the approach to clustering adopted in this paper we will encounter convergence of level sets (described in Section \ref{section_clustering} below). This involves uniform convergence of $f_\tau(\cdot)$ to $f(\cdot)$ and other ideas. To this end, we denote by $\overline{B}_{\epsilon}(x) \coloneqq \{ y \in \mathbb{R}^p \, : \, \norm{x-y} \leq \epsilon \}$ the closed ball with center $x \in \mathbb{R}^p$ and radius $\epsilon > 0$.
\begin{theorem} \label{theorem_uniform_convergence_of_tau_approximation}
Let $P$ be absolutely continuous with respect to the Lebesgue measure on $\mathbb{R}^p$ with density $f(\cdot)$. Then the following hold: \\
(i) If $f(\cdot)$ is uniformly continuous and bounded, then
\begin{equation} \label{uniform_convergence_of_tau_approximation}
\lim_{\tau \to 0^{+}} \sup_{ x \in \mathbb{R}^p } \abs{ f_{\tau}(x) - f(x) } = 0.
\end{equation}
(ii) If $f(\cdot)$ is continuous, then for all compact sets $K \subset \mathbb{R}^p$
\begin{equation} \label{uniform_convergence_compact_set_of_tau_approximation}
\lim_{\tau \to 0^{+}} \sup_{ x \in K } \abs{ f_{\tau}(x) - f(x) } = 0.
\end{equation}
In particular, for all $x \in \mathbb{R}^p$
\begin{equation*}
\lim_{\tau, \epsilon \rightarrow 0^{+}} \sup_{y \in \overline{B}_{\epsilon}(x)} \abs{ f_{\tau}(y) - f(x) } = 0.
\end{equation*}
\end{theorem}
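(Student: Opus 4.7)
The plan is to work directly from the integral representation of $LLD(x,\tau)$ given in \eqref{local_depth_for_absolutely_continuous_distribution} and first establish uniform convergence of $f_\tau^2$ to $f^2$; the passage to $f_\tau \to f$ will then be handled by an elementary square-root inequality. Precisely, writing
\begin{equation*}
\frac{LLD(x,\tau)}{\tau^{2p}} - \Lambda_1 f^2(x) = \int_{Z_1(0)} \bigl[f(x+\tau x_1)\,f(x+\tau x_2) - f(x)^2\bigr]\, dx_1\, dx_2,
\end{equation*}
the goal in part (i) is to bound the right-hand side uniformly in $x$ by something vanishing with $\tau$.

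For part (i), I would use that $Z_1(0)$ is a bounded set in $\mathbb{R}^p\times\mathbb{R}^p$, hence contained in a product of balls of some fixed radius $R$. Uniform continuity of $f$ supplies a modulus $\omega_f$ with $\omega_f(t)\to 0$ as $t\to 0^+$, and boundedness of $f$ supplies $M \coloneqq \sup f < \infty$. Splitting the product difference as
\begin{equation*}
f(x+\tau x_1)f(x+\tau x_2) - f(x)^2 = f(x+\tau x_1)\bigl[f(x+\tau x_2)-f(x)\bigr] + f(x)\bigl[f(x+\tau x_1)-f(x)\bigr]
\end{equation*}
and using $\|x_i\|\le R$ yields the uniform bound
\begin{equation*}
\sup_{x\in\mathbb{R}^p}\left|\frac{LLD(x,\tau)}{\tau^{2p}\Lambda_1} - f^2(x)\right| \le 2M\,\omega_f(\tau R),
\end{equation*}
which tends to $0$. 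To descend from $f_\tau^2$ to $f_\tau$, I would invoke the elementary inequality $|a-b|\le\sqrt{|a^2-b^2|}$ valid for $a,b\ge 0$ (applied to $a=f_\tau(x)$ and $b=f(x)$). This gives $\sup_x|f_\tau(x)-f(x)| \le \bigl(2M\,\omega_f(\tau R)\bigr)^{1/2} \to 0$, completing (i).

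For part (ii), $f$ need not be uniformly continuous on $\mathbb{R}^p$, so I would localize. Given a compact $K\subset\mathbb{R}^p$, let $K' \coloneqq K + \overline{B}_R(0)$, which is compact, and restrict to $\tau\in(0,1]$ so that the integrands in the above display only probe values of $f$ on $K'$. Then $f|_{K'}$ is both uniformly continuous and bounded, and the argument of (i) applies verbatim to yield $\sup_{x\in K}|f_\tau(x)-f(x)|\to 0$. For the final display, fix $x$, apply the compact-set result to $K=\overline{B}_1(x)$, and split
\begin{equation*}
\sup_{y\in\overline{B}_\epsilon(x)}|f_\tau(y)-f(x)| \le \sup_{y\in\overline{B}_1(x)}|f_\tau(y)-f(y)| + \sup_{y\in\overline{B}_\epsilon(x)}|f(y)-f(x)|;
\end{equation*}
the first term vanishes as $\tau\to 0^+$ and the second as $\epsilon\to 0^+$ by continuity of $f$ at $x$.

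The only non-routine step is the square-root passage from $f_\tau^2$ to $f_\tau$; the rest is standard once the integrand is expressed in the form above and the compactness of $Z_1(0)$ is exploited. The main potential pitfall, and thus the step to handle carefully, is keeping the uniformity in $x$ intact in part (ii): this is secured by the buffer $K'=K+\overline{B}_R(0)$ and by restricting $\tau\le 1$ so that all values of $f$ entering the estimate lie in a single fixed compact set on which both uniform continuity and boundedness hold.
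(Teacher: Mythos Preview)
Your proposal is correct and follows essentially the same route as the paper's proof: both reduce to bounding $\sup_x\bigl|\tau^{-2p}\Lambda_1^{-1}LLD(x,\tau)-f^2(x)\bigr|$ via the integral representation over $Z_1(0)$, then pass to $f_\tau$ using the square-root inequality $|\sqrt{s}-\sqrt{t}|\le\sqrt{|s-t|}$, and handle (ii) by localizing to a compact enlargement of $K$. Your explicit modulus-of-continuity bound $2M\,\omega_f(\tau R)$ is a slightly more direct variant of the paper's argument (which pushes the supremum inside the integral and invokes dominated convergence), but the two are equivalent in substance.
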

\begin{remark} \label{remark_theorem_uniform_consistency_of_tau_approximation}
The above theorem implies that the $\tau$-approximation converges uniformly to the density under extreme localization. We also note that continuity is not enough in Theorem \ref{theorem_uniform_convergence_of_tau_approximation} (i) (see Appendix \ref{sm:section_necessity_of_conditions_in_theorem_uniform_consistency_of_tau_approximation} for a counterexample).
\end{remark}

\subsection{Sample local depth} \label{subsection_sample_local_depth}

Let $\{X_1, \dots, X_n\}$ be independent and identically distributed (i.i.d.) from $P$ on $\mathbb{R}^p$; then the estimate of LLD is the empirical distribution of the pair $(X_i,X_j)$ belonging to the set $Z_{\tau}(x)$; that is, for $x \in \mathbb{R}^p$ and $\tau \in [0,\infty]$
\begin{equation} \label{sample_local_depth}
	LLD_n(x,\tau) \coloneqq \frac{1}{{n \choose 2}} \sum_{1 \leq i < j \leq n} \mathbf{I}( (X_i, X_j) \in Z_{\tau}(x) ).
\end{equation}
As an immediate consequence of Proposition \ref{proposition_local_depth} one obtains the Corollary \ref{corollary_sample_local_depth} below.
\begin{corollary} \label{corollary_sample_local_depth}
(i) For any fixed $x \in \mathbb{R}^p$ and sample size $n \in \mathbb{N}$, $LLD_n(x, \, \cdot \,)$ is a non-decreasing and right-continuous function of $\tau$ satisfying
\begin{equation*}
    \lim_{\tau \to 0^{+}} LLD_n(x,\tau) = \frac{1}{ { n \choose 2 }} \sum_{1 \leq i < j \leq n} \mathbf{I}( (X_i, X_j)=(x,x) )
\end{equation*}
and
\begin{equation*}
    \lim_{\tau \to \infty} LLD_n(x,\tau) = \frac{1}{ { n \choose 2 }} \sum_{1 \leq i < j \leq n} \mathbf{I}( (X_i, X_j) \in Z(x) ).
\end{equation*}
(ii) For $\tau \in [0,\infty]$, $\sup_{x \in \mathbb{R}^p \, : \, \norm{x} \geq M } LLD_n(x,\tau) \rightarrow 0 \text{ as } M \rightarrow \infty$. \\
(iii) For $\tau \in [0,\infty]$, $LLD_n(\, \cdot \, ,\tau)$ is a upper semicontinuous function of $x$.
\end{corollary}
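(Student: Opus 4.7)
The plan is to view $LLD_n(x,\tau)$ as a finite positive linear combination (with weights $1/\binom{n}{2}$) of the indicator functions
\begin{equation*}
\xi_{ij}(x,\tau) := \mathbf{I}\bigl((X_i,X_j) \in Z_\tau(x)\bigr), \quad 1 \leq i < j \leq n,
\end{equation*}
and to verify that each $\xi_{ij}$ inherits the required $\tau$- and $x$-regularity directly from the geometry of $Z_\tau(x)$. Since the classes of non-decreasing, right-continuous, and upper semicontinuous functions are all closed under finite positive linear combinations, the assertions about $LLD_n$ reduce to pointwise checks at a single pair. This is the empirical/point-mass analogue of the argument used to establish Proposition \ref{proposition_local_depth}.

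For (i), I would fix $i<j$ and use that $\{Z_\tau(x)\}_{\tau \geq 0}$ is an increasing family of sets, which makes $\xi_{ij}(x,\cdot)$ non-decreasing; right-continuity in $\tau$ follows because the defining inequality $\norm{X_i-X_j} \leq \tau$ is a closed constraint. The monotone intersection
\begin{equation*}
\bigcap_{\tau > 0} Z_\tau(x) = \bigl\{(x_1,x_2) : \norm{x_1-x_2} = 0,\ \max_{k=1,2}\norm{x-x_k} \leq 0\bigr\} = \{(x,x)\}
\end{equation*}
gives $\xi_{ij}(x,\tau) \downarrow \mathbf{I}((X_i,X_j) = (x,x))$ as $\tau \to 0^+$, while the constraint $\norm{X_i-X_j} \leq \tau$ becoming vacuous yields $\xi_{ij}(x,\tau) \uparrow \mathbf{I}((X_i,X_j) \in Z(x))$ as $\tau \to \infty$. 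Summing over pairs and dividing by $\binom{n}{2}$ produces the two stated limits.

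For (ii), I would set $R := \max_{1 \leq k \leq n} \norm{X_k}$; then for $\norm{x} > 3R$ the triangle inequality gives $\norm{x - X_k} \geq \norm{x} - R > 2R \geq \norm{X_i - X_j}$ for every $i,j,k$, so $\xi_{ij}(x,\tau) = 0$ for all $i<j$ and all $\tau$, whence $LLD_n(x,\tau) = 0$ uniformly in $\tau$ on $\{\norm{x} > 3R\}$. For (iii), the $x$-section
\begin{equation*}
\{x \in \mathbb{R}^p : (X_i,X_j) \in Z_\tau(x)\} = \overline{B}_{r_{ij}}(X_i) \cap \overline{B}_{r_{ij}}(X_j)
\end{equation*}
(when $r_{ij} := \norm{X_i - X_j} \leq \tau$, and empty otherwise) is closed as an intersection of closed balls; thus $\xi_{ij}(\cdot,\tau)$ has closed upper level set $\{1\}$ and is upper semicontinuous in $x$, a property preserved under finite positive combinations.

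I do not anticipate any substantive obstacle: everything reduces to deterministic geometric statements about the single-pair set $Z_\tau(x)$, with the properties of $LLD_n$ then following by linearity. The only step warranting mild care is the $\tau \to 0^+$ collapse in (i), where both defining inequalities of $Z_\tau(x)$ must be checked to degenerate consistently to the point $(x,x)$; this is automatic from the chain $\max_{k=1,2}\norm{x-x_k} \leq \norm{x_1-x_2} \leq \tau \to 0$.
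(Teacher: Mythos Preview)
Your proposal is correct and essentially the same approach as the paper's: the paper's one-line proof simply says to apply Proposition \ref{proposition_local_depth} with the product measure $P^{\otimes 2}$ replaced by the empirical pair measure $P^{\otimes 2}_n = \binom{n}{2}^{-1}\sum_{i<j}\delta_{(X_i,X_j)}$, and integrating the indicator of $Z_\tau(x)$ against this discrete measure is exactly your finite positive combination of the $\xi_{ij}$. Your write-up just unpacks that substitution explicitly, verifying each geometric fact about $Z_\tau(x)$ at the level of a single pair rather than invoking the already-proved proposition wholesale.
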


We notice here that $LLD_n$ is a non-degenerate U-statistics of order $2$ \citep{Korolyuk-2013} with the kernel
\begin{equation} \label{kernel}
\mathcal{K}_{x,\tau}(x_1,x_2) \coloneqq \mathbf{I}( (x_1, x_2) \in Z_{\tau}(x))
\end{equation}
whose first projection is
\begin{equation} \label{kernel_projection}
\mathcal{J}_{x,\tau}(x_1) \coloneqq \int \mathbf{I}( (x_1, x_2) \in Z_{\tau}(x)) \, dP(x_2) = P( (X_1, X_2) \in Z_{\tau}(x) \, | X_1=x_1).
\end{equation}
Furthermore, $LLD_n$ is an unbiased estimator of $LLD$, that is, $E[LLD_n(x,\tau)] =  LLD(x,\tau)$. Using \citet[Lemma A, Section 5.2.1]{Serfling-2009}, it follows that 
\begin{equation} \label{variance_sample_local_depth}
	Var[ LLD_n(x,\tau) ] = \frac{1}{{n \choose 2}} [ a^2(x,\tau) + 2(n-2) b^2(x,\tau) ],
\end{equation} 
where $a^2(x,\tau) \coloneqq Var \left[ \mathcal{K}_{x,\tau}(X_1, X_2) \right] = LLD(x, \tau)(1-LLD(x, \tau))$, and
\begin{equation} \label{b_square}
b^2(x,\tau) \coloneqq Var \left[ \mathcal{J}_{x,\tau}(X_1) \right] = \int \left( \mathcal{J}_{x,\tau}(x_1) \right)^2 \, dP(x_1)  - \left( LLD(x,\tau) \right)^2.
\end{equation}
From \eqref{variance_sample_local_depth}, it follows that for all $n \in \mathbb{N}$
\begin{equation*}
    Var[ \sqrt{n} \, LLD_n(x,\tau) ] = 4 \, \frac{n-2}{n-1} \,  b^2(x,\tau) + O \left( \frac{1}{n} \right) \xrightarrow[ n \rightarrow \infty]{} 4 \, b^2(x,\tau).
\end{equation*}
The above calculation shows that $LLD_n$ is a consistent estimator of $LLD$. In typical applications, the choice of $x$ and $\tau$ vary and in exploratory analyses, different choices of $x$ and $\tau$ may be investigated. Our next result shows that the $LLD_n$ is uniformly consistent over $x$ and $\tau$. In the following, we use the notation \emph{a.s.\ }to mean almost surely with respect to $P$.

\begin{theorem} \label{theorem_uniform_consistency}
\begin{equation*}
	\sup_{ \substack{x \in \mathbb{R}^p \\ \tau \in [0,\infty]}} \abs{ LLD_n(x,\tau) - LLD(x,\tau) } \xrightarrow[ n \rightarrow \infty]{} 0 \text{ a.s.} 
\end{equation*}
\end{theorem}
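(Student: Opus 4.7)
Since $LLD_n(x,\tau)$ is a U-statistic of order two with bounded kernel $\mathcal{K}_{x,\tau}$, my plan is to combine Hoeffding's decomposition with a Glivenko-Cantelli theorem for a Vapnik-Chervonenkis (VC) class. Writing
\begin{equation*}
LLD_n(x,\tau) - LLD(x,\tau) = \frac{2}{n}\sum_{i=1}^n \bigl(\mathcal{J}_{x,\tau}(X_i) - LLD(x,\tau)\bigr) + R_n(x,\tau),
\end{equation*}
where $R_n$ is the completely degenerate second-order remainder, it suffices to show both terms converge to zero almost surely, uniformly in $(x,\tau) \in \mathbb{R}^p \times [0,\infty]$.

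The central step is to show that $\mathcal{Z} = \{Z_\tau(x) : (x,\tau) \in \mathbb{R}^p \times [0,\infty]\}$ is a VC class of subsets of $\mathbb{R}^p \times \mathbb{R}^p$. Each $Z_\tau(x)$ is the intersection of three sub-level sets of polynomials of degree at most two in $(x_1, x_2)$, namely $\|x-x_1\|^2 - \|x_1-x_2\|^2 \leq 0$, $\|x-x_2\|^2 - \|x_1-x_2\|^2 \leq 0$, and $\|x_1-x_2\|^2 - \tau^2 \leq 0$, with coefficients depending polynomially on $(x,\tau^2)$. Each family lives in a finite-dimensional vector space of polynomials, hence is VC by the standard polynomial subgraph argument (e.g., van der Vaart-Wellner, Lemma 2.6.15), and finite intersections of VC classes remain VC (Lemma 2.6.17). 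The kernel class $\{\mathcal{K}_{x,\tau}\}$ is therefore a bounded VC-subgraph class with envelope $1$, and by Fubini the projection class $\{\mathcal{J}_{x,\tau}\}$ inherits the VC property.

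With this in hand, the classical Glivenko-Cantelli theorem for VC classes yields $\sup_{x,\tau}\bigl|n^{-1}\sum_i \mathcal{J}_{x,\tau}(X_i) - LLD(x,\tau)\bigr| \to 0$ a.s., which handles the linear term. For the completely degenerate part, a maximal inequality for degenerate $U$-processes over a bounded VC class (Arcones-Gin\'e 1993) provides exponential concentration of $\sup_{x,\tau} |R_n(x,\tau)|$ at rate $O(n^{-1})$, and Borel-Cantelli yields $\sup |R_n| \to 0$ a.s. Combining the two bounds completes the proof. I expect the main obstacle to be a clean verification of the VC property over the non-compact parameter set $\mathbb{R}^p \times [0,\infty]$: compactness and equicontinuity arguments are unavailable, so all uniformity must come from combinatorial VC bounds, and the boundary values $\tau=0$ and $\tau=\infty$ must be treated by reading the third polynomial inequality as trivially satisfied for $\tau=\infty$ and as forcing $x_1=x_2$ for $\tau=0$.
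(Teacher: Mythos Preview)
Your overall strategy is sound and reaches the same conclusion as the paper, but the route differs in two places worth noting.

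First, the paper does not perform the Hoeffding decomposition by hand. It invokes Corollary~3.3 of \citet{Arcones-Gine-1993} directly on the U-statistic, so that the entire uniform SLLN follows once three conditions are checked: bounded envelope for $\mathcal{F}=\{\mathcal{K}_{x,\tau}\}$ and its projection $\mathcal{F}'=\{\mathcal{J}_{x,\tau}\}$, the image admissible Suslin (measurability) property of $\mathcal{F}$, and the VC-subgraph property of $\mathcal{F}$. Your decomposition into a linear empirical process plus a degenerate remainder essentially re-derives the content of that corollary; it works, but it is more labor and you still need the same ingredients (VC, envelope, and measurability) to run the Arcones--Gin\'e maximal inequalities you cite for $R_n$.

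Second, the VC argument itself is different. You show $\{Z_\tau(x)\}$ is VC by writing each set as a finite intersection of polynomial sub-level sets in $(x_1,x_2)$; this is correct and entirely self-contained. The paper instead uses a duality: it writes $\mathcal{K}_{x,\tau}(x_1,x_2)=\mathbf{I}(x\in L(x_1,x_2))\,\mathbf{I}(\|x_1-x_2\|\le\tau)$, identifies $\mathcal{K}_{x,\tau}$ with an evaluation functional on the class of lenses $L(x_1,x_2)$ (augmented by $\emptyset$), and shows that lenses are VC because they are intersections of two balls. Your polynomial argument is arguably more transparent and generalizes immediately to the other local depths (spherical, $\beta$-skeleton, simplicial) without identifying the ``dual'' geometric object.

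Two cautions. Your line ``by Fubini the projection class $\{\mathcal{J}_{x,\tau}\}$ inherits the VC property'' is not quite right: conditional expectation does not preserve the VC-subgraph property. What does transfer, via Jensen's inequality, is control of the $L^2$ covering numbers, which is all you need for Glivenko--Cantelli (the paper uses exactly this inequality in its proof of the uniform CLT). Also, you do not mention the measurability condition (image admissible Suslin) that the paper verifies; it is required to apply the Arcones--Gin\'e machinery over the non-separable index set $\mathbb{R}^p\times[0,\infty]$, and without it the suprema you write may not be measurable.
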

 
We now turn to the uniform central limit theorem for $LLD_n$ over a subset $\Gamma$ of $\mathbb{R}^p \times [0,\infty]$. Let $\ell^{\infty} (\Gamma)$ denote the space of all bounded functions  $\bar{h}(\cdot): \Gamma \rightarrow \mathbb{R}$ . To study the convergence in distribution in $\ell^{\infty}(\Gamma)$, one needs to address the  measurability problems that are encountered due to the non-separability of $\ell^{\infty} (\Gamma)$. We address this using Theorem 4.9 in \cite{Arcones-Gine-1993}. In their paper they handle the issue by requiring ${\cal{F}}$ (not defined here) be a ``measurable'' class, where measurable was described on Page 1497. Indeed, in our proof, and as also stated in their paper, we address this issue by establishing that the class of kernels related to the U-statistics  is image admissible Suslin (see \citet{Dudley-2014}) and (1.9) of their paper holds. In the following, convergence in distribution in $\ell^{\infty} (\Gamma)$ is in the sense of Hoffmann-J{\o}rgensen \citep[Definition 3.7.22]{Gine-2016}. 

\begin{theorem} \label{theorem_uniform_asymptotic_normality_of_local_depth}
  If $\Gamma \subset \mathbb{R}^p \times [0,\infty]$ is such that $b^2(x,\tau) > 0$ for all $(x,\tau) \in \Gamma$, then
\begin{equation*}
	\sqrt{n} \left( LLD_n(\, \cdot \,, \, \cdot \,) - LLD(\, \cdot \, , \, \cdot \,) \right) \xrightarrow[ n \rightarrow \infty]{d} 2 \,  W(\, \cdot \, ,\, \cdot \,) \text{ in } \ell^{\infty} (\Gamma) 
\end{equation*}
where $\{ W(x,\tau) \}_{(x, \tau) \in \Gamma }$ is a centered Gaussian process with covariance function $\gamma: \Gamma \times \Gamma \rightarrow \mathbb{R}$ given by
\begin{equation} \label{covariance_function_of_normalized_local_depth}
\gamma((x,\tau),(y,\nu)) =  \int \mathcal{J}_{x,\tau}(x_1) \mathcal{J}_{y,\nu}(x_1) \, dP(x_1) - \, LLD(x,\tau) LLD(y,\nu).
\end{equation}
\end{theorem}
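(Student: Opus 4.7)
The plan is to invoke the uniform central limit theorem for U-processes of Theorem 4.9 of \citet{Arcones-Gine-1993} applied to the U-statistic representation of $LLD_n$ with kernel $\mathcal{K}_{x,\tau}$ defined in \eqref{kernel}. The starting point is Hoeffding's decomposition
\begin{equation*}
LLD_n(x,\tau) - LLD(x,\tau) = \frac{2}{n}\sum_{i=1}^{n}\bigl(\mathcal{J}_{x,\tau}(X_i) - LLD(x,\tau)\bigr) + R_n(x,\tau),
\end{equation*}
whose leading linear term, scaled by $\sqrt{n}$, is twice the empirical process indexed by $\mathcal{F}=\{\mathcal{J}_{x,\tau}:(x,\tau)\in\Gamma\}$, while $R_n$ is the completely degenerate second-order term. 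Under the entropy conditions verified below, the standard U-process bound yields $\sup_{(x,\tau)\in\Gamma}\sqrt{n}\,|R_n(x,\tau)|\to 0$ in outer probability, so the limit is governed entirely by the linear part.

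Next I verify the hypotheses of the Arcones--Gin\'e theorem for the class of kernels $\mathcal{K}=\{\mathcal{K}_{x,\tau}:(x,\tau)\in\Gamma\}$. For measurability, since $\Gamma$ is Suslin in the Polish space $\mathbb{R}^p\times[0,\infty]$ and the map $(x,\tau,x_1,x_2)\mapsto\mathcal{K}_{x,\tau}(x_1,x_2)$ is jointly Borel (the inequalities defining $Z^L_\tau(x)$ are continuous in all arguments), $\mathcal{K}$ is image admissible Suslin in the sense of \citet{Dudley-2014}. For the entropy, each $Z^L_\tau(x)$ is the intersection of three sublevel sets of polynomials of fixed degree in $(x_1,x_2)$ with coefficients depending smoothly on $(x,\tau)$; hence $\{Z^L_\tau(x):(x,\tau)\in\Gamma\}$ is a VC class. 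Alexander's Lemma \citep{Alexander-1987} then provides a polynomial bound on the bracketing numbers $N_{[\,]}(\varepsilon,\mathcal{K},L^2(P^{\otimes 2}))$, so the entropy integral required by Theorem 4.9 converges. The envelope $F\equiv 1$ is trivially in $L^2(P^{\otimes 2})$, ensuring the integrability condition (1.9) of \citet{Arcones-Gine-1993}.

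Finally, I identify the limit through the covariance of the linear part: a direct calculation using \eqref{kernel_projection} and \eqref{b_square} gives
\begin{equation*}
\operatorname{Cov}\bigl(2[\mathcal{J}_{x,\tau}(X_1)-LLD(x,\tau)],\,2[\mathcal{J}_{y,\nu}(X_1)-LLD(y,\nu)]\bigr)=4\gamma((x,\tau),(y,\nu)),
\end{equation*}
with $\gamma$ as in \eqref{covariance_function_of_normalized_local_depth}. Theorem 4.9 of \citet{Arcones-Gine-1993} therefore yields weak convergence in $\ell^{\infty}(\Gamma)$ in the Hoffmann-J{\o}rgensen sense to $2W$, where $W$ is a centered Gaussian process with covariance $\gamma$; the hypothesis $b^2(x,\tau)>0$ on $\Gamma$ ensures the non-degeneracy of the linear part, so the normalization $2W$ is correct.

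I expect the main obstacle to be the verification of the measurability and uniform entropy hypotheses on the whole index set $\Gamma\subset\mathbb{R}^p\times[0,\infty]$. The image admissible Suslin property requires a careful Borel parameterization of $\mathcal{K}$ over a possibly non-closed index set, and obtaining a bracketing-entropy bound uniform in $\tau$, including the boundary values $\tau=0$ (where $Z^L_\tau(x)$ collapses to $\{(x,x)\}$) and $\tau=\infty$ (recovering lens depth), is delicate; this is precisely where Alexander's Lemma is indispensable, since direct $L^2$ bracketing estimates are unavailable at these degenerate values of $\tau$.
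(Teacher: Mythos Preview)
Your proposal is correct and follows essentially the same route as the paper: apply Theorem 4.9 of \citet{Arcones-Gine-1993}, verify the image admissible Suslin property from joint Borel measurability, and obtain the entropy condition from the VC property of the indexing class combined with Alexander's Lemma. The only minor differences are that the paper derives the VC property by writing lenses as intersections of two balls (rather than as polynomial sublevel sets), uses covering rather than bracketing numbers from Alexander, and spells out condition (i) of Arcones--Gin\'e more explicitly by proving finite-dimensional convergence separately (their Proposition~A.1) and establishing the existence of a bounded, uniformly $d_{\mathcal{F}',P}^2$-continuous version of the limit via the sub-Gaussian entropy bound of \citet[Theorem~2.3.7]{Gine-2016}.
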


\begin{remark} \label{remark_b_square_positive}
  Notice that the variance of $W(x,\tau)$ is $\gamma((x,\tau),(x,\tau)) = b^2(x,\tau)$, and it is positive for $\tau>0$ provided $P$ is absolutely continuous with respect to the Lebesgue measure and $x$ is in its support.
\end{remark}

In the rest of this subsection, we assume that $P$ is absolutely continuous with respect to the Lebesgue measure with density $f(\cdot)$. The plug-in estimator of $f_{\tau}(\cdot)$ is given by
\begin{equation} \label{sample_tau_approximation}
f_{\tau,n}(x) \coloneqq \frac{1}{\tau^p \sqrt{\Lambda_1} } \sqrt{LLD_n(x,\tau)}. 
\end{equation}
Our first result summarizes basic properties of $f_{\tau,n}(\cdot)$ and it is a consequence of Theorems \ref{theorem_uniform_convergence_of_tau_approximation} and \ref{theorem_uniform_consistency}.
\begin{corollary} \label{corollary_uniform_consistency_of_sample_tau_approximation} Let $P$ be absolutely continuous with respect to the Lebesgue measure on $\mathbb{R}^p$ with density $f(\cdot)$. Then the following hold: \\
(i) If $f(\cdot)$ is uniformly continuous and bounded, then
\begin{equation} \label{uniform_consistency_of_sample_tau_approximation}
\lim_{\tau \to 0^{+}} \lim_{n \to \infty} \sup_{ x \in \mathbb{R}^p} \abs{ f_{\tau,n}(x) - f(x)} = 0 \text{ a.s.}
\end{equation}
(ii) If $f(\cdot)$ is continuous, then for all compact sets $K \subset \mathbb{R}^p$
\begin{equation} \label{uniform_convergence_compact_set_of_sample_tau_approximation}
\lim_{\tau \to 0^{+}} \lim_{n \to \infty} \sup_{ x \in K} \abs{ f_{\tau,n}(x) - f(x)} = 0  \text{ a.s.}
\end{equation}
In particular, for all $x \in \mathbb{R}^p$
\begin{equation} \label{uniform_continuous_consistency_of_sample_tau_approximation}
\lim_{\tau, \epsilon \rightarrow 0^{+}} \lim_{n \to \infty} \sup_{ y \in \overline{B}_{\epsilon}(x)}  \abs{ f_{\tau,n}(y) - f(x)} = 0 \text{ a.s.}
\end{equation}
\end{corollary}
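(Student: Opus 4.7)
The plan is to bound the sample error by a triangle inequality
\[
|f_{\tau,n}(x) - f(x)| \;\leq\; |f_{\tau,n}(x) - f_\tau(x)| \;+\; |f_\tau(x) - f(x)|,
\]
and then send $n\to\infty$ first (with $\tau$ held fixed and positive) and $\tau\to 0^+$ afterwards, which is precisely the order of limits appearing in the statement. The second summand is a purely deterministic quantity already handled by Theorem \ref{theorem_uniform_convergence_of_tau_approximation}: part (i) gives uniform convergence to $0$ over all of $\mathbb{R}^p$ when $f$ is uniformly continuous and bounded, part (ii) gives uniform convergence on compact sets under continuity alone, and the same theorem supplies the local uniform statement needed for the $\sup$ over $\overline{B}_\epsilon(x)$ in \eqref{uniform_continuous_consistency_of_sample_tau_approximation}.

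For the first summand, I would use the elementary inequality $|\sqrt{a} - \sqrt{b}| \leq \sqrt{|a-b|}$ applied to $a = LLD_n(x,\tau)$ and $b = LLD(x,\tau)$, yielding
\[
|f_{\tau,n}(x) - f_\tau(x)| \;\leq\; \frac{1}{\tau^p \sqrt{\Lambda_1}} \, \sqrt{\,|LLD_n(x,\tau) - LLD(x,\tau)|\,}.
\]
Taking the supremum over $x$ (resp.\ over $K$ or $\overline{B}_\epsilon(x)$) and invoking the uniform strong consistency established in Theorem \ref{theorem_uniform_consistency}, the right-hand side tends to $0$ almost surely as $n\to\infty$, because $\tau>0$ is fixed so the prefactor $\tau^{-p}\Lambda_1^{-1/2}$ is a harmless constant at this stage. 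Combining this with the deterministic convergence of $f_\tau$ to $f$, each of the three claims follows after letting $\tau\to 0^+$ (and, for (iii), $\epsilon\to 0^+$ jointly).

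The main subtlety, and the reason the limits are written as iterated rather than joint in $(n,\tau)$, is precisely the blow-up of the factor $\tau^{-p}$ as $\tau\to 0^+$. If one tried to interchange the order or send $\tau = \tau_n\to 0^+$ together with $n\to\infty$, one would need a sharper rate control on $\sup_x |LLD_n(x,\tau) - LLD(x,\tau)|$ in terms of $\tau$, which is not what Theorem \ref{theorem_uniform_consistency} delivers. With the iterated order stated in the corollary, however, no such sharpening is needed and the bound above is entirely routine: each of (i), (ii), (iii) reduces to a clean two-line chain combining Theorems \ref{theorem_uniform_convergence_of_tau_approximation} and \ref{theorem_uniform_consistency} with the square-root inequality, and the proof is essentially immediate.
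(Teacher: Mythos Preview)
Your proposal is correct and matches the paper's proof essentially line for line: the same triangle-inequality split into $|f_{\tau,n}-f_\tau|+|f_\tau-f|$, the same use of the square-root H\"older inequality $|\sqrt{a}-\sqrt{b}|\le\sqrt{|a-b|}$ combined with Theorem~\ref{theorem_uniform_consistency} for the first piece (with $\tau>0$ fixed), and the invocation of Theorem~\ref{theorem_uniform_convergence_of_tau_approximation} for the second. Your remark about why the limits must be iterated rather than joint is also apt, though the paper does not spell this out explicitly in the proof.
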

It is well known that extreme localization is an important concept in depth analysis, however, the fluctuations of $f_{\tau,n}(\cdot)$ are unknown. Our main result in this section characterizes the asymptotic variance and establishes a related limit distribution. To this end, for $x_1 \in \mathbb{R}^p$, we denote the projection of $Z_{\tau}(x)$ onto the first component of $\mathbb{R}^p \times \mathbb{R}^p$ by
\begin{equation*}
Z_\tau(x)\rvert_{x_1} \coloneqq \{ x_2 \in \mathbb{R}^p \, : \, \max_{i=1,2} \norm{x-x_i} \leq \norm{x_1-x_2} \leq \tau \}
\end{equation*}
and $\Lambda_1^{*2} \coloneqq \int \lambda^2(Z_1(0) \rvert_{x_1} ) \, d x_1$.

\begin{theorem} \label{theorem_asymptotic_normality_of_sample_tau_approximation} Let $P$ be absolutely continuous with respect to the Lebesgue measure on $\mathbb{R}^p$ with continuous density $f(\cdot)$. Let $x$ belong to the support of $f(\cdot)$ and $\{ \tau_n \}_{n=1}^{\infty}$ be a sequence of positive scalars converging to zero. If $\sqrt{n} \, \tau_n^{\frac{3}{2}p} \xrightarrow[n \to \infty]{} \infty$, then
\begin{equation*}
  \sqrt{n} \, \tau_n^{\frac{1}{2}p}  \left( f_{\tau_n,n}(x) - f_{\tau_n}(x)  \right) \xrightarrow[ n \rightarrow \infty]{d} N\left( 0, \frac{\Lambda_1^{*2}}{\Lambda_1^2} f(x) \right).
\end{equation*}
\end{theorem}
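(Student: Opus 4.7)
The plan is to linearize via the identity $\sqrt{a}-\sqrt{b} = (a-b)/(\sqrt{a}+\sqrt{b})$, apply Hoeffding's decomposition to the U-statistic $LLD_n(x,\tau_n)$, and then invoke a Lindeberg--Feller CLT for the linear (Hájek) projection while showing that the degenerate remainder is negligible. Concretely, write
\begin{equation*}
f_{\tau_n,n}(x) - f_{\tau_n}(x) = \frac{LLD_n(x,\tau_n) - LLD(x,\tau_n)}{\tau_n^{p}\sqrt{\Lambda_1}\bigl(\sqrt{LLD_n(x,\tau_n)} + \sqrt{LLD(x,\tau_n)}\bigr)}.
\end{equation*}
By Theorem \ref{theorem_local_depth_tau_to_0}(i) and Theorem \ref{theorem_uniform_consistency}, the denominator, divided by $2\tau_n^{2p}\Lambda_1 f(x)$, converges in probability to $1$ since $x$ lies in the support of $f$. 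Hence, by Slutsky, it will suffice to show that $\sqrt{n}\,\tau_n^{-3p/2}\bigl(LLD_n(x,\tau_n) - LLD(x,\tau_n)\bigr) \xrightarrow{d} N(0, 4\Lambda_1^{*2} f^3(x))$, which will then yield the stated variance $\Lambda_1^{*2}f(x)/\Lambda_1^2$ after dividing by $2\Lambda_1 f(x)$.

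Next, the Hoeffding decomposition gives
\begin{equation*}
LLD_n(x,\tau_n) - LLD(x,\tau_n) = \frac{2}{n}\sum_{i=1}^{n}\bigl(\mathcal{J}_{x,\tau_n}(X_i) - LLD(x,\tau_n)\bigr) + R_n,
\end{equation*}
where $R_n$ is a completely degenerate U-statistic of order two with kernel bounded by the indicator of $Z_{\tau_n}(x)$. Using $E[R_n^2] = O(\tau_n^{2p}/n^2)$, we get $\sqrt{n}\,\tau_n^{-3p/2}R_n = O_P(1/\sqrt{n\tau_n^{p}})$, which vanishes because the hypothesis $\sqrt{n}\,\tau_n^{3p/2}\to\infty$ implies $n\tau_n^{p}\to\infty$ (as $\tau_n\to 0^+$). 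Thus only the linear part matters.

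For the linear part, let $\xi_{n,i} = \mathcal{J}_{x,\tau_n}(X_i) - LLD(x,\tau_n)$, an i.i.d.\ triangular array with mean zero and variance $b^2(x,\tau_n)$. Using the translation-dilation representation $Z_{\tau_n}(x)\rvert_{x_1} = x + \tau_n\bigl(Z_1(0)\rvert_{(x_1-x)/\tau_n}\bigr)$ and the substitution $y_1 = (x_1-x)/\tau_n$,
\begin{equation*}
\int \mathcal{J}_{x,\tau_n}(x_1)^2\,dP(x_1) = \tau_n^{3p}\!\int\!\left(\int_{Z_1(0)|_{y_1}} f(x+\tau_n y_2)\,dy_2\right)^{\!2}\! f(x+\tau_n y_1)\,dy_1.
\end{equation*}
Continuity of $f$ at $x$, combined with dominated convergence (using that $\lambda(Z_1(0)\rvert_{y_1})$ is integrable, which is exactly what defines $\Lambda_1^{*2}$), gives $\tau_n^{-3p}\int \mathcal{J}_{x,\tau_n}^2 dP \to f^3(x)\Lambda_1^{*2}$. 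Together with $LLD(x,\tau_n)^2 = O(\tau_n^{4p}) = o(\tau_n^{3p})$, this shows $\tau_n^{-3p}b^2(x,\tau_n)\to f^3(x)\Lambda_1^{*2}$, so the rescaled linear part has asymptotic variance $4 f^3(x)\Lambda_1^{*2}$. Lindeberg's condition follows from the crude bound $|\xi_{n,i}| = O(\tau_n^{p})$ against the normalizing $\sqrt{n\,b^2(x,\tau_n)}\asymp \sqrt{n}\,\tau_n^{3p/2}$: the ratio is $O(1/(\sqrt{n}\tau_n^{p/2}))\to 0$, which is implied by $\sqrt{n}\tau_n^{3p/2}\to\infty$ and $\tau_n\to 0^+$.

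The main technical obstacle is the asymptotic evaluation of $b^2(x,\tau_n)$: one must justify the interchange of limit and integral in the change-of-variables expression above, including verifying that the integrand is dominated uniformly in $\tau_n$ by an integrable function. Once this is settled, the rest of the argument is a clean composition of Slutsky's theorem, the Hoeffding decomposition, and Lindeberg--Feller.
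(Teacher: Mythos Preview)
Your approach is essentially the same as the paper's: Hoeffding decomposition, Lindeberg--Feller for the linear projection, a variance bound showing the degenerate remainder is negligible, and the asymptotic $b^2(x,\tau_n)/\tau_n^{3p}\to \Lambda_1^{*2}f^3(x)$ (which the paper isolates as a separate lemma). The paper phrases the passage from $LLD_n-LLD$ to $\sqrt{LLD_n}-\sqrt{LLD}$ as ``the delta method'' rather than via the algebraic identity $\sqrt{a}-\sqrt{b}=(a-b)/(\sqrt{a}+\sqrt{b})$, but these are the same manoeuvre.

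One point to tighten: your invocation of Theorem~\ref{theorem_uniform_consistency} to handle the denominator is not sufficient. That theorem gives $\sup_{x,\tau}|LLD_n-LLD|\to 0$ almost surely with no rate, whereas you need $LLD_n(x,\tau_n)/\tau_n^{2p}\to \Lambda_1 f^2(x)$ in probability, i.e.\ $(LLD_n-LLD)/\tau_n^{2p}\to 0$. The uniform consistency does not control division by $\tau_n^{2p}\to 0$. The correct justification is a byproduct of the CLT you establish afterwards: once $\sqrt{n}\,\tau_n^{-3p/2}(LLD_n-LLD)=O_P(1)$, dividing by $\sqrt{n}\,\tau_n^{p/2}\to\infty$ gives $(LLD_n-LLD)/\tau_n^{2p}=o_P(1)$. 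The paper handles this in the same way (it derives the ratio consistency from the CLT for $LLD_n$ in the proof of the subsequent corollary). So the logic is fine, but you should reorder: first prove the CLT for $LLD_n-LLD$, then use it both for the numerator and to show $\sqrt{LLD_n}+\sqrt{LLD}\sim 2\tau_n^p\sqrt{\Lambda_1}\,f(x)$, and finally combine via Slutsky.
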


\begin{remark}
We notice that the limit distribution in Theorem \ref{theorem_asymptotic_normality_of_sample_tau_approximation} with $f_{\tau_n}(\cdot)$ replaced by $f(\cdot)$ cannot hold. In fact, the deterministic term $f_{\tau_n}(x) - f(x)$ is, by Theorem \ref{theorem_local_depth_tau_to_0} (ii), of order $O(\tau_n)$, while the term $f_{\tau_n,n}(x) - f_{\tau_n}(x)$ converges to a normal distribution at rate $1 / ( \sqrt{n} \tau_{n}^{\frac{1}{2}p})$. Since, necessarily, $\sqrt{n} \, \tau_n^{\frac{3}{2}p} \xrightarrow[n \to \infty]{} \infty$, $f_{\tau_n}(x)-f(x)$ is the dominant term.
\end{remark}

The constant $\Lambda_1$ appearing in the limiting variance in Theorem \ref{theorem_asymptotic_normality_of_sample_tau_approximation} can be calculated numerically by computing the percentage of uniformly distributed random points in the centered ball in $\mathbb{R}^p \times \mathbb{R}^p$ with radius $\sqrt{2}$ that lie in $Z_1(0)$ and multiplying the result by its volume $(2^p \pi^p)(p!)^{-1}$. Similarly, the constant $\Lambda_1^{*2}$ can be calculated by approximating the integral with a sum. An alternative form for Theorem \ref{theorem_asymptotic_normality_of_sample_tau_approximation} without the factor $f(x)$ in the variance term is given in the following corollary.

\begin{corollary} \label{corollary_asymptotic_normality_of_sample_tau_approximation}
Under the hypothesis of Theorem \ref{theorem_asymptotic_normality_of_sample_tau_approximation},
\begin{equation*}
  \sqrt{n} \, \tau_n^{\frac{1}{2}p}  \left( \sqrt{f_{\tau_n,n}(x)} - \sqrt{f_{\tau_n}(x)}  \right) \xrightarrow[ n \rightarrow \infty]{d} N\left( 0, \frac{\Lambda_1^{*2}}{4 \Lambda_1^2} \right).
\end{equation*}
\end{corollary}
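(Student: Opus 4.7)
The plan is to derive the corollary from Theorem \ref{theorem_asymptotic_normality_of_sample_tau_approximation} via a delta-method / Slutsky argument, exploiting the elementary identity
\begin{equation*}
\sqrt{a} - \sqrt{b} = \frac{a - b}{\sqrt{a} + \sqrt{b}}, \qquad a,b \geq 0, \; a+b>0.
\end{equation*}
Applying this with $a = f_{\tau_n,n}(x)$ and $b = f_{\tau_n}(x)$, I would write
\begin{equation*}
\sqrt{n}\,\tau_n^{p/2}\bigl(\sqrt{f_{\tau_n,n}(x)} - \sqrt{f_{\tau_n}(x)}\bigr) = \frac{\sqrt{n}\,\tau_n^{p/2}\bigl(f_{\tau_n,n}(x) - f_{\tau_n}(x)\bigr)}{\sqrt{f_{\tau_n,n}(x)} + \sqrt{f_{\tau_n}(x)}},
\end{equation*}
so that the numerator is exactly the quantity whose limit is identified in Theorem \ref{theorem_asymptotic_normality_of_sample_tau_approximation}.

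The next step is to show that the denominator converges in probability to $2\sqrt{f(x)}$. Since $x$ lies in the support of the continuous density $f(\cdot)$, we have $f(x)>0$. Theorem \ref{theorem_uniform_convergence_of_tau_approximation} (ii) (applied at the single point $x$, or on a compact neighborhood of $x$) gives $f_{\tau_n}(x) \to f(x)$ deterministically. For $f_{\tau_n,n}(x)$ I would argue $f_{\tau_n,n}(x) \xrightarrow{P} f(x)$ by writing $f_{\tau_n,n}(x) = (f_{\tau_n,n}(x)-f_{\tau_n}(x)) + f_{\tau_n}(x)$; the first bracket is $O_P(1/(\sqrt{n}\,\tau_n^{p/2}))$ by Theorem \ref{theorem_asymptotic_normality_of_sample_tau_approximation}, and the rate condition $\sqrt{n}\,\tau_n^{3p/2}\to\infty$ forces $\sqrt{n}\,\tau_n^{p/2}\to\infty$ (since $\tau_n\to 0^+$), so the bracket is $o_P(1)$. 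Taking square roots (legitimate by the continuous mapping theorem, since $f(x)>0$), the denominator converges to $2\sqrt{f(x)}$ in probability.

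An application of Slutsky's theorem then yields
\begin{equation*}
\sqrt{n}\,\tau_n^{p/2}\bigl(\sqrt{f_{\tau_n,n}(x)} - \sqrt{f_{\tau_n}(x)}\bigr) \xrightarrow[n\to\infty]{d} \frac{1}{2\sqrt{f(x)}}\, N\!\left(0,\frac{\Lambda_1^{*2}}{\Lambda_1^2}f(x)\right),
\end{equation*}
and the stated variance $\Lambda_1^{*2}/(4\Lambda_1^2)$ is obtained by pulling $(2\sqrt{f(x)})^{-2}$ inside, which cancels the factor $f(x)$ in the asymptotic variance.

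I do not anticipate a substantive obstacle: the only subtle point is making sure $f_{\tau_n,n}(x) \xrightarrow{P} f(x)$ under the chosen rate (so the denominator is well-behaved and bounded away from $0$ in probability), and this follows directly from the rate assumption combined with Theorems \ref{theorem_uniform_convergence_of_tau_approximation} and \ref{theorem_asymptotic_normality_of_sample_tau_approximation}. The positivity of $f(x)$, guaranteed by continuity and $x$ being in the support, ensures the denominator stays away from zero, so Slutsky's theorem applies cleanly.
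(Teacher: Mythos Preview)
Your proposal is correct and follows essentially the same approach as the paper: both arguments factor $f_{\tau_n,n}(x)-f_{\tau_n}(x)=(\sqrt{f_{\tau_n,n}(x)}-\sqrt{f_{\tau_n}(x)})(\sqrt{f_{\tau_n,n}(x)}+\sqrt{f_{\tau_n}(x)})$, invoke Theorem~\ref{theorem_asymptotic_normality_of_sample_tau_approximation} for the numerator, show the denominator converges in probability to $2\sqrt{f(x)}$, and apply Slutsky's theorem. Your route to the denominator limit is in fact slightly more direct than the paper's, which establishes $f_{\tau_n,n}(x)/f_{\tau_n}(x)\to 1$ by going back through the ratio $LLD_n(x,\tau_n)/LLD(x,\tau_n)$ and the intermediate asymptotic normality \eqref{asymptotic_normality_local_depth_tau_n}, whereas you simply read off $f_{\tau_n,n}(x)-f_{\tau_n}(x)=O_P\bigl((\sqrt{n}\,\tau_n^{p/2})^{-1}\bigr)=o_P(1)$ from the theorem's conclusion itself.
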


An extension of Theorem \ref{theorem_asymptotic_normality_of_sample_tau_approximation} uniformly over $S$, namely,
\begin{equation*}
\sqrt{n} \, \tau_n^{\frac{1}{2}p} (f_{\tau_n,n}(\cdot)-f_{\tau_n}(\cdot))  \xrightarrow[ n \rightarrow \infty]{d} \frac{\Lambda_1^{*}}{\Lambda_1} \, W(\cdot) \text{ in } \ell^{\infty} (S),
\end{equation*}
where $\{W(x)\}_{x \in S}$ is a centered Gaussian process with the covariance function $\gamma : S \times S \rightarrow \mathbb{R}$ given by $\gamma(x,y) = \sqrt{f(x)f(y)}$, requires an extension of the results of \citet{Arcones-Gine-1993} to triangular arrays and it is beyond the scope of the present paper. A result in this direction is given by \citet{Schneemeier-1989}, but this is not sufficient in this context since the sets $\{ Z_{\tau_n}(x) \}_{n=1}^{\infty}$ depend on $n$ and $x$.

\subsection{Other instances of local depth} \label{subsection_other_instances_of_local_depth}

In the previous subsections, we established results for the LLD. However, as explained in the introduction, our results remain valid for other LDFs such as LBD and LSD with appropriate modifications. Also, some of the results hold for a local version of half-space depth \citep{Tukey-1975}, denoted by LHD, and a local version of half-region depth \citep{Lopez-2011}, denoted by LRD. We highlight in this section the common features and the main differences between the other LDFs and LLD. In Section \ref{section_simulations_and_data_analysis}, we compare the performance of LLD with LSD in clustering.

We begin by comparing LLD and LSD. An alternative definition of LSD that bounds the volume of the simplex is introduced and studied in \citet{Agostinelli-2011}. However, in this definition, the maximum of the component lengths may remain large even if the volume of the simplex converges to $0$ (as an example consider the triangle). This prevents the extreme localization analysis for $p>1$ which requires the maximum of the component lengths to converge to zero. Thus, we use the definition of LSD provided in the introduction. As before, let $Z_{\tau}^S(x)\rvert_{x_1} $, for $x_1 \in \mathbb{R}^p$, denote the projection of $Z_{\tau}^S(x)$ onto the first component of $(\mathbb{R}^p)^{(p+1)}$.

The main difference between LSD and LLD is that the geometry of the sets $Z_{\tau}^S(x)$ and $Z_{\tau}^S(x) \rvert_{x_1}$ are different from those of $Z_{\tau}(x)$ and $Z_{\tau}(x) \rvert_{x_1}$ (recall that $L$ is suppressed for LLD). Specifically, note that $Z_{\tau}^S(x)$ is a subset of $(\mathbb{R}^p)^{(p+1)}$ while $Z_{\tau}(x)$ is a subset of $(\mathbb{R}^p)^{2}$. This difference affects the corresponding estimator, since the relevant U-statistics is now of order $(p+1)$ as against two for LLD.  The Lebesgue measure of the sets $Z_{\tau}(x)$ and $Z_{\tau}(x) \rvert_{x_1}$, namely, $\Lambda_1$ and $\Lambda_1^{*}$, appear in many of the asymptotic results for LLD. This, in the case of LSD, is replaced by the Lebesgue measures of the corresponding sets; that is, $\Lambda_1^S= \lambda(Z_1^S(0))$ and $(\Lambda_1^{*S})^2=\int \left( \lambda^{\otimes (p-1)} \right)^{2}(Z_{1}^S(0) \rvert_{x_1})\, dx_1$. This change also affects the $\tau$-approximation, since the factor $\tau^2$ for LLD is replaced by $\tau^{(p+1)}$ for LSD. Similar comments hold for other results as well.

On the other hand, except for the difference in the geometry of sets $Z_{\tau}^B(x)$ and $Z_{\tau}(x)$ (and hence their projections), both LBD and LLD have similar analytical and probabilistic properties. Indeed, BD and LD arise as particular instances of a broader class of DFs \citep{Yang-2018}, which is based on the concept of $\beta$-skeleton region \citep{Kirkpatrick-1985}. These DFs are therefore called $\beta$-skeleton DFs. For $\beta \geq 1$, the $\beta$-skeleton depth is a particular case of \eqref{general_depth} with $k=2$, $G=K_{\beta}$ and 
\begin{equation*}
Z^{K_{\beta}}(x) = \{ (x_1,x_2) \in \mathbb{R}^p \times \mathbb{R}^p \, : \, \max_{(i,j) \in \{ (1,2), (2,1) \}} \norm{ x_i + (2/\beta-1) x_j - 2/\beta \, x} \leq \norm{x_1-x_2} \}.
\end{equation*}
The corresponding local version ($LK_{\beta}D$) is then obtained by replacing $Z^G_{\tau}(x)$ with 
\begin{equation*}
Z^{K_{\beta}}_{\tau}(x) = \{ (x_1,x_2) \in \mathbb{R}^p \times \mathbb{R}^p \, : \, \max_{(i,j) \in \{ (1,2), (2,1) \}} \norm{ x_i + (2/\beta-1) x_j - 2/\beta \, x } \leq \norm{x_1-x_2} \leq \tau \}.
\end{equation*}
Notice that,  $LK_{1}D(\, \cdot \, , \, \cdot \,)=LBD( \, \cdot\, , \, \cdot \,)$ and  $LK_{2}D( \, \cdot \, , \, \cdot \,)=LLD( \, \cdot \, , \, \cdot \,)$. In particular, for $p=1$, the LDFs $LK_{\beta}D$, $LLD$, $LBD$, and $LSD$ coincide.

Turning to LHD (see \citet{Agostinelli-2011}[Definition 12]), notice that, as in LSD, the slabs proposed in there cannot be included in a ball of radius $r$ for any $r$ positive. For this reason, we propose an alternative definition of this local depth. As before, let $\tau \ge 0$ and $x \in \mathbb{R}^p$. Then the local half-space depth with respect to $P$ is given by 
\begin{equation} \label{local_half-space_depth}
LHD(x,P,\tau) = \inf_{u \in S^{p-1}} P(Z_{\tau}^H(x,u)),
\end{equation}
where $Z_{\tau}^H(x, u)$ is the hypercube with center $x+\frac{\tau}{2} u$ given by
\begin{equation*}
Z_{\tau}^H(x, u) = \bigg\{ y \in \mathbb{R}^p \, : \, \max_{i=1,\dots,p} \abs{x^{(i)} + \frac{\tau}{2} u^{(i)} - y^{(i)}} \leq \frac{\tau}{2} \bigg\}
\end{equation*}
and $S^{p-1}$ is the unit sphere in $\mathbb{R}^p$.

Similar to the notion of local half-space depth is the local half-region depth (LRD) introduced and studied in \citet{Agostinelli-2018}. This is given by
\begin{equation*}
  LRD(x,P,\tau) = \min( P(Z^{R-}_{\tau}(x)), P(Z^{R+}_{\tau}(x) ),
\end{equation*}
where $Z^{R-}_{\tau}(x) = \{ y \in \mathbb{R}^p \, : \, -\tau \leq y^{(i)} - x^{(i)} \leq 0,\, i=1, \dots, p \}$ and $Z^{R+}_{\tau}(x) = \{ y \in \mathbb{R}^p \, : \, 0 \leq y^{(i)} - x^{(i)} \leq \tau, \,i=1, \dots, p \}$. In particular, for $p=1$, LHD and LRD coincide.

We now describe some features of LSD, LBD, $LK_{\beta}D$, LHD, and LRD that are shared with LLD allowing one to obtain similar results. Recalling that the superscript $G$ represents general depth ($G = L, S, B, K_{\beta}, H, R$), we note that the expression defining LLD, LSD, LBD, and $LK_{\beta}D$ is  an integral of the form \eqref{general_local_depth}, where $Z_\tau^G(x) \subset (\mathbb{R}^p)^{k}$ is a closed set that satisfies the invariance properties and symmetry conditions
\begin{align} \label{invariance1_of_z_tau_x_general}
  Z_{\tau}^G(x)&=(x, \dots, x)+Z_{\tau}^G(0) \\ \label{invariance2_of_z_tau_x_general}
   Z_{\tau}^G(0)&=\tau Z_1^G(0) \\ \label{symmetry1_of_z_tau_x_general}
(x,\dots,x) + (x_1,\dots,x_k) \in Z_{\tau}^G(x)  &\Longleftrightarrow (x,\dots,x) - (x_1,\dots,x_k) \in Z_{\tau}^G(x) \\ \label{symmetry2_of_z_tau_x_general}
(x,\dots,x) + (x_1,\dots,x_k) \in Z_{\tau}^G(x) &\Longleftrightarrow (x,\dots,x) + (x_{i_1},\dots,x_{i_k}) \in Z_{\tau}^G(x)
\end{align}
for every permutation $(i_1,\dots,i_k)$ of $(1,\dots,k)$.  For LLD, LSD, LBD and $LK_{\beta}D$ there are sets $L \subset \mathbb{R}^p$ (lenses, simplices, balls, intersection of balls) such that
$\{ L(x_1,\dots,x_k) \, : \, x_1,\dots,x_k \in \mathbb{R}^p \}$ is a VC-class and $(x_1,\dots,x_k) \in Z_{\tau}^G(x)$ if and only if $x \in L(x_1,\dots,x_k)$. This property proves useful in establishing Theorems \ref{theorem_uniform_consistency} and \ref{theorem_uniform_asymptotic_normality_of_local_depth} for LSD, LBD and $LK_{\beta}D$. The invariance properties \eqref{invariance1_of_z_tau_x_general} and \eqref{invariance2_of_z_tau_x_general} are needed to establish Proposition \ref{proposition_local_depth} (i), Corollary \ref{corollary_sample_local_depth} (i) and the results concerning the density function, for example, Theorems \ref{theorem_local_depth_tau_to_0} and \ref{theorem_uniform_convergence_of_tau_approximation}. The symmetry properties \eqref{symmetry1_of_z_tau_x_general} and \eqref{symmetry2_of_z_tau_x_general} are required in Theorem \ref{theorem_local_depth_tau_to_0} (ii), and Theorems \ref{theorem_local_central_symmetry} and \ref{theorem_modes_local_central_symmetry} in the next section. A detailed description of the modifications required for the results concerning LLD to hold for LSD is provided in Appendix \ref{sm:section_local_simplicial_depth}. The results for LBD and $LK_{\beta}D$ are very similar to those of LLD and therefore omitted.

We notice that LHD and LRD also share many properties with $LLD$. Indeed, LHD is the infimum over $u \in S^{p-1}$ of integrals of the form \eqref{general_local_depth} with $k=1$ and $Z_{\tau}^G(x)$ replaced by $Z_{\tau}^H(x, u)$. If $P$ is absolutely continuous with respect to the Lebesgue measure, this infimum is actually a minimum. Hence, most of the properties and the applications to clustering hold for LHD since (i) we can replace $u$ by the minimizer, (ii) $Z_{\tau}^H(x, u)$ satisfies \eqref{invariance1_of_z_tau_x_general}, \eqref{invariance2_of_z_tau_x_general}, and \eqref{symmetry2_of_z_tau_x_general}, and (iii) the class of hypercubes $\{ Z_{\tau}^H(x, u) \, : \, x \in \mathbb{R}^p, u \in S^{p-1}, \tau \in [0,\infty] \}$ forms a VC-class.  We refer to Appendix \ref{sm:section_local_half-space_depth}, for further details. Similar comments also hold for LRD, as it is the minimum of the probability of two hypercubes satisfying \eqref{invariance1_of_z_tau_x_general}, \eqref{invariance2_of_z_tau_x_general} and \eqref{symmetry2_of_z_tau_x_general}. In particular, it has the same behavior under extreme localization (see Appendix \ref{sm:section_local_half-space_depth}).

\section{Clustering} \label{section_clustering}

In this section, we describe a new methodology for clustering high-dimensional data. The basic idea relies on defining clusters as the stable manifolds generated by the mode, which are obtained using the limiting trajectory of a gradient system (see \citet{Chacon-2015}). The fact that such manifolds yield non-trivial meaningful clusters can then be established using the Lyapunov's theory in dynamical systems. The development of the algorithm needs an investigation of the
behavior of the level sets. \citet{Chacon-2015} uses density based methods for investigating these clusters in one dimension only. Our methods, which involve the use of the $\tau$-approximation based on LLD, are for multidimensional distributions. 

It is {\it{a priori}} unclear why such a procedure would yield meaningful clusters. Indeed, using analytical and statistical results on extreme localization described in Sections 2 and 3, it is plausible that the proposal so described will yield meaningful clusters. In the rest of this section, we describe step-by-step analytical tools to fill in the gap between local depths and stable manifolds generated by the modes. Towards this aim, we begin by providing a precise notion of the key ideas. 

Let $f(\cdot)$ be a twice-continuously differentiable density function. Then for any $x_0 \in S$, the stable manifold generated by $x_0$ is
\begin{equation} \label{stable_manifold}
    C(x_0) \coloneqq \{ x \in S \, : \, \lim_{t \rightarrow \infty} u_x(t) = x_0 \}, 
\end{equation}
where $u_{x}(t)$ is the solution at time $t$ of the gradient system
\begin{equation} \label{gradient_system}
	u^{\prime}(t) = \nabla f (u(t))
\end{equation}
with initial value $u(0)=x$. For any choice of $x_0$, it is not required for the stable manifold so-defined to be non-trivial; i.e.\ the Lebesgue measure of $C(x_0)$ is non-zero. However, if $x_0$ is chosen as a local maximum of $f(\cdot)$, then, under additional conditions, one can verify that the resulting manifold has positive Lebesgue measure. In the next subsections, we establish several results leading up to the verification that using local maxima of $f(\cdot)$ and $f_{\tau}(\cdot)$ yield non-trivial clusters. These results are stated for LLD, but also hold for other notions of depth such as spherical depth, simplicial depth, and $\beta$-skeleton depth. Some details of the changes needed for simplicial depth are provided in Appendix \ref{sm:section_local_simplicial_depth}.

\subsection{Mathematical background for cluster identification}{\label{subsection_mathematical_background}}

In this subsection, we collect relevant mathematical background that allows one to develop the algorithm for identification of clusters, also referred to as stable manifolds or basins of attraction. We begin with the definition of level sets and super level sets.
\begin{definition} \label{definition_level_sets_and_regions}
  For $\alpha>0$, the level sets of $f(\cdot)$ and $f_{\tau}(\cdot)$ are $L^{\alpha} = \{x \in \mathbb{R}^p \, : \, f(x)=\alpha \}$ and $L^{\alpha}_{\tau} = \{x \in \mathbb{R}^p \, : \, f_{\tau}(x)=\alpha \}$, respectively. The super level sets of $f(\cdot)$, $f_{\tau}(\cdot)$ and $f_{\tau,n}(\cdot)$ are $R^{\alpha} \coloneqq \{ x \in \mathbb{R}^p \, : \, f(x) \geq \alpha \}$, $R^{\alpha}_{\tau} \coloneqq \{ x \in \mathbb{R}^p \, : \, f_{\tau}(x) \geq \alpha \}$ and $R^{\alpha}_{\tau,n} \coloneqq \{ x \in \mathbb{R}^p \, : \, f_{\tau,n}(x) \geq \alpha \}$, respectively.
\end{definition}
 The next proposition shows that in the limit the super level sets induced by $f_{\tau}(\cdot)$ and $f_{\tau,n}(\cdot)$ coincide with those induced by $f(\cdot)$. We use the notation $\mathring{A}$ for the interior of a set $A$.
\begin{proposition} \label{proposition_convergence_of_level_sets}
Suppose that $f(\cdot)$ is uniformly continuous and bounded. Let $\{ \alpha_k \}_{k=1}^{\infty}$ be a sequence converging to $\alpha>0$ and $\{ \tau_k \}_{k=1}^{\infty}$ a sequence of positive scalars converging to $0$. Then
\begin{equation} \label{inclusions_level_sets}
\mathring{R^{\alpha}} \subset \liminf_{ k \to \infty } R^{\alpha_k}_{\tau_k} \subset \limsup_{ k \to \infty } R^{\alpha_k}_{\tau_k} \subset R^{\alpha},
\end{equation}
and, if $\lambda(L^{\alpha})=0$,
\begin{equation} \label{convergence_level_sets}
\lim_{ k \to \infty } R^{\alpha_k}_{\tau_k} = R^{\alpha} \text{ a.e.}
\end{equation}
If additionally $\lambda(L^{\alpha}_{\tau_k})=0$ for $k \in \mathbb{N}$, then
\begin{equation} \label{convergence_sample_level_sets}
\lim_{ k \to \infty} \lim_{ n \to \infty} R^{\alpha_n}_{\tau_k,n} = R^{\alpha} \text{ a.s.}
\end{equation}
\end{proposition}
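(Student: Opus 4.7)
The plan is to reduce everything to two uniform convergence inputs: $\sup_{x \in \mathbb{R}^p} \abs{f_{\tau_k}(x) - f(x)} \to 0$ from Theorem \ref{theorem_uniform_convergence_of_tau_approximation}(i), and, for \eqref{convergence_sample_level_sets}, $\sup_{x \in K} \abs{f_{\tau_k, n}(x) - f_{\tau_k}(x)} \to 0$ a.s.\ as $n \to \infty$ from Corollary \ref{corollary_uniform_consistency_of_sample_tau_approximation}. Combined with $\alpha_k \to \alpha$, these produce a sandwich for the level-set inclusions with the level sets of $f$ supplying the outer bounds.

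For the chain \eqref{inclusions_level_sets}, the middle inclusion is definitional. For $\limsup_k R^{\alpha_k}_{\tau_k} \subset R^\alpha$, if $x$ lies in the limsup then some subsequence satisfies $f_{\tau_{k_j}}(x) \geq \alpha_{k_j}$, and uniform convergence together with $\alpha_{k_j} \to \alpha$ yields $f(x) \geq \alpha$. For $\mathring{R^\alpha} \subset \liminf_k R^{\alpha_k}_{\tau_k}$, I would use the explicit representation \eqref{local_depth_for_absolutely_continuous_distribution}: given $x \in \mathring{R^\alpha}$, pick $\delta>0$ with $f \geq \alpha$ on $\overline{B}_\delta(x)$, and exploit the boundedness of $Z_1(0)$ so that, for $\tau_k$ small enough, the integrand in $f_{\tau_k}(x)^2$ is bounded below pointwise by $\alpha^2$. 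This forces $f_{\tau_k}(x) \geq \alpha$ eventually, which together with $\alpha_k \to \alpha$ places $x$ in $R^{\alpha_k}_{\tau_k}$ eventually (cleanly when $f(x)>\alpha$, directly from uniform convergence).

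For \eqref{convergence_level_sets}, the continuity of $f$ forces $\partial R^\alpha \subset L^\alpha$, so $\lambda(R^\alpha \setminus \mathring{R^\alpha}) \leq \lambda(L^\alpha) = 0$; the sandwich \eqref{inclusions_level_sets} therefore collapses to $\lim_k R^{\alpha_k}_{\tau_k} = R^\alpha$ a.e. For \eqref{convergence_sample_level_sets}, I would iterate this argument. For each fixed $k$, Corollary \ref{corollary_uniform_consistency_of_sample_tau_approximation} lets me replay the previous reasoning with $f$ replaced by $f_{\tau_k}$, $f_{\tau_k}$ replaced by $f_{\tau_k, n}$, and $n \to \infty$, using the hypothesis $\lambda(L^\alpha_{\tau_k}) = 0$ to conclude $\lim_n R^{\alpha_n}_{\tau_k, n} = R^\alpha_{\tau_k}$ a.e.\ a.s. Letting $k \to \infty$ and applying \eqref{convergence_level_sets} with the constant sequence $\alpha_k \equiv \alpha$ then gives \eqref{convergence_sample_level_sets}.

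The main obstacle is the left inclusion in \eqref{inclusions_level_sets} at plateau points, namely $x$ with $f(x)=\alpha$ that lie in the interior of $R^\alpha$: there the estimate $f_{\tau_k}(x) \geq \alpha$ can be tight while $\alpha_k$ may exceed $\alpha$, so $x$ could fail to belong to $R^{\alpha_k}_{\tau_k}$. This pointwise obstruction is confined to $\mathring{R^\alpha} \cap L^\alpha$, which is Lebesgue-null under the hypothesis of \eqref{convergence_level_sets}, so neither \eqref{convergence_level_sets} nor \eqref{convergence_sample_level_sets} is affected. A secondary care point is keeping the modes of convergence aligned in the iterated limit for \eqref{convergence_sample_level_sets}: the uniform consistency is on compact sets, so the a.s.\ sandwich argument must be carried out on an arbitrary compact containing the regions of interest before passing to the full space.
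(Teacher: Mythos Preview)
Your approach for \eqref{inclusions_level_sets} and \eqref{convergence_level_sets} is essentially the paper's: both rely on the uniform convergence of Theorem \ref{theorem_uniform_convergence_of_tau_approximation}(i) together with $\alpha_k \to \alpha$. The paper simply observes that for each $m$ there is $k$ with $\abs{f_{\tau_j}(x)-f(x)}<1/m$ and $\abs{\alpha_j-\alpha}<1/m$ for $j\ge k$, whence $\{f>\alpha+2/m\}\subset\liminf_k R^{\alpha_k}_{\tau_k}$ and $\limsup_k R^{\alpha_k}_{\tau_k}\subset\{f\ge\alpha-2/m\}$; letting $m\to\infty$ gives \eqref{inclusions_level_sets}. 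Your detour through the integral representation \eqref{local_depth_for_absolutely_continuous_distribution} for the left inclusion is unnecessary: it only yields $f_{\tau_k}(x)\ge\alpha$, which does not dominate $\alpha_k>\alpha$, and you correctly fall back to uniform convergence for the case $f(x)>\alpha$. Your plateau-point observation is on the mark; the paper's proof makes the same identification, writing $\cup_m\{f>\alpha+2/m\}=\mathring{R^\alpha}$ without further comment.

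The one genuine difference is in \eqref{convergence_sample_level_sets}. The paper does \emph{not} replay the sandwich with $f_{\tau_k,n}$ in place of $f_{\tau_k}$; instead it rewrites $R^{\alpha_n}_{\tau_k,n}$ as a local-depth superlevel set $R^{*\alpha_n^2\tau_k^{2p}\Lambda_1}_{\tau_k,n}$ and invokes a separate convergence-of-regions result (Proposition \ref{proposition_convergence_of_regions}, from \citet[Theorem 4.1]{Zuo-Serfling-2000-c}) to obtain $\lim_n R^{\alpha_n}_{\tau_k,n}=R^{\alpha}_{\tau_k}$ a.s., then applies \eqref{convergence_level_sets} with constant $\alpha_k\equiv\alpha$ for the outer limit. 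Your direct replay via the uniform consistency $\sup_{x}\abs{f_{\tau_k,n}(x)-f_{\tau_k}(x)}\to 0$ a.s.\ (the inner step in the proof of Corollary \ref{corollary_uniform_consistency_of_sample_tau_approximation}, coming from Theorem \ref{theorem_uniform_consistency}) is equally valid and more self-contained; note also that this consistency is global in $x$, so your secondary concern about restricting to compacts is not needed.
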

\begin{remark}
Proposition \ref{proposition_convergence_of_level_sets} also holds for continuous $f(\cdot)$ if we restrict $R^{\alpha}$, $R^{\alpha}_{\tau}$ and $R^{\alpha}_{\tau,n}$ to a compact subset of $\mathbb{R}^p$. Since LSD, LBD, $LK_{\beta}D$, LHD and LRD satisfy Proposition \ref{proposition_local_depth} (ii), Theorem \ref{theorem_uniform_convergence_of_tau_approximation} and Theorem \ref{theorem_uniform_consistency}, the proof of Proposition \ref{proposition_convergence_of_level_sets} shows that the super level sets based on all these LDFs satisfy \eqref{inclusions_level_sets}, \eqref{convergence_level_sets} and \eqref{convergence_sample_level_sets}.
\end{remark}
Proposition \ref{proposition_convergence_of_level_sets} is of independent interest, since a common approach in modal clustering is to define clusters as the connected components of the super level sets $R^{\alpha}$ for some $\alpha>0$ \citep{Menardi-2016}. Once the connected components are computed, the remaining points may be allocated to one of the clusters by using supervised classification techniques. A common approach is then to study how the clusters change as the parameter $\alpha$ varies, yielding cluster trees. 

In the rest of this section, we suppose that $f(\cdot)$ is twice continuously differentiable with finite number of stationary points in $S$. Additionally, we assume that the Hessian matrix $H_f$ has non-zero eigenvalues at its stationary points and that $R^{\alpha}$ is a bounded set for every $\alpha>0$. Since $f(\cdot)$ is continuous, $R^{\alpha}$ is compact. We notice that $R^{\alpha}$ is bounded if $f(\cdot)$ vanishes at infinity, that is, $\sup_{x \in \mathbb{R}^p \, : \, \norm{x} \geq M } f(x) \to 0$ as $M \to \infty$, which is satisfied, for example, if $S$ is bounded.

Even though well-known, to make the manuscript self-contained, we define the mode. The definition of cluster as basin of attraction of a mode is given in \citet{Chacon-2015}.
\begin{definition} \label{definition_modes_clusters}
A point $m \in S$ is said to be a mode (resp.\ an antimode) for $f(\cdot)$ if it is a stationary point of $f(\cdot)$ and $H_f(m)$ has only negative (resp.\ positive) eigenvalues, that is, $m$ is a local maximum (resp.\ minimum) for $f(\cdot)$. If $m_1, \dots, m_k$ are the modes of $f(\cdot)$, then the clusters induced by $m_1, \dots, m_k$ are the stable manifolds $C(m_1), \dots, C(m_k)$.
\end{definition}
Since the clusters are obtained as limits of trajectories induced by modes, we now summarize relevant properties of the gradient system \eqref{gradient_system} by using results from the theory of ordinary differential equations and dynamical systems \citep{Agarwal-1993,Hale-1980,Teschl-2012,Perko-2013}. We first note that $u_x(\cdot)$ exists and is unique for $t$ in some maximal time interval $(a,b)$ with $a<0<b$, where $a=-\infty$ or $b=\infty$ is allowed. To see this, observe that, as $f(\cdot)$ is twice continuously differentiable, for every $x \in S$ there exists a convex neighborhood $U_x$ of $x$ in which the second order partial derivatives are bounded. By applying \citet{Agarwal-1993}[Lemma 3.2.1] to $\nabla f$, it follows that $\nabla f$ is Lipschitz in $U_x$, and therefore $\nabla f$ is locally Lipschitz in $S$. Now, applying Picard-Lindel{\"o}f Theorem \citep{Teschl-2012}[Theorem 2.2 and 2.5] it follows that $u_x(\cdot)$ exists in some time interval, which can be chosen to be maximal in view of Theorem 2.13 in \cite{Teschl-2012}. 

We now show that, using the boundedness of $R^{\alpha}$, the solution $u_x(t)$ exists for all $t \ge 0$ and all $x \in S$. Furthermore, the solution starting in $R^{\alpha}$ cannot leave the set. To this end, notice that the equilibria of  \eqref{gradient_system} are the stationary points of $f(\cdot)$. The gradient   computed at each point gives the direction of most rapid increase of $f(\cdot)$. Hence, the trajectories $\{ u_x(t) \, : \, t \in \mathbb{R} \}$ for $x \in S$ that are not stationary points are curves of steepest ascent for $f(\cdot)$. More specifically, if $u_x(t) \in L^{\alpha}$ for some $x \in S$ and $t \in \mathbb{R}$, then any vector $v$ tangent to $L^{\alpha}$ at $u_x(t)$ satisfies $\inp{v}{u^{\prime}_x(t)}=0$ (see \citep{Hirsch-1974}[Chapter 9 \S4 Theorem 2] and \citep{Jost-2005}[Lemma 6.\ 4.\ 2.]). Hence, either $u_x(t)=x$ for all $t$ is a stationary point of the gradient system \eqref{gradient_system} or the trajectory $\{ u_x(t) \, : \, t \in \mathbb{R} \}$ crosses $L^{\alpha}$ orthogonally. This also implies that $u_x(t)$ cannot leave $R^{\alpha}$ for $t \geq 0$. Furthermore, this property shows that, for all $x \in S$, the solutions $u_x(t)$ exists for all $t \geq 0$, i.e.\ the maximal time interval in which $u_x$ is defined is $(a,\infty)$ for some $a<0$, where $a=-\infty$ is possible. To see this, for $x \in S$, choose an $\alpha>0$ such that $x \in R^{\alpha}$. Since $u_x(t)$ cannot leave $R^{\alpha}$ for $t \geq 0$ and $R^{\alpha}$ is compact, the result follows from \citet{Teschl-2012}[Corollary 2.15]. Recalling that our clusters are the stable manifolds generated by modes, we now link modes to the gradient system. This requires the notion of $\omega$-limit which we now define.
\begin{definition}
The $\omega$-limit of a point $x \in S$ is the set of points $y \in S$ such that $u_x(t)$ goes to $y$ as $t \rightarrow \infty$, in symbols
\begin{equation*}
\omega(x) = \{ y \in S \, : \, \lim_{t \to \infty} u_x(t) = y \}.
\end{equation*}
\end{definition}
The following definition is also required (\citet{Hirsch-1974}[Chapter 9 \S3 Theorem 1], \citet{Teschl-2012}[Section 6.6]). We use the notation that for any function $W\, : \, U \to \mathbb{R}$, $W^{\prime}(u_x(t))=\frac{d \, W(u_x(t))}{dt}$.
\begin{definition}
Let $u_0 \in S$ be an equilibrium point for \eqref{gradient_system} and $U \subset S$ a neighborhood of $u_0$. A  differentiable function $W \, : \, U \to \mathbb{R}$ is a strict Lyapunov function if (i) $W(u_0) = 0$ and $W(u) > 0$ for $u \neq u_0$, and (ii) $W^{\prime}(u_x(t)) < 0$ when $u_x(t) \in U \setminus \{ x_0 \} $.
\end{definition}
Let $V(\cdot) \coloneqq - f(\cdot)$. If $m$ is a mode for $f(\cdot)$, there exists a neighborhood $U_m$ of $m$ such that, for all $u \in U_m \setminus \{ m \}$, $V(u)-V(m) > 0$ and
\begin{equation*}
\left( V(u)-V(m) \right)^{\prime} = - \left( f(u) \right)^{\prime} = - \inp{\nabla f(u)}{u^{\prime}} = - \norm{\nabla f(u)}^2 < 0.
\end{equation*}
Hence, $V(u)-V(m)$ is a strict Lyapunov function in $U_m$. By the Lyapunov stability Theorem (see \citep{Hirsch-1974}[Chapter 9 \S3 Theorem 1] and \citep{Hale-1980}[Chapter X.1 Theorem 1.1]) $m$ is asymptotically stable, that is, there is a neighborhood $U^{*}_m \subset U_m$ of $m$ such that each solution starting from a point $x \in U^{*}_m$ converges to $m$, i.e., for all $x \in U^{*}_m$, $\omega(x)=\{ m \}$. 

Given a mode $m$, we now describe the stable manifold generated by $m$. This is equivalent to describing the properties of the points that have $\omega$-limit $m$. Indeed, if $0<\alpha < f(m)$ is such that the connected component of $m$ in $R^{\alpha}$ contains no equilibria other than $m$, then, since each solutions of \eqref{gradient_system} starting in that component cannot leave it, by LaSalle's invariance principle (see \citep{Hirsch-1974}[Chapter 9 \S3 Theorem 2] and \citep{Teschl-2012}[Theorem 6.14]) applied to the strict Lyapunov function $V(\cdot)-V(m)$, all the points in that component have $m$ as an $\omega$-limit point. On the other hand, if $m$ is an antimode for $f(\cdot)$, there exists a neighborhood $U_m$ of $m$ such that for all $u \in U_m \setminus \{ m \}$, $V(m)-V(u) > 0$ and $(V(m)-V(u))^{\prime} > 0$. This implies that $m$ is unstable \citep{Hale-1980}[Chapter X.1 Theorem 1.2]: for every neighborhood $U^{*}_m \subset U_m$ of $m$ every solution $u_x$ starting from a point $x \in U^{*}_m$ eventually leaves $U^{*}_m$. Furthermore, any $\omega$-limit point of gradient system \eqref{gradient_system} is an equilibrium point, that is a stationary points of $f(\cdot)$ (see  \citep{Hirsch-1974}[Chapter 9 \S4 Theorem 4] and \citep{Hale-1980}[Chapter X.1 Theorem 1.3], and \citep{Jost-2005}[Lemma 6.\ 4.\ 4.] in a different context).

For a stationary point $x_0$ of $f(\cdot)$, recall from \eqref{stable_manifold}
that $C(x_0)$ is the stable manifold induced by $x_0$, that is, the set of points with $\omega$-limit $x_0$. The hypothesis that $H_{f}$ (which is symmetric), at stationary points, has non-zero eigenvalues, implies that the sets $C(x_0)$ have dimension equal to the number of negative eigenvalues of $H_f (x_0)$ (see \citep{Teschl-2012}[Section 9]).

As in Definition \ref{definition_modes_clusters}, let $m_1, \dots, m_k$ be the modes of $f(\cdot)$. We are now ready to verify that the clusters $C(m_1), \dots, C(m_k)$ are non-trivial. We first observe that, by the uniqueness of the limit, $C(m_1), \dots, C(m_k)$ are disjoint. Also, if $x_0$ is not a mode, then $C(x_0)$ has dimension smaller than $p$ and the set $S \setminus \left( C(m_1) \cup \dots \cup C(m_k) \right)$ has Lebesgue measure $0$. However, in practice, the density $f(\cdot)$ is generally unknown and it has to be estimated. A common approach is to use kernel density estimators. However, we replace $f(\cdot)$ by $f_{\tau}(\cdot)$ in \eqref{gradient_system} and consider instead the gradient system
\begin{equation} \label{gradient_system_tau}
	u^{\prime}(t) = \nabla f_{\tau} (u(t)).
\end{equation}
The domain of this new system is $S_{\tau}$, the support of $f_{\tau}(\cdot)$.

\subsection{Convergence of the gradient system under extreme localization} \label{subsection_convergence_gradient_system}

We summarize in this subsection the main properties of the gradient system \eqref{gradient_system_tau} as $\tau \to 0^{+}$.
\begin{lemma} \label{lemma_support_S_and_S_tau}
For all $0 < \tau_{1} \leq \tau_{2}$, we have that $S_{\tau_1} \subset S_{\tau_2}$. Additionally, if $f(\cdot)$ is continuous, then for all $\tau>0$, $S \subset S_{\tau}$ and $\lim_{\tau \to 0^{+}} S_{\tau}=S$.
\end{lemma}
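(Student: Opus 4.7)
The plan is to read every claim through the integral representation
\[
LLD(x,\tau) = \int_{Z_\tau(0)} f(x+x_1)\,f(x+x_2)\,dx_1\,dx_2
\]
from \eqref{local_depth_for_absolutely_continuous_distribution}, so that membership $x \in S_\tau$ is equivalent to this integrand being strictly positive on a subset of $Z_\tau(0)$ of positive Lebesgue measure.

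The monotonicity $S_{\tau_1}\subset S_{\tau_2}$ requires no continuity hypothesis on $f$: Proposition \ref{proposition_local_depth}(i) gives $LLD(x,\cdot)$ non-decreasing in $\tau$, so $LLD(x,\tau_1)>0$ forces $LLD(x,\tau_2)>0$ and hence $f_{\tau_2}(x)>0$.

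For the inclusion $S\subset S_\tau$ with $f$ continuous, I would fix $x\in S$ and use continuity to pick $\epsilon>0$ with $f>0$ on $\overline{B}_\epsilon(x)$. The interior of $Z_\tau(0)$, characterised by the strict inequalities $\max_i\|x_i\|<\|x_1-x_2\|<\tau$, is non-empty---the pair $(\delta e_1,-\delta e_1)$ lies in it for every $0<\delta<\tau/2$---and choosing $\delta<\min(\epsilon,\tau/2)$ places such a point in $Z_\tau(0)\cap(\overline{B}_\epsilon(x)\times\overline{B}_\epsilon(x))$. On a small open neighbourhood of it inside that intersection the integrand is continuous and strictly positive, so the displayed integral is positive and $x\in S_\tau$.

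For the limit, the monotonicity from the first step makes $\lim_{\tau\to0^+}S_\tau$ coincide with the nested intersection $\bigcap_{\tau>0}S_\tau$, and the previous paragraph already yields $S\subset\bigcap_{\tau>0}S_\tau$. For the reverse inclusion I would use $Z_\tau(0)\subset\overline{B}_\tau(0)\times\overline{B}_\tau(0)$, which follows from $\|x_i\|\le\|x_1-x_2\|\le\tau$ on $Z_\tau(0)$: if $x$ lies in the interior of $\{f=0\}$ and $r>0$ satisfies $f\equiv0$ on $\overline{B}_r(x)$, then for every $\tau<r$ the integrand in the displayed formula is identically zero on $Z_\tau(0)$ and hence $x\notin S_\tau$. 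The delicate point---and the main obstacle---is the boundary set $\{x\in\partial S:f(x)=0\}$, at which $f$ still accumulates positive mass arbitrarily close to $x$ and the integral can remain strictly positive for every $\tau>0$; closing the argument here relies on combining Theorem \ref{theorem_uniform_convergence_of_tau_approximation}(ii) (which forces $f_\tau(x)\to f(x)=0$ uniformly on compacts) with a mild regularity assumption on the support, such as $\lambda(\partial S)=0$ or $S$ agreeing with the interior of its closure, so that the boundary contribution does not obstruct the set equality.
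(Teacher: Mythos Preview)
Your treatment of the monotonicity $S_{\tau_1}\subset S_{\tau_2}$ and of the inclusion $S\subset S_\tau$ is essentially identical to the paper's: both rely on Proposition~\ref{proposition_local_depth}(i) for the first and on continuity of $f$ to produce a ball $\overline{B}_\epsilon(x)\subset S$ on which the integrand is strictly positive for the second.

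Where you part ways with the paper is in the limit statement, and here you are actually \emph{more} careful than the paper. The paper's proof of the last claim only derives the inclusion $S\subset\bigcap_{\tau>0}S_\tau$ (indeed, after writing $\lim_{\tau\to0^+}S_\tau=\bigcap_{\tau>0}S_\tau\supset S$ it redundantly re-proves that same containment); the reverse inclusion is never addressed. Your instinct that boundary points of $S$ where $f$ vanishes are the obstruction is exactly right, and in fact the equality can fail: take $p=2$, $S$ the open upper half-plane, and $x=(0,0)$. For any $\tau>0$ the pairs $((a,b),(-a,b))$ with $0<b\le\sqrt{3}\,a$ and $2a\le\tau$ lie in $Z_\tau(0)$ and have both coordinates in $S$, so $LLD(0,\tau)>0$ and $0\in S_\tau$ for every $\tau$, yet $0\notin S$.

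Your proposed patch via Theorem~\ref{theorem_uniform_convergence_of_tau_approximation}(ii) does not by itself close this gap: that theorem gives $f_\tau(x)\to f(x)=0$, but positivity of each $f_\tau(x)$ is perfectly compatible with convergence to zero, so it cannot force $x\notin S_\tau$ eventually. The extra regularity hypothesis you mention (e.g.\ $\lambda(\partial S)=0$, or $S$ equal to the interior of its closure) is genuinely needed if one wants the literal set equality; without it, only the inclusion $S\subset\lim_{\tau\to0^+}S_\tau$ holds. For the downstream applications in Section~\ref{section_clustering}, that one-sided inclusion is in fact all that is used.
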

The next theorem shows that the gradient and the Hessian matrix of $f_{\tau}(\cdot)$ converge to those of $f(\cdot)$. Given a function $g : D \subset \mathbb{R}^p \rightarrow \mathbb{R}$ for $j=1, \dots, p$ we denote by $\partial_j g$ its partial derivative with respect to the $j$-component.
\begin{theorem} \label{theorem_first_and_second_partial_derivaties}
Let $\tau>0$. If $f(\cdot)$ has continuous first order partial derivatives, then, for all $j=1,\dots,p$, $\partial_j f_{\tau}$ is continuous in $S_{\tau}$ and, for all $x \in S$,
\begin{equation*}
\lim_{ \tau \rightarrow 0^{+} } \partial_j f_{\tau}(x) =  \partial_j f(x).
\end{equation*}
If also the second order partial derivatives are continuous, then, for all $i,j=1,\dots,n$, $\partial_i \partial_j f_{\tau}$ is continuous in $S_{\tau}$ and, for all $x \in S$,
\begin{equation*}
\lim_{ \tau \rightarrow 0^{+} } \partial_i \partial_j f_{\tau}(x) = \partial_i \partial_j f(x).
\end{equation*}
\end{theorem}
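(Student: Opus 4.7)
My approach rests on the representation of $f_\tau^2$ as an integral over the fixed, bounded domain $Z_1(0)$. By the change of variable in \eqref{local_depth_for_absolutely_continuous_distribution} and Definition \ref{definition_tau_approximation},
\begin{equation*}
f_\tau^2(x) = \frac{1}{\Lambda_1} \int_{Z_1(0)} f(x+\tau x_1)\, f(x+\tau x_2)\, dx_1\, dx_2.
\end{equation*}
The key structural feature is that $Z_1(0) \subset \{\norm{x_1} \leq 1\} \times \{\norm{x_2} \leq 1\}$ is bounded and does not depend on $\tau$ or $x$. Under the assumed continuity of $f$ (and later of $\partial_j f$ and $\partial_i \partial_j f$), on any compact neighborhood of $x$ the integrand and its relevant partials are bounded, so Leibniz's rule is applicable and differentiation may be brought inside the integral.

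For the first-order claim, differentiating under the integral sign and exploiting the symmetry $(x_1, x_2) \mapsto (x_2, x_1)$ of $Z_1(0)$ yields
\begin{equation*}
\partial_j f_\tau^2(x) = \frac{2}{\Lambda_1} \int_{Z_1(0)} \partial_j f(x+\tau x_1)\, f(x+\tau x_2)\, dx_1\, dx_2.
\end{equation*}
For $x \in S_\tau$ one has $f_\tau(x) > 0$, so the chain rule $\partial_j f_\tau^2 = 2 f_\tau \partial_j f_\tau$ gives
\begin{equation*}
\partial_j f_\tau(x) = \frac{1}{f_\tau(x)\, \Lambda_1} \int_{Z_1(0)} \partial_j f(x+\tau x_1)\, f(x+\tau x_2)\, dx_1\, dx_2.
\end{equation*}
Continuity of $\partial_j f_\tau$ on $S_\tau$ follows since $f_\tau$ is continuous and positive there (Remark \ref{remark_f_tau_continuous}) and the integral is continuous in $x$ by dominated convergence. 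For $x \in S$ fixed, Theorem \ref{theorem_uniform_convergence_of_tau_approximation} (ii) ensures $f_\tau(x) \to f(x) > 0$, while the dominated convergence theorem (the integrand converges pointwise to $\partial_j f(x) f(x)$ and is uniformly bounded on a compact neighborhood of $x$) gives
\begin{equation*}
\frac{1}{\Lambda_1} \int_{Z_1(0)} \partial_j f(x+\tau x_1)\, f(x+\tau x_2)\, dx_1\, dx_2 \xrightarrow[\tau \to 0^{+}]{} \partial_j f(x)\, f(x),
\end{equation*}
so that $\partial_j f_\tau(x) \to \partial_j f(x)$.

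The second-order claim follows the same scheme. Differentiating twice under the integral produces four terms that collapse to two by the $(x_1, x_2)$-swap symmetry:
\begin{equation*}
\partial_i \partial_j f_\tau^2(x) = \frac{2}{\Lambda_1} \int_{Z_1(0)} \bigl[\partial_i \partial_j f(x+\tau x_1)\, f(x+\tau x_2) + \partial_i f(x+\tau x_1)\, \partial_j f(x+\tau x_2)\bigr] dx_1\, dx_2.
\end{equation*}
Combined with the product-rule identity $\partial_i \partial_j f_\tau = (\partial_i \partial_j f_\tau^2 - 2\, \partial_i f_\tau\, \partial_j f_\tau)/(2 f_\tau)$, this gives continuity on $S_\tau$, and passing to the limit at $x \in S$ (using the first-order result and dominated convergence) causes the mixed $\partial_i f\, \partial_j f$ contribution in $\partial_i \partial_j f_\tau^2$ to cancel the $2 \partial_i f_\tau \partial_j f_\tau$ term, leaving $2 \partial_i \partial_j f(x) f(x)/(2 f(x)) = \partial_i \partial_j f(x)$. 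The only substantive obstacle is the division by $f_\tau(x)$: it is harmless on $S_\tau$ where $f_\tau > 0$, and the pointwise limits at $x \in S$ are safe thanks to Theorem \ref{theorem_uniform_convergence_of_tau_approximation} (ii). Everything else reduces to routine Leibniz differentiation and dominated convergence on the bounded, $\tau$-independent domain $Z_1(0)$.
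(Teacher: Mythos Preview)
Your proof is correct and follows essentially the same approach as the paper's own argument. The only cosmetic difference is bookkeeping: you work directly with $f_\tau^2 = \tfrac{1}{\tau^{2p}\Lambda_1}LLD(\cdot,\tau)$ and use the chain-rule identities $\partial_j f_\tau^2 = 2f_\tau\,\partial_j f_\tau$ and $\partial_i\partial_j f_\tau = (\partial_i\partial_j f_\tau^2 - 2\,\partial_i f_\tau\,\partial_j f_\tau)/(2f_\tau)$, whereas the paper differentiates $LLD(x,\tau)$ and then expands $\partial_j\sqrt{LLD}$ and $\partial_i\partial_j\sqrt{LLD}$ explicitly; after scaling by $\tau^{2p}$ and using the $(x_1,x_2)$-symmetry of $Z_1(0)$, the two computations coincide term for term.
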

\begin{remark}
Lemma \ref{lemma_support_S_and_S_tau} and Theorem \ref{theorem_first_and_second_partial_derivaties} hold also for LSD, LBD, $LK_{\beta}D$, LHD and LRD.
\end{remark}
The sets $\{ S_{\tau} \}_{\tau>0}$ monotonically decrease to $S$ by Lemma \ref{lemma_support_S_and_S_tau}. Furthermore, because of Theorem \ref{theorem_first_and_second_partial_derivaties}, $f_{\tau}(\cdot)$ is twice continuously differentiable in $S_{\tau}$ and its gradient and Hessian matrix converge to those of $f(\cdot)$ in $S$. If it exists, we denote by $u_{x,\tau}(t)$ the solution of \eqref{gradient_system_tau} with initial point $u_{x,\tau}(0)=x$. Since $f_{\tau}(\cdot)$ is continuous, for $\alpha>0$, the sets $R^{\alpha}_{\tau}=\{ x \in \mathbb{R}^p \, : \, f_{\tau}(x) \geq \alpha \}=f_{\tau}^{-1}([\alpha,\infty))$ are closed. The next lemma shows that they are also bounded. For a set $A \subset \mathbb{R}^{p}$, we define $(A)^{+\delta} \coloneqq \{ x \in \mathbb{R}^p \, : \, \inf_{y \in A} \norm{x-y} \leq \delta \}$ and $(A)^{-\delta} \coloneqq \mathbb{R}^p \setminus \left( \mathbb{R}^p \setminus A \right)^{+\delta} = \{ x \in \mathbb{R}^p \, : \, \inf_{y \in \mathbb{R}^p \setminus A} \norm{x-y} > \delta \}$.
\begin{lemma} \label{lemma_R_alpha_tau_bounded}
For all $\tau>0$ and $\alpha>0$, $(R^{\alpha})^{-\tau} \subset R_{\tau}^{\alpha} \subset (R^{\alpha})^{+\tau}$. In particular, if $R^{\alpha}$ is bounded for $\alpha >0$, then $R^{\alpha}_{\tau}$ is also bounded for any $\tau>0$.
\end{lemma}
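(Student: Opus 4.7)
The plan is to exploit the geometric fact that if $(y_1,y_2)\in Z_\tau(x)$, then both components lie in the closed ball $\overline{B}_\tau(x)$, since the defining condition forces $\max_{i=1,2}\|y_i - x\|\leq \|y_1 - y_2\|\leq \tau$. Combined with the integral representation $f_\tau(x)^2 = \frac{1}{\tau^{2p}\Lambda_1}\int_{Z_\tau(x)} f(y_1)f(y_2)\,dy_1 dy_2$ (equivalent to Definition~\ref{definition_tau_approximation}), this will let me sandwich $f_\tau(x)$ between the infimum and supremum of $f$ over $\overline{B}_\tau(x)$, which in turn yields the two set inclusions.

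For the inclusion $R_\tau^\alpha \subset (R^\alpha)^{+\tau}$, I would proceed contrapositively or directly as follows: given $x\in R_\tau^\alpha$, bound
\begin{equation*}
\alpha^2\tau^{2p}\Lambda_1 \;\leq\; LLD(x,\tau) \;=\; \int_{Z_\tau(x)} f(y_1) f(y_2)\,dy_1\,dy_2 \;\leq\; \Big(\sup_{y\in\overline{B}_\tau(x)} f(y)\Big)^{2}\,\tau^{2p}\Lambda_1,
\end{equation*}
using that $\lambda^{\otimes 2}(Z_\tau(x))=\tau^{2p}\Lambda_1$. Hence $\sup_{y\in \overline{B}_\tau(x)} f(y)\geq \alpha$, and since $f$ is continuous and the ball is compact, the supremum is attained at some $y^{*}\in R^\alpha$ with $\|y^{*}-x\|\leq\tau$, giving $x\in (R^\alpha)^{+\tau}$.

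For the other inclusion $(R^\alpha)^{-\tau}\subset R_\tau^\alpha$, suppose $x\in(R^\alpha)^{-\tau}$. By definition this forces every $y$ with $\|y-x\|\leq\tau$ to lie in $R^\alpha$, i.e.\ $f(y)\geq\alpha$. Applying this to both coordinates of any $(y_1,y_2)\in Z_\tau(x)$,
\begin{equation*}
LLD(x,\tau)\;=\;\int_{Z_\tau(x)} f(y_1) f(y_2)\,dy_1\,dy_2 \;\geq\; \alpha^{2}\,\tau^{2p}\Lambda_1,
\end{equation*}
whence $f_\tau(x)\geq \alpha$, i.e.\ $x\in R_\tau^\alpha$. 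The boundedness claim is then immediate: if $R^\alpha\subset \overline{B}_M(0)$, then $(R^\alpha)^{+\tau}\subset \overline{B}_{M+\tau}(0)$, so by the first inclusion $R_\tau^\alpha$ is bounded as well.

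There is no real obstacle here; the only point requiring a small amount of care is justifying that the supremum of $f$ over $\overline{B}_\tau(x)$ is \emph{attained} in $R^\alpha$ rather than merely approached, but this follows from continuity of $f$ and compactness of the closed ball. The inclusions for LSD, LBD, $LK_\beta D$, LHD and LRD alluded to elsewhere in Section~\ref{section_clustering} would follow by exactly the same argument, using that their defining local sets also confine each coordinate to $\overline{B}_\tau(x)$ (which is precisely the content of the invariance property \eqref{invariance2_of_z_tau_x_general} combined with the fact that $Z_1^G(0)$ is contained in a product of unit balls).
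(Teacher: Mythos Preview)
Your proof is correct and follows essentially the same approach as the paper's: both arguments rest on the inclusion $Z_\tau(x)\subset \overline{B}_\tau(x)\times\overline{B}_\tau(x)$ together with $\lambda^{\otimes 2}(Z_\tau(x))=\tau^{2p}\Lambda_1$. The only cosmetic difference is that for the inclusion $R_\tau^\alpha\subset (R^\alpha)^{+\tau}$ the paper picks a point $(x_1,x_2)\in Z_\tau(x)$ where the integrand $f(x_1)f(x_2)$ exceeds its average $\alpha^2$ (so $\max_i f(x_i)\geq\alpha$), whereas you bound the integral by $(\sup_{\overline{B}_\tau(x)} f)^2\,\tau^{2p}\Lambda_1$ and invoke compactness to attain the supremum; both routes are equally valid under the standing continuity assumption on $f$.
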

As we have shown for the gradient system \eqref{gradient_system}, Lemma \ref{lemma_R_alpha_tau_bounded}, along with the boundedness of $R^{\alpha}$ for all $\alpha >0$ and the fact that $f_{\tau}(\cdot)$ is twice continuously differentiable, implies that, for all $x \in S$, $u_{x,\tau}$ exists and is unique in  a maximal time interval $(a,\infty)$, for some $-\infty \leq a < 0$.

Theorem \ref{theorem_local_depth_tau_to_0} (i) shows that, in the limit for $\tau \to 0^{+}$, the function $f_{\tau}(\cdot)$ behaves like $f(\cdot)$ (see also Theorem \ref{theorem_uniform_convergence_of_tau_approximation}) and Theorem \ref{theorem_first_and_second_partial_derivaties} shows that the partial derivatives of $f_{\tau}(\cdot)$ up to the second order converge to those of $f(\cdot)$. We exploit this in the next theorem to show that the solutions of the gradient system \eqref{gradient_system_tau} converge for $\tau \to 0^{+}$ to those of the gradient system \eqref{gradient_system}.
\begin{theorem} \label{theorem_convergence_solution_estimated_gradient_system}
Suppose that $f(\cdot)$ is continuously differentiable in $\mathbb{R}^p$ and that for all $\alpha>0$ $R^{\alpha}$ is compact. Then, for all $t \geq 0$ and $x \in S$
\begin{equation*}
\lim_{\tau \rightarrow 0^{+}} u_{x,\tau}(t) = u_{x}(t).
\end{equation*}
\end{theorem}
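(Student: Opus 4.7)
The plan is to reduce the claim to a classical continuous-dependence-on-parameters argument for ordinary differential equations, carried out on a well-chosen compact subset of $S$, and then close with a Gronwall estimate. Two ingredients are essential: (a) the trajectories $u_x$ and $u_{x,\tau}$ remain in a common compact set $K \subset S$ on $[0,T]$ for $\tau$ small, and (b) $\nabla f_\tau \to \nabla f$ uniformly on $K$.

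For ingredient (a), fix $x \in S$ and $T > 0$, and pick $\alpha$ with $0 < \alpha < f(x)$, so that $x \in \mathring{R^\alpha}$. From the discussion in Section \ref{subsection_mathematical_background}, $u_x([0,T]) \subset R^\alpha$, which is compact by hypothesis. By Theorem \ref{theorem_uniform_convergence_of_tau_approximation} (ii), for $\tau$ small enough $f_\tau(x) > \alpha/2$, hence $x \in R^{\alpha/2}_\tau$; since $u_{x,\tau}$ cannot leave $R^{\alpha/2}_\tau$ for $t \geq 0$, Lemma \ref{lemma_R_alpha_tau_bounded} gives $u_{x,\tau}([0,T]) \subset (R^{\alpha/2})^{+\tau}$. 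Choosing a compact neighborhood $K$ of $R^{\alpha/2}$ with $K \subset S$ (possible since $R^{\alpha/2}$ is compact in the open set $S$), for all sufficiently small $\tau$ both trajectories lie in $K$, and in particular the existence interval of $u_{x,\tau}$ contains $[0,T]$.

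For ingredient (b), I would upgrade the pointwise convergence in Theorem \ref{theorem_first_and_second_partial_derivaties} to uniform convergence on $K$. This follows by inspecting the integral representation of $\nabla f_\tau$ obtained by differentiating under the integral in \eqref{local_depth_for_absolutely_continuous_distribution} and Definition \ref{definition_tau_approximation}: the denominator $\tau^{-p}\sqrt{LLD(x,\tau)}$ is bounded below by $\tfrac{1}{2}\sqrt{\Lambda_1}\,\inf_{y\in K} f(y) > 0$ uniformly on $K$ for $\tau$ small (by Theorem \ref{theorem_uniform_convergence_of_tau_approximation} (ii) and the compactness of $K \subset S$), while the numerator converges uniformly on $K$ by dominated convergence and the uniform continuity of $\nabla f$ on a slight compact enlargement of $K$. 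Setting $\epsilon(\tau) \coloneqq \sup_{y \in K}\, \norm{\nabla f_\tau(y) - \nabla f(y)}$, we then have $\epsilon(\tau) \to 0$ as $\tau \to 0^+$.

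Finally, with $L$ the Lipschitz constant of $\nabla f$ on $K$ (finite since $f$ is $C^2$ and $K$ is compact, cf.\ the argument preceding Lemma \ref{lemma_support_S_and_S_tau}), I would write
\begin{equation*}
u_{x,\tau}(t) - u_x(t) = \int_0^t \bigl[\nabla f_\tau(u_{x,\tau}(s)) - \nabla f(u_{x,\tau}(s))\bigr]\, ds + \int_0^t \bigl[\nabla f(u_{x,\tau}(s)) - \nabla f(u_x(s))\bigr]\, ds,
\end{equation*}
yielding $\norm{u_{x,\tau}(t) - u_x(t)} \leq T\,\epsilon(\tau) + L \int_0^t \norm{u_{x,\tau}(s) - u_x(s)}\, ds$, and applying Gronwall's inequality to conclude $\norm{u_{x,\tau}(t) - u_x(t)} \leq T\,\epsilon(\tau)\, e^{L T} \to 0$. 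The main obstacle is the passage from pointwise to uniform convergence of $\nabla f_\tau$ on $K$; once this is secured, the confinement of trajectories via Lemma \ref{lemma_R_alpha_tau_bounded} and the Gronwall step are routine.
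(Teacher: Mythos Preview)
Your proposal is correct and shares the overall architecture of the paper's proof: confine both trajectories to a common compact $K \subset S$, then close with a Gronwall estimate. The difference lies in how the integrand is split. The paper adds and subtracts $\nabla f_\tau(u_x(s))$, so that the Lipschitz constant entering Gronwall is $L_\tau$, the Lipschitz constant of $\nabla f_\tau$ on $K$; it then argues separately that $L_\tau$ stays bounded as $\tau \to 0^+$, and concludes $\int_0^t \norm{\nabla f_\tau(u_x(s)) - \nabla f(u_x(s))}\,ds \to 0$ by dominated convergence along the fixed trajectory $u_x$, using only the pointwise convergence of Theorem~\ref{theorem_first_and_second_partial_derivaties} together with a uniform bound. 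Your split, via $\nabla f(u_{x,\tau}(s))$, uses the fixed Lipschitz constant $L$ of $\nabla f$ on $K$ and therefore sidesteps the (slightly delicate) task of controlling $L_\tau$; the price is that you must upgrade Theorem~\ref{theorem_first_and_second_partial_derivaties} from pointwise to uniform convergence on $K$, since the argument of $\nabla f_\tau - \nabla f$ is now the moving point $u_{x,\tau}(s)$. As you indicate, that upgrade follows by the same integral-representation argument (uniform continuity of $f$ and $\nabla f$ on a compact enlargement of $K$, plus the uniform positive lower bound on $f_\tau$ over $K$). Both routes work; yours trades one bookkeeping task for another and is arguably tidier. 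Your confinement step is also a bit more explicit than the paper's in guaranteeing $x \in R^{\alpha/2}_\tau$ for small $\tau$.
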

\begin{remark}
Theorem \ref{theorem_convergence_solution_estimated_gradient_system} holds also for LSD, LBD, $LK_{\beta}D$, LHD and LRD using Theorem \ref{theorem_first_and_second_partial_derivaties} and Lemma \ref{lemma_R_alpha_tau_bounded}. For $LK_{\beta}D$ with $\beta>2$ a small modification is required in Lemma \ref{lemma_R_alpha_tau_bounded}; namely, for all $\tau>0$ and $\alpha>0$, $(R^{\alpha})^{-\frac{\beta}{2}\tau} \subset R_{\tau}^{\alpha} \subset (R^{\alpha})^{+\frac{\beta}{2}\tau}$. Similarly, since hypercubes in $\mathbb{R}^p$ have diameter $\sqrt{p}$, for LHD and LRD we have that $(R^{\alpha})^{-\sqrt{p} \tau} \subset R_{\tau}^{\alpha} \subset (R^{\alpha})^{+\sqrt{p}\tau}$.
\end{remark}
In the next subsection we provide some conditions under which the stationary points (resp.\ modes, antimodes) of $f(\cdot)$ are also stationary points (resp.\ modes, antimodes) of $f_{\tau}(\cdot)$ for $\tau>0$.
\subsection{Identification of modes} \label{subsection_modes_identification}
In this subsection we consider an absolutely continuous distribution with respect to the Lebesgue measure, with density $f(\cdot)$. The key criteria for the identification of the modes is the notion of symmetry proposed below.
\begin{definition} \label{definition_tau_symmetry}
Given $\tau>0$, a density function $f(\cdot)$ with support $S$ is said to be $\tau$-centrally symmetric about $\mu \in S$ if, for all $x \in \mathbb{R}^p$ with $\norm{x} \leq \tau$, $f(\mu+x)=f(\mu-x)$.
\end{definition}
In particular, for $p=1$, $f(\cdot)$ is $\tau$-centrally symmetric about $\mu \in \mathbb{R}$ if $f(\mu-x)=f(\mu+x)$ for all $x \in [0,\tau]$. If $f(\cdot)$ has a continuous derivative, a direct computation using Corollary \ref{corollary_local_depth_tau_to_0} shows that $f_{\tau}^{'}(\mu) = 0$ (see Appendix \ref{section_stationary_points_for_univariate_densities} for more details). For example, if $f(\cdot)$ is a mixture of two normal densities with means $\mu_1$ and $\mu_2$, variances equal to $\sigma^2$ and weights equal to $1/2$, that is,
\begin{equation} \label{mixture_normals_density}
  f(x) = \frac{1}{\sqrt{2 \pi} \cdot 2 \sigma} \left[ e^{\frac{-(x-\mu_1)^2}{2 \sigma^2}} + e^{-\frac{(x-\mu_2)^2}{2 \sigma^2}} \right],
\end{equation}
then $f(\cdot)$ is $\tau$-centrally symmetric about $\mu=\frac{\mu_1+\mu_2}{2}$ for all $\tau \geq 0$ and therefore $\mu$ is a stationary point of $f_{\tau}(\cdot)$. On the other hand, due to the interference of the tail of one component to the other around the means, $f(\cdot)$ is not $\tau$-centrally symmetric about them. However, if the two means are far away from each other, the modes of $f_{\tau}(\cdot)$ are a good approximation of those of $f(\cdot)$ (see Figure \ref{mixture_normals_plot} and Propositiion \ref{proposition_modes} in Appendix \ref{section_stationary_points_for_univariate_densities}).
\begin{figure}[!htbp]
	\includegraphics[width= \linewidth]{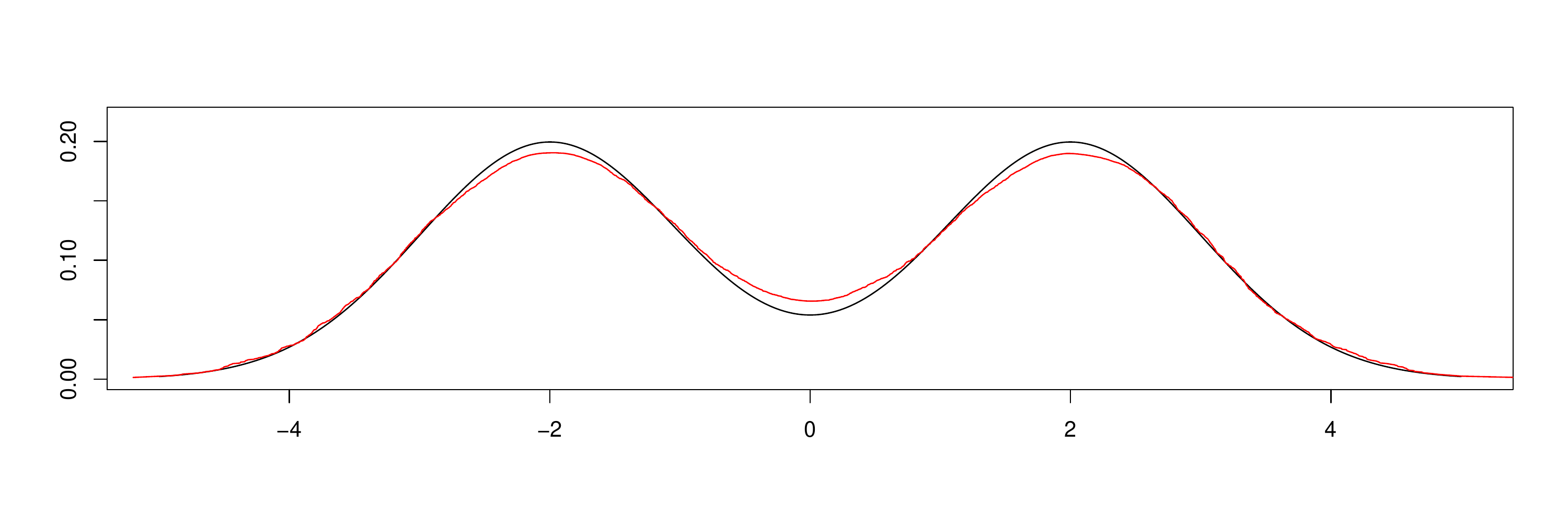}
	\caption{ In black, the density \eqref{mixture_normals_density} with $\mu_1=-2$, $\mu_2=2$ and $\sigma=1$. In red, its sample $\tau$-approximation $f_{\tau,n}(\cdot)$ for $\tau=1$ and $n=6000$. }
 	\label{mixture_normals_plot}
\end{figure}
We begin by identifying the stationary points of $f_{\tau}$.
\begin{theorem} \label{theorem_local_central_symmetry}
Suppose that for some $\tau>0$ and $\mu \in S$, $f(\cdot)$ has continuous first order partial derivatives in $\overline{B}_{\tau}(\mu) = \{ x \in \mathbb{R}^p \, : \, \norm{x-\mu} \leq \tau \}$. Then, $\mu$ is a stationary point for $f_{\tau}(\cdot)$ if and only if
\begin{equation} \label{equivalent_condition_for_mu_stationary_point}
	\int_{Z_1(0)} \nabla f(\mu+\tau x_1) f(\mu+\tau x_2) \, dx_1 dx_2 = 0,
\end{equation}
where the integral of a vector is the vector of the integrals. Furthermore, if $f(\cdot)$ is $\tau$-centrally symmetric about $\mu$, then $\nabla f_{\tau}(\mu)=0$.
\end{theorem}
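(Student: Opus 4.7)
The plan is to express $f_\tau(x)^2$ as an integral over the fixed set $Z_1(0)$ via the change of variables in \eqref{local_depth_for_absolutely_continuous_distribution}, differentiate under the integral sign at $x=\mu$, and then reduce the resulting expression using the two symmetries of $Z_1(0)$ encoded in \eqref{symmetry1_of_z_tau_x_general}--\eqref{symmetry2_of_z_tau_x_general}. Since $f$ is $C^1$ on the compact set $\overline{B}_\tau(\mu)$ and $(x_1,x_2)\in Z_1(0)$ forces $\mu+\tau x_i\in\overline{B}_\tau(\mu)$, the partial derivatives of $(x_1,x_2)\mapsto f(x+\tau x_1)f(x+\tau x_2)$ with respect to $x$ are uniformly bounded on a small neighborhood of $\mu$, justifying differentiation under the integral sign there.

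Applying the product rule gives two summands inside the integral, and the transformation $(x_1,x_2)\mapsto(x_2,x_1)$ preserves $Z_1(0)$ by \eqref{symmetry2_of_z_tau_x_general}, so the two pieces yield equal integrals. This produces
\begin{equation*}
f_\tau(\mu)\,\nabla f_\tau(\mu) = \frac{1}{\Lambda_1}\int_{Z_1(0)} \nabla f(\mu+\tau x_1)\,f(\mu+\tau x_2)\,dx_1\,dx_2.
\end{equation*}
Because $\mu\in S$ and $f$ is continuous, $f>0$ on some neighborhood of $\mu$, which contributes a positive amount to the integrand defining $f_\tau(\mu)^2$ on a set of positive Lebesgue measure in $Z_1(0)$; hence $f_\tau(\mu)>0$, and $\nabla f_\tau(\mu)=0$ is equivalent to the vanishing of the integral on the right. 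This gives the claimed equivalence.

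For the final statement, suppose $f$ is $\tau$-centrally symmetric about $\mu$, so that $f(\mu+y)=f(\mu-y)$ for $\|y\|\le\tau$. Differentiating this identity componentwise and using the chain rule yields $\nabla f(\mu+y)=-\nabla f(\mu-y)$ on the same ball, extended to the boundary by continuity of $\nabla f$. Set $I\coloneqq\int_{Z_1(0)}\nabla f(\mu+\tau x_1)\,f(\mu+\tau x_2)\,dx_1\,dx_2$ and apply the change of variables $(x_1,x_2)\mapsto(-x_1,-x_2)$, which preserves $Z_1(0)$ by \eqref{symmetry1_of_z_tau_x_general}. The two symmetry relations then turn the integrand into $-\nabla f(\mu+\tau x_1)\,f(\mu+\tau x_2)$, giving $I=-I$ and hence $I=0$; the equivalence already established then yields $\nabla f_\tau(\mu)=0$. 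The only technical care required lies in justifying differentiation under the integral sign given a $C^1$ hypothesis on $\overline{B}_\tau(\mu)$ alone, which is handled by a standard shrinking-neighborhood argument together with the uniform boundedness of $\nabla f$ on this compact set.
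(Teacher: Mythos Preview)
Your proof is correct and follows essentially the same route as the paper: differentiate under the integral sign, use the permutation symmetry $(x_1,x_2)\mapsto(x_2,x_1)$ of $Z_1(0)$ to collapse the two product-rule terms, and then for the $\tau$-centrally symmetric case use the sign-flip symmetry $(x_1,x_2)\mapsto(-x_1,-x_2)$ together with $\nabla f(\mu+y)=-\nabla f(\mu-y)$ to get $I=-I$. The only cosmetic difference is that you differentiate $f_\tau^2$ (equivalently $LLD(\cdot,\tau)/(\tau^{2p}\Lambda_1)$) and then divide by $f_\tau(\mu)>0$, whereas the paper differentiates $f_\tau$ directly via the chain rule through the square root; both reduce to the same integral condition.
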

Our next result, which is about the Hessian matrix, gives sufficient conditions for $m$ to be a mode of $f_{\tau}(\cdot)$.
\begin{theorem} \label{theorem_modes_local_central_symmetry}
Let $\tau>0$. Suppose that $f(\cdot)$ is $\tau$-centrally symmetric about a mode (resp.\ an antimode) $m$ and has continuous second order partial derivatives in $\overline{B}_{\tau}(m)$. \\
If, for all $x_1, x_2 \in \overline{B}_{\tau}(m)$, the matrix 
\begin{equation*}
G_f(x_1,x_2) \coloneqq H_{f}(x_1) f(x_2) + \nabla f(x_1) \nabla f(x_2)^{\top}
\end{equation*}
is negative (resp.\ positive) definite, then $m$ is also a mode (resp.\ an antimode) for $f_{\tau}(\cdot)$.
\end{theorem}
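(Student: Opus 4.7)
The strategy is to exploit Theorem \ref{theorem_local_central_symmetry}, which already yields $\nabla f_\tau(m)=0$ under the $\tau$-central symmetry assumption, so only the sign of the Hessian $H_{f_\tau}(m)$ needs to be controlled. Rather than differentiate $f_\tau = \frac{1}{\tau^p\sqrt{\Lambda_1}}\sqrt{LLD(\,\cdot\,,\tau)}$ directly (where a square root complicates matters), I would work with $g(x) \coloneqq f_\tau^{2}(x) = \frac{1}{\Lambda_1}\int_{Z_1(0)} f(x+\tau x_1)\,f(x+\tau x_2)\,dx_1\,dx_2$, which follows from \eqref{local_depth_for_absolutely_continuous_distribution}, and then translate back to $H_{f_\tau}(m)$ using $\nabla f_\tau(m)=0$.

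Differentiating twice under the integral (justified by continuity of the second partials on $\overline{B}_\tau(m)$, since for $(x_1,x_2)\in Z_1(0)$ one has $\|x_i\|\le 1$ and hence $m+\tau x_i\in \overline{B}_\tau(m)$) produces four summands of the form $(\partial_j\partial_i f)(x+\tau x_a)\,f(x+\tau x_b)$ and $\partial_i f(x+\tau x_a)\,\partial_j f(x+\tau x_b)$. Invoking the symmetry property \eqref{symmetry2_of_z_tau_x_general} of $Z_1(0)$ under the swap $(x_1,x_2)\mapsto(x_2,x_1)$, these four summands pair up and I obtain the compact form
\begin{equation*}
H_g(x) \;=\; \frac{2}{\Lambda_1}\int_{Z_1(0)} G_f(x+\tau x_1,\,x+\tau x_2)\,dx_1\,dx_2,
\end{equation*}
which is the key identity of the proof.

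Next I evaluate at $x=m$. From $g=f_\tau^{2}$ one has $\partial_j\partial_i g = 2\,\partial_j f_\tau\,\partial_i f_\tau + 2 f_\tau\,\partial_j\partial_i f_\tau$; since $\nabla f_\tau(m)=0$, the cross term vanishes and $H_g(m)=2 f_\tau(m)\,H_{f_\tau}(m)$. Note $f_\tau(m)>0$: by continuity $f$ is strictly positive in a neighborhood of $m$, so $LLD(m,\tau)>0$. Dividing then gives
\begin{equation*}
H_{f_\tau}(m) \;=\; \frac{1}{\Lambda_1\,f_\tau(m)}\int_{Z_1(0)} G_f(m+\tau x_1,\,m+\tau x_2)\,dx_1\,dx_2.
\end{equation*}

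Finally, for any nonzero $v\in\mathbb{R}^p$, the hypothesis that $G_f(x_1,x_2)$ is negative definite throughout $\overline{B}_\tau(m)\times\overline{B}_\tau(m)$ implies $v^\top G_f(m+\tau x_1,m+\tau x_2)v<0$ pointwise on $Z_1(0)$; continuity and compactness of the integrand in $(x_1,x_2)$ then yields $v^\top H_{f_\tau}(m) v<0$, so $H_{f_\tau}(m)$ is negative definite and $m$ is a mode of $f_\tau$. The antimodal case is identical with all signs reversed. The only slightly delicate step is the symmetrization that folds the four cross terms into $G_f$; the rest is routine differentiation under the integral plus the preservation of definiteness under integration of a pointwise-definite family of symmetric matrices.
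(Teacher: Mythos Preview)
Your proof is correct and follows essentially the same route as the paper: both establish $\nabla f_\tau(m)=0$ via the $\tau$-central symmetry, then reduce $H_{f_\tau}(m)$ (up to a positive scalar) to $\int_{Z_\tau(0)} G_f(m+x_1,m+x_2)\,dx_1\,dx_2$ using the swap symmetry of $Z_\tau(0)$, and conclude definiteness from the pointwise hypothesis on $G_f$. Your packaging via $g=f_\tau^{2}$ (so that $H_g(m)=2f_\tau(m)H_{f_\tau}(m)$ once $\nabla f_\tau(m)=0$) is a minor cosmetic variant of the paper's direct chain-rule computation on $\sqrt{LLD}$, and your final step---testing $v^\top H_{f_\tau}(m)v$ directly---is slightly cleaner than the paper's orthogonal diagonalization, but the substance is the same.
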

Notice that $G_f(m,m)=H_{f}(m) f(m)$ is negative (resp.\ positive) definite and therefore the second condition of Theorem \ref{theorem_modes_local_central_symmetry} is satisfied by $f(\cdot)$, for $\tau$ small.
\begin{remark}
  Theorem \ref{theorem_local_central_symmetry} and Theorem \ref{theorem_modes_local_central_symmetry} hold also for LSD, LBD and $LK_{\beta}D$. For $LK_{\beta}D$ with $\beta>2$ we assume that $f(\cdot)$ has continuous derivatives in $\overline{B}_{\frac{\beta}{2} \tau}(\mu)$, $\overline{B}_{\frac{\beta}{2} \tau}(m)$ and that it is $\frac{\beta}{2}\tau$-centrally symmetric.
\end{remark}

\subsection{Numerical implementation} \label{subsection_numerical_implementation}

In this section, we describe the algorithm for the numerical approximation of the clusters induced by the system \eqref{gradient_system_tau} based on the theoretical developments in Sections \ref{subsection_convergence_gradient_system} and \ref{subsection_modes_identification}. Since the sample $\tau$-approximation is not differentiable in $x$, we use a finite difference approximation that converges to the directional derivative. To this end, for $x \in \mathbb{R}^p$, $\tau >0$, $n \in \mathbb{N}$, $h>0$ and a unit vector $v \in \mathbb{R}^p$, we use the finite difference approximations of the directional derivatives of $f_{\tau}(\cdot)$ and $f_{n,\tau}(\cdot)$ along $v$ given by
\begin{equation*}
  \nabla_v^{h} f_\tau(x) = \frac{f_{\tau}(x+hv)-f_{\tau}(x)}{h} \quad \text{ and } \quad \nabla_v^{h} f_{\tau,n}(x) = \frac{f_{\tau,n}(x+hv)-f_{\tau,n}(x)}{h}.
\end{equation*}
Using the proof of Corollary \ref{corollary_uniform_consistency_of_sample_tau_approximation} and  the convergence of partial derivatives in Theorem \ref{theorem_first_and_second_partial_derivaties}, it follows that the finite difference approximations converge to the directional derivative. 

The next step towards identifying the modes, is finding a local maximum of a function.  To this end, we use the steepest ascent or gradient ascent idea; that is, starting from a point in the space, the next point is chosen in the direction given by the gradient of the function at that point. This procedure is repeated until convergence to a local maximum is achieved. In the context of mode hunting, this procedure is often combined with kernel density estimators to find the modes of the density underlying some given data points and to find the clusters associated to these modes \citep{Fukunaga-1975,Menardi-2016}. In the following, we propose a similar technique, which does not necessarily require the existence of the gradient and only considers data points as candidate points toward to which to move in the next step. \emph{This yields an efficient procedure, since  the local depth function has a higher potential to differentiate the local features of the distribution, especially in the inner part of the data cloud}. 

Turning to the development of the algorithm, starting from a point $x \in \mathbb{R}^p$, we search,  in a given neighborhood of $x$, the point $y$ that yields the largest directional derivative $\nabla_v^{h} f_{\tau,n}$ with $h=\norm{y-x}$ and $v = (y-x)/\norm{y-x}$. Since,
  \begin{equation*}
   \nabla_v^{h} f_{\tau}(x) \, \tau^p \, \sqrt{\Lambda_1} = \frac{\sqrt{LLD(x+hv,\tau)}-\sqrt{LLD(x,\tau)}}{h}
  \end{equation*}
  and
  \begin{equation*}
   \nabla_v^{h} f_{\tau,n}(x) \, \tau^p \, \sqrt{\Lambda_1} = \frac{\sqrt{LLD_n(x+hv,\tau)}-\sqrt{LLD_n(x,\tau)}}{h},
  \end{equation*}
the constant $\tau^p \, \sqrt{\Lambda_1}$ does not influence the choice of the point $y$ which maximizes both the finite differences $\nabla_v^{h} f_{\tau}(x)$ and $\nabla_v^{h} f_{\tau,n}(x)$. This allows one to ignore the constants in the specification of the algorithm. That is,  the finite difference approximation of the directional derivative of the square root of the local depth can be used instead, avoiding the computation of the constant $\Lambda_1$. 

We now show, in fact, that the constant $\tau^p \, \sqrt{\Lambda_1}$ also does not influence the clusters induced by the system \eqref{gradient_system_tau}. Since $\tau, \Lambda_1 > 0$, if, for $x \in \mathbb{R}^p$, $u_{x,\tau}: \mathbb{R} \rightarrow \mathbb{R}^p$ is a solution of the system \eqref{gradient_system_tau} with $u_{x,\tau}(0)=x$, then $\tilde{u}_{x,\tau}: \mathbb{R} \rightarrow \mathbb{R}^p$ given by $\tilde{u}_{x,\tau}(t) \coloneqq u_{x,\tau}(\tau^p \sqrt{\Lambda_1} t)$ also satisfies $\tilde{u}_{x,\tau}(0)=x$ and it is a solution of the system
\begin{equation} \label{tau_local_depth_gradient_system}
    \tilde{u}^{\prime}(t) = \nabla \left( \sqrt{LLD(\tilde{u}(t),\tau)} \right).
\end{equation}
Moreover, since $\lim_{t \to \infty} u_{x,\tau}(t) = \lim_{t \to \infty} \tilde{u}_{x,\tau}(t)$ for all $x \in \mathbb{R}^p$, the clusters induced by \eqref{gradient_system_tau} and \eqref{tau_local_depth_gradient_system} are the same. Hence, for $x,y \in \mathbb{R}^p$ with $y \neq x$ and $h=\norm{y-x} \leq r$ small enough, we consider the finite difference approximation of the directional derivatives of $\sqrt{LLD(x,\tau)}$ and $\sqrt{LLD_n(x,\tau)}$ along the direction $v=\frac{y-x}{\norm{y-x}}$ given by
\begin{equation} \label{finite difference_local_depth}
    d_{\tau}(x;y) \coloneqq \frac{\sqrt{LLD(y,\tau)} - \sqrt{LLD(x,\tau)}}{\norm{y-x}}
\end{equation}
and
\begin{equation} \label{finite difference_sample_local_depth}
    d_{\tau,n}(x;y) \coloneqq \frac{\sqrt{LLD_n(y,\tau)} - \sqrt{LLD_n(x,\tau)}}{\norm{y-x}}.
\end{equation}

Given $n$ data points $x_1, \dots, x_n$, the localization parameter $\tau$ used for the clustering procedure is chosen as the quantile of order $q$, for some $0 \leq q \leq 1$, of the empirical distribution of the ${n \choose 2}$ distances $\norm{x_i-x_j}$, $i>j$, $i,j \in \{1,2,\dots,n\}$. We now summarize the procedure for  computing the clusters in Algorithm \ref{algorithm_clusters}. It requires as input, the data points $\{ x_1, \dots, x_n \}$, the quantile $q$, and two additional parameters, $r$ and $s$. Additional points $\{ y_1, \dots, y_o \}$ may also be provided as input. Starting from any point $x \in \{ x_1, \dots, x_n \} \cup \{ y_1, \dots, y_o \}$, based on the finite difference \eqref{finite difference_sample_local_depth}, the algorithm moves to another data point $y \in \{ x_1, \dots, x_n \}$ (hence, except for the initial step, only data points are involved in \eqref{finite difference_sample_local_depth}). The parameter $r$ gives a bound on the norm $\norm{y-x}$ in \eqref{finite difference_sample_local_depth} in order to choose  only those points that are close to each other. The parameter $s$ is exploited, to avoid trivialities such that there are no points at distance from $x$ smaller than $r$ or the number of directional derivatives taken into account is too small to have a reliable estimate. If there are less than $s$ points at distance smaller than $r$ from $x$, then the closest $s$ points are considered instead. The data points toward which each point tends are returned as output. Typical choices for the parameters $q$, $s$ and $r$ are $q=0.05$, $s=30$ and $r=0.05$. The parameter $r$ does not significantly influence the output of Algorithm \ref{algorithm_clusters}. For the distributions and data considered in this manuscript, it is fixed to be $r=0.05$. On the other hand, the choice of $s$ plays an important role in Algorithm \ref{algorithm_clusters}, except when there is one true cluster in which case the choice of $s$ does not matter, provided it is not too small. However, if the number of true clusters is at least $2$, then $s$ cannot be too big or Algorithm \ref{algorithm_clusters} will return only one cluster. If the number of true clusters is large, say, $10$ or $20$, then smaller values of $s$ should be used. 

The quantity $s$ can also play the role of a smoothing parameter. If $q$ (and hence $\tau$) is small with a small sample size $n$, then the sample local depth can be noisy and have local peaks with a small basin of attraction that were not present in the original distribution. In this case, the choice of a larger $s$ helps to avoid these local maxima.
\begin{algorithm}
\DontPrintSemicolon
  
\KwInput{$\{ x_1, \dots, x_n \}$, $\{ y_1, \dots, y_o \}$ (optional), $q$, $s$, $r$ }
\KwOutput{Local maxima for input points: $\{ z_1, \dots, z_{n+o} \}$}
\nl Compute the quantile $\tau$ of order $q$ of all pairwise distances: $\norm{x_i-x_j}$, $i>j$, $i,j \in \{1,2,\dots,n\}$ \\
\nl Compute the local depth of $\{ x_1, \dots, x_n \}$ with the localization parameter $\tau$ \\
\nl Store $\{ x_1, \dots, x_n \}$, $\{ y_1, \dots, y_o \}$ in new variables \\
\For{$i=1$ to $n$}{
	$z^{*}_i \coloneqq x_i$
}
\For{$i=1$ to $o$}{
	$z^{*}_{i+n} \coloneqq y_i$
}
  
\nl For all points, compute the corresponding local maxima \\
\For{$i=1$ to $n+o$}{
   \Repeat{$LLD_n(z^{*}_i,\tau) > LLD_n(z_i,\tau)$}{
\nl   $z_i \coloneqq z^{*}_i$ \\
\nl   Store the data points (different from $z_i$) at distance from $z_i$ smaller than $r$ or the $s$ closest data points if they are less than $s$ in new variables $w_1, \dots, w_l$ ($l \geq s$) \\
\nl   $z^{*}_i \coloneqq \argmax_{j=1, \dots, l} d_{\tau,n}(z_i;w_j)$
   }
}

\caption{Clustering with local depth}
\label{algorithm_clusters}
\end{algorithm}
    
\section{Simulations and data analysis} \label{section_simulations_and_data_analysis}

\subsection{Illustrative examples} \label{subsection_illustrative_example}

We begin this section with a one-dimensional example showing the flexibility of the $\tau$-approximation for different values of $\tau$. As described in Section 2, for small values of  $\tau$, $f_{\tau}(\cdot)$ ``resembles'' the underlying density, while for larger $\tau$ it becomes unimodal, as DFs are decreasing from the median of the distribution. We take this univariate distribution to be a mixture of four normal distributions with means $-2$, $0$, $3$, $4$, standard deviations $0.5$, $0.8$, $0.5$, $0.2$ and weights $0.25$, $0.5$, $0.15$ and $0.1$, respectively. The resulting density is quadrimodal and is depicted in Figure \ref{mixture_normals_plot2} along with its sample $\tau$-approximation for $\tau=0.5,1,2,4$. As can be seen from the Figure \ref{mixture_normals_plot2}, for $\tau=0.5$ the approximation has a similar shape to the density with approximately the same number of modes. For $\tau=1$,  the clusters corresponding to the modes at $x=3$ and $x=4$ merge yielding only three clusters. As we increase $\tau$ to  $2$, we notice that one can still identify two clusters, while, for $\tau=4$, the $\tau$-approximation has a unimodal shape.

\begin{figure}[!htbp]
	\includegraphics[width= \linewidth]{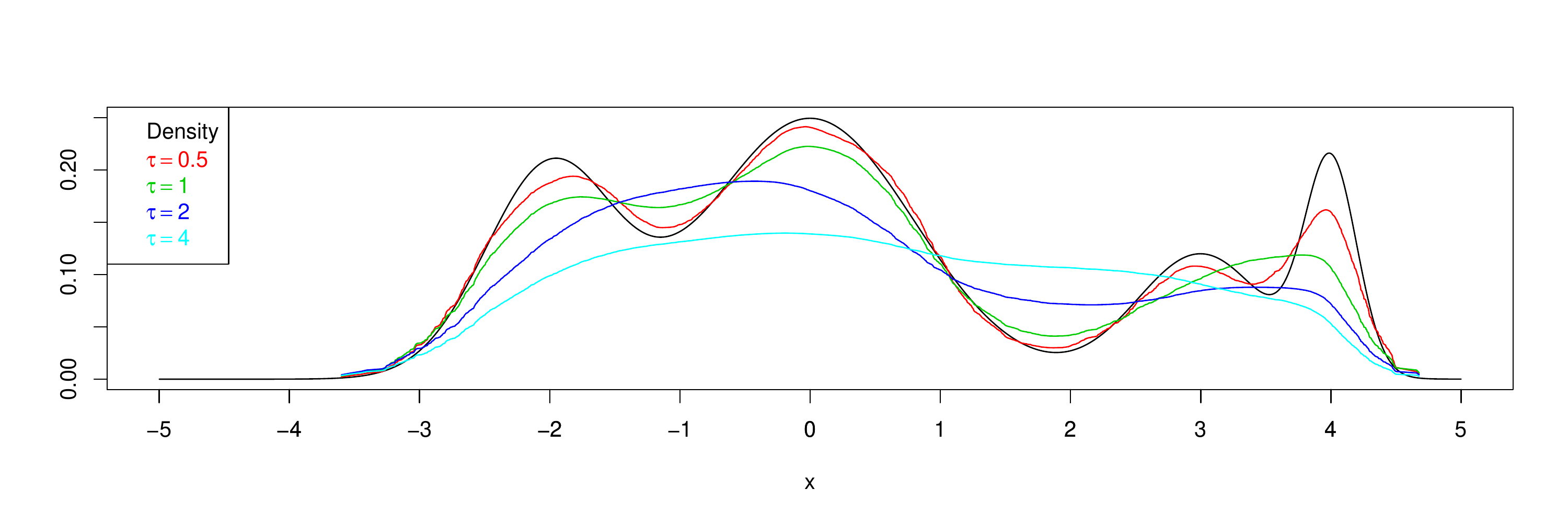}
	\caption{ In black the quadrimodal mixture density and in red, green, blue and cyan its sample $\tau$-approximation $f_{\tau,n}(\cdot)$ for $\tau=0.5,1,2,4$, respectively, and $n=6000$. }
 	\label{mixture_normals_plot2}
\end{figure}

Turning to bivariate examples studied in the literature (see \citet{Chacon-2015}),  we consider mixtures of bivariate normal distributions with the following characteristics: (i) two-mixture  with equal weights (Bimodal) and identity covariance matrix and (ii) the mixtures investigated in \citet{Wand-1993} and \citet{Chacon-2009} referred to as (H) Bimodal IV, (K) Trimodal III and \#10 Fountain (see Figure \ref{figure_true_clusters_ldc_kde}, first row, (K) Trimodal III is in Appendix \ref{sm:subsection_illustrative_examples}). Their analytical expression and the associated \emph{true clusters} are given in  Appendix \ref{sm:subsection_true_clusters}. We apply our algorithm to analyze these models and identify clusters; these results are displayed in Figure \ref{figure_true_clusters_ldc_kde} (second row). A comparison of our results with the clusters obtained using the kernel density estimator (obtained using the function \texttt{kms} in the R-package \emph{ks} \citep{Duong-2018}) are provided in Figure \ref{figure_true_clusters_ldc_kde} (third row). By a visual inspection of Figure \ref{figure_true_clusters_ldc_kde}, it is clear that a better cluster estimation is performed by LLD than by KDE.  A more detailed analysis of these and other distributions under extreme localization is provided in Appendix \ref{sm:subsection_illustrative_examples}.

\begin{figure}
\centering
\begin{minipage}[c]{0.1\textwidth}
   \caption*{ True \\ clusters}
\end{minipage}
\begin{minipage}[c]{0.88\textwidth}
\begin{subfigure}{0.32 \linewidth}
\centering
\caption*{Bimodal}
\includegraphics[width=\linewidth]{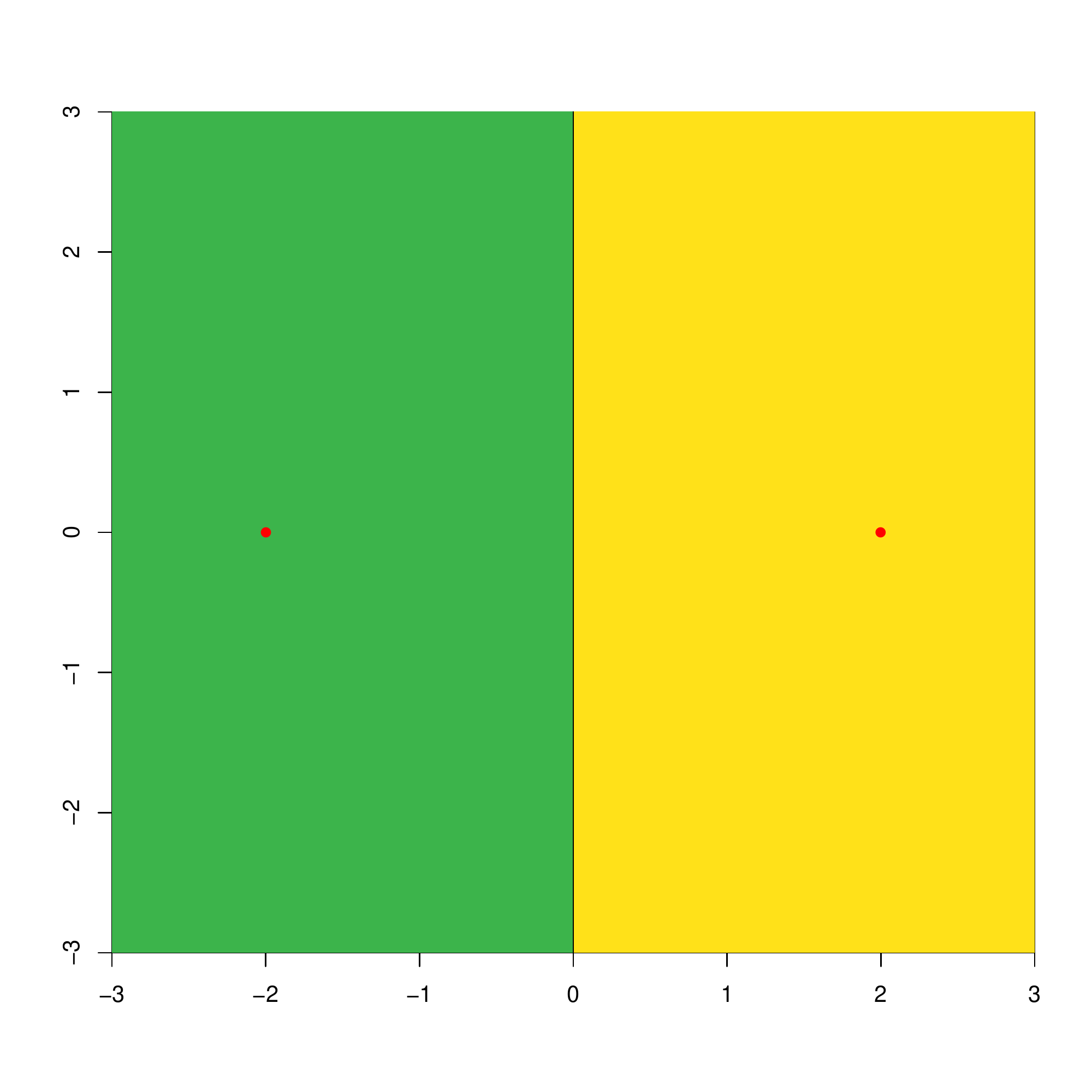}
\end{subfigure}
\begin{subfigure}{0.32 \linewidth}
\centering
\caption*{(H) Bimodal IV}
\includegraphics[width=\linewidth]{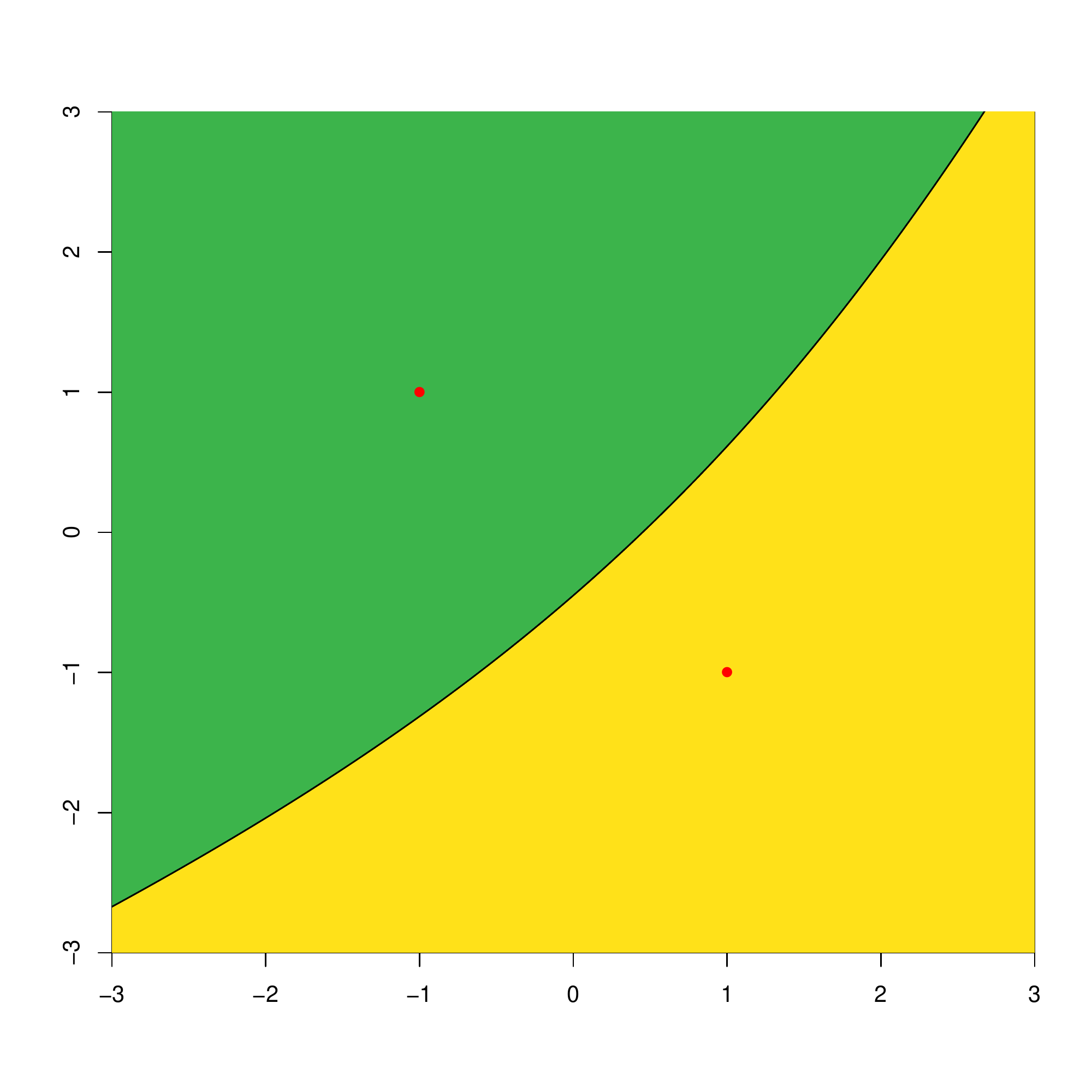}
\end{subfigure}
\centering
\begin{subfigure}{0.32 \linewidth}
\caption*{\#10 Fountain}
\includegraphics[width=\linewidth]{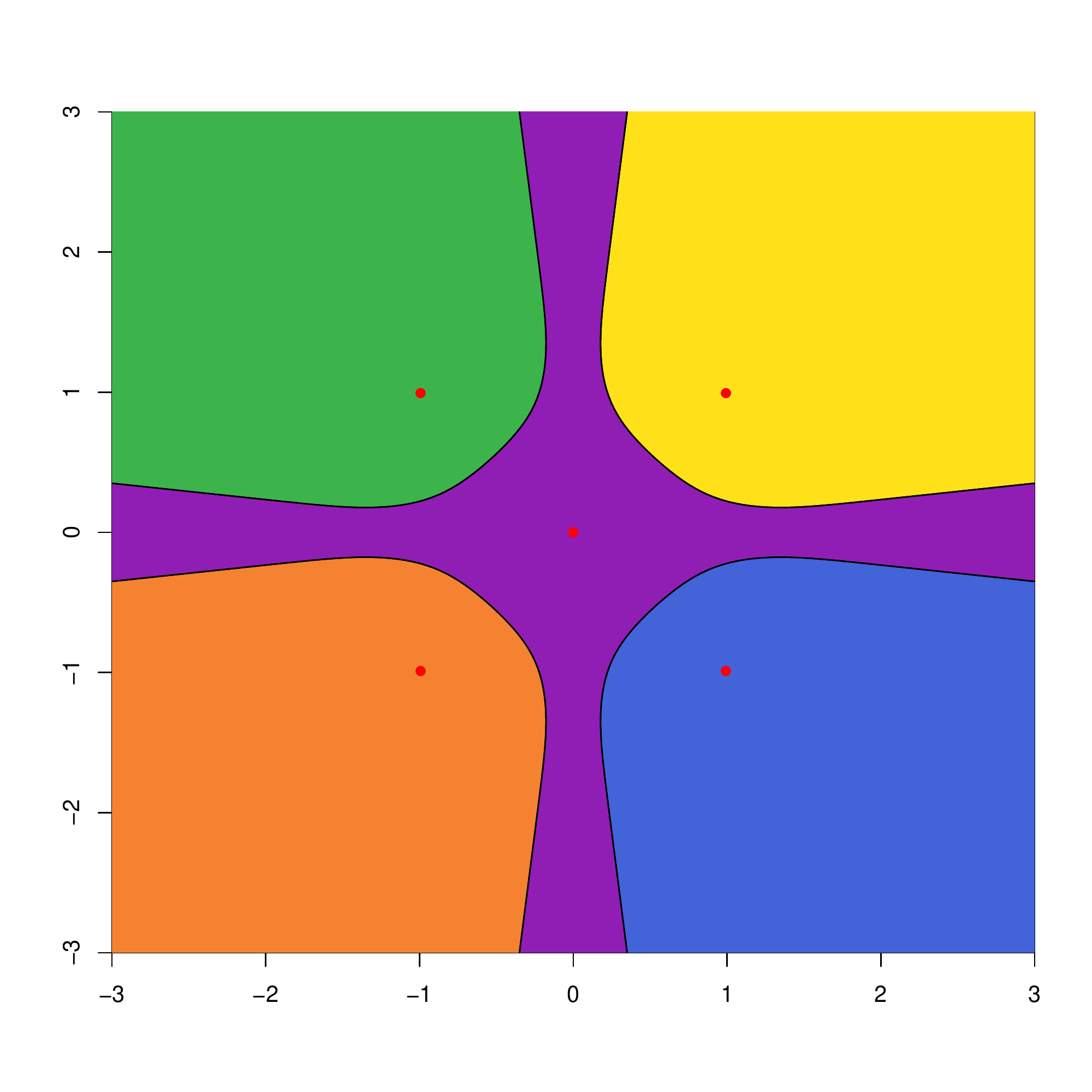}
\end{subfigure}
\end{minipage}
\begin{minipage}[c]{0.1\textwidth}
   \caption*{LLD \\ estimated \\ clusters}
\end{minipage}
\begin{minipage}[c]{0.88\textwidth}
\centering
\begin{subfigure}{0.32 \linewidth}
\centering
\includegraphics[width=\linewidth]{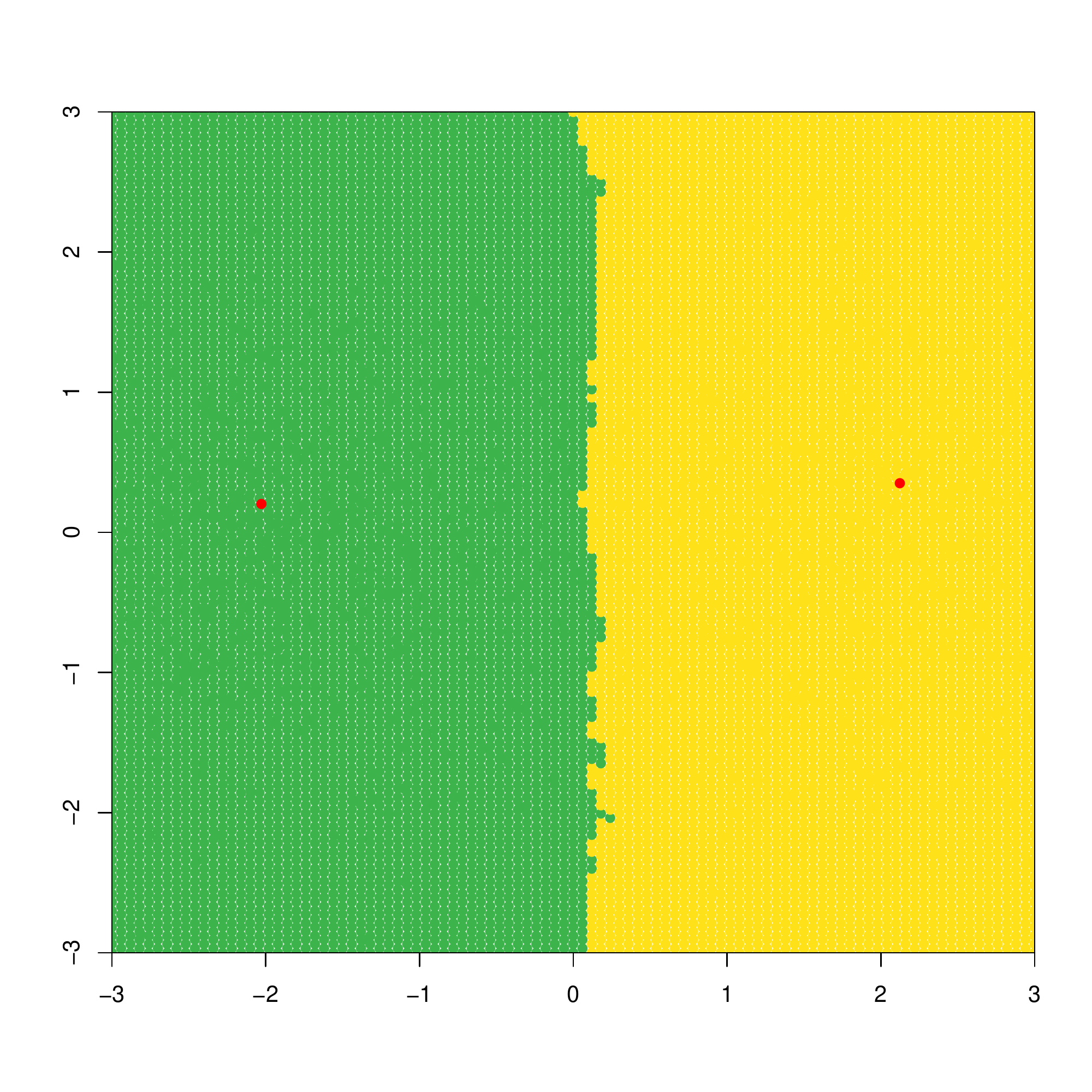}
\end{subfigure}
\begin{subfigure}{0.32 \linewidth}
\centering
\includegraphics[width=\linewidth]{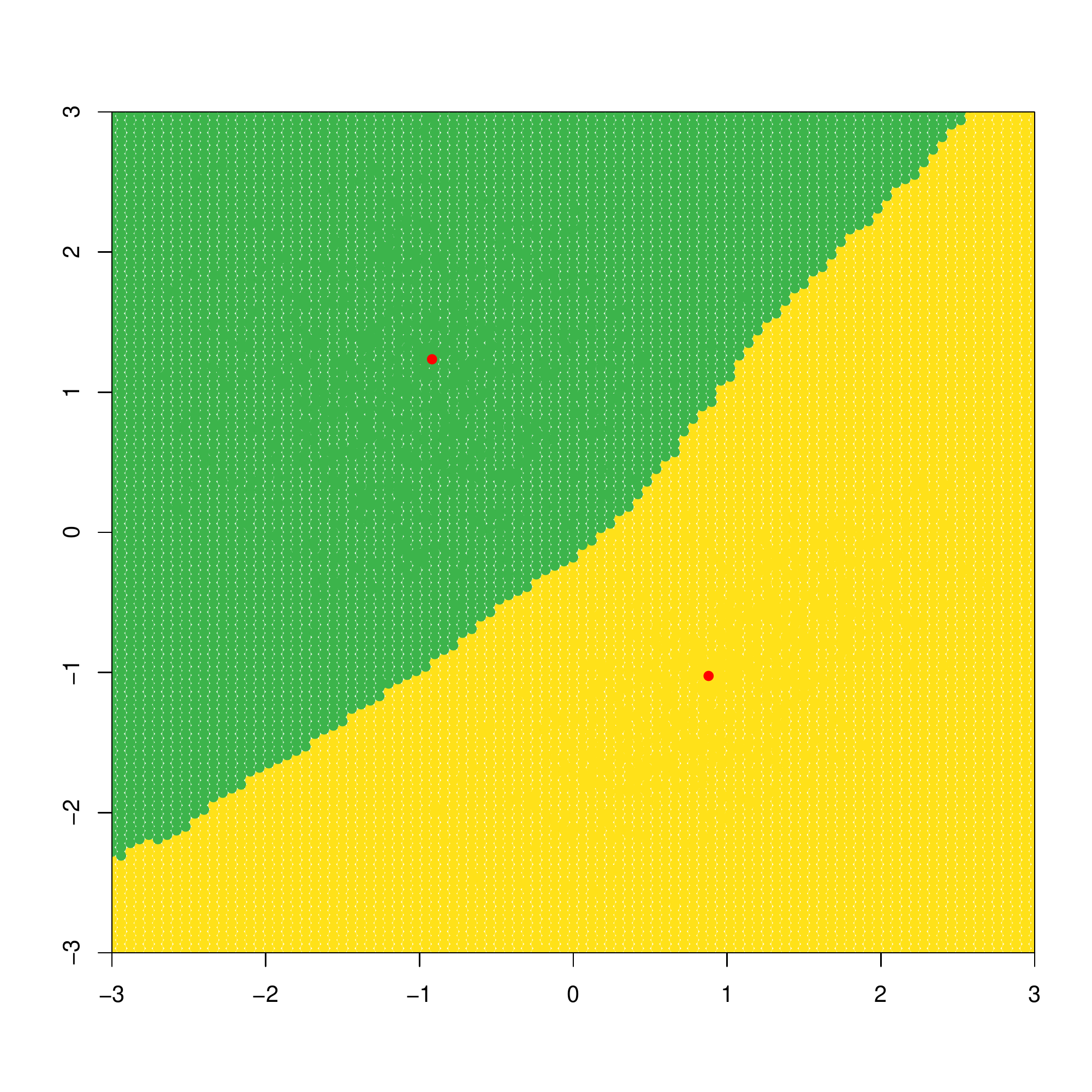}
\end{subfigure}
\begin{subfigure}{0.32 \linewidth}
\centering
\includegraphics[width=\linewidth]{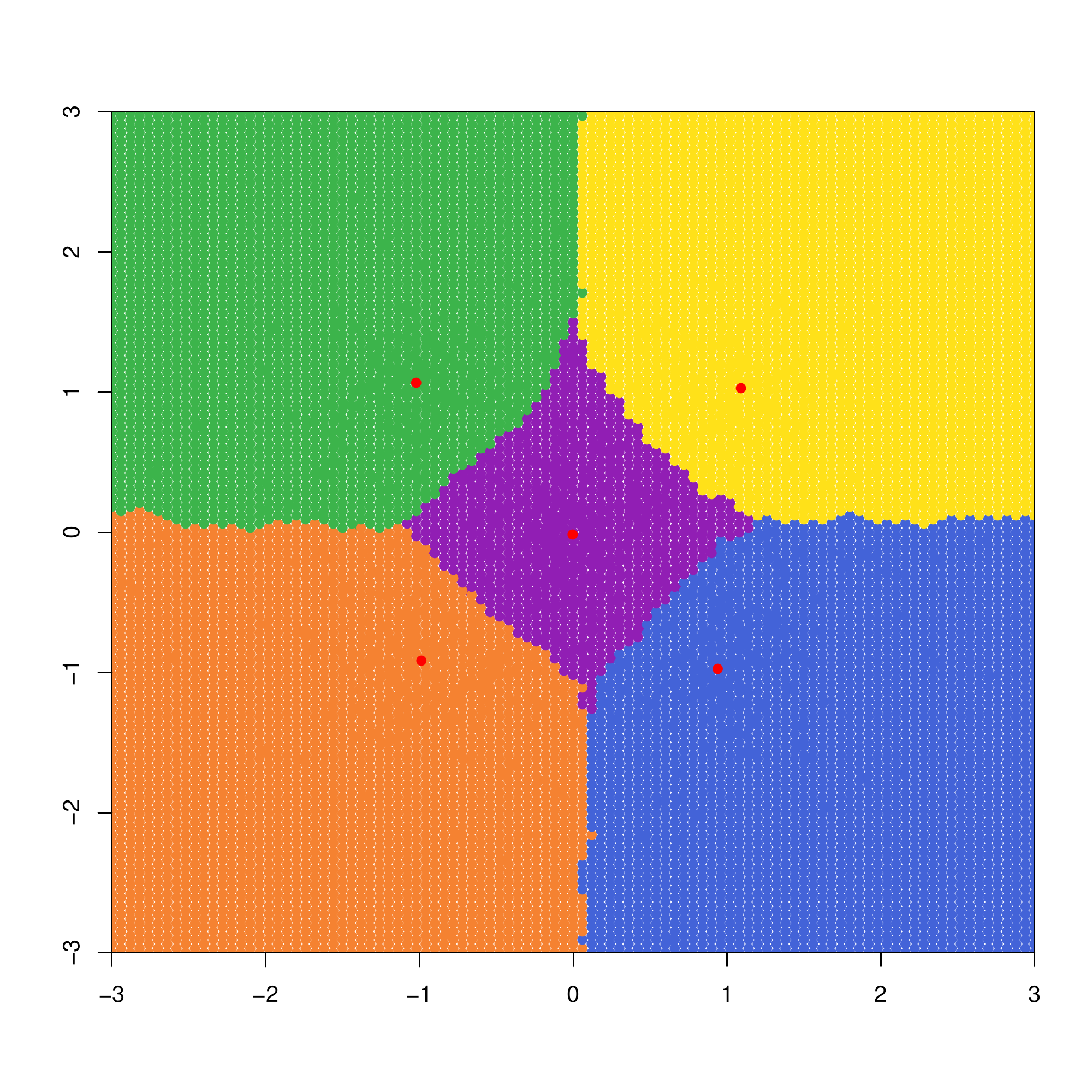}
\end{subfigure}
\end{minipage}
\begin{minipage}[c]{0.1\textwidth}
   \caption*{KDE \\ estimated \\ clusters}
\end{minipage}
\begin{minipage}[c]{0.88\textwidth}
\centering
\begin{subfigure}{0.32 \linewidth}
\centering
\includegraphics[width=\linewidth]{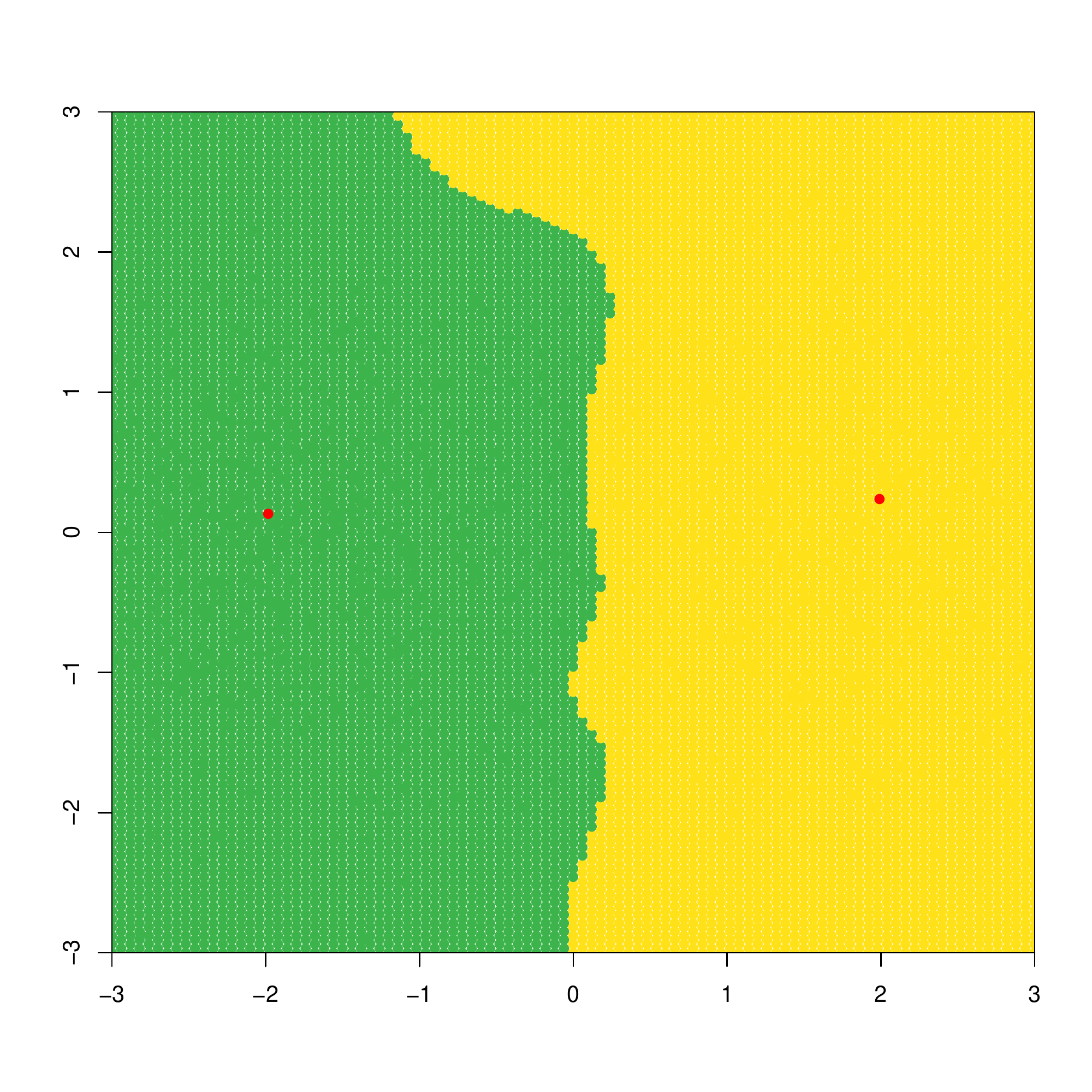}
\end{subfigure}
\begin{subfigure}{0.32 \linewidth}
\centering
\includegraphics[width=\linewidth]{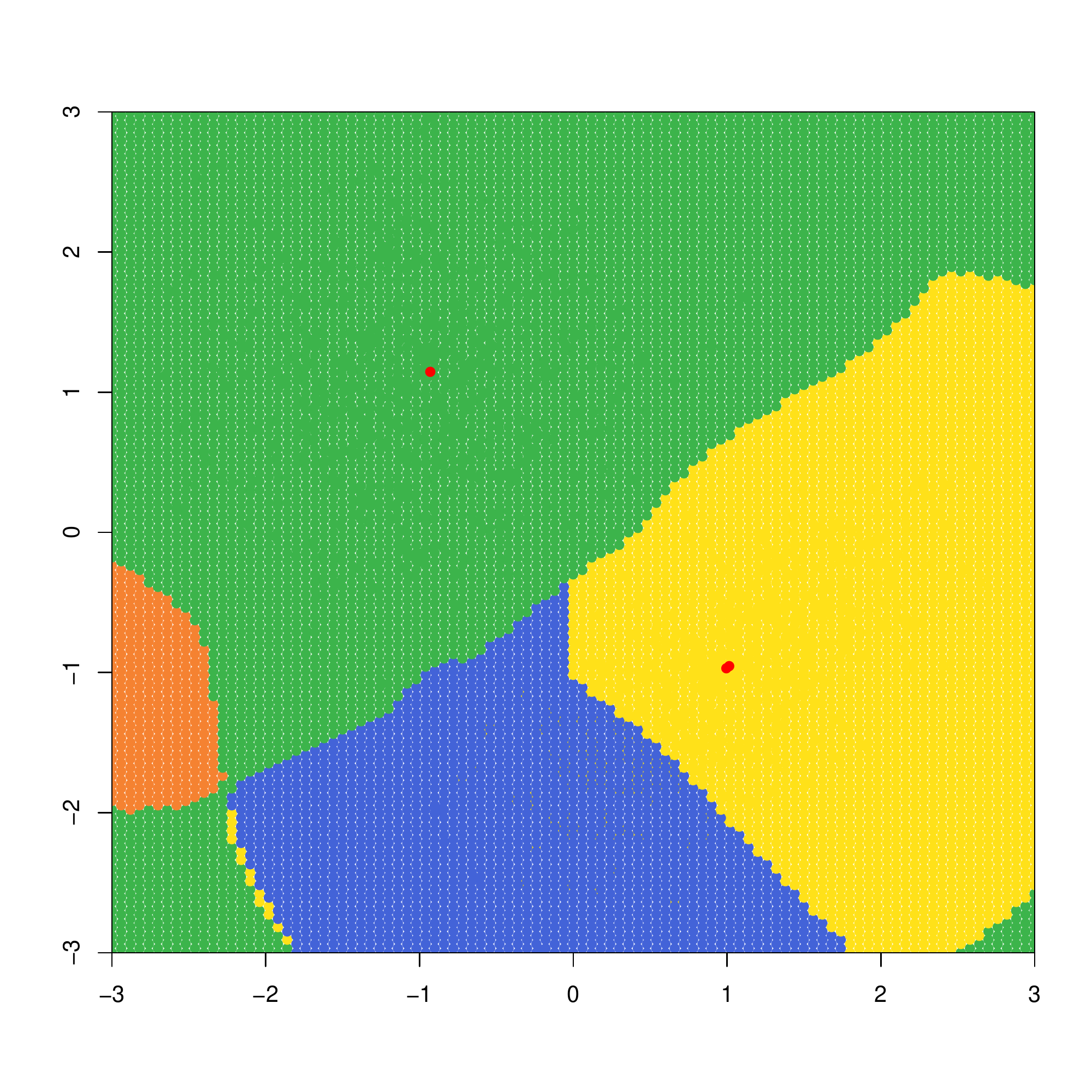}
\end{subfigure}
\begin{subfigure}{0.32 \linewidth}
\centering
\includegraphics[width=\linewidth]{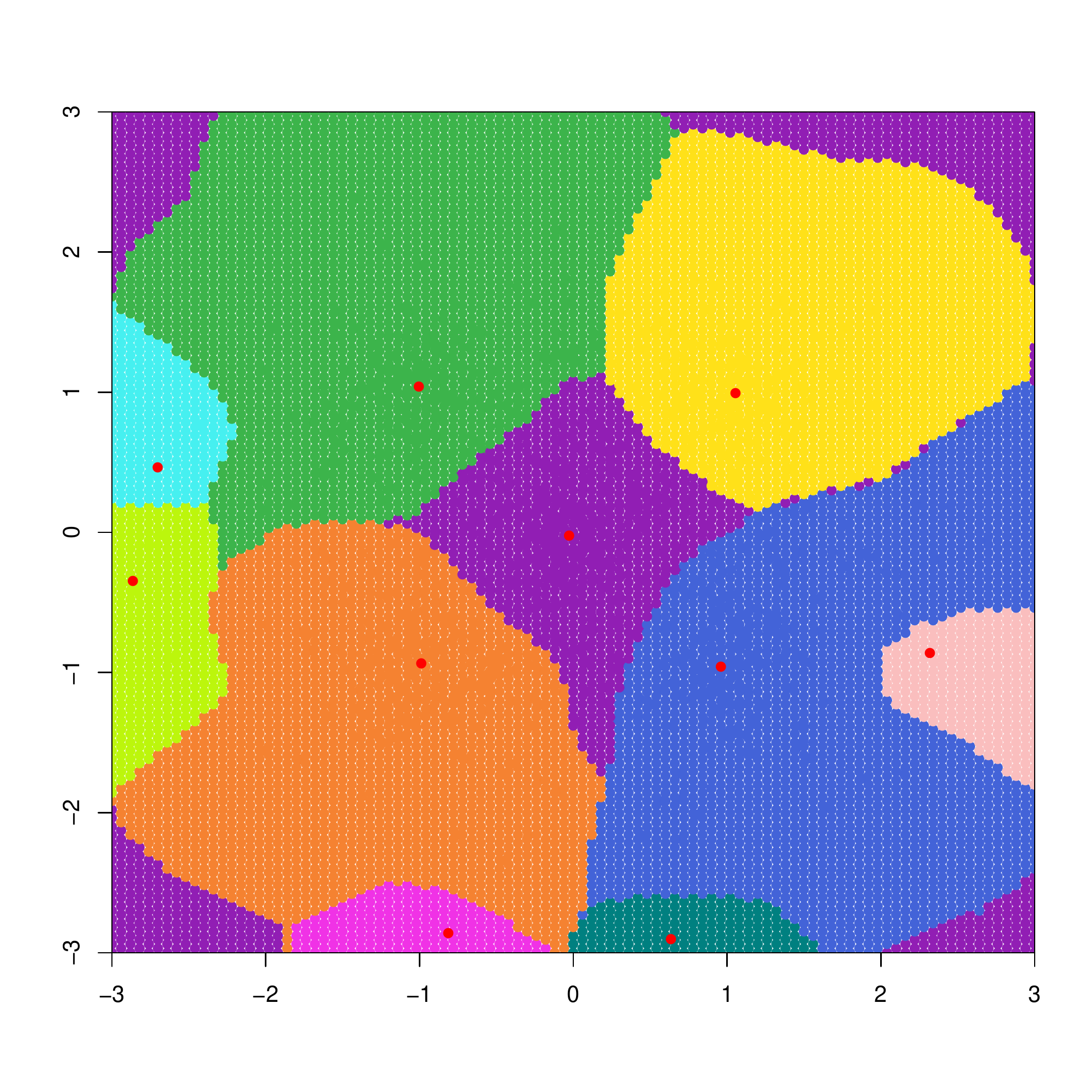}
\end{subfigure}
\end{minipage}

\caption{Clusters associated with the Bimodal (left), (H) Bimocal IV (middle) and \#10 Fountain (right) densities. True clusters (first row). Local depth clustering based on $n=1000$ samples from these densities and parameters $q=0.05$, $s=50$ and $r=0.05$ (second row). Kernel density estimator clustering (third row). The true modes (first row) and the predicted modes (second and third rows) are plotted in red.}
\label{figure_true_clusters_ldc_kde}
\end{figure}

\subsection{Numerical experiments} \label{subsection_numerical_experiments}
In this subsection, we provide simulation results of our method for identification of clusters. We evaluate the performance in three different ways:
(i) true number of  clusters identified by the algorithm, (ii) empirical Hausdorff distance between the ``true'' cluster and the estimated cluster, and (iii) empirical probability distance (see \citet{Chacon-2015}, for instance).  We recall that the symmetric difference between two clusters $C,D \subset \mathbb{R}^p$ is $C \Delta D = \left( \left( \mathbb{R}^p \setminus C \right) \cap D \right) \cup \left( C \cap \left( \mathbb{R}^p \setminus D \right) \right)$. Let $X_1,\dots, X_n$ be i.i.d.\ samples from some probability distribution $P$. The empirical probability distance between the clusters $\mathcal{C}=\{ C_1, \dots, C_l \}$ and $\mathcal{D}=\{ D_1, \dots, D_s \}$ with $l < s$ is given by
\begin{equation*}
\hat{d}_{P,\eta}(\mathcal{C},\mathcal{D}) = \frac{1}{2n} \min_{\pi \in \mathcal{P}_s} \left( \sum_{i=1}^l \sum_{j=1}^n \mathbf{I}(X_j \in C_i \Delta D_{\pi(i)}) + \eta \sum_{i=l+1}^s \sum_{j=1}^n \mathbf{I}(X_j \in D_{\pi(i)}) \right),
\end{equation*}
where $\mathcal{P}_s$ is the set of all permutations of $\{ 1, \dots, l \}$ and $\eta \geq 0$ is a penalization coefficient for clusters that do not match with any other. If $l=s$ the second term in the above expression is zero. In our numerical experiments, we choose $\eta=1$. Additional results for other values of $\eta$ are provided in Appendix \ref{sm:subsection_numerical_experiments}.

The empirical Hausdorff distance $\hat{d}_{H}(\mathcal{C},\mathcal{D}) $ is given by
\begin{equation*} \label{empirical_hausdorff_distance}
 \frac{1}{n} \max \left( \max_{i \in \{1,\dots,l\} } \min_{j \in \{1,\dots,s \} } \sum_{k=1}^n \mathbf{I}(X_k \in C_i \Delta D_j) , \max_{j \in \{1,\dots,s \} } \min_{i \in \{1,\dots,l\} } \sum_{k=1}^n \mathbf{I}(X_k \in C_i \Delta D_j) \right).
\end{equation*}

In applications, $\mathcal{C}$ is taken to be the set of true clusters while $\mathcal{D}$ is the estimated cluster, produced by the algorithm. If the estimated clusters coincide with the true clusters, then both these distances, {\it{viz}}.\ the \emph{clustering errors}, are zero. Thus, small values of these distances suggest a good performance. In this manuscript, as explained before, we consider the following distributions commonly used in the literature: Bimodal, (H) Bimodal IV, (K) Trimodal III and \#10 Fountain. To test the performance of our methodology in high dimensions, we also consider a bimodal and a quadrimodal density in dimension five. We refer to these distributions as Mult.\ Bimodal and Mult.\ Quadrimodal. Their analytic expressions are given in Appendix \ref{sm:subsection_true_clusters}. Our simulation results are based on a sample size of 1000 with 100 numerical experiments. A key ingredient for the procedure is the choice of $\tau$. We choose $\tau$ so that the corresponding quantiles $q$ are given by $0.01$, $0.05$ and $0.1$ (see Algorithm \ref{algorithm_clusters}). We compare our results with clustering based on the Kernel density estimator (KDE), on LSD, and hierarchical clustering (Hclust). The hierarchical clustering requires pre-specification of the number of clusters while the other methods do not, and it is reported here since it is one of the widely used methods for clustering. Thus, we compute it making use of the true number of clusters, which implies that the obtained results are not comparable with those of the other methodologies. For more details about the numerical implementation and the quantiles for LSD we refer to Appendix \ref{sm:section_local_simplicial_depth}. Further simulation results for these and other distributions are provided in Appendix \ref{sm:subsection_numerical_experiments}.

Table \ref{table_hausdorff_distance_and_probability_distance} provides clustering errors based on the Hausdorff distance and the probability distance. The best results are highlighted in bold. From these results it is clear that clustering errors based on the proposed LLD are in general much smaller than those based on the KDE, Hclust, and LSD. The improvement is substantial especially in the high-dimensional case as can be seen from the columns Mult.\ Bimodal and Mult.\ Quadrimodal. Table \ref{table_number_times} provides a comparison of the number of times the correct number of clusters is detected. The number of times the procedure identifies a lower number of clusters (on the left) and a higher number of clusters (on the right) is also provided. Again we notice that the proposed methods outperforms the competitors. It is possible to improve the performance of LSD for distributions in dimension 5, by choosing smaller values of $q$, as described in Subsection \ref{subsection_data_analysis}.

\begin{ThreePartTable}
\begin{TableNotes}[para,raggedright]
\item[*] \label{notetrueclusters} The true number of clusters is given in input.
\item[1] \label{note:q0.1:s30} $q=0.1$, $s=30$.
\item[2] \label{note:q0.01:s30} $q=0.01$, $s=30$.
\item[3] \label{note:q0.1:s50} $q=0.1$, $s=50$.
\item[4] \label{note:q0.05:s30} $q=0.05$, $s=30$.
\end{TableNotes}
\begin{tabularx}{10cm}{ | p{2.6cm} | p{3.5cm} | p{3.5cm} | p{3.5cm} | }
\hline
\multicolumn{4}{|c|}{\textbf{Clustering errors (Hausdorff distance)}} \\
\hline
& (H) Bimodal IV & (K) Trimodal III & \#10 Fountain \\
\hline
KDE & 0.19 (0.21) & 0.10 (0.11) & 0.12 (0.06) \\
\hline
LLD \tnotex{note:q0.1:s30} & {\bf{0.05 (0.10)}} & {\bf{0.01 (0.15)}} & {\bf{0.06 (0.01)}} \\
\hline
LSD \tnotex{note:q0.01:s30} & {\bf{0.05 (0.11)}} & 0.10 (0.15) & {\bf{0.06 (0.01)}} \\
\hline
Hclust \tnotex{notetrueclusters} & {\bf{0.05 (0.09)}} & 0.15 (0.09) & 0.29 (0.05) \\
\hline
& Bimodal & Mult.\ Bimodal & Mult.\ Quadrimodal \\
\hline
KDE & 0.09 (0.15) & 0.20 (0.21) & 0.09 (0.09) \\
\hline
LLD \tnotex{note:q0.1:s50} & 0.01 (0.03) & {\bf{0.01 (0.04)}} & {\bf{0.02 (0.01)}} \\
\hline
LSD \tnotex{note:q0.05:s30} & {\bf{0.00 (0.00)}} & 0.23 (0.18) & 0.38 (0.18) \\
\hline
Hclust \tnotex{notetrueclusters} & 0.06 (0.05) &  0.05 (0.03) & 0.07 (0.03) \\
\hline
\multicolumn{4}{|c|}{\textbf{Clustering errors (distance in probability)}} \\
\hline
& (H) Bimodal IV & (K) Trimodal III & \#10 Fountain \\
\hline
KDE & 0.37 (0.41) & 0.08 (0.08) & 0.42 (0.36) \\
\hline
LLD \tnotex{note:q0.1:s30} & 0.13 (0.28) & {\bf{0.06 (0.07)}} & {\bf{0.06 (0.01) }} \\
\hline
LSD \tnotex{note:q0.01:s30} & 0.12 (0.27) & 0.07 (0.09) & {\bf{0.06 (0.01)}} \\
\hline
Hclust \tnotex{notetrueclusters} & {\bf{0.05 (0.09)}} & 0.16 (0.09) & 0.35 (0.07) \\
\hline
& Bimodal & Mult.\ Bimodal & Mult.\ Quadrimodal \\
\hline
KDE & 0.05 (0.09) & 0.08 (0.12) & 0.31 (0.36) \\
\hline
LLD \tnotex{note:q0.1:s50} & 0.01 (0.02) & {\bf{0.01 (0.01) }} & {\bf{0.03 (0.01) }} \\
\hline
LSD \tnotex{note:q0.05:s30} & {\bf{0.00 (0.00) }} & 0.20 (0.17) & 0.45 (0.17) \\
\hline
Hclust \tnotex{notetrueclusters} & 0.06 (0.05) & 0.05 (0.03) & 0.10 (0.04) \\
\hline
\insertTableNotes \\
\caption{Mean of the clustering errors based on the Hausdorff distance and the distance in probability for the densities (H) Bimodal IV, (K) Trimodal III, \#10 Fountain, Bimodal, Mult.\ Bimodal and Mult.\ Quadrimodal. In parentheses the standard deviation. The true number of clusters is specified as input for the hierarchical clustering algorithm.} \label{table_hausdorff_distance_and_probability_distance}
\end{tabularx}
\end{ThreePartTable}

\begin{ThreePartTable}
\begin{tabularx}{10cm}{ | p{2.6cm} | p{3.5cm} | p{3.5cm} | p{3.5cm} | }
\hline
\multicolumn{4}{|c|}{\textbf{Number of times the true clusters are detected correctly}} \\
\hline
& (H) Bimodal IV & (K) Trimodal III  & \#10 Fountain \\
\hline
KDE & (0) 54 (46) & (5) 60 (35) & (0) 47 (53) \\
\hline
LLD \tnotex{note:q0.1:s30} & (0) 83 (17) & {\bf{(14) 79 (7)}} & {\bf{(0) 100 (0)}} \\
\hline
LSD \tnotex{note:q0.01:s30} & {\bf{(0) 85 (15)}} & (13) 75 (12) & {\bf{(0) 100 (0)}} \\
\hline
& Bimodal & Mult.\ Bimodal & Mult.\ Quadrimodal \\
\hline
KDE & (0) 76 (24) & (0) 56 (44) & (0) 63 (37) \\
\hline
LLD \tnotex{note:q0.1:s50} & (0) 99 (1) & {\bf{(0) 99 (1)}} & {\bf{(0) 100 (0)}} \\
\hline
LSD \tnotex{note:q0.05:s30} & {\bf{(0) 100 (0)}} & (12) 63 (25) & (77) 18 (5) \\
\hline
\caption{Number of times that the procedure identifies the true number of clusters for the densities (H) Bimodal IV, (K) Trimodal III , \#10 Fountain, Bimodal, Mult.\ Bimodal and Mult.\ Quadrimodal. In parentheses the number of times the procedure identifies a lower number of clusters (on the left) and a higher number of clusters (on the right). } \label{table_number_times}
\end{tabularx}
\end{ThreePartTable}

\subsection{Data analysis} \label{subsection_data_analysis}

In this subsection, we evaluate the performance of our methodology on two datasets taken from the UCI machine learning repository (\url{http://archive.ics.uci.edu/ml/}), namely, the Iris dataset and the Seeds dataset. As is well-known, the Iris dataset consists of $n=150$ observations from three classes (Iris Setosa, Iris Versicolour, and Iris Virginica) with four measurements each (sepal length, sepal width, petal length, and petal width). We compare our results to those based on KDE and Hclust. Our algorithm, based on both lens and simplicial depth, correctly identifies all three clusters (see Table \ref{table_iris_and_seeds_data}); furthermore, the Hausdorff distance and probability distance from our algorithm are smaller than those of the competitors.

Seeds dataset consists of $n=210$ observations concerning three varieties of wheat; namely, Kama, Rosa and Canadian. High quality visualization of the internal kernel structure was detected using a soft X-ray technique and seven geometric parameters of wheat kernels were recorded. They are area, perimeter, compactness, length of kernel,  width of kernel, asymmetry coefficient, and length of kernel groove. All of these geometric parameters were continuous and real-valued. Table \ref{table_iris_and_seeds_data} contains the results of our analysis. The best results are highlighted in bold and correspond to LLD. We notice that both of our methods, LLD and LSD, correctly identify the true number of clusters. 

It is worth mentioning here that Hclust was given as input the true number of clusters, three, as required by this methodology. However, the Haudorff distance and probability distance of our proposed methods are smaller than those of Hclust. KDE, in both the examples, overestimates the true number of clusters.

\begin{ThreePartTable}
\begin{TableNotes}[para,raggedright]
\item[5] \label{note:q0.0001:s20} $q=10^{-4}$, $s=20$.
\item[6] \label{note:q0.00001:s20} $q=10^{-5}$, $s=20$.
\end{TableNotes}
\begin{tabularx}{10cm}{ | p{2.6cm} | p{3.5cm} | p{3.5cm} | p{3.5cm} | }
\hline
\multicolumn{4}{|c|}{\textbf{Clustering errors for Iris data}} \\
\hline
& Number of clusters & Distance in prob.\ & Hausdorff distance \\
\hline
KDE & 7 & 0.37 & 0.30 \\
\hline
LLD \tnotex{note:q0.05:s30} & {\bf{3}} & {\bf{0.10}} & {\bf{0.10}} \\
\hline
LSD \tnotex{note:q0.0001:s20} & {\bf{3}} & {\bf{0.10}} & {\bf{0.10}}  \\
\hline
Hclust \tnotex{notetrueclusters} &  & 0.16 & 0.16 \\
\hline
\multicolumn{4}{|c|}{\textbf{Clustering errors for Seeds data}} \\
\hline
& Number of clusters & Distance in prob.\ & Hausdorff distance \\
\hline
KDE & 23 & 0.70 & 0.33 \\
\hline
LLD \tnotex{note:q0.05:s30} & {\bf{3}} & {\bf{0.10}} & {\bf{0.10}} \\
\hline
LSD \tnotex{note:q0.00001:s20} & {\bf{3}} & 0.17 & 0.17 \\
\hline
Hclust \tnotex{notetrueclusters}  &  & 0.20 & 0.20 \\
\hline
\insertTableNotes \\
\caption{Mean of the clustering errors based on the Hausdorff distance and distance in probability for the Iris and Seeds data. The true number of clusters is specified as input for the hierarchical clustering algorithm.} \label{table_iris_and_seeds_data}
\end{tabularx}
\end{ThreePartTable}

\section{Concluding Remarks} \label{section_concluding_remarks}
We have studied the properties of local depth in the multivariate setting. Our results include uniform convergence and uniform central limit theorem for the sample local depth. Based on the local depth, we proposed a population and sample approximation of the density that converges uniformly and uniformly almost surely to the true density. We made use of it in a modal clustering approach (via a gradient system) where the density is replaced by the population approximation. Convergence results show that the approximated approach provides, in the limit, the same clusters as those given by the true density. In particular, we have shown that, for symmetric distributions, our approximated approach correctly detects the true modes. Finally, we proposed an algorithm for the numerical computation of the clusters at sample level. 

Yet another work on local depth is that of \citet{Paindaveine-2013} and is based on the concept of symmetric neighborhoods. It is legitimate to wonder if the methods proposed in the current paper go over to this alternative notion. While we believe such an extension is feasible, we have not pursued all the technical details required for such analysis and is left for future research. Additionally, it will be interesting to study a smoothed version of the sample approximation and replace the gradient system by the stochastic differential equation based on this approximation, and consequently study the convergence of the corresponding solutions.

\medspace

\appendix
\renewcommand{\thesection}{\Alph{section}}

\section{Proofs} \label{section_proofs}

We begin this section summarizing relevant properties of $Z_{\tau}(x)=\{ (y_1, y_2) \in \mathbb{R}^p \times \mathbb{R}^p \, : \max_{ i=1,2} \norm{x-y_i} \leq \norm{y_1 - y_2} \leq \tau \}$ that are required in the proof of several main results on the paper. In the proofs  we will use $P^{\otimes k}$ for the $k$-fold product measure.

\begin{lemma} \label{lemma_interior_exterior_of_z_tau_x}
For $x \in \mathbb{R}^p$ and $\tau \in [0,\infty]$, $Z_{\tau}(x)$ is a closed set and its interior, exterior and boundary are given by
\begin{equation*}
\begin{split}
    \mathring{Z}_{\tau}(x) &= \{ (y_1, y_2) \in \mathbb{R}^p \times \mathbb{R}^p \, : \, \max_{i=1,2} \norm{x-y_i} < \norm{y_1-y_2} < \tau \} \\
    Z^{e}_{\tau}(x) &= \{ (y_1, y_2) \in \mathbb{R}^p \times \mathbb{R}^p \, : \, \norm{y_1-y_2} > \tau \} \\
    & \cup \{ (y_1, y_2) \in \mathbb{R}^p \times \mathbb{R}^p \, : \, \max_{i=1,2} \norm{x-y_i} > \norm{y_1-y_2} \le \tau \} \\
    \partial Z_{\tau}(x) &= \{ (y_1, y_2) \in \mathbb{R}^p \times \mathbb{R}^p \, : \, \max_{i=1,2} \norm{x-y_i} \le \norm{y_1-y_2} = \tau \} \\
    & \cup \{ (y_1, y_2) \in \mathbb{R}^p \times \mathbb{R}^p \, : \, \max_{i=1,2} \norm{x-y_i} = \norm{y_1-y_2} < \tau \}.
\end{split}
\end{equation*}
\end{lemma}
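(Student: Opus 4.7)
The plan is to exploit the continuity of the functions $(y_1,y_2)\mapsto \|x-y_i\|$ and $(y_1,y_2)\mapsto \|y_1-y_2\|$ on $\mathbb{R}^p\times\mathbb{R}^p$, which makes this essentially a routine topological exercise. For closedness, I would write $Z_\tau(x)$ as the intersection of three preimages of closed half-lines: $\{\|x-y_1\|\le \|y_1-y_2\|\}$, $\{\|x-y_2\|\le \|y_1-y_2\|\}$, and $\{\|y_1-y_2\|\le \tau\}$. Each is closed since it is the preimage of $(-\infty,0]$ under a continuous function, and the intersection of closed sets is closed.

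For the interior, let $A_\tau(x) \coloneqq \{(y_1,y_2) : \max_i \|x-y_i\| < \|y_1-y_2\| < \tau\}$. First I would show $A_\tau(x)\subset \mathring{Z}_\tau(x)$: if $(y_1,y_2)\in A_\tau(x)$, set $\delta \coloneqq \min(\tau-\|y_1-y_2\|,\|y_1-y_2\|-\max_i\|x-y_i\|)/3>0$, and by uniform continuity of the norm there is a neighborhood $U$ of $(y_1,y_2)$ on which $\big|\|y_1'-y_2'\|-\|y_1-y_2\|\big|<\delta$ and $\big|\|x-y_i'\|-\|x-y_i\|\big|<\delta$, so the defining (non-strict) inequalities continue to hold on $U$ and $U\subset Z_\tau(x)$. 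For the reverse inclusion $\mathring{Z}_\tau(x)\subset A_\tau(x)$, if a defining inequality holds with equality, I would perturb $(y_1,y_2)$ along an appropriate direction (e.g.\ shift both coordinates outward from $x$ or along $y_1-y_2$) to produce a sequence in the complement converging to the point, showing it is not interior.

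For the exterior $Z_\tau^e(x)$, the claim decomposes naturally into the two open sets $\{\|y_1-y_2\|>\tau\}$ and $\{\max_i\|x-y_i\|>\|y_1-y_2\|\}$ (intersected with $\{\|y_1-y_2\|\le\tau\}$ in the statement to keep the cases disjoint, but their union is the same as dropping that clause since any point with $\|y_1-y_2\|>\tau$ is already in the first set). Each of these is open by continuity, and their union is exactly the complement of $Z_\tau(x)$: a point fails to lie in $Z_\tau(x)$ iff either $\|y_1-y_2\|>\tau$ or some $\|x-y_i\|>\|y_1-y_2\|$.

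Finally, the boundary is $\partial Z_\tau(x)=\overline{Z_\tau(x)}\setminus \mathring{Z}_\tau(x)=Z_\tau(x)\setminus A_\tau(x)$ since $Z_\tau(x)$ is closed; this leaves exactly the points where at least one of the non-strict defining inequalities is an equality, which is the stated union. There is no real obstacle; the only mild care is in the interior direction for points where $\|y_1-y_2\|=\max_i\|x-y_i\|$, where the approximating sequence in the complement must be chosen to break precisely the binding inequality (e.g.\ translating $y_i$ slightly away from the segment joining $y_1$ and $y_2$).
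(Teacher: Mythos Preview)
Your proposal is correct and relies on the same underlying tool as the paper---continuity of the norm---but the organization differs. The paper proves closedness by a direct sequential argument, then computes the exterior as the complement, then derives the boundary as $Z_\tau(x)\cap\overline{Z^e_\tau(x)}$, and finally obtains the interior as $Z_\tau(x)\setminus\partial Z_\tau(x)$. You instead prove closedness via preimages (a bit cleaner), attack the interior directly by showing both inclusions, and read off the boundary last. The substantive step is identical in both: one must show that any point where a defining inequality holds with equality is approximable by points outside $Z_\tau(x)$. You make this perturbation explicit; the paper hides exactly the same argument inside its unproved formula for $\overline{Z^e_\tau(x)}$. Neither route has an advantage beyond taste; your version is arguably more transparent about where the work lies.
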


\begin{proof}
For the closeness, let $(y_{1,n}, y_{2,n}) \in Z_{\tau}(x)$ and suppose that $(y_{1,n}, y_{2,n})  \xrightarrow[ n \rightarrow \infty ]{} (y_{1}, y_{2})$. We will show that $(y_{1}, y_{2}) \in Z_{\tau}(x)$. Since $\max_{ i=1,2} \norm{x-y_{i,n}}$ $\leq \norm{y_{1,n} - y_{2,n}} \leq \tau$, for all $n \in \mathbb{N}$, it follows that $\lim_{n \to \infty} \max_{ i=1,2} \norm{x-y_{i,n}} \leq \lim_{n \to \infty} \norm{y_{1,n} - y_{2,n}} \leq \tau$. Using the continuity of the norm we get
$\max_{ i=1,2} \norm{x-y_i}$ $\leq \norm{y_1 - y_2} \leq \tau$, i.e.\ $(y_1,y_2) \in Z_{\tau}(x)$. Next, notice that
\begin{equation*}
\begin{split}
    Z^{e}_{\tau}(x) &= ( \mathbb{R}^p \times \mathbb{R}^p ) \setminus \overline{Z_{\tau}(x)} = ( \mathbb{R}^p \times \mathbb{R}^p ) \setminus Z_{\tau}(x) \\
    &= \{ (y_1, y_2) \in \mathbb{R}^p \times \mathbb{R}^p \, : \, \norm{y_1-y_2} > \tau \, \lor \, \max_{i=1,2} \norm{x-y_i} > \norm{y_1-y_2} \le \tau \} \\
    &= \{ (y_1, y_2) \in \mathbb{R}^p \times \mathbb{R}^p \, : \, \norm{y_1-y_2} > \tau \} \\
    & \cup \{ (y_1, y_2) \in \mathbb{R}^p \times \mathbb{R}^p \, : \, \max_{i=1,2} \norm{x-y_i} > \norm{y_1-y_2} \le \tau \}.
\end{split}
\end{equation*}
Now, by the definition of the boundary we have that
\begin{equation*}
\partial Z_{\tau}(x) = \overline{Z_{\tau}(x)} \setminus \overline{ \left( ( \mathbb{R}^p \times \mathbb{R}^p ) \setminus Z_{\tau}(x) \right) } = Z_{\tau}(x) \cap \overline{Z^{e}_{\tau}(x)}
\end{equation*}
where
\begin{equation*}
\begin{split}
\overline{Z^{e}_{\tau}(x)} &= \{ (y_1, y_2) \in \mathbb{R}^p \times \mathbb{R}^p \, : \, \norm{y_1-y_2} \geq \tau \} \\
    & \cup \{ (y_1, y_2) \in \mathbb{R}^p \times \mathbb{R}^p \, : \, \max_{i=1,2} \norm{x-y_i} \geq \norm{y_1-y_2} \le \tau \}.
\end{split}
\end{equation*}
Therefore,
\begin{equation*}
\begin{split}
    \partial Z_{\tau}(x)
    &= \{ (y_1, y_2) \in \mathbb{R}^p \times \mathbb{R}^p \, : \, \max_{i=1,2} \norm{x-y_i} \le \norm{y_1-y_2} = \tau \} \\
    & \cup \{ (y_1, y_2) \in \mathbb{R}^p \times \mathbb{R}^p \, : \, \max_{i=1,2} \norm{x-y_i} = \norm{y_1-y_2} \leq \tau \} \\
    &= \{ (y_1, y_2) \in \mathbb{R}^p \times \mathbb{R}^p \, : \, \max_{i=1,2} \norm{x-y_i} \le \norm{y_1-y_2} = \tau \} \\
    & \cup \{ (y_1, y_2) \in \mathbb{R}^p \times \mathbb{R}^p \, : \, \max_{i=1,2} \norm{x-y_i} = \norm{y_1-y_2} < \tau \}.
\end{split}
\end{equation*}
Finally, the interior of $Z_{\tau}(x)$ is
\begin{equation*}
\begin{split}
\mathring{Z}_{\tau}(x) &= Z_{\tau}(x) \setminus \partial Z_{\tau}(x) \\
&= \{ (y_1, y_2) \in \mathbb{R}^p \times \mathbb{R}^p \, : \, \max_{i=1,2} \norm{x-y_i} < \norm{y_1-y_2} < \tau \}.
\end{split}
\end{equation*}
\end{proof} \\

\noindent The next lemma deals with symmetry properties of the set $Z_{\tau}(x)$. Its proof follows directly from the definition of $Z_{\tau}(x)$ and it is omitted.

\begin{lemma} \label{lemma_symmetries_of_z_tau_x}
For $x \in \mathbb{R}^p$ and $\tau \in [0,\infty]$, $Z_{\tau}(x)$ satisfies the following symmetry conditions
\begin{equation} \label{symmetry1_of_z_tau_x}
	(x,x) + (x_1,x_2) \in Z_{\tau}(x) \Longleftrightarrow (x,x) - (x_1,x_2) \in Z_{\tau}(x)
\end{equation}
\begin{equation} \label{symmetry2_of_z_tau_x}
	(x,x) + (x_1,x_2) \in Z_{\tau}(x) \Longleftrightarrow (x,x) + (x_2,x_1) \in Z_{\tau}(x).
\end{equation}
In particular, \eqref{symmetry1_of_z_tau_x} and \eqref{symmetry2_of_z_tau_x} imply that
\begin{equation} \label{symmetry3_of_z_tau_x}
	(x,x) + (x_1,x_2) \in Z_{\tau}(x) \Longleftrightarrow (x,x) - (x_2,x_1) \in Z_{\tau}(x).
\end{equation}
\end{lemma}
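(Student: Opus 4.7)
The plan is to unpack the membership condition in terms of the displaced vectors and observe that the defining inequalities are manifestly invariant under the required symmetries. Specifically, by the definition of $Z_{\tau}(x)$, the statement $(x,x)+(x_1,x_2)\in Z_{\tau}(x)$ is equivalent to
\begin{equation*}
\max(\|x_1\|,\|x_2\|)\le \|x_1-x_2\|\le \tau,
\end{equation*}
so both symmetries reduce to checking that this condition is preserved under the two transformations on $(x_1,x_2)$.

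First I would verify \eqref{symmetry1_of_z_tau_x}: replacing $(x_1,x_2)$ by $(-x_1,-x_2)$ leaves $\|x_i\|$ unchanged for $i=1,2$ and leaves $\|x_1-x_2\|=\|(-x_1)-(-x_2)\|$ unchanged, so the displayed condition holds for one iff it holds for the other. Next I would verify \eqref{symmetry2_of_z_tau_x}: swapping $(x_1,x_2)\leftrightarrow (x_2,x_1)$ preserves $\max(\|x_1\|,\|x_2\|)$ and, by $\|x_1-x_2\|=\|x_2-x_1\|$, also preserves the middle term; again the condition is unchanged.

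Finally, \eqref{symmetry3_of_z_tau_x} is obtained by composing the two equivalences: starting from $(x,x)+(x_1,x_2)\in Z_{\tau}(x)$, apply \eqref{symmetry2_of_z_tau_x} to obtain $(x,x)+(x_2,x_1)\in Z_{\tau}(x)$, and then apply \eqref{symmetry1_of_z_tau_x} with the roles of the components played by $(x_2,x_1)$ to conclude $(x,x)-(x_2,x_1)\in Z_{\tau}(x)$. Since every step is an equivalence, the chain reverses, giving the stated biconditional.

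There is no real obstacle here: the lemma is a direct consequence of the Euclidean norm being invariant under negation and under swapping its two arguments, together with the fact that the parameter $\tau$ and the vertex $x$ enter the definition only through a translation by $(x,x)$ and an upper bound on $\|y_1-y_2\|$. The only thing to be careful about is to record the reductions symbolically rather than argue informally, since these identities will be invoked later for analogous local sets (lenses, spherical, simplicial, $\beta$-skeleton) where the same invariance pattern holds.
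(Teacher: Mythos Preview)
Your proof is correct and matches the paper's approach: the paper simply states that the lemma follows directly from the definition of $Z_{\tau}(x)$ and omits the details, and your argument is exactly the explicit verification one would write out.
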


\noindent We now proceed with the proofs of the results in the previous sections. \\

\begin{proof}[Proof of Proposition \ref{proposition_local_depth}]
  We start by proving (i). For the monotonicity, observe that for $\nu \ge \tau$, $Z_{\nu}(x) \supset Z_{\tau}(x)$ and therefore $LLD(x,\nu) \ge LLD(x,\tau)$. Now, let $\{ \tau_n \}_{n=1}^{\infty}$ be a sequence of scalars converging to $\tau$ from above. Then, since $Z_{\tau}(x)$ is closed,
 \begin{equation*}
 \lim_{n \to \infty} \mathbf{I}( (x_1, x_2) \in Z_{\tau_n}(x)) = \mathbf{I}( (x_1, x_2) \in Z_{\tau}(x)).
 \end{equation*}
Hence, by Lebesgue dominated convergence Theorem (LDCT) we get that
\begin{equation*}
\begin{split}
\lim_{n \to \infty} LLD(x,\tau_n) &= \int \lim_{n \to \infty} \mathbf{I}( (x_1, x_2) \in Z_{\tau_n}(x)) \, dP(x_1) dP(x_2) \\
&= \int \mathbf{I}( (x_1, x_2) \in Z_{\tau}(x)) \, dP(x_1) dP(x_2) = LLD(x,\tau).
\end{split}
\end{equation*}
In particular, for $\tau=0$, it follows that
\begin{equation*}
\lim_{n \to \infty} LLD(x,\tau_n) = LLD(x,0) = P^{2}(\{ x \}).
\end{equation*}
Similarly, if $\{ \tau_n \}_{n=1}^{\infty}$ diverges to $\infty$, then
\begin{equation*}
\begin{split}
\lim_{n \to \infty} LLD(x,\tau_n) &= \int \lim_{n \to \infty} \mathbf{I}( (x_1, x_2) \in Z_{\tau_n}(x)) \, dP(x_1) dP(x_2) \\
&= \int \mathbf{I}( (x_1, x_2) \in Z(x)) \, dP(x_1) dP(x_2) = LD(x).
\end{split}
\end{equation*}
For (ii) observe that
\begin{equation*}
	\sup_{x \in \mathbb{R}^p \, : \, \norm{x} \geq M } LLD(x,\tau) \leq \int \sup_{x \in \mathbb{R}^p \, : \, \norm{x} \geq M } \mathbf{I}( (x_1, x_2) \in Z_{\tau}(x)) \, dP(x_1) \, dP(x_2).
\end{equation*}
Since for $(x_1,x_2) \in Z_{\tau}(x)$, it follows that
\begin{equation*}
  \norm{x} \leq \max_{i=1,2} \left( \norm{x_i} + \norm{x_i-x} \right) \leq \max_{i=1,2} \norm{x_i} + \norm{x_1-x_2} \leq 3 \max_{i=1,2} \norm{x_i},
\end{equation*}
we have that
\begin{align*}
  \sup_{x \in \mathbb{R}^p \, : \, \norm{x} \geq M } \mathbf{I}( (x_1, x_2) \in Z_{\tau}(x)) \leq \sup_{x \in \mathbb{R}^p \, : \, \norm{x} \geq M } \mathbf{I}( \norm{x} &\leq 3 \max_{i=1,2} \norm{x_i} ) \leq \mathbf{I}( M \leq 3 \max_{i=1,2} \norm{x_i}),
\end{align*}
where $\mathbf{I}( M \leq 3 \max_{i=1,2} \norm{x_i} )$ converges to $0$ as $M \rightarrow \infty$. Now, (ii) follows from LDCT. \\
We now prove (iii). For the continuity of LLD at $(x, \tau) \in \mathbb{R}^p \times [0, \infty)$, we consider for $(y, \nu) \in \mathbb{R}^p \times [0, \infty)$ the difference
\begin{equation*}
\begin{split}
	& \abs{ LLD(x,\tau) - LLD(y,\nu) }  \leq \\
	& \int \abs{ \mathbf{I}( (x_1, x_2) \in Z_{\tau}(x) ) - \mathbf{I}( (x_1, x_2) \in Z_{\nu}(y) ) } \, dP(x_1) \, dP(x_2).
\end{split}
\end{equation*}
The result follows, using LDCT, if we can show that the absolute value of the difference of the indicators inside the integral converges pointwise to $0$ almost surely as $(y, \nu)$ converges to $(x,\tau)$. This in turn follows from the fact that for $(x_1, x_2) \in \mathbb{R}^p \times \mathbb{R}^p$ fixed, if $(x_1, x_2) \notin \partial Z_{\tau}(x)$ (by the absolute continuity $P^{\otimes 2}(\partial Z_{\tau}(x))=0$), then there exists $\epsilon > 0$ such that, for all $y \in \mathbb{R}^p$ with $\norm{x-y} \leq \epsilon$ and all $\nu \in [0, \infty)$ with $\abs{\tau-\nu} \le \epsilon$, $\mathbf{I}( (x_1, x_2) \in Z_{\tau}(x) ) = \mathbf{I}( (x_1, x_2) \in Z_{\nu}(y) )$. 

We now establish this as follows. Recall the interior, exterior and boundary of $Z_{\tau}(x)$ as given in Lemma \ref{lemma_interior_exterior_of_z_tau_x}. If $(x_1,x_2) \in \mathring{Z}_{\tau}(x)$, then $\max_{i=1,2} \norm{x-x_i} < \norm{x_1-x_2} < \tau$. Since,  $\norm{y-x_i} \leq \norm{y-x} + \norm{x-x_i}$, for $i=1,2$, it follows that $\max_{i=1,2} \norm{y-x_i} < \norm{x_1-x_2} < \nu$ by taking $\epsilon < \min \left( \norm{x_1-x_2}-\max_{i=1,2} \norm{x-x_i}, \tau-\norm{x_1-x_2} \right)$; this yields $(x_1,x_2) \in \mathring{Z}_{\nu}(y)$. On the other hand, if $(x_1,x_2) \in Z^{e}_{\tau}(x)$, then either $\norm{x_1-x_2} > \tau$ or $\max_{i=1,2} \norm{x-x_i} > \norm{x_1-x_2} \le \tau$. In the first case, by taking $0 < \epsilon < \norm{x_1-x_2}-\tau$ we see that also $\norm{x_1-x_2} > \nu$. In the second case, $\norm{x-x_i} > \norm{x_1-x_2}$ for $i=1$ or $i=2$, and since $\norm{y-x_i} \geq \abs{ \norm{x-x_i} - \norm{y-x} }$, by taking $0 < \epsilon < \norm{x-x_i} - \norm{x_1-x_2} $ it follows that $\norm{y-x_i} > \norm{x_1-x_2}$. In both cases we conclude that $(x_1,x_2) \in Z^{e}_{\nu}(y)$. 

We now show that $LLD(\cdot,\tau)$ is upper semicontinuous. Since, for $x \in \mathbb{R}^p$, $Z_{\tau}(x)$ is a closed set, it follows as above that, if $(x_1,x_2) \in Z^{e}_{\tau}(x)$, then $\limsup_{y \to x} \mathbf{I}( (x_1, x_2) \in Z_{\tau}(y) )=0$. Therefore, $\limsup_{y \to x} \mathbf{I}( (x_1, x_2) \in Z_{\tau}(y) ) \leq \mathbf{I}( (x_1, x_2) \in Z_{\tau}(x) )$, and by LDCT 
\begin{equation*}
\begin{split}
    \limsup_{y \to x} LLD(y,\tau) & \le \int \limsup_{y \to x} \mathbf{I}( (x_1, x_2) \in Z_{\tau}(y) )  \, dP(x_1) \, dP(x_2) \\
	&\leq \int \mathbf{I}( (x_1, x_2) \in Z_{\tau}(x) )  \, dP(x_1) \, dP(x_2) = LLD(x,\tau).
\nonumberthis
\end{split}
\end{equation*}
The proof of (iv) is rather routine and, for the skeptic, it is provided in Appendix \ref{section_proof_proposition_local_depth}.
\end{proof} \\

\begin{proof}[Proof of Theorem \ref{theorem_local_depth_tau_to_0}]
  A direct proof of (i), when the density $f(\cdot)$ is continuous, is obtained from \eqref{local_depth_for_absolutely_continuous_distribution}. Since $Z_{1}(0)$ is compact, it follows that for $\epsilon>0$
  \begin{equation*}
\sup_{\tau \in [0,\epsilon]} \sup_{(x_1,x_2) \in Z_{1}(0)} f(x + \tau x_1) f(x + \tau x_2) < \infty.
 \end{equation*}
Hence, from LDCT it follows that
\begin{equation*}
\begin{split}
	\lim_{\tau \rightarrow 0^{+}} \frac{1}{\Lambda_1 \tau^{2 p}} LLD(x,\tau) &= \frac{1}{\Lambda_1} \int_{Z_{1}(0)} \lim_{\tau \rightarrow 0^{+}} f(x + \tau x_1) f(x + \tau x_2) \, dx_1 dx_2 = f^{2}(x).
\end{split}
\end{equation*}
The general statement follows from Lebesgue differentiation Theorem \citep{Benedetto-2010}, since $Z_{\tau}(x) \subset \overline{B}_{\tau}(x) \times \overline{B}_{\tau}(x) \subset \overline{B}^{\otimes 2}_{\sqrt{2} \tau}(x)$, where $\overline{B}^{\otimes 2}_{\sqrt{2}\tau}(x) \coloneqq \{ (w,z) \in \mathbb{R}^p \times \mathbb{R}^p \, : \, \norm{w-x}^2 + \norm{z-x}^2 \leq 2 \tau^2 \}$ is the closed ball in $\mathbb{R}^p \times \mathbb{R}^p$ with center $(x,x)$ and radius $\sqrt{2} \tau$, and 
\begin{equation*}
\frac{\lambda^{\otimes 2}(Z_{\tau}(x))}{\lambda^{\otimes 2}(\overline{B}^{\otimes 2}_{\sqrt{2} \tau}(x))} = \frac{\lambda^{\otimes 2}(Z_{1}(0))}{2^{p} \lambda^{\otimes 2}(\overline{B}^{\otimes 2}_{1}(0))} = \frac{ \Lambda_1 p!}{2^p \pi^p} >0.
\end{equation*}
For (ii), notice from \eqref{local_depth_for_absolutely_continuous_distribution} that
\begin{equation}
\label{rate_of_convergence_integral_form}
	\frac{1}{\tau^{2 p}} LLD(x,\tau) - \Lambda_1 f^2(x) = \int_{Z_1(0)} \left[ f(x+\tau x_1) f(x+\tau x_2) - f^2(x) \right] \, dx_1 dx_2.
\end{equation}
Since $f(\cdot)$ is twice differentiable, by multivariate Taylor's expansion, for $i=1,2$,
\begin{equation*}
f(x+\tau x_i) = f(x) + \tau \inp{\nabla f(x)}{x_i} + \frac{\tau^2}{2} x_{i}^{\top} H_f(x) x_i + O(\tau^2).
\end{equation*}
Therefore,
\begin{equation} \label{taylor_expansion_of_product}
\begin{split}
&f(x+\tau x_1) f(x+\tau x_2) = f^2(x) + \tau f(x) \inp{\nabla f(x)}{x_1+x_2} \\
&+ \frac{\tau^2}{2} f(x) \left[ x_{1}^{\top} H_f(x) x_{1} + x_{2}^{\top} H_f(x) x_{2} \right] + \tau^2 \inp{\nabla f(x)}{x_1} \inp{\nabla f(x)}{x_2} + O(\tau^2).
\end{split}
\end{equation}
The continuity of the third order partial derivatives implies that the remainder $O(\tau^2)$ is a bounded and continuous function of $x_1$ and $x_2$ for $\tau \in [0,\tau_1]$. By substituting \eqref{taylor_expansion_of_product} in \eqref{rate_of_convergence_integral_form}, we see that
\begin{equation*}
\begin{split}
	\frac{1}{\tau^{2 p}} LLD(x,\tau) - \Lambda_1 f^2(x) &= \tau f(x) \int_{Z_1(0)} \inp{\nabla f(x)}{x_1+x_2} \, dx_1 dx_2 \\
	&+ \frac{\tau^2}{2} f(x) \int_{Z_1(0)} \left[ x_{1}^{\top} H_f(x) x_{1} + x_{2}^{\top} H_f(x) x_{2} \right] \, dx_1 dx_2 \\
	& + \tau^2 \int_{Z_1(0)} \inp{\nabla f(x)}{x_1} \inp{\nabla f(x)}{x_2} \, dx_1 dx_2 + O(\tau^2).
\end{split}
\end{equation*}
From Lemma \ref{lemma_symmetries_of_z_tau_x} it follows that \begin{equation} \label{symmetry1_of_Z_tau_0}
(x_1, x_2) \in Z_{1}(0) \Longleftrightarrow (-x_1, -x_2) \in Z_{1}(0),
\end{equation}
and by this change of variable
\begin{equation*}
\int_{Z_1(0)} \inp{\nabla f(x)}{x_1+x_2} \, dx_1 dx_2 = - \int_{Z_1(0)} \inp{\nabla f(x)}{x_1+x_2} \, dx_1 dx_2.
\end{equation*}
Therefore
\begin{equation*}
\int_{Z_1(0)} \inp{\nabla f(x)}{x_1+x_2} \, dx_1 dx_2 = 0.
\end{equation*}
 Lemma \ref{lemma_symmetries_of_z_tau_x} also implies that
\begin{equation*}
(x_1, x_2) \in Z_{1}(0) \Longleftrightarrow (x_2, x_1) \in Z_{1}(0),
\end{equation*}
and therefore
\begin{equation*}
\int_{Z_1(0)} x_{2}^{\top} H_f(x) x_{2} \, dx_1 dx_2 = \int_{Z_1(0)} x_{1}^{\top} H_f(x) x_{1} \, dx_1 dx_2.
\end{equation*}
We conclude that
\begin{equation*}
	\lim_{\tau \rightarrow 0^{+}} \frac{1}{\tau^2} \left( \frac{1}{\tau^{2 p}} LLD(x,\tau) - \Lambda_1 f^2(x) \right) = h(x), \text{ where}
\end{equation*}
\begin{equation*}
\begin{split}
h(x) &= f(x) \int_{Z_1(0)} x_{1}^{\top} H_f(x) x_{1} \, dx_1 dx_2 + \int_{Z_1(0)} \inp{\nabla f(x)}{x_1} \inp{\nabla f(x)}{x_2} \, dx_1 dx_2.
\end{split}
\end{equation*}
\end{proof} \\

\begin{proof}[Proof of Corollary \ref{corollary_local_depth_tau_to_0}]
From \eqref{local_depth_for_absolutely_continuous_distribution}, we see that
\begin{equation*}
	LLD(x,\tau) = \int_{ Z_{\tau}(0) } f(x+x_1) f(x+x_2) \, dx_1 dx_2.
\end{equation*}
In two dimensions $Z_{\tau}(0)$ can be expressed as the union of two triangles $T_{-+}^{\tau}$ and $T_{+-}^{\tau}$; that is,
\begin{equation*}
\begin{split}
	T_{-+}^{\tau} & \coloneqq \{ (x_1, x_2) \in \mathbb{R}^2 \, : \, x_1 \leq 0, \, x_2 \geq 0, \, x_2 - x_1 \leq \tau \} \\
	T_{+-}^{\tau} & \coloneqq \{ (x_1, x_2) \in \mathbb{R}^2 \, : \, x_1 \geq 0, \, x_2 \leq 0, \, x_1 - x_2 \leq \tau \}.
\end{split}
\end{equation*}
Now, by a change of variables in the integrals over the triangles it follows that
\begin{equation*}
\begin{split}
	LLD(x,\tau) &= \int_{ T_{-+}^{\tau} } f(x+x_1) f(x+x_2) + f(x-x_1) f(x-x_2) \, dx_1 dx_2 \\
	&= \int_{T_{++}^{\tau}} f(x-x_1) f(x+x_2) + f(x+x_1) f(x-x_2) \, dx_1 dx_2 \\
	&= 2 \int_{T_{++}^{\tau}} f(x+x_1) f(x-x_2) \, dx_1 dx_2.
\end{split}
\end{equation*}
The last part follows directly from Theorem \ref{theorem_local_depth_tau_to_0} (i) and the fact that $Z_1(0)$ has area $1$.
\end{proof} \\

\begin{proof}[Proof of Theorem \ref{theorem_uniform_convergence_of_tau_approximation}]
We start by proving (i). Observe that
\begin{equation} \label{hoelder_continuity_square_root}
\abs{\sqrt{s}-\sqrt{t}} \leq \abs{s-t}^{\frac{1}{2}} \text{ for all } s,t \geq 0.
\end{equation}
Therefore, for $\tau>0$,
\begin{equation*}
\begin{split}
\sup_{ x \in \mathbb{R}^p } \abs{ f_{\tau}(x) - f(x) } &= \sup_{ x \in \mathbb{R}^p } \abs{ \sqrt{\frac{1}{\tau^{2p}\Lambda_1} LLD(x,\tau)} - \sqrt{f^2(x)} } \\
& \leq \sup_{ x \in \mathbb{R}^p } \abs{ \frac{1}{\tau^{2p}\Lambda_1} LLD(x,\tau) - f^2(x) }^{\frac{1}{2}} \\
&= \sup_{ x \in \mathbb{R}^p } \sqrt{G_{\tau}(x)},
\end{split}
\end{equation*}
where
\begin{equation*}
G_{\tau}(x) \coloneqq \abs{ \frac{1}{\Lambda_1} \int_{Z_{1}(0)} \left[ f(x+\tau x_1) f(x+\tau x_2) - f^2(x) \right] \, dx_1 dx_2}.
\end{equation*}
Since the square root is a continuous function, by definition of supremum there exist sequences $\{ x^k \}_{k=1}^{\infty}$ such that
\begin{equation} \label{upper_semicontinuity_square_root}
\begin{split}
\sup_{ x \in \mathbb{R}^p } \sqrt{G_{\tau}(x)} = \limsup_{k \to \infty} \sqrt{G_{\tau}(x^k)} \leq \sqrt{\limsup_{k \to \infty} G_{\tau}(x^k)} \leq \sqrt{ \sup_{ x \in \mathbb{R}^p } G_{\tau}(x)}.
\end{split}
\end{equation}
To complete the proof, it is enough to show that
\begin{equation} \label{uniform_lebesgue_differentiation}
\lim_{\tau \to 0^{+}} \sup_{ x \in \mathbb{R}^p } G_{\tau}(x) = 0.
\end{equation}
For this, observe that
\begin{equation*}
\sup_{ x \in \mathbb{R}^p } G_{\tau}(x) \leq \frac{1}{\Lambda_1} \int_{Z_{1}(0)} \sup_{ x \in \mathbb{R}^p } \abs{ f(x+\tau x_1) f(x+\tau x_2) - f^2(x) } \, dx_1 dx_2.
\end{equation*}
Since $f(\cdot)$ is uniformly continuous and bounded, for all $(x_1, x_2)$
\begin{equation*}
\lim_{\tau \to 0^{+}} \sup_{ x \in \mathbb{R}^p } \abs{ f(x+\tau x_1) f(x+\tau x_2) - f^2(x) } = 0.
\end{equation*}
\eqref{uniform_lebesgue_differentiation} now follows from LDCT, since $Z_{1}(0)$ is compact and the supremum is bounded. \\
Since a continuous function is uniformly continuous on a compact set, the proof of the first part of (ii) follows from the proof of (i) with $\mathbb{R}^p$ replaced by $K$. \\
For the second part of (ii), notice that
\begin{equation*}
\sup_{y \in \overline{B}_{\epsilon}(x) } \abs{ f_{\tau}(y) - f(x) } \leq \sup_{y \in \overline{B}_{\epsilon}(x) } \abs{ f_{\tau}(y) - f(y) } + \sup_{y \in \overline{B}_{\epsilon}(x) } \abs{ f(y) - f(x) }.
\end{equation*}
The result now follows from the first part of (ii) and continuity of $f(\cdot)$.
\end{proof} \\

\begin{proof}[Proof of Corollary \ref{corollary_sample_local_depth}]
The result follows from Proposition \ref{proposition_local_depth}, where the probability measure $P^{\otimes 2}$ is replaced by $P^{\otimes 2}_n \coloneqq \frac{1}{ { n \choose 2 }} \sum_{1 \leq i < j \leq n} \delta_{(X_i, X_j)}$.
\end{proof} \\

\begin{proof}[Proof of Theorem \ref{theorem_uniform_consistency}]
  Let $\mathcal{F} \coloneqq \{ \mathcal{K}_{x,\tau} \, : \, x \in \mathbb{R}^p, \tau \in [0,\infty] \}$, where
\begin{equation*}
 \mathcal{K}_{x,\tau}(x_1,x_2) = \mathbf{I}( (x_1, x_2) \in Z_{\tau}(x)) = \mathbf{I}( x \in L(x_1,x_2)) \mathbf{I}( \norm{x_1-x_2} \leq \tau)
\end{equation*}
and $L(x_1,x_2) \coloneqq \{ y \in \mathbb{R}^p \, : \max_{ i=1,2} \norm{y-x_i} \leq \norm{x_1 - x_2} \}$. We will show that
 \begin{equation*}
\sup_{f \in \mathcal{F}} \abs{ \int f(x_1,x_2) \, dP(x_1) dP(x_2) - \frac{2}{n(n-1)} \sum_{\substack{i,j=1 \\ i<j}}^n f(X_i,X_j) } \xrightarrow[n \to \infty]{} 0 \text{ a.s.}
  \end{equation*}
 To this end, we use Corollary 3.3 of \citet{Arcones-Gine-1993} and verify that (i) $\sup_{f \in \mathcal{F}} \abs{f(\cdot)} < \infty$ and $\sup_{f^{\prime} \in \mathcal{F^{\prime}}} \abs{f^{\prime}(\cdot)} < \infty$, where $\mathcal{F}^{\prime} \coloneqq \{ \mathcal{J}_{x,\tau} \, : \, x \in \mathbb{R}^p, \tau \in [0,\infty] \}$ and $\mathcal{J}_{x,\tau}$ is given in \eqref{kernel_projection}, (ii) ${\mathcal{F}}$ is image admissible Suslin \citep[p.\ 186]{Dudley-2014}, and (iii) ${\mathcal{F}}$ is a VC subgraph class. \\
Observe first that $\mathcal{F}$ and $\mathcal{F}^{\prime}$ are bounded by $1$, and hence (i) holds. Turning to (ii), notice that 
 \begin{equation*}
(x_1,x_2,x,\tau) \to \mathbf{I}( (x_1, x_2) \in Z_{\tau}(x))
 \end{equation*}
 is jointly Borel measurable by Lemma \ref{lemma_interior_exterior_of_z_tau_x} and  $Z_{\tau}(x)$ is a closed set in $\mathbb{R}^p \times \mathbb{R}^p$ for all $x \in \mathbb{R}^p$ and $\tau \in [0,\infty]$. Hence, by \citep[p.\ 186]{Dudley-2014}, the class $\mathcal{F}$ is image admissible Suslin via the onto Borel measurable map $\mathfrak{e} \, : \, (\mathbb{R}^p \times [0,\infty],\mathscr{B}(\mathbb{R}^p) \times \mathscr{B}([0,\infty])  \rightarrow \mathcal{F}$ given by $\mathfrak{e}(x,\tau)=\mathcal{K}_{x,\tau}$.
For (iii), let $\mathcal{L}$ be the class of lenses $\mathcal{L} \coloneqq \{ L(x_1,x_2) \, : \, (x_1, x_2) \in \mathbb{R}^p \times \mathbb{R}^p \}$ and let $\mathcal{L}^{\prime} \coloneqq \mathcal{L} \cup \{ \emptyset \}$, where $\emptyset$ is the empty set.  For $x \in \mathbb{R}^p$, define the function $\mathfrak{g}_x \, : \, \mathcal{L}^{\prime} \to \mathbb{R}$ by $\mathfrak{g}_x(L)=\mathbf{I}( x \in L)$. Similarly, for $\tau \in [0,\infty]$ define the evaluation function $\mathfrak{h}_{\tau} \, : \, \mathbb{R}^p \times \mathbb{R}^p \to \mathcal{L}^{\prime}$ by $\mathfrak{h}_{\tau}(x_1,x_2) = L(x_1,x_2)$ if $\norm{x_1-x_2} \leq \tau$ and $\mathfrak{h}_{\tau}(x_1,x_2)=\emptyset$ if $\norm{x_1-x_2} > \tau$.  Since, $\mathcal{K}_{x,\tau} = \mathfrak{g}_x \circ \mathfrak{h}_{\tau}$ for all $x \in \mathbb{R}^p$ and $\tau \in [0,\infty]$, it follows that one can identify ${\mathcal{F}}$  with ${\mathcal{G}} \coloneqq \{\mathfrak{g}_x: x \in \mathbb{R}^p\}$. Hence it is enough to show that $\mathcal{G}$ is VC-subgraph. \\
For this, let $\mathcal{B}$ be the class of balls in $\mathbb{R}^p$. Notice that $\mathcal{L}$ is a subset of
 \begin{equation*}
\mathcal{B} \cap \mathcal{B} \coloneqq \{ B_1 \cap B_2 \, : \, B_1, B_2 \in \mathcal{B} \}.
 \end{equation*}
By Theorem 1 in \citet{Dudley-1979}, $\mathcal{B}$ is a VC-class of sets. Applying Proposition 3.6.7 (ii) of \citet{Gine-2016}, it follows that $\mathcal{B} \cap \mathcal{B}$ is also a VC-class of sets. This implies that $\mathcal{L}$ and $\mathcal{L}^{\prime}=\mathcal{L} \cup \{ \emptyset \}$ are VC-classes of sets by Proposition 3.6.7 (iv) in \citet{Gine-2016}. Finally, using Definition 3.6.8 of \citet{Gine-2016}, it follows that $\mathcal{G}$ is a VC-subgraph class of functions and the proof follows.
\end{proof} \\

\begin{proof}[Proof of Theorem \ref{theorem_uniform_asymptotic_normality_of_local_depth}]
To prove Theorem \ref{theorem_uniform_asymptotic_normality_of_local_depth}, we will verify the conditions of Theorem 4.9 in \citet{Arcones-Gine-1993}.  To this end, first let $\mathcal{F} \coloneqq \{ \mathcal{K}_{x,\tau} \, : \, (x,\tau) \in \Gamma \}$, where $\mathcal{K}_{x,\tau}$ is given in \eqref{kernel} and $\mathcal{F}^{\prime} = \{ \mathcal{J}_{x,\tau} \, : \, (x,\tau) \in \Gamma \}$, where $\mathcal{J}_{x,\tau}$ is given in \eqref{kernel_projection}. 
Notice that, $\sup_{f \in \mathcal{F}} \abs{f(\cdot)} \leq 1$, $\sup_{f^{\prime} \in \mathcal{F^{\prime}}} \abs{f^{\prime}(\cdot)} \leq 1$ and $\mathcal{F}$ is image admissible Suslin \citep[p.\ 186]{Dudley-2014}. This then shows that ${\mathcal{F}}$ is a measurable class with a bounded envelope and (ii) of Theorem 4.9 in \citet{Arcones-Gine-1993} holds. To verify (iii) in \citet{Arcones-Gine-1993} we appeal to Lemma 4.4 and (4.2) in \citet{Alexander-1987} concerning the covering number $N(\mathcal{F}, d_{L^2(\mathcal{F},P)}, \, \cdot \, )$ of $\mathcal{F}$ with respect to the $L^2$-distance, $d_{L^2(\mathcal{F},P)}$, given by
\begin{equation*}
d_{L^2(\mathcal{F},P)}((x,\tau),(y,\nu)) = \left( \int (\mathcal{K}_{x,\tau}(x_1,x_2)-\mathcal{K}_{y,\nu}(x_1,x_2))^2 \, dP(x_1) dP(x_2) \right)^{\frac{1}{2}}.
\end{equation*}
For this, we observe that ${\mathcal{F}}$ is a VC-subgraph class as in the proof of Theorem \ref{theorem_uniform_consistency}. Thus to complete the proof, we need to verify (i) in \citet{Arcones-Gine-1993}. To this end, we need to show: (a) the finite dimensional distributions of $\sqrt{n}(LLD_n(x, P, \tau)-LLD(x, p, \tau))$ converges to a multivariate normal distribution and (b) for each $(x, \tau)$, the limiting normal random variable $\{ W(x,\tau) \}_{(x,\tau) \in \Gamma }$ admits a version whose sample paths are all bounded and uniformly continuous with respect to the distance $d_{\mathcal{F}^{\prime},P}^2$ on $\mathcal{F}^{\prime}$ given by
\begin{equation*}
d_{\mathcal{F}^{\prime},P}^2((x,\tau),(y,\nu)) = \int ( \mathcal{J}_{x,\tau}(x_1) - \mathcal{J}_{y,\nu}(x_1) )^2 \, dP(x_1) - ( LLD(x,\tau) - LLD(y,\nu) )^2,
\end{equation*}
where we identify a function $\mathcal{J}_{x,\tau}$ for $(x,\tau) \in \Gamma$ with its parameter $(x,\tau)$. In this sense, $d_{\mathcal{F}^{\prime},P}^2$ is a metric on $\Gamma$. 

Since $W(x, \tau)$ is Gaussian, we can apply \citet[Theorem 2.3.7]{Gine-2016} with $T=\Gamma$ and $d=d_{\mathcal{F}^{\prime},P}^2$. First, note that $\{ W(x,\tau) \}_{(x,\tau) \in \Gamma }$ is a sub-Gaussian process relative to $d_{\mathcal{F}^{\prime},P}^2$. Indeed, using Proposition \ref{proposition_asymptotic_normality_of_local_depth}, for $(x,\tau), (y,\nu) \in \Gamma$, $(W(x,\tau),W(y,\nu))^{\top}$ has a bivariate normal distribution with mean $(0,0)^{\top}$ and covariance matrix
\begin{equation*}
\begin{pmatrix}
	b^{2}(x,\tau) & \gamma((x,\tau),(y,\nu)) \\
	\gamma((y,\nu),(x,\tau)) & b^{2}(y,\nu).
\end{pmatrix}.
\end{equation*}
It follows that $W(x,\tau)-W(y,\nu)$ is normally distributed with mean $0$ and variance $b^{2}(x,\tau)+b^{2}(y,\nu) - 2 \, \gamma((x,\tau),(y,\nu))=d_{\mathcal{F}^{\prime},P}^2((x,\tau),(y,\nu))$. Therefore, for all $\alpha \in \mathbb{R}$
\begin{equation*}
E [ e^{\alpha (W(x,\tau)-W(y,\nu))}] = e^{\alpha^2 \frac{d_{\mathcal{F}^{\prime},P}^2((x,\tau),(y,\nu))}{2}}
\end{equation*}
and the process $\{ W(x,\tau) \}_{(x,\tau) \in \Gamma }$ is sub-Gaussian with respect to $d_{\mathcal{F}^{\prime},P}^2$. 

We next verify the integrability condition for the metric entropy. To this end, notice that, for $(x,\tau), (y,\nu) \in \Gamma$, the $L^2$-distance on $\mathcal{F}^\prime$, $d_{L^2(\mathcal{F}^{\prime},P)}$ is given by
\begin{equation*}
d_{L^2(\mathcal{F}^{\prime},P)}((x,\tau),(y,\nu)) = \left( \int (\mathcal{J}_{x,\tau}(x_1)-\mathcal{J}_{y,\nu}(x_1))^2 \, dP(x_1) \right)^{\frac{1}{2}}.
\end{equation*}
Now using yet another application of Lemma 4.4 of \citet{Alexander-1987}, it follows that there are constants $C_1, C_2 > 1$ such that
\begin{equation*}
N(\mathcal{F}, d_{L^2(\mathcal{F},P)},\sqrt{\epsilon}) \leq \left( \frac{C_1}{\sqrt{\epsilon}} \right)^{C_2}.
\end{equation*}
By Jensen's inequality, it follows that
\begin{equation*}
d_{L^2(\mathcal{F}^{\prime},P)}((x,\tau),(y,\nu)) \leq d_{L^2(\mathcal{F},P)}((x,\tau),(y,\nu)),
\end{equation*}
which in turn, implies that
\begin{equation*}
N(\mathcal{F}^{\prime}, d_{L^2(\mathcal{F}^{\prime},P)},\sqrt{\epsilon}) \leq N(\mathcal{F}, d_{L^2(\mathcal{F},P)},\sqrt{\epsilon}) \leq \left( \frac{C_1}{\sqrt{\epsilon}} \right)^{C_2},
\end{equation*}
where $N(\mathcal{F}^{\prime}, d_{L^2(\mathcal{F}^{\prime},P)}, \sqrt{\epsilon})$ is the covering number of $\mathcal{F}^{\prime}$ with respect to the distance $d_{L^2(\mathcal{F}^{\prime},P)}$. Thus, for any $0 < \epsilon \le 1$,
\begin{equation} \label{inequality_covering_numbers}
N(\mathcal{F}^{\prime}, d_{\mathcal{F}^{\prime},P}^2,\epsilon) \leq N(\mathcal{F}^{\prime}, d_{L^2(\mathcal{F}^{\prime},P)}^2,\epsilon) =  N(\mathcal{F}^{\prime}, d_{L^2(\mathcal{F}^{\prime},P)},\sqrt{\epsilon}) \leq \left( \frac{C_1}{\sqrt{\epsilon}} \right)^{C_2} \leq \left( \frac{C_1}{\epsilon} \right)^{C_2}.
\end{equation}
Hence, it follows that
\begin{align*}
  \int_0^1 \sqrt{ \log(N(\mathcal{F}^{\prime}, d_{\mathcal{F}^{\prime},P}^2,\epsilon)) } \, d \epsilon & \leq \sqrt{C_2} \int_0^1 \sqrt{ \log(C_1) - \log(\epsilon) } \, d \epsilon \\
  & \leq \sqrt{ C_2} \left( \sqrt{ \log(C_1)} + \int_0^{e^{-1}} \sqrt{ -\log(\epsilon) } \, d \epsilon + \int_{e^{-1}}^1 \sqrt{ - \log(\epsilon) } \, d \epsilon \right) \\
  & \leq \sqrt{ C_2} \left( \sqrt{ \log(C_1)} + \int_0^{e^{-1}} -\log(\epsilon) \, d \epsilon + 1-e^{-1} \right) \\
  & = \sqrt{ C_2} \left( \sqrt{ \log(C_1)} + e^{-1} + 1 \right) < \infty,
\end{align*}
where we have used that for $a,b \geq 0$, $\sqrt{a+b} \leq \sqrt{a} + \sqrt{b}$. The proof finally follows from Proposition A.1. given below.
\end{proof} \\

\noindent Before we state the following proposition, we recall that $\Gamma$ is a subset of $\mathbb{R}^p \times [0,\infty]$ such that, for $(x,\tau) \in \Gamma$, $b^2(x,\tau) = Var[\mathcal{J}_{x,\tau}(X_1)] >0$. For $k \geq 1$ and $(x^1,\tau^1), \dots, (x^k,\tau^k) \in \Gamma$, we also use the notations $\bm{LLD_n}(x^l,\tau^l) \coloneqq (LLD_n(x^1,\tau^1), \dots, LLD_n(x^k,\tau^k) )^{\top}$, $\bm{LLD}(x^l,\tau^l) \coloneqq (LLD(x^1,\tau^1), \dots, LLD(x^k,\tau^k) )^{\top}$ and $\bm{\mathcal{J}}_{x^l,\tau^l}(X_j) = (\mathcal{J}_{x^1,\tau^1}(X_j), \dots, \mathcal{J}_{x^k,\tau^k}(X_j))^{\top}$.

\begin{proposition} \label{proposition_asymptotic_normality_of_local_depth}
For $(x^1,\tau^1), \dots, (x^k,\tau^k) \in \Gamma$, $\sqrt{n} ( \bm{LLD_n}(x^l,\tau^l) - \bm{LLD}(x^l,\tau^l) )$ converges in distribution to a $k$-variate normal distribution with mean $0$ and covariance matrix whose $(l_1,l_2)$-element is given by $4 \, \gamma((x^{l_1},\tau^{l_1}),(x^{l_2},\tau^{l_2}))$, where $l_1,l_2 =1,\dots,k$.
\end{proposition}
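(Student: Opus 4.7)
The plan is to reduce the asymptotic behavior of the vector $\sqrt{n}(\bm{LLD_n}(x^l,\tau^l) - \bm{LLD}(x^l,\tau^l))$ to that of an i.i.d.\ sum via the Hoeffding decomposition, and then invoke the multivariate central limit theorem. Recall that $LLD_n(x,\tau)$ is a U-statistic of order two with symmetric, bounded kernel $\mathcal{K}_{x,\tau}$ and first projection $\mathcal{J}_{x,\tau}$ defined in \eqref{kernel} and \eqref{kernel_projection}. The Hoeffding decomposition therefore yields
\begin{equation*}
LLD_n(x,\tau) - LLD(x,\tau) = \frac{2}{n} \sum_{i=1}^n \bigl( \mathcal{J}_{x,\tau}(X_i) - LLD(x,\tau) \bigr) + R_n(x,\tau),
\end{equation*}
where $R_n(x,\tau)$ is a completely degenerate U-statistic of order two with bounded kernel.

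First, I would handle the remainder. Since the kernel defining $R_n(x,\tau)$ is bounded by a constant and is $P$-canonical, a direct variance calculation (analogous to the one underlying \eqref{variance_sample_local_depth}, retaining only the $a^2(x,\tau)/\binom{n}{2}$ contribution) gives $\mathrm{Var}(R_n(x,\tau)) = O(n^{-2})$. Consequently $\sqrt{n} R_n(x^l,\tau^l) \to 0$ in $L^2$, hence in probability, for each $l=1,\dots,k$.

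Next, I would treat the linear part. Set $\bm{Y}_i \coloneqq 2\bigl( \bm{\mathcal{J}}_{x^l,\tau^l}(X_i) - \bm{LLD}(x^l,\tau^l) \bigr) \in \mathbb{R}^k$ for $i=1,\dots,n$. The $\bm{Y}_i$ are i.i.d., centered, and bounded (since each $\mathcal{J}_{x,\tau}$ takes values in $[0,1]$), so they have a finite covariance matrix $\Sigma$. Its $(l_1,l_2)$-entry is
\begin{equation*}
\Sigma_{l_1 l_2} = 4 \, \mathrm{Cov}\bigl( \mathcal{J}_{x^{l_1},\tau^{l_1}}(X_1), \mathcal{J}_{x^{l_2},\tau^{l_2}}(X_1) \bigr) = 4 \, \gamma\bigl( (x^{l_1},\tau^{l_1}),(x^{l_2},\tau^{l_2}) \bigr),
\end{equation*}
by the definition of $\gamma$ in \eqref{covariance_function_of_normalized_local_depth}. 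The classical multivariate central limit theorem then yields $n^{-1/2} \sum_{i=1}^n \bm{Y}_i \xrightarrow{d} N(0, 4\Gamma)$, where $\Gamma$ is the matrix with entries $\gamma((x^{l_1},\tau^{l_1}),(x^{l_2},\tau^{l_2}))$. The assumption $b^2(x^l,\tau^l) > 0$ for all $l$ is not needed here but confirms that $\Sigma$ has no zero diagonal entries.

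Finally, combining the two displays with Slutsky's theorem yields the stated joint convergence:
\begin{equation*}
\sqrt{n}\bigl( \bm{LLD_n}(x^l,\tau^l) - \bm{LLD}(x^l,\tau^l) \bigr) = \frac{1}{\sqrt n}\sum_{i=1}^n \bm{Y}_i + \sqrt n\, \bm{R}_n \xrightarrow{d} N(0, 4\Gamma).
\end{equation*}
No step presents a serious obstacle: the boundedness of the kernel trivializes the moment estimates for the degenerate remainder, and the only conceptual input is the standard multivariate CLT applied to the projections $\bm{\mathcal{J}}$.
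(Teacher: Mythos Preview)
Your proposal is correct and follows essentially the same approach as the paper: Hoeffding decomposition into a linear i.i.d.\ sum plus a degenerate remainder, a variance bound showing $\sqrt{n}\,R_n \to 0$ in probability, and the multivariate CLT applied to the projections $\mathcal{J}_{x,\tau}$ together with Slutsky. The paper carries out the remainder step via an explicit Chebyshev computation rather than citing the $O(n^{-2})$ variance directly, but the content is the same.
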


\begin{proof}[Proof of Proposition \ref{proposition_asymptotic_normality_of_local_depth}] 
Using Hoeffding's decomposition of U-statistics (\citet[Equation 1.1.22]{Korolyuk-2013}), it follows that
\begin{equation} \label{representation_of_U-statistics}
\begin{split}
LLD_n(x,\tau)-LLD(x,\tau) &= \frac{2}{n} \sum_{j=1}^n \left[ \mathcal{J}_{x,\tau}(X_j) - LLD(x,\tau) \right] \\
&+\frac{1}{{n \choose 2}} \sum_{1 \leq i < j \leq n} \left[ \xi_{x,\tau}(X_i,X_j) + LLD(x,\tau) \right],
\end{split}
\end{equation}
where
\begin{equation} \label{function_xi}
\xi_{x,\tau}(x_1,x_2) \coloneqq \mathcal{K}_{x,\tau}(x_1,x_2) - \mathcal{J}_{x,\tau}(x_1)-\mathcal{J}_{x,\tau}(x_2).
\end{equation}
Hence, for $k \geq 1$ and $(x^1,\tau^1), \dots, (x^k,\tau^k) \in \Gamma$, it follows that
\begin{equation*}
 \bm{LLD_n}(x^l,\tau^l) - \bm{LLD}(x^l,\tau^l) = \frac{2}{n} \sum_{j=1}^n \left[ \bm{\mathcal{J}}_{x^l,\tau^l}(X_j) - \bm{LLD}(x^l,\tau^l) \right] + R_n,
\end{equation*}
where
\begin{equation*}
R_n=R_n(X_1, \dots, X_n) \coloneqq
\begin{pmatrix}
\frac{1}{{n \choose 2}} \sum_{1 \leq i < j \leq n} \left[ \xi_{x^1,\tau^1}(X_i,X_j) + LLD(x^1,\tau^1) \right]
 \\ \vdots \\ 
\frac{1}{{n \choose 2}} \sum_{1 \leq i < j \leq n} \left[ \xi_{x^k,\tau^k}(X_i,X_j) + LLD(x^k,\tau^k) \right]
\end{pmatrix}.
\end{equation*}
Now applying Cheybchev's inequality,  for all $\epsilon>0$,
\begin{equation*} \label{markov_inequality_remainder_of_local_depth}
P^{\otimes n}( \sqrt{n} \norm{R_n} > \epsilon) \leq 
 \frac{4n}{(n(n-1)\epsilon)^2}  \sum_{l=1}^{k} E \left[ \left( \sum_{1 \leq i < j \leq n} \left[ \xi_{x^l,\tau^l}(X_i,X_j)+LLD(x^l,\tau^l) \right] \right)^2 \right].
\end{equation*}
Observe that, for $l=1,\dots,k$, using the independence of $X_i$ and $X_j$ for $i \neq j$ and a routine calculation, each term inside the sum on the RHS of the above inequality is equal to
\begin{equation*}
E\left[ \sum_{1 \leq i < j \leq n} \left[ \xi_{x^l,\tau^l}(X_i,X_j)+LLD(x^l,\tau^l) \right]^2\right].
\end{equation*}
Now, the expectation of the above term is
\begin{equation*}
{n \choose 2} E \left[ \left( \xi_{x^l,\tau^l}(X_1,X_2)+LLD(x^l,\tau^l) \right)^2 \right].
\end{equation*}
Hence,
\begin{equation} \label{markov_inequality_remainder_of_local_depth_4}
P^{\otimes n}( \sqrt{n} \norm{R_n} > \epsilon) \leq \frac{1}{\epsilon^2} \, \frac{2}{n-1} \sum_{l=1}^{k}  E \left[ \left( \xi_{x^l,\tau^l}(X_1,X_2)+LLD(x^l,\tau^l) \right)^2 \right]
\end{equation}
and this converges to $0$ as $n \to \infty$, which implies that $R_n$ converges to $0$ in probability. \\
On the other hand, observe that $\frac{1}{n} \sum_{j=1}^n \left[ \bm{\mathcal{J}}_{x^l,\tau^l}(X_j) - \bm{LLD}(x^l,\tau^l) \right]$ is an average of i.i.d.\ random variables with mean $0$ and covariance matrix given by
\begin{equation}
\label{covariance_matrix_of_finite_dimensional_distributions}
E \left[ \left( \mathcal{J}_{x^{l_1},\tau^{l_1}}(X_j) - LLD(x^{l_1},\tau^{l_1}) \right) \left( \mathcal{J}_{x^{l_2},\tau^{l_2}}(X_j) - LLD(x^{l_2},\tau^{l_2}) \right) \right] = \gamma((x^{l_1},\tau^{l_1}), (x^{l_2},\tau^{l_2})).
\end{equation}
Therefore, by the multivariate central limit theorem, as $n \to \infty$,
\begin{equation*}
\sqrt{n} \, \frac{1}{n} \sum_{j=1}^n \left[ \bm{\mathcal{J}}_{x^l,\tau^l}(X_j) - \bm{LLD}(x^l,\tau^l) \right]
\end{equation*}
converges in distribution to a multivariate normal distribution with mean $0$ and covariance matrix given by \eqref{covariance_matrix_of_finite_dimensional_distributions}.
\end{proof} \\

\noindent An immediate consequence of Proposition \ref{proposition_asymptotic_normality_of_local_depth} is the following corollary.

\begin{corollary} \label{corollary_asymptotic_normality_of_local_depth}
If $x \in \mathbb{R}^p$ and $\tau \in (0,\infty]$ satisfy $b^2(x,\tau)>0$, then
\begin{equation} \label{asymptotic_normality_of_local_depth}
	\sqrt{n} \left( LLD_n(x,\tau) - LLD(x,\tau) \right) \xrightarrow[ n \rightarrow \infty]{d} N(0, 4 \, b^2(x,\tau))
\end{equation}
where $b^2(x,\tau)$ is as in \eqref{b_square}.
\end{corollary}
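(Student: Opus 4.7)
The plan is to obtain this corollary as the one-dimensional specialization of Proposition \ref{proposition_asymptotic_normality_of_local_depth}. Setting $k=1$ and $(x^{1},\tau^{1})=(x,\tau)$ in that result, the scalar limiting variance is $4\,\gamma((x,\tau),(x,\tau))$, and the definitions \eqref{covariance_function_of_normalized_local_depth} and \eqref{b_square} together yield $\gamma((x,\tau),(x,\tau))=b^{2}(x,\tau)$ (this is also noted in Remark \ref{remark_b_square_positive}). The hypothesis $b^{2}(x,\tau)>0$ guarantees the limit is non-degenerate, and the claim follows.

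If a stand-alone derivation is desired, I would retrace the scalar version of the proof of Proposition \ref{proposition_asymptotic_normality_of_local_depth}. First, I would invoke Hoeffding's decomposition \eqref{representation_of_U-statistics} to write
\[
\sqrt{n}\bigl(LLD_n(x,\tau)-LLD(x,\tau)\bigr)=\frac{2}{\sqrt{n}}\sum_{j=1}^{n}\bigl[\mathcal{J}_{x,\tau}(X_j)-LLD(x,\tau)\bigr]+\sqrt{n}\,R_n,
\]
where $R_n$ denotes the averaged degenerate kernel built from $\xi_{x,\tau}$ in \eqref{function_xi}. The summands in the linear term are i.i.d., mean zero, uniformly bounded, and have variance $b^{2}(x,\tau)\in(0,\infty)$, so the classical Lindeberg--L\'evy CLT yields convergence of that term in distribution to $N(0,4\,b^{2}(x,\tau))$.

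Next, I would establish that $\sqrt{n}\,R_n\xrightarrow{P}0$. Specializing the Chebyshev bound in \eqref{markov_inequality_remainder_of_local_depth_4} to $k=1$ gives
\[
P\bigl(\sqrt{n}\,\lvert R_n\rvert>\epsilon\bigr)\le \frac{2}{(n-1)\epsilon^{2}}\,E\bigl[\bigl(\xi_{x,\tau}(X_1,X_2)+LLD(x,\tau)\bigr)^{2}\bigr],
\]
and the right-hand side vanishes as $n\to\infty$ because $\xi_{x,\tau}$ is uniformly bounded. Slutsky's theorem then combines the two pieces and delivers the assertion. The only genuine technical ingredient is the $O(1/n)$ variance bound on the degenerate part of the Hoeffding decomposition; once Proposition \ref{proposition_asymptotic_normality_of_local_depth} has been established, no new obstacle arises, and this corollary is essentially a direct specialization.
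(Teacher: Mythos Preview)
Your proposal is correct and matches the paper's approach exactly: the paper simply states that this corollary is ``an immediate consequence of Proposition \ref{proposition_asymptotic_normality_of_local_depth},'' which is precisely the $k=1$ specialization you describe, together with the identification $\gamma((x,\tau),(x,\tau))=b^{2}(x,\tau)$. Your optional stand-alone derivation via Hoeffding's decomposition, the CLT for the linear term, and the Chebyshev bound \eqref{markov_inequality_remainder_of_local_depth_4} for the remainder is also correct and faithfully retraces the scalar version of the proposition's proof.
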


\begin{proof}[Proof of Corollary \ref{corollary_uniform_consistency_of_sample_tau_approximation}]
 For (i), observe that
\begin{equation*}
\sup_{ x \in \mathbb{R}^p } \abs{ f_{\tau,n}(x) - f(x)} \leq \sup_{ x \in \mathbb{R}^p } \abs{ f_{\tau,n}(x) - f_{\tau}(x)} + \sup_{ x \in \mathbb{R}^p } \abs{ f_{\tau}(x) - f(x)} 
\end{equation*}
and, by Theorem \ref{theorem_uniform_convergence_of_tau_approximation} (i), it is enough to show that
\begin{equation*}
\begin{split}
&\lim_{n \to \infty} \sup_{ \substack{x \in \mathbb{R}^p \\ \tau \in (0,T]}} \abs{ f_{\tau,n}(x) - f_{\tau}(x)}= \\
& \frac{1}{\tau^{p} \sqrt{\Lambda_1}} \lim_{n \to \infty} \sup_{ x \in \mathbb{R}^p } \abs{ \sqrt{LLD_n(x,\tau)} - \sqrt{LLD(x,\tau)} } \xrightarrow[n \rightarrow \infty ]{} 0 \text{ a.s.}
\end{split}
\end{equation*}
Now, using \eqref{hoelder_continuity_square_root}, it follows that
\begin{equation*}
\sup_{ x \in \mathbb{R}^p } \abs{ \sqrt{LLD_n(x,\tau)} - \sqrt{LLD(x,\tau)} } \leq \sup_{ x \in \mathbb{R}^p } \abs{ LLD_n(x,\tau) - LLD(x,\tau) }^{\frac{1}{2}},
\end{equation*}
and setting $G_{\tau,n}(x) \coloneqq \abs{ LLD_n(x,\tau) - LLD(x,\tau) }$, notice that
\begin{equation*}
\begin{split}
\sup_{ x \in \mathbb{R}^p} \sqrt{G_{\tau,n}(x)} \leq \sqrt{ \sup_{ x \in \mathbb{R}^p } G_{\tau,n}(x)}.
\end{split}
\end{equation*}
By Theorem \ref{theorem_uniform_consistency}, $\sup_{ x \in \mathbb{R}^p } G_{\tau,n}(x)$ converges to $0$ almost surely as $n \rightarrow \infty$, and since the square root is a continuous function, (i) follows from the continuous mapping theorem. Since a continuous function on a compact set is uniformly continuous, the proof of the first part of (ii) follows from the proof of (i) with $\mathbb{R}^p$ replaced by $K$. 

For the second part of the proof of (ii), notice that for all $x \in \mathbb{R}^p$
\begin{equation*}
\sup_{y \in \overline{B}_{\epsilon}(x)} \abs{ f_{\tau,n}(y) - f(x)} \leq \sup_{y \in \overline{B}_{\epsilon}(x)} \abs{ f_{\tau,n}(y) - f_{\tau}(y)} + \sup_{y \in \overline{B}_{\epsilon}(x)} \abs{ f_{\tau}(y) - f(x)},
\end{equation*}
where for all $\tau>0$ and $\epsilon>0$
\begin{equation*}
\sup_{y \in \overline{B}_{\epsilon}(x)} \abs{ f_{\tau,n}(y) - f_{\tau}(y)} \xrightarrow[n \rightarrow \infty ]{} 0 \text{ a.s.}
\end{equation*}
Using the compactness of $\overline{B}_{\epsilon}(x)$, the first part of (ii), \eqref{uniform_convergence_compact_set_of_sample_tau_approximation}, and  Theorem \ref{theorem_uniform_convergence_of_tau_approximation} (ii), \eqref{uniform_convergence_compact_set_of_tau_approximation}, it follows that
\begin{equation*}
\sup_{y \in \overline{B}_{\epsilon}(x)} \abs{ f_{\tau}(y) - f(x)} \xrightarrow[\epsilon, \tau \rightarrow 0^{+}]{} 0.
\end{equation*}
\end{proof} \\

\noindent Before proving Theorem \ref{theorem_asymptotic_normality_of_sample_tau_approximation},  we provide a lemma concerning the order of convergence of $b^2(x,\tau)$ to $0$, as $\tau \to 0^{+}$.

\begin{lemma} \label{lemma_order_of_convergence_b_square}
If $f(\cdot)$ is continuous, then
\begin{equation*}
  \lim_{\tau \to 0^{+}} \frac{b^2(x,\tau)}{\tau^{3p}} = \Lambda_1^{*2} f^3(x), \quad \text{where} \quad \Lambda_1^{*2} =  \int \lambda^2(Z_1(0)\rvert_{x_1}) \, dx_1.
\end{equation*}
\end{lemma}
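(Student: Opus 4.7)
The plan is to expand $b^2(x,\tau)$ using its definition in \eqref{b_square}, scale away the $\tau^{3p}$ using the invariance properties $Z_\tau(x) = (x,x) + \tau Z_1(0)$, and then pass to the limit by dominated convergence.

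First, I would observe that by Theorem \ref{theorem_local_depth_tau_to_0} (i), $LLD(x,\tau) = O(\tau^{2p})$, so $LLD(x,\tau)^2 / \tau^{3p} = O(\tau^p) \to 0$; this disposes of the subtracted term in \eqref{b_square}, and it remains to analyze $\tau^{-3p}\int \mathcal{J}_{x,\tau}(x_1)^2 \, dP(x_1)$. Using the change of variables $x_1 = x + \tau y_1$, $x_2 = x + \tau y_2$, together with the translation/dilation identity $Z_\tau(x) = (x,x) + \tau Z_1(0)$, I would rewrite
\begin{equation*}
\mathcal{J}_{x,\tau}(x + \tau y_1) = \int_{Z_1(0)\rvert_{y_1}} \tau^{p} f(x + \tau y_2) \, dy_2,
\end{equation*}
where $Z_1(0)\rvert_{y_1} = \{ y_2 \in \mathbb{R}^p : (y_1, y_2) \in Z_1(0) \}$. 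Substituting back into the integral and pulling out the powers of $\tau$ produces
\begin{equation*}
\int \mathcal{J}_{x,\tau}(x_1)^2 \, dP(x_1) = \tau^{3p} \int_{\mathbb{R}^p} \Bigl( \int_{Z_1(0)\rvert_{y_1}} f(x + \tau y_2) \, dy_2 \Bigr)^{2} f(x + \tau y_1) \, dy_1.
\end{equation*}

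The main technical point, which I expect to be the only real obstacle, is passing the limit $\tau \to 0^{+}$ through both integrals. For this I would use that $Z_1(0) \subset \overline{B}_1(0) \times \overline{B}_1(0)$ (since any $(y_1,y_2) \in Z_1(0)$ satisfies $\max(\norm{y_1},\norm{y_2}) \leq \norm{y_1 - y_2} \leq 1$), so both the inner domain $Z_1(0)\rvert_{y_1}$ and the outer effective domain $\overline{B}_1(0)$ are compact and independent of $\tau$. By continuity of $f(\cdot)$, the integrand converges pointwise to $\lambda^2(Z_1(0)\rvert_{y_1}) f^3(x)$, and is dominated by $\bigl(\sup_{y \in \overline{B}_\tau(x)} f(y)\bigr)^3 \lambda^2(Z_1(0)\rvert_{y_1})$, which is bounded for small $\tau$ and integrable over $\overline{B}_1(0)$.

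Applying Lebesgue's dominated convergence theorem then yields
\begin{equation*}
\lim_{\tau \to 0^{+}} \tau^{-3p} \int \mathcal{J}_{x,\tau}(x_1)^2 \, dP(x_1) = f^3(x) \int \lambda^2(Z_1(0)\rvert_{y_1}) \, dy_1 = \Lambda_1^{*2} f^3(x),
\end{equation*}
and combining this with the $O(\tau^p)$ estimate for the $LLD(x,\tau)^2$ contribution completes the proof.
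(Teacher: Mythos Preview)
Your proposal is correct and follows essentially the same approach as the paper: both split $b^2(x,\tau)$ into the $\int \mathcal{J}_{x,\tau}^2\,dP$ term and the $LLD(x,\tau)^2$ term, dispose of the latter via Theorem~\ref{theorem_local_depth_tau_to_0}~(i), rescale the former through the change of variables $x_i = x + \tau y_i$ using $Z_\tau(x) = (x,x) + \tau Z_1(0)$, and then apply dominated convergence. Your justification of the LDCT step (compact support in $\overline{B}_1(0)$, explicit dominating function) is in fact more detailed than the paper's, which simply invokes continuity of $f$ and LDCT without further comment.
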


\begin{proof}[Proof of Lemma \ref{lemma_order_of_convergence_b_square}]
  Let $\tau>0$. We compute
\begin{equation} \label{b_square_over_tau^3p}
  \frac{b^2(x,\tau)}{\tau^{3p}} = \frac{1}{\tau^{3p}} \int \left( \int_{Z_\tau(x)\rvert_{x_1}} f(x_2) \, dx_2 \right)^2 f(x_1) \, dx_1 - \left( \frac{LLD(x,\tau)}{\tau^{\frac{3}{2}p}} \right)^2,
\end{equation}
where, by Theorem \ref{theorem_local_depth_tau_to_0} (i),
\begin{equation*}
\frac{LLD(x,\tau)}{\tau^{2p}} \xrightarrow[\tau \to 0^{+}]{} \Lambda_1 f^2(x), \quad \text{and} \quad \frac{LLD(x,\tau)}{\tau^{\frac{3}{2}p}} \xrightarrow[\tau \to 0^{+}]{} 0.
\end{equation*}
We now focus on the first term in \eqref{b_square_over_tau^3p}. By changing variables twice, we note that
\begin{align*}
  \frac{1}{\tau^{3p}}  \int \left( \int_{Z_{\tau}(x)\rvert_{x_1}} f(x_2) \, dx_2 \right)^2 f(x_1) \, dx_1 &= \int \left( \frac{1}{\tau^{p}} \int_{Z_{\tau}(x)\rvert_{x+\tau x_1}} f(x_2) \, dx_2 \right)^2 f(x+\tau x_1) \, dx_1 \\
& = \int \left(  \int_{Z_{1}(0)\rvert_{x_1}} f(x+\tau x_2) \, dx_2 \right)^2 f(x+\tau x_1) \, dx_1.
\end{align*}
Since $f(\cdot)$ is continuous, it follows from LDCT that
\begin{equation*}
\lim_{\tau \to 0^{+}}  \int_{Z_{1}(0)\rvert_{x_1}} f(x+\tau x_2) \, dx_2 = f(x) \, \lambda(Z_1(0)\rvert_{x_1})
\end{equation*}
and
\begin{equation*}
  \lim_{\tau \to 0^{+}} \int \left(  \int_{Z_{1}(0)\rvert_{x_1}} f(x+\tau x_2) \, dx_2 \right)^2 f(x+\tau x_1) \, dx_1 = f^3(x) \int \lambda^2(Z_1(0)\rvert_{x_1}) \, dx_1.
\end{equation*}
\end{proof} \\

\begin{proof}[Proof of Theorem \ref{theorem_asymptotic_normality_of_sample_tau_approximation}]
Using Hoeffding's decomposition of U-statistics (\eqref{representation_of_U-statistics} with $\tau$ replaced by $\tau_n$), it follows that
\begin{equation} \label{representation_of_U-statistics_2}
\begin{split}
LLD_n(x,\tau_n)-LLD(x,\tau_n) &= 2 \, \frac{1}{n} \sum_{j=1}^n \left[ \mathcal{J}_{x,\tau_n}(X_j) - LLD(x,\tau_n) \right] \\
&+\frac{1}{{n \choose 2}} \sum_{1 \leq i < j \leq n} \left[ \xi_{x,\tau_n}(X_i,X_j) + LLD(x,\tau_n) \right].
\end{split}
\end{equation}
Since $x$ belongs to the the support of $f(\cdot)$ and $\tau_n >0$, by Remark \ref{remark_b_square_positive}, $b^2(x,\tau_n)>0$. Now, applying Lindeberg-Levy Theorem for triangular arrays \citep[Theorem 27.2]{Billingsley-2012} with $r_n=n$, $s_n = \sqrt{n} \, b(x,\tau_n)$ and
\begin{equation*}
S_n = \sum_{j=1}^n \left[ \mathcal{J}_{x,\tau_n}(X_j) - LLD(x,\tau_n) \right], \quad \text{it follows that,}
\end{equation*}
\begin{equation} \label{Lindeberg-Levy_theorem_for_triangular_arrays}
\sqrt{n} \, \frac{1}{n} \, \sum_{j=1}^n \left[ \mathcal{J}_{x,\tau_n}(X_j) - LLD(x,\tau_n) \right]/b(x,\tau_n) \xrightarrow[ n \rightarrow \infty]{d} N(0,1),
\end{equation}
provided the Lindeberg condition \citep[Equation (27.8)]{Billingsley-2012}
\begin{equation} \label{lindeberg_condition}
\lim_{n \to \infty} \frac{1}{n \, b^2(x,\tau_n)} \sum_{k=1}^n \int_{A_{\epsilon,n,k}} \left( \mathcal{J}_{x,\tau_k}(x_1) - LLD(x,\tau_k) \right)^2 \, f(x_1) \, dx_1 = 0
\end{equation}
holds for all $\epsilon>0$, where
\begin{equation*}
A_{\epsilon,n,k,x} \coloneqq \{ x_1 \in \mathbb{R}^p \, : \, \abs{ \mathcal{J}_{x,\tau_k}(x_1)-LLD(x,\tau_k)} \geq \epsilon \, \sqrt{n} \, b(x,\tau_n)  \}.
\end{equation*}
Notice that, due to $\sqrt{n} \, \tau_n^{\frac{3}{2}p} \xrightarrow[n \to \infty]{} \infty$ and Lemma \ref{lemma_order_of_convergence_b_square}, $\sqrt{n} \, b(x,\tau_n) \xrightarrow[n \to \infty]{} \infty$. Let $n^{*} \in \mathbb{N}$ be such that, for all $n \geq n^{*}$, $1<\epsilon \, \sqrt{n} \, b(x,\tau_n)$. Since $\abs{\mathcal{J}_{x,\tau_k}(x_1) - LLD(x,\tau_k)} \leq 1$ for all $x,x_1 \in \mathbb{R}^p$ and $k \in \mathbb{N}$, it follows that $A_{\epsilon,n,k,x}=\emptyset$, for all $n \geq n^{*}$ and all $1 \leq k \leq n$. Thus, \eqref{lindeberg_condition} holds true and we obtain \eqref{Lindeberg-Levy_theorem_for_triangular_arrays}. Finally, let
\begin{equation*}
R_n = R_n(X_1,\dots,X_n) \coloneqq \frac{1}{{n \choose 2}} \sum_{1 \leq i < j \leq n} \left[ \xi_{x,\tau_n}(X_i,X_j) + LLD(x,\tau_n) \right].
\end{equation*}
By \eqref{markov_inequality_remainder_of_local_depth_4} with $k=1$, $x^1$ replaced by $x$ and $\tau^1$ replaced by $\tau_n$,
\begin{equation*}
  P^{\otimes n}( \sqrt{n} R_n > \epsilon) \leq \frac{1}{\epsilon^2} \, \frac{2}{n-1}  E \left[ \left( \xi_{x,\tau_n}(X_1,X_2)+LLD(x,\tau_n) \right)^2 \right],
\end{equation*}
which implies that
\begin{equation*}
  P^{\otimes n} \left( \sqrt{n} \, R_n / b(x,\tau_n) > \epsilon \right) \leq \frac{1}{\epsilon^2} \, \frac{2}{(n-1) \, b^2(x,\tau_n)}  E \left[ \left( \xi_{x,\tau_n}(X_1,X_2)+LLD(x,\tau_n) \right)^2 \right].
\end{equation*}
Since $\sqrt{n} \, b(x,\tau_n) \xrightarrow[n \to \infty]{} \infty$,
\begin{equation} \label{markov_inequality_remainder_of_local_depth_3}
P^{\otimes n}\left( \sqrt{n} \, R_n / b(x,\tau_n) > \epsilon \right)  \xrightarrow[ n \rightarrow \infty]{} 0.
\end{equation}
From \eqref{representation_of_U-statistics_2}, \eqref{Lindeberg-Levy_theorem_for_triangular_arrays}, and \eqref{markov_inequality_remainder_of_local_depth_3}, it follows that
\begin{equation} \label{asymptotic_normality_local_depth_tau_n}
	\sqrt{n} \frac{LLD_n(x,\tau_n) - LLD(x,\tau_n)}{2 \, b(x,\tau_n)} \xrightarrow[ n \rightarrow \infty]{d} N(0, 1).
\end{equation}
Now, using the delta method we obtain
\begin{equation*}
	\sqrt{n} \frac{\sqrt{LLD(x,\tau_n)}}{b(x,\tau_n)} \left( \sqrt{LLD_n(x,\tau_n)} - \sqrt{LLD(x,\tau_n)} \right) \xrightarrow[ n \rightarrow \infty]{d} N\left( 0, 1 \right);
\end{equation*}
equivalently,
\begin{equation} \label{asymptotic_normality_sample_tau_approximation}
  Z_n \coloneqq \sqrt{n} \, \frac{ \tau_n^{2p} \, \Lambda_1 \, f_{\tau_n}(x)}{b(x,\tau_n)} \left( f_{\tau_n,n}(x) - f_{\tau_n}(x) \right) \xrightarrow[ n \rightarrow \infty]{d} N\left( 0, 1 \right).
\end{equation}
To complete the proof, since $x \in S$ and $\tau_n >0$, by Remark \ref{remark_b_square_positive} $b^2(x,\tau_n)>0$.
By Theorem \ref{theorem_local_depth_tau_to_0} (i),
\begin{equation} \label{ratio_tau_approximation_density}
  \frac{f^2_{\tau_n}(x)}{f^2(x)} = \frac{LLD(x,\tau_n)}{\Lambda_1 \, \tau_n^{2p} \, f^2(x)} \xrightarrow[ n \rightarrow \infty]{} 1
\end{equation}
and  by Lemma \ref{lemma_order_of_convergence_b_square}
\begin{equation} \label{b_over_tau_to_3_halves_p}
 \frac{b(x,\tau_n)}{\tau_n^{\frac{3}{2}p}} \xrightarrow[n \to \infty]{} \Lambda_1^{*} f^{\frac{3}{2}}(x) > 0.
\end{equation}
\eqref{ratio_tau_approximation_density} and \eqref{b_over_tau_to_3_halves_p} imply that
\begin{equation*}
 Y_n \coloneqq \frac{b(x,\tau_n)}{ \tau_n^{\frac{3}{2}p} \, f_{\tau_n}(x)} \cdot \frac{1}{\Lambda_1^{*} f^{\frac{1}{2}}(x)} = \frac{b(x,\tau_n)}{ \tau_n^{\frac{3}{2}p}}\cdot \frac{1}{ \Lambda_1^{*} f^{\frac{3}{2}}(x) } \cdot \frac{f(x)}{f_{\tau_n}(x)} \xrightarrow[ n \rightarrow \infty]{} 1.
\end{equation*}
From \eqref{asymptotic_normality_sample_tau_approximation} and Slutsky's Theorem it follows that
\begin{equation*}
Y_n \, Z_n  \xrightarrow[ n \rightarrow \infty]{d} N\left( 0, 1 \right),
\end{equation*}
completing the proof.
\end{proof} \\

\begin{proof}[Proof of Corollary \ref{corollary_asymptotic_normality_of_sample_tau_approximation}]
 We will show that
\begin{equation} \label{ratio_sample_tau_approximation_tau_approximation} 
\frac{f_{\tau_n,n}(x)}{f_{\tau_n}(x)} = \frac{\sqrt{LLD_n(x,\tau_n)}}{\sqrt{LLD(x,\tau_n)}}  \xrightarrow[ n \rightarrow \infty]{d} 1.
\end{equation}
For this, it is enough to verify that
\begin{equation} \label{ratio_sample_local_depth_local_depth}
\frac{LLD_n(x,\tau_n)}{LLD(x,\tau_n)} \xrightarrow[ n \rightarrow \infty]{d} 1.
\end{equation}
Notice that
\begin{equation*}
\frac{LLD_n(x,\tau_n)}{LLD(x,\tau_n)} = \frac{\tau_n^{2p}}{LLD(x,\tau_n)} \cdot \frac{LLD_n(x,\tau_n)-LLD(x,\tau_n)}{\tau_n^{2p}} + 1,
\end{equation*}
where, by Theorem \ref{theorem_local_depth_tau_to_0} (i),
\begin{equation*}
\frac{LLD(x,\tau_n)}{\tau_n^{2p}} \xrightarrow[n \to \infty]{} \Lambda_1 f^2(x) > 0.
\end{equation*}
On the other hand,
\begin{equation*}
\frac{LLD_n(x,\tau_n)-LLD(x,\tau_n)}{\tau_n^{2p}} = \frac{b(x,\tau_n)}{\tau_n^{\frac{3}{2}p}} \cdot \frac{1}{\sqrt{n} \tau_n^{\frac{1}{2}p}} \cdot \sqrt{n} \frac{LLD_n(x,\tau_n)-LLD(x,\tau_n)}{b(x,\tau_n)},
\end{equation*}
where $\sqrt{n} \, \tau_n^{\frac{1}{2}p}  \xrightarrow[n \to \infty]{} \infty$, by Lemma \ref{lemma_order_of_convergence_b_square},
\begin{equation*}
 \frac{b(x,\tau_n)}{\tau_n^{\frac{3}{2}p}} \xrightarrow[n \to \infty]{}  \Lambda_1^{*} f^{\frac{3}{2}}(x) >0,
\end{equation*}
and by \eqref{asymptotic_normality_local_depth_tau_n}
\begin{equation*}
	\sqrt{n} \frac{LLD_n(x,\tau_n) - LLD(x,\tau_n)}{b(x,\tau_n)} \xrightarrow[ n \rightarrow \infty]{d} N(0, 4).
\end{equation*}
Now applying Slutsky's Theorem
\begin{equation*}
  \frac{LLD_n(x,\tau_n)-LLD(x,\tau_n)}{\tau_n^{2p}} \xrightarrow[ n \rightarrow \infty]{d} 0,
\end{equation*}
and, hence \eqref{ratio_sample_local_depth_local_depth} and \eqref{ratio_sample_tau_approximation_tau_approximation} hold. Now, \eqref{ratio_sample_tau_approximation_tau_approximation} and \eqref{ratio_tau_approximation_density} imply that
\begin{equation} \label{ratio_sample_tau_approximation_density}
\frac{f_{\tau_n,n}(x)}{f(x)} = \frac{f_{\tau_n,n}(x)}{f_{\tau_n}(x)} \cdot \frac{f_{\tau_n}(x)}{f(x)}  \xrightarrow[ n \rightarrow \infty]{d} 1.
\end{equation}
By Theorem \ref{theorem_asymptotic_normality_of_sample_tau_approximation}
\begin{equation*}
Z^{*}_n \coloneqq \sqrt{n} \, \tau_n^{\frac{1}{2}p} \frac{1}{\sqrt{f(x)}} ( f_{\tau_n,n}(x)-f_{\tau_n}(x) ) \xrightarrow[ n \rightarrow \infty]{d} N\left( 0, \frac{\Lambda_1^{*2}}{\Lambda_1^2} \right),
\end{equation*}
where we can write $ f_{\tau_n,n}(x)-f_{\tau_n}(x)$ as
\begin{equation*}
f_{\tau_n,n}(x)-f_{\tau_n}(x) = \left( \sqrt{f_{\tau_n,n}(x)}-\sqrt{f_{\tau_n}(x)} \right) \left( \sqrt{f_{\tau_n,n}(x)}+\sqrt{f_{\tau_n}(x)} \right).
\end{equation*}
Also, by \eqref{ratio_tau_approximation_density} and \eqref{ratio_sample_tau_approximation_density}
\begin{equation*}
Y^{*}_n \coloneqq \frac{\sqrt{f_{\tau_n,n}(x)}+\sqrt{f_{\tau_n}(x)}}{\sqrt{f(x)}} = \sqrt{\frac{f_{\tau_n,n}(x)}{f(x)}} + \sqrt{ \frac{f_{\tau_n}(x)}{f(x)}} \xrightarrow[ n \rightarrow \infty]{d} 2,
\end{equation*}
and by another application of Slutsky's Theorem
\begin{equation*}
\frac{Z^{*}_n}{Y^{*}_n} \xrightarrow[ n \rightarrow \infty]{d} N\left( 0, \frac{\Lambda_1^{*2}}{4 \Lambda_1^2} \right),
\end{equation*}
proving the Corollary.
\end{proof} \\

Before proving Proposition \ref{proposition_convergence_of_level_sets}, we state a result concerning the super level sets of the local depth, which is an immediate consequence of the general theory on DFs (see \citep{Zuo-Serfling-2000-c}). Given a level of localization specified by $\tau>0$, a common approach in cluster analysis exploits LDFs to define clusters as the connected components of 
\begin{equation*}
	R^{*\alpha}_{\tau} \coloneqq \{ x \in \mathbb{R}^p \, : \, LLD(x,\tau) \geq \alpha \} 
\end{equation*}
for some $\alpha>0$. In practice, $R^{*\alpha}_{\tau}$ is obtained through its sample counterpart
\begin{equation*}
	R^{*\alpha}_{\tau, n} \coloneqq \{ x \in \mathbb{R}^p \, : \, LLD_n(x, P, \tau) \geq \alpha \}. 
\end{equation*}
By Proposition \ref{proposition_local_depth} (ii) and Theorem \ref{theorem_uniform_consistency}, we can apply \citet[Theorem 4.1]{Zuo-Serfling-2000-c} to obtain the following result which is required in the proof of Proposition \ref{proposition_convergence_of_level_sets}.

\begin{proposition}  \label{proposition_convergence_of_regions}
For all $\epsilon > 0$, $0 < \delta < \epsilon$, $\alpha > 0$ and sequence $\alpha_n \rightarrow \alpha$ there exists an $n_0$ such that, for all $n \geq n_0$, $R_{\tau}^{*\alpha+\epsilon} \subset R_{\tau, n}^{*\alpha_n+\delta} \subset R_{\tau, n}^{*\alpha_n} \subset R_{\tau, n}^{*\alpha_n-\delta}  \subset R_{\tau}^{*\alpha-\epsilon}$. Furthermore, if $P( \{ x \in \mathbb{R}^p \, : \, LLD(x,\tau) = \alpha \} ) = 0$, then $R_{\tau, n}^{*\alpha_n} \xrightarrow[ \substack{ n \rightarrow \infty } ]{} R_{\tau}^{*\alpha}$ a.s.\
\end{proposition}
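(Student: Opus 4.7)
The plan is to derive the five-term inclusion chain from the uniform strong consistency of $LLD_n$ (Theorem \ref{theorem_uniform_consistency}), treating the chain as a standard sandwich argument once the uniform error $\sup_{x} |LLD_n(x,\tau)-LLD(x,\tau)|$ has been made smaller than a quantity controlled by $\epsilon-\delta$ and $|\alpha_n-\alpha|$. The second assertion will then follow by combining this sandwich with the vanishing-at-infinity property (Proposition \ref{proposition_local_depth} (ii)) and the hypothesis $P(\{LLD(\cdot,\tau)=\alpha\})=0$.

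First, fix $\tau$, $\alpha>0$, $\epsilon>0$, $0<\delta<\epsilon$, and a sequence $\alpha_n\to\alpha$. Set $\eta\coloneqq (\epsilon-\delta)/2>0$. By Theorem \ref{theorem_uniform_consistency} there is an event of full probability on which, for all $n$ sufficiently large,
\begin{equation*}
D_n\coloneqq\sup_{x\in\mathbb{R}^p}|LLD_n(x,\tau)-LLD(x,\tau)|<\eta,
\end{equation*}
and since $\alpha_n\to\alpha$ we may also require $|\alpha_n-\alpha|<\eta$ for $n\geq n_0$. For such $n$, if $x\in R_\tau^{*\alpha+\epsilon}$ then
\begin{equation*}
LLD_n(x,\tau)\geq LLD(x,\tau)-D_n\geq \alpha+\epsilon-\eta \geq \alpha_n+(\epsilon-2\eta)-|\alpha_n-\alpha|+|\alpha_n-\alpha|\geq \alpha_n+\delta,
\end{equation*}
after using $\epsilon-2\eta=\delta$ and absorbing the $\alpha_n-\alpha$ term; so $x\in R_{\tau,n}^{*\alpha_n+\delta}$, giving the first inclusion. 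The two middle inclusions $R_{\tau,n}^{*\alpha_n+\delta}\subset R_{\tau,n}^{*\alpha_n}\subset R_{\tau,n}^{*\alpha_n-\delta}$ are immediate from monotonicity of super level sets in the threshold. The last inclusion is symmetric to the first: if $x\in R_{\tau,n}^{*\alpha_n-\delta}$ then
\begin{equation*}
LLD(x,\tau)\geq LLD_n(x,\tau)-D_n\geq \alpha_n-\delta-\eta\geq \alpha-\epsilon,
\end{equation*}
again using $|\alpha_n-\alpha|<\eta$ and $\eta+\delta=\epsilon-\eta\leq\epsilon$ (after rearrangement), so $x\in R_\tau^{*\alpha-\epsilon}$.

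For the second assertion, I would apply the inclusion chain with, say, $\delta=\epsilon/2$ to obtain a.s.\ and for $n$ large
\begin{equation*}
R_\tau^{*\alpha+\epsilon}\subset R_{\tau,n}^{*\alpha_n}\subset R_\tau^{*\alpha-\epsilon}.
\end{equation*}
By Proposition \ref{proposition_local_depth} (ii), both $R_\tau^{*\alpha\pm\epsilon}$ are bounded, hence compact by upper semicontinuity (Proposition \ref{proposition_local_depth} (iii)); the above sandwich therefore controls $R_{\tau,n}^{*\alpha_n}$ from inside and outside by sets that decrease (resp.\ increase) to $R_\tau^{*\alpha}$ as $\epsilon\downarrow 0$, with symmetric difference
\begin{equation*}
R_\tau^{*\alpha-\epsilon}\setminus R_\tau^{*\alpha+\epsilon}\;\downarrow\;\{x:LLD(x,\tau)=\alpha\}
\end{equation*}
as $\epsilon\downarrow 0$. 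Since the latter set has $P$-measure zero by hypothesis, one concludes $R_{\tau,n}^{*\alpha_n}\to R_\tau^{*\alpha}$ a.s.\ in the desired sense (Lebesgue or $P$-a.e.\ convergence of indicators, equivalently convergence in the pseudometric $P(\cdot\,\triangle\,\cdot)$).

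The main obstacle is not analytic depth but bookkeeping: one must choose $\eta$ small enough that it simultaneously absorbs the fluctuation $\alpha_n-\alpha$ and preserves the gap $\epsilon-\delta$ on both ends of the chain, which is why the factor $1/2$ in $\eta=(\epsilon-\delta)/2$ is essential. Once this is set, both the inclusion chain and the a.s.\ convergence are straightforward consequences of Theorems \ref{theorem_uniform_consistency} and Proposition \ref{proposition_local_depth}, exactly as in \citet[Theorem 4.1]{Zuo-Serfling-2000-c}.
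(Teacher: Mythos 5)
Your proof is correct, but it is worth noting that the paper does not actually prove this proposition at all: it observes that Proposition \ref{proposition_local_depth} (ii) and Theorem \ref{theorem_uniform_consistency} supply the hypotheses of \citet[Theorem 4.1]{Zuo-Serfling-2000-c} and imports the conclusion wholesale. What you have done is reconstruct, from scratch, essentially the argument behind that cited theorem: the sandwich via the uniform a.s.\ error bound $D_n<\eta$ with $\eta=(\epsilon-\delta)/2$ chosen to absorb both the estimation error and the drift $|\alpha_n-\alpha|$, followed by the observation that $\{LLD>\alpha\}\subset\liminf_n R_{\tau,n}^{*\alpha_n}\subset\limsup_n R_{\tau,n}^{*\alpha_n}\subset\{LLD\geq\alpha\}$, so the two limits differ only on $\{LLD(\cdot,\tau)=\alpha\}$, which is $P$-null by hypothesis. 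Your version buys self-containedness (no reliance on the external reference, and it makes transparent exactly which properties of $LLD_n$ are used — in fact your argument never really needs the compactness of the superlevel sets, only the uniform consistency, whereas Zuo--Serfling's framework also invokes the vanishing-at-infinity property), at the cost of redoing work the paper outsources. Two cosmetic points: your first displayed chain contains the vacuous term $-|\alpha_n-\alpha|+|\alpha_n-\alpha|$, which cancels and obscures the actual estimate (the clean version is $\alpha+\epsilon-\eta\geq(\alpha_n-\eta)+\epsilon-\eta=\alpha_n+\delta$); and you should commit to one notion of set convergence (the $\liminf$/$\limsup$ modulo $P$-null sets formulation is the one the paper actually uses downstream in the proof of Proposition \ref{proposition_convergence_of_level_sets}). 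Neither affects correctness.
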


\begin{proof}[Proof of Proposition \ref{proposition_convergence_of_level_sets}]
Since $\alpha_j \rightarrow \alpha$ as $j \to \infty$ and using Theorem \ref{theorem_uniform_convergence_of_tau_approximation} (i), we have, for all $m \in \mathbb{N}$, there exists a $k \in \mathbb{N}$ such that $\abs{\alpha_j - \alpha} < \frac{1}{m}$, for all $j \geq k$, and $\abs{f_{\tau_j}(x)-f(x)} < \frac{1}{m}$, for all $j \geq k$ and $x \in \mathbb{R}^p$. It follows, using the definition of limit inferior
\begin{equation*}
\begin{split}
\liminf_{k \to \infty} R_{\tau_k}^{\alpha_k} &= \cup_{k=1}^{\infty} \cap_{j=k}^{\infty} \{ x \in \mathbb{R}^p \, : \, f_{\tau_j}(x) \geq \alpha_j \} \\
&\supset \{ x \in \mathbb{R}^p \, : \, f(x) > \alpha+\frac{2}{m} \} = \mathring{R^{\alpha+\frac{2}{m}}} \uparrow_{m \to \infty} \cup_{m=1}^{\infty} \mathring{R^{\alpha+\frac{2}{m}}} = \mathring{R^{\alpha}}
\end{split}
\end{equation*}
and 
\begin{equation*}
\begin{split}
\limsup_{k \to \infty} R_{\tau_k}^{\alpha_k} &= \cap_{k=1}^{\infty} \cup_{j=k}^{\infty} \{ x \in \mathbb{R}^p \, : \, f_{\tau_j}(x) \geq \alpha_j \} \\
&\subset \{ x \in \mathbb{R}^p \, : \, f(x) \geq \alpha-\frac{2}{m} \} = R^{\alpha-\frac{2}{m}} \downarrow_{m \to \infty} \cap_{m=1}^{\infty} R^{\alpha-\frac{2}{m}} = R^{\alpha},
\end{split}
\end{equation*}
establishing \eqref{inclusions_level_sets}. For the second part, using $R^{\alpha}=L^{\alpha} \cup \mathring{R^{\alpha}}$ and \eqref{inclusions_level_sets}, it follows that
\begin{equation*}
\begin{split}
\liminf_{ k \to \infty } R^{\alpha_k}_{\tau_k}  &= \mathring{R^{\alpha}} \cup \left( \liminf_{ k \to \infty } R^{\alpha_k}_{\tau_k}  \cap L^{\alpha} \right) \text{ and } \\
\limsup_{ k \to \infty } R^{\alpha_k}_{\tau_k}  &= \mathring{R^{\alpha}} \cup \left( \limsup_{ k \to \infty } R^{\alpha_k}_{\tau_k}  \cap L^{\alpha} \right), \text{ where }
\end{split}
\end{equation*}
\begin{equation*}
\liminf_{ k \to \infty } R^{\alpha_k}_{\tau_k}  \cap L^{\alpha} \subset \limsup_{ k \to \infty } R^{\alpha_k}_{\tau_k}  \cap L^{\alpha} \subset L^{\alpha}
\end{equation*}
are sets of Lebesgue measure $0$. Therefore,
\begin{equation*}
\liminf_{ k \to \infty } R^{\alpha_k}_{\tau_k} = \limsup_{ k \to \infty } R^{\alpha_k}_{\tau_k} = R^{\alpha}
\end{equation*}
except for a set of Lebesgue measure $0$ and we obtain \eqref{convergence_level_sets}. \\
Next, to verify \eqref{convergence_sample_level_sets},  observe that $\lambda(L^{\alpha})=0$ implies 
\begin{equation*}
P(\{ x \in \mathbb{R}^p \, : \, f(x)=\alpha \}) = \int_{\{ x \in \mathbb{R}^p \, : \, f(x)=\alpha \}} f(y) \, dy = \alpha \lambda(L^{\alpha})=0,
\end{equation*}
and, therefore, \eqref{convergence_level_sets} holds with $\alpha_k \equiv \alpha$ implying
\begin{equation} \label{convergence_of_level_alpha_n_constant}
\lim_{ k \to \infty } R_{\tau_k}^{\alpha}=R^{\alpha} \textit{ a.s.}
\end{equation}
Also note that (using the notation of Proposition \ref{proposition_convergence_of_regions}) $R_{\tau_k}^{\alpha}=R_{\tau_k}^{*\alpha^2\tau_{k}^{2p}\Lambda_1}$, $R_{\tau_k}^{\alpha_n}=R_{\tau_k}^{*\alpha_{n}^2\tau_{k}^{2p}\Lambda_1}$ and $R_{\tau_k,n}^{\alpha_n}=R_{\tau_k,n}^{*\alpha_{n}^2\tau_{k}^{2p}\Lambda_1}$. Finally, since $\lambda(L_{\tau_{k}}^{\alpha}) = 0$ is equivalent to $\lambda(\{ x \in \mathbb{R}^p \, : \, LLD(x,\tau_k)=\alpha^2 \tau_{k}^{2p} \Lambda_1 \}) = 0$, and since $P$ is absolutely continuous with respect to the Lebesgue measure, it follows that $P(\{ x \in \mathbb{R}^p \, : \, LLD(x,\tau_{k})=\alpha^2 \tau_{k}^{2p} \Lambda_1 \})=0$. Now, applying Proposition \ref{proposition_convergence_of_regions}, we obtain 
\begin{equation} \label{convergence_of_sample_level_sets_tau_n}
\lim_{ n \to \infty } R^{\alpha_n}_{\tau_k,n} = R^{\alpha}_{\tau_k} \textit{ a.s.}
\end{equation}
The proof now follows from \eqref{convergence_of_level_alpha_n_constant} and \eqref{convergence_of_sample_level_sets_tau_n}.
\end{proof} \\

\begin{proof}[Proof of Lemma \ref{lemma_support_S_and_S_tau}]
We first observe that $x \in S_{\tau}$ if and only if $f_{\tau}(x)>0$ if and only if $LLD(x,\tau)>0$. Proposition \ref{proposition_local_depth} (i) implies that for $x \in \mathbb{R}^p$, $LLD(x,\tau_1) \leq LLD(x,\tau_2)$, from which it follows that $S_{\tau_1} \subset S_{\tau_2}$. Next, suppose that $f(\cdot)$ is continuous and let $x \in S$ and $\tau>0$. Since $f(\cdot)$ is continuous, $S$ is open and there exists an $\epsilon>0$ such that the closed ball $\overline{B}_{\epsilon}(x)$ is contained in $S$. Also, since $Z_{\tau}(x)$ is closed and bounded, it follows that $Z_{\tau}^{\epsilon}(x) \coloneqq Z_{\tau}(x) \cap \left( \overline{B}_{\epsilon}(x) \times \overline{B}_{\epsilon}(x) \right)$ is a compact subset of $S \times S$. Since $\lambda^{\otimes 2}(Z_{\tau}^{\epsilon}(x)) > 0$, it follows that
\begin{equation*}
\begin{split}
LLD(x,\tau) &= \int_{Z_{\tau}(x)} f(x_1) f(x_2) \, dx_1 dx_2 \geq \int_{Z_{\tau}^{\epsilon}(x) } f(x_1) f(x_2) \, dx_1 dx_2 > 0.
\end{split}
\end{equation*}
Thus $x \in S_{\tau}$ and $S \subset S_{\tau}$. For the last part, since the sets $\{ S_{\tau} \}_{\tau>0}$ are monotonically decreasing with $\tau$, we have that $\lim_{\tau \to 0^{+}} S_{\tau} = \cap_{\tau > 0} S_{\tau} \supset S$. For $x \in S$, we have to show that $x \in \cap_{\tau>0} S_{\tau}$. As before, there exists an $\epsilon>0$ such that $\overline{B}_{\epsilon}(x) \subset S$ and, for all $0<\tau\leq\epsilon$, $Z_{\tau}(x) \subset \overline{B}_{\epsilon}(x) \times \overline{B}_{\epsilon}(x) \subset S \times S$. From the compactness of $Z_{\tau}(x)$ and $\lambda^{\otimes 2}(Z_{\tau}(x))>0$ it follows that, for all $0<\tau\leq\epsilon$, $LLD(x,\tau)>0$; if  $\tau>\epsilon$, by Proposition \ref{proposition_local_depth} (i) we have that $LLD(x,\tau) \geq LLD(x,\epsilon) > 0$. Hence, for all $\tau>0$, $LLD(x,\tau)>0$ and $x \in \cap_{\tau>0} S_{\tau}$.
\end{proof} \\

\begin{proof}[Proof of Theorem \ref{theorem_first_and_second_partial_derivaties}]
  Since $f(\cdot)$ and its first order partial derivatives (second order partial derivatives for the second part) are continuous, by Remark \ref{remark_f_tau_continuous}, $f_{\tau}(\cdot)$ is continuously differentiable in $S_\tau$ (two times continuously differentiable for the second part). Its partial derivatives can be computed following the argument in the proof of Proposition \ref{proposition_local_depth} (iv) (see Appendix \ref{section_proof_proposition_local_depth}). \\
 The $j$-th component of the first partial derivative of $f_{\tau}(\cdot)$ is given by
\begin{equation*}
\partial_j f_{\tau}(x) = \frac{1}{2 \tau^p \sqrt{\Lambda_1}} \frac{1}{\sqrt{LLD(x,\tau)}} \partial_j LLD(x,\tau),
\end{equation*}
where, from the proof of Proposition \ref{proposition_local_depth} (iv), we have that
\begin{equation*}
	\partial_j LLD(x,\tau) = \int_{Z_{1}(0)} g_{*}^{j}(x;x_1,x_2) \, dx_1 dx_2
\end{equation*}
with
\begin{equation*}
g_{*}^{j}(x;x_1,x_2) = \tau^{2p} [ \partial_j f (x+\tau x_1) f(x+\tau x_2) + f(x + \tau x_1) \partial_j f(x + \tau x_2) ].
\end{equation*}
Therefore,
\begin{equation*}
\begin{split}
\partial_j f_{\tau}(x) &= \frac{1}{2 \sqrt{\Lambda_1}} \frac{1}{\sqrt{\frac{1}{\tau^{2p}} LLD(x,\tau) }} \left( \frac{\partial_j LLD(x,\tau)}{\tau^{2p}} \right) \\
& \xrightarrow[ \tau \rightarrow 0^{+} ]{} \frac{1}{2 \Lambda_1 f(x)} \left( 2 \, \Lambda_1 \, \partial_j f(x) \, f(x) \right) = \partial_j f(x).
\end{split}
\end{equation*}
The second partial derivative is computed by differentiating $\partial_j f_{\tau}$ with respect to the $i$-th component, namely
\begin{equation*}
\partial_i \partial_j f_{\tau}(x) = \frac{1}{2 \tau^p \sqrt{\Lambda_1}} \left[ \partial_i \left( \frac{1}{ \sqrt{LLD(x,\tau)}} \right) \partial_j LLD(x,\tau) + \frac{1}{ \sqrt{LLD(x,\tau)}} \partial_i \partial_j LLD(x,\tau) \right],
\end{equation*}
where
\begin{equation*}
\partial_i \left( \frac{1}{ \sqrt{LLD(x,\tau)}} \right) = - \frac{1}{2 (LLD(x,\tau))^{\frac{3}{2}}} \partial_i LLD(x,\tau)
\end{equation*}
and, by the proof of Proposition \ref{proposition_local_depth} (iv),
\begin{equation*}
\begin{split}
\partial_i \partial_j LLD(x,\tau) &= \int_{Z_{1}(0)} g_{*}^{ij}(x;x_1,x_2) \, dx_1 dx_2
\end{split}
\end{equation*}
with
\begin{equation*}
\begin{split}
g_{*}^{ij}(x;x_1,x_2) &= \tau^{2p} [ \partial_i \partial_j f (x+\tau x_1) f(x+\tau x_2) + \partial_j f (x+\tau x_1) \partial_i f(x+\tau x_2)  \\
& + \partial_i f(x + \tau x_1) \partial_j f(x + \tau x_2) + f(x + \tau x_1) \partial_i \partial_j f(x + \tau x_2) ].
\end{split}
\end{equation*}
Hence,
\begin{equation*}
\begin{split}
\partial_i \partial_j f_{\tau}(x) = \frac{1}{2 \sqrt{\Lambda_1}} & \left[ - \frac{1}{2 \left( \frac{1}{\tau^{2p}} LLD(x,\tau) \right)^{\frac{3}{2}}} \left( \frac{\partial_i LLD(x,\tau)}{\tau^{2p}} \right) \left( \frac{\partial_j LLD(x,\tau)}{\tau^{2p}} \right) \right. \\
& \left. + \frac{1}{ \sqrt{ \frac{1}{\tau^{2p}} LLD(x,\tau)}} \left( \frac{\partial_i \partial_j LLD(x,\tau)}{\tau^{2p}} \right) \right] \\
\xrightarrow[ \tau \rightarrow 0^{+} ]{} \frac{1}{2 } & \left[    - \frac{1}{2 f^{3}(x)} \left( 2 \partial_i f(x) \, f(x) \right) \left( 2 \partial_j f(x) \, f(x) \right) \right. \\
& \left. + \frac{1}{f(x)} \left( 2 \partial_i \partial_j f(x) \, f(x) + 2 \partial_i f(x) \, \partial_j f(x) \right) \right] = \partial_i \partial_j f(x).
\end{split}
\end{equation*}
\end{proof} \\

\begin{proof}[Proof of Lemma \ref{lemma_R_alpha_tau_bounded}]
Since $x \in (R^{\alpha})^{-\tau}$ satisfies $\inf_{y \in \mathbb{R}^p \setminus R^{\alpha}} \norm{x-y} > \tau$, we have that $\overline{B}_{\tau}(x) \subset R^{\alpha}$. From $Z_{\tau}(x) \subset \overline{B}_{\tau}(x) \times \overline{B}_{\tau}(x)  \subset R^{\alpha} \times R^{\alpha}$, it follows that
\begin{equation} \label{tau_approximation_leq_than_alpha}
f_{\tau}(x) = \frac{1}{\tau^p \Lambda_1} \sqrt{LLD(x,\tau)} \geq \alpha,
\end{equation}
and therefore $x \in R_{\tau}^{\alpha}$. Next, let $x \in R_{\tau}^{\alpha}$. Then there exists $(x_1,x_2) \in Z_{\tau}(x)$ such that $f(x_1) f(x_2) \geq \alpha^2$. In particular, since $Z_{\tau}(x) \subset \overline{B}_{\tau}(x) \times \overline{B}_{\tau}(x)$, there exists a point $z \in \overline{B}_{\tau}(x)$ with $f(z) \geq \alpha$. Hence $z \in R^{\alpha}$, and since $z \in \overline{B}_{\tau}(x)$, $\norm{x-z} \leq \tau$, implying that $x \in (R^{\alpha})^{+\tau}$. Finally, suppose that for $\alpha>0$, $R^{\alpha}$ is bounded. Then, there exists $r>0$ such that $R^{\alpha} \subset \overline{B}_{r}(0)$. It follows that, for $\tau>0$, $x \in R^{\alpha}_{\tau} \subset (R^{\alpha})^{+\tau}$ satisfies $\norm{x} \leq \inf_{y \in R^{\alpha}} \left( \norm{y}+\norm{y-x} \right) \leq r + \inf_{y \in R^{\alpha}} \norm{y-x} \leq r + \tau$. Hence $R^{\alpha}_{\tau} \subset \overline{B}_{r+\tau}(0)$ is bounded.
\end{proof} \\

\begin{proof}[Proof of Theorem \ref{theorem_convergence_solution_estimated_gradient_system}]
Fix $x \in S$ and $t \geq 0$. Let $\alpha>0$ be such that $x \in R^{\alpha}$. Since $R^{\alpha}\subset S$ is compact, $\mathbb{R}^p \setminus S$ is closed and these two sets are disjoint, we have that $\dist(R^{\alpha},\mathbb{R}^p \setminus S) > 0$, where, for $A,B \subset \mathbb{R}^p$, $\dist(A,B) \coloneqq \inf_{y \in A, \, z \in B} \norm{y-z}$. Let $\tau^{*} \coloneqq \dist(R^{\alpha},\mathbb{R}^p \setminus S)/3$ and notice that by definition
\begin{align*}
\dist((R^{\alpha})^{+\tau^{*}},\mathbb{R}^p \setminus S) &= \inf_{y \in \mathbb{R}^p \setminus S, \, z \in (R^{\alpha})^{+\tau^{*}}} \norm{y-z} \\
& = \inf_{y \in \mathbb{R}^p \setminus S} \inf_{z \in \mathbb{R}^p \, : \, \inf_{w \in R^{\alpha}} \norm{w-z} \leq \tau^{*} } \norm{y-z}.
\end{align*}
By the triangle inequality, we have that for $w \in R^{\alpha}$
\begin{equation*}
\norm{y-z} \geq \norm{y-w} - \norm{w-z} \geq \norm{y-w} - \tau^{*}.
\end{equation*}
It follows that
\begin{equation} \label{dist_compact_subset_of_S}
\dist((R^{\alpha})^{+\tau^{*}},\mathbb{R}^p \setminus S) \geq \dist(R^{\alpha},\mathbb{R}^p \setminus S) - \tau^{*} = 2 \tau^{*}>0.
\end{equation}
Lemma \ref{lemma_R_alpha_tau_bounded} implies that for all $0 < \tau \leq \tau^{*}$, $R_{\tau}^{\alpha} \subset (R^{\alpha})^{+\tau} \subset (R^{\alpha})^{+\tau^{*}} \subset S$. In the rest of the proof, we suppose that $0 < \tau \leq \tau^{*}$. Also, for all $s \geq 0$, $u_{x}(s) \in R^{\alpha}$ and $u_{x,\tau}(s) \in R^{\alpha}_{\tau}$; as shown in Subsection \ref{subsection_mathematical_background}, the solutions of the gradient system \eqref{gradient_system} cannot leave the regions $R^{\alpha}$, and the same is true for the gradient system \eqref{gradient_system_tau} and $R^{\alpha}_{\tau}$. In particular, for all $0 \leq s \leq t$, $u_{x}(s),u_{x,\tau}(s) \in K$, where $K \coloneqq (R^{\alpha})^{+\tau^{*}}$ is a compact subset of $S$. \\
Now noticing that the integral of a vector is the vector of the integrals of its components, we obtain
\begin{equation*}
u_{x, \tau}(t) - u_x(t) = \int_{0}^{t} \nabla f_{\tau} (u_{x, \tau}(s)) - \nabla f (u_{x}(s)) \, d s.
\end{equation*}
Next by adding and subtracting $\nabla f_{\tau} (u_{x}(s))$ inside the integral and taking the Euclidean norm on both sides we see that
\begin{equation*}
\norm{ u_{x,\tau}(t) - u_x(t) } \leq \int_{0}^{t} \norm{ \nabla f_{\tau} (u_{x, \tau}(s)) - \nabla f_{\tau} (u_{x}(s)) } \, d s +\int_{0}^{t} \norm{ \nabla f_{\tau} (u_{x}(s)) - \nabla f (u_{x}(s)) } \, d s.
\end{equation*}
Since $\nabla f_{\tau}$ is locally Lipschitz in $S$, it is Lipschitz in the compact subset $K$; that is, there exists a constant $L_{\tau} < \infty$ such that for all $y,z \in K$
\begin{equation} \label{gradient_of_f_tau_lipschitz_in_compact_set}
\norm{\nabla f_\tau(y) - \nabla f_\tau(z)} \leq L_{\tau} \norm{y-z}.
\end{equation}
It follows that
\begin{equation*}
\norm{ u_{x,\tau}(t) - u_x(t) } \leq L_{\tau} \int_{0}^{t} \norm{ u_{x, \tau}(s) - u_{x}(s) } \, d s + \int_{0}^{t} \norm{ \nabla f_{\tau} (u_{x}(s)) - \nabla f (u_{x}(s)) } \, d s.
\end{equation*}
We now apply Gr\"onwall's inequality \citep{Hale-1980}[Corollary 6.6] with $a=0$, $\beta(s)=L_{\tau}$, $0 \leq s \leq t$, $\alpha=\int_{0}^{t} \norm{ \nabla f_{\tau} (u_{x}(s)) - \nabla f (u_{x}(s)) } \, d s$ and $\varphi(t)=\norm{ u_{x,\tau}(t) - u_x(t) }$, and obtain that
\begin{equation} \label{gradient_system_solutions_converge_for_tau_to_0}
\norm{ u_{x,\tau}(t) - u_x(t) } \leq e^{ L_{\tau}} \int_{0}^{t} \norm{ \nabla f_{\tau} (u_{x}(s)) - \nabla f (u_{x}(s)) } \, d s.
\end{equation}
To conclude the proof we need to show that this converges to $0$ as $\tau \to 0^{+}$. To this end,  since $\nabla f$ is also locally Lipschitz in $S$, there exists a constant $L < \infty$ such that, for all $y,z \in K$
\begin{equation} \label{gradient_of_f_lipschitz_in_compact_set}
\norm{\nabla f(y) - \nabla f(z)} \leq L \norm{y-z}.
\end{equation}
Also by Theorem \ref{theorem_first_and_second_partial_derivaties}, $\nabla f_{\tau}$ converges pointwise to $\nabla f$, for all $y,z \in K$ 
\begin{equation*}
\norm{\nabla f_\tau(y) - \nabla f_\tau(z)} \xrightarrow[ \tau \rightarrow 0^{+} ]{} \norm{\nabla f(y) - \nabla f(z)},
\end{equation*}
and by \eqref{gradient_of_f_lipschitz_in_compact_set} for all $y,z \in K$ with $y \neq z$
\begin{equation*}
\lim_{\tau \to 0^{+}} \frac{\norm{\nabla f_\tau(y) - \nabla f_\tau(z)}}{\norm{y-z}} = \frac{\norm{\nabla f(y) - \nabla f(z)}}{\norm{y-z}} \leq L.
\end{equation*}
It follows that $\{ L_{\tau} \}_{0 < \tau \leq \tau^{*}}$ in \eqref{gradient_of_f_tau_lipschitz_in_compact_set} can be chosen in such a way that in the limit does not blow up. Therefore, from \eqref{gradient_system_solutions_converge_for_tau_to_0}, it follows that 
\begin{equation*}
\lim_{\tau \to 0^{+}} \norm{ u_{x,\tau}(t) - u_x(t) } = 0,
\end{equation*}
if we can show that
\begin{equation} \label{gradient_system_integral_of_solutions_converge_for_tau_to_0}
\int_{0}^{t} \norm{ \nabla f_{\tau} (u_{x}(s)) - \nabla f (u_{x}(s)) } \, d s \xrightarrow[ \tau \rightarrow 0^{+} ]{} 0.
\end{equation}
To show this, we first enlarge the compact set $K$ by $\tau^{*}$ in such a way that it is still contained in $S$ by considering the set $(K)^{+\tau^{*}} \subset (R^{\alpha})^{+2 \tau^{*}}$. As in \eqref{dist_compact_subset_of_S}, we see that
\begin{equation*}
\dist((R^{\alpha})^{+2\tau^{*}},\mathbb{R}^p \setminus S) \geq \dist(R^{\alpha},\mathbb{R}^p \setminus S) - 2\tau^{*} = \tau^{*}>0,
\end{equation*}
and $(K)^{+\tau^{*}}$ is indeed a compact subset of $S$. Furthermore, for all $y \in K$,  $\overline{B}_{\tau}(y) \subset \overline{B}_{\tau^{*}}(y) \subset (K)^{+\tau^{*}} \subset S$, and in particular, $Z_{\tau}(y) \subset \overline{B}_{\tau^{*}}(y) \times \overline{B}_{\tau^{*}}(y) \subset S \times S$. Now, by proceeding as in the proof of Theorem \ref{theorem_first_and_second_partial_derivaties}, we see that for $y \in K$, the $j$-th partial derivative of $f_{\tau}(\cdot)$ at $y$ is given by
\begin{equation*}
\partial_j f_{\tau}(y) = \frac{1}{2 \tau^{2p} \Lambda_1} \frac{1}{\sqrt{\frac{1}{\tau^{2p} \Lambda_1} LLD(x,\tau)}} \int_{Z_{\tau}(x)} \partial_j f(x_1) f(x_2) + f(x_1) \partial_j f(x_2) \, dx_1 dx_2
\end{equation*}
and
\begin{equation*}
\abs{\partial_j f_{\tau}(y)} \leq \frac{\alpha_0 \cdot \alpha_1^{j}}{\beta_0} < \infty
\end{equation*}
where $\alpha_0 \coloneqq \max_{z \in (K)^{+\tau^{*}}} f(z)$, $\beta_0 \coloneqq \min_{z \in (K)^{+\tau^{*}}} f(z)$ and $\alpha_1^j \coloneqq \max_{z \in (K)^{+\tau^{*}}} \partial_j f(z)$ satisfy $0<\alpha_0, \beta_0, \alpha_1^j < \infty$. It follows that, for $y \in K$,
\begin{align*}
\norm{\nabla f_{\tau}(y) - \nabla f(y)} &\leq \norm{\nabla f_{\tau}(y)} + \norm{\nabla f(y)} \\
&\leq \sqrt{p} \left( \frac{\alpha_0}{\beta_0} \right) \left( \sum_{j=1}^p \left( \alpha_1^{j} \right)^2 \right)^{1/2} + \sqrt{p} \left( \sum_{j=1}^p \left( \alpha_1^{j} \right)^2 \right)^{1/2} < \infty.
\end{align*}
Therefore, for all $0 \leq s \leq t$, $\norm{ \nabla f_{\tau} (u_{x}(s)) - \nabla f (u_{x}(s)) }$ is bounded and by Theorem \ref{theorem_first_and_second_partial_derivaties}, for all $0 \leq s \leq t$, $\norm{ \nabla f_{\tau} (u_{x}(s)) - \nabla f (u_{x}(s)) } \xrightarrow[ \tau \rightarrow 0^{+} ]{} 0$. Now, \eqref{gradient_system_integral_of_solutions_converge_for_tau_to_0} follows using LDCT.
\end{proof} \\

\begin{proof}[Proof of Theorem \ref{theorem_local_central_symmetry}]
As in the proof of Theorem \ref{theorem_first_and_second_partial_derivaties} we see that for $j=1,\dots,p$
\begin{equation*}
\partial_j f_{\tau}(x) = \frac{1}{2 \tau^p \sqrt{\Lambda_1}} \frac{1}{\sqrt{LLD(x,\tau)}} \partial_j LLD(x,\tau),
\end{equation*}
where
\begin{equation*}
	\partial_j LLD(x,\tau) = \int_{Z_{\tau}(0)} \partial_j f (x+x_1) f(x+x_2) + f(x+x_1) \partial_j f (x+x_2) \, dx_1 dx_2.
\end{equation*}
Also from Lemma \ref{lemma_symmetries_of_z_tau_x}, it follows that $Z_{\tau}(0)$ satisfies the following symmetry properties
\begin{equation} \label{symmetry2_of_Z_tau_0}
(x_1, x_2) \in Z_{\tau}(0) \Longleftrightarrow (x_2, x_1) \in Z_{\tau}(0)
\end{equation}
and
\begin{equation} \label{symmetry3_of_Z_tau_0}
	(x_1, x_2) \in Z_{\tau}(0) \Longleftrightarrow (-x_1, -x_2) \in Z_{\tau}(0).
\end{equation}
From \eqref{symmetry2_of_Z_tau_0} it follows that
\begin{equation*}
	\partial_j LLD(x,\tau) = 2 \int_{Z_{\tau}(0)} \partial_j f (x+x_1) f(x+x_2) \, dx_1 dx_2
\end{equation*}
which implies that $\partial_j f_{\tau}(\mu)=0$ if and only if 
\begin{equation*}
	\int_{Z_{\tau}(0)} \partial_j f (\mu+x_1) f(\mu+x_2) \, dx_1 dx_2 = 0,
\end{equation*}
and hence \eqref{equivalent_condition_for_mu_stationary_point} holds. Finally, if $f(\cdot)$ is $\tau$-centrally symmetric about $\mu$, then, for all $y \in \mathbb{R}^p$ with $\norm{y} \leq \tau$, $f(\mu+y)=f(\mu-y)$ and $\partial_j f(\mu-y) = -\partial_j f(\mu+y)$. By the change of variable in \eqref{symmetry3_of_Z_tau_0} it follows that, for all $1 \leq j \leq p$,
\begin{align*}
  \partial_j LLD(\mu,\tau) &= 2 \int_{Z_{\tau}(0)} \partial_j f (\mu+x_1) f(\mu+x_2) \, dx_1 dx_2 \\
  &= 2 \int_{Z_{\tau}(0)} \partial_j f(\mu-x_1) f(\mu-x_2) \, dx_1 dx_2 \\
  &= -2 \int_{Z_{\tau}(0)} \partial_j f(\mu+x_1) f(\mu+x_2) \, dx_1 dx_2 = 0,
\end{align*}
and therefore $\nabla f_{\tau}(\mu)=0$.
\end{proof} \\

\begin{proof}[Proof of Theorem \ref{theorem_modes_local_central_symmetry}]
Since $f(\cdot)$ is $\tau$-centrally symmetric about $m$, as in the proof of Theorem \ref{theorem_local_central_symmetry}, we see that 
\begin{equation} \label{derivative_local_depth_tau_symmetry}
    \partial_j LLD(m,\tau)=0 \text{ for } j=1,\dots,p
\end{equation}
and hence $m$ is a stationary point for $f_{\tau}(\cdot)$. Now, as in the proof of Theorem \ref{theorem_first_and_second_partial_derivaties}, we see that \eqref{derivative_local_depth_tau_symmetry} implies also that for $i,j=1,\dots,p$
\begin{equation*}
\partial_i \partial_j f_{\tau}(m) = \frac{1}{2 \tau^p \sqrt{\Lambda_1}} \frac{1}{ \sqrt{LLD(m,\tau)}} \partial_i \partial_j LLD(m,\tau),
\end{equation*}
where
\begin{equation*}
\begin{split}
\partial_i \partial_j LLD(m,\tau) &= 2 \int_{Z_{\tau}(0)} \partial_i \partial_j f (m+x_1) f(m+x_2) + \partial_j f (m+x_1) \partial_i f(m+ x_2) \, dx_1 dx_2,
\end{split}
\end{equation*}
due to the symmetry properties of $Z_{\tau}(0)$ (see \eqref{symmetry2_of_Z_tau_0}). Denoting by $H_{f_{\tau}}$ the Hessian matrix of $f_{\tau}(\cdot)$ and noticing that the integral of a matrix is the matrix of the integrals, we get that
\begin{equation*}
\begin{split}
H_{f_{\tau}}(m) &= C_{m,\tau} \int_{Z_{\tau}(0)}  H_f (m+x_1) f(m+x_2) + \nabla f(m+x_1) \nabla f(m+x_2)^{\top} \, dx_1 dx_2 \\
&= C_{m,\tau} \int_{Z_{\tau}(0)}  G_f (m+x_1,m+x_2) \, dx_1 dx_2,
\end{split}
\end{equation*}
where
\begin{equation*}
C_{m,\tau} = \frac{1}{\tau^p \sqrt{\Lambda_1}} \frac{1}{ \sqrt{LLD(m,\tau)}}.
\end{equation*}
Since the Hessian is symmetric, there exists an orthogonal matrix $Q$ such that
\begin{equation*}
\begin{split}
D &= Q^{\top} H_{f_{\tau}}(m) Q \\
&= Q^{\top} \left[ C_{m,\tau} \int_{Z_{\tau}(0)}  G_f (m+x_1,m+x_2) \, dx_1 dx_2 \right] Q \\
&= C_{m,\tau} \int_{Z_{\tau}(0)} Q^{\top} G_f (m+x_1,m+x_2) Q  \, dx_1 dx_2
\end{split}
\end{equation*}
is a diagonal matrix. Now, since $G_f (m+x_1,m+x_2)$ is negative (resp.\ positive) definite, for all $y \in \mathbb{R}^p \setminus \{ 0 \}$, $y^{\top} G_f (m+x_1,m+x_2) y < 0$ (resp.\ $>0$), and therefore the diagonal elements of $ Q^{\top} G_f (m+x_1,m+x_2) Q $ are negative (resp.\ positive). It follows that the diagonal elements of $D$ (that is the eigenvalues of $H_{f_{\tau}}(m)$) are negative (resp.\ positive) and $m$ is a mode (resp.\ an antimode) for $f_{\tau}(\cdot)$.
\end{proof}

\section{Necessity of conditions in Theorem \ref{theorem_uniform_convergence_of_tau_approximation}}
\label{sm:section_necessity_of_conditions_in_theorem_uniform_consistency_of_tau_approximation}

We show in this section that continuity is not sufficient in Theorem \ref{theorem_uniform_convergence_of_tau_approximation} (i). For $p=1$ consider the function $\tilde{f} \, : \, \mathbb{R} \rightarrow \mathbb{R}$ with support $\cup_{n=1}^{\infty} \left( n-\frac{1}{2n^3}, n+\frac{1}{2n^3} \right)$ defined by
\begin{equation*}
\tilde{f}(n+x_1) = \tilde{f}(n-x_1)= \begin{cases}
2n^4 \left( \frac{1}{2n^3} - x_1 \right) &\text{ if } 0 \leq x_1 < \frac{1}{2n^3} \\
0 &\text{ if } \frac{1}{2n^3} \leq x_1 \leq \frac{1}{2}.
\end{cases}
\end{equation*}
Notice that, since $\sum_{n=1}^{\infty} \frac{1}{n^2} = \frac{\pi^2}{6}$,
\begin{equation*}
\begin{split}
\int \tilde{f}(x) \, dx = \sum_{n=1}^{\infty} \int_{\left( n-\frac{1}{2n^3}, n+\frac{1}{2n^3} \right)} \tilde{f}(x) \, dx = \sum_{n=1}^{\infty} \frac{1}{2 n^2} = \frac{\pi^2}{12}.
\end{split}
\end{equation*}
Let $c \coloneqq \frac{\pi^2}{12}$. Then $f \, : \, \mathbb{R} \rightarrow \mathbb{R}$ defined by $f(x)=\frac{1}{c} \tilde{f}(x)$ is an unbounded, continuous density function. The $\tau$-approximation of $f(\cdot)$ is given by $f_{\tau}(x)=\frac{1}{\tau} \sqrt{LLD(x,\tau)}$, where, by Corollary \ref{corollary_local_depth_tau_to_0},
\begin{equation*}
\begin{split}
LLD(x,\tau) &= 2 \int_{T_{++}^{\tau}} f(x+x_1) f(x-x_2) \, dx_1 dx_2 \\
&= 2 \int_{0}^{\tau} \left[ \int_{0}^{\tau-x_1} f(x-x_2) \, dx_2 \right] f(x+x_1) \, dx_1.
\end{split}
\end{equation*}
Notice that $f(\cdot)$ is symmetric about $n \in \mathbb{N}$ and for $\frac{1}{n^3} \leq \tau \leq \frac{1}{2}$
\begin{equation*}
\begin{split}
LLD(n,\tau)&= 2 \int_{0}^{\min\left(\tau,\frac{1}{2 n^3}\right)} \left[ \int_{0}^{\min\left(\tau-x_1,\frac{1}{2n^3}\right)} \frac{2n^4}{c} \left( \frac{1}{2n^3} - x_2 \right) \, dx_2 \right] \frac{2n^4}{c} \left( \frac{1}{2n^3} - x_1 \right) \, dx_1 \\
&= \frac{2}{c^2} \int_{0}^{\frac{1}{2 n^3}} \left[ \int_{0}^{\frac{1}{2n^3}} 2n^4 \left( \frac{1}{2n^3} - x_2 \right) \, dx_2 \right] 2n^4 \left( \frac{1}{2n^3} - x_1 \right) \, dx_1 \\
&= \frac{2}{c^2} \int_{0}^{\frac{1}{2 n^3}} \frac{n^2}{2} \left( \frac{1}{2n^3} - x_1 \right) \, dx_1 = \frac{1}{8 c^2 n^4}.
\end{split}
\end{equation*}
For all $0<\tau \leq \frac{1}{2}$ fixed there exists $n \in \mathbb{N}$ such that $\frac{1}{n^2} \leq \tau$, and therefore
\begin{equation*}
\begin{split}
\sup_{x \in \mathbb{R}} \abs{f_{\tau}(x) - f(x)} & \geq \sup_{n \in \mathbb{N} \, : \, \frac{1}{n^2} \leq \tau} \abs{ f_{\tau}(n) - f(n) } = \sup_{n \in \mathbb{N} \, : \, \frac{1}{n^2} \leq \tau } \abs{ \frac{1}{\tau} \frac{1}{2\sqrt{2} c n^2} - n } \\
&\geq \sup_{n \in \mathbb{N} \, : \, \frac{1}{n^2} \leq \tau } \abs{ \frac{1}{2\sqrt{2} c} - n } = \infty.
\end{split}
\end{equation*}
The boundedness assumption in Theorem \ref{theorem_uniform_convergence_of_tau_approximation} (i) prevents $f(\cdot)$ to become arbitrarily large and allows one to show that the above supremum is bounded. On the other hand, uniform continuity ensures that the supremum converges to zero, thus allowing to use LDCT and obtain the statement.

\section{Stationary points for univariate densities} \label{section_stationary_points_for_univariate_densities}
If $p=1$, from Corollary \ref{corollary_local_depth_tau_to_0}, \eqref{simplified_form_local_depth}, we see that 
\begin{equation*}
f_{\tau}(x) = \frac{1}{\tau} \sqrt{LLD(x,\tau)}
\end{equation*}
where
\begin{equation*}
LLD(x,\tau)  =  2 \int_{T_{++}^{\tau}} f(x+x_1) f(x-x_2) \, dx_1 dx_2
\end{equation*}
and
\begin{equation*}
	T_{++}^{\tau} = \{ (x_1, x_2) \, : x_1 \geq 0, \, x_2 \geq 0, \, x_1+x_2 \leq \tau \}.
\end{equation*}
In particular, if  $f(\cdot)$ has a continuous derivative, it follows that
\begin{equation} \label{derivative_f_tau}
	f_{\tau}^{'}(x) = \frac{1}{\tau \sqrt{LLD(x,\tau)}} \int_{T_{++}^{\tau}} f^{'}(x+x_1) f(x-x_2) + f(x+x_1) f^{'}(x-x_2) \, dx_1 dx_2.
\end{equation}
Therefore, the sign of $f_{\tau}(x)$ depends on the sign of $f^{'}(\cdot)$ in the interval $(x-\tau, x+\tau)$. In particular, if $\mu \in \mathbb{R}$ satisfies $f(\mu-x) = f(\mu+x)$ for all $x \in (0, \tau)$, it follows that $f^{'}(\mu-x) = - f^{'}(\mu+x)$, yielding $f_{\tau}^{'}(\mu) = 0$. 

The next result, which also holds for asymmetric densities, shows that, for $p=1$ and $\tau$ small enough, $f_{\tau}(\cdot)$ has a mode in a neighborhood of a mode of $f(\cdot)$.
\begin{proposition} \label{proposition_modes}
Let $p=1$. Suppose that $f(\cdot)$ is twice continuously differentiable in a neighborhood of a mode (resp.\ an antimode) $m$. Then, there is $\tau^{*}>0$ such that, for all $0 < \tau \leq \tau^{*}$, $f_{\tau}(\cdot)$ has a mode (resp.\ an antimode) in $(m-\tau,m+\tau)$.
\end{proposition}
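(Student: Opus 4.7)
I will treat the mode case; the antimode case follows by the identical argument with the signs of $f''(m)$ (and hence of $f_\tau''$) reversed. The plan is to show that on the shrinking interval $I_\tau \coloneqq [m-\tau, m+\tau]$ the function $f_\tau$ is strictly concave while its derivative changes from positive to negative, so that an interior maximum exists and is automatically a mode in the sense of Definition \ref{definition_modes_clusters}.

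The analytic input is the one-dimensional representation in Corollary \ref{corollary_local_depth_tau_to_0}, $LLD(x,\tau) = 2 \int_{T_{++}^\tau} f(x+x_1) f(x-x_2)\, dx_1 dx_2$, together with the two formulas
\begin{equation*}
f_\tau'(x) = \frac{\partial_x LLD(x,\tau)}{2 \tau \sqrt{LLD(x,\tau)}}, \qquad f_\tau''(x) = \frac{\partial_x^2 LLD(x,\tau)}{2 \tau \sqrt{LLD(x,\tau)}} - \frac{(\partial_x LLD(x,\tau))^2}{4 \tau \, LLD(x,\tau)^{3/2}},
\end{equation*}
obtained by differentiating $f_\tau^2 = LLD/\tau^2$ twice under the integral sign. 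Writing $x = m + y$ with $y \in [-\tau,\tau]$ and Taylor expanding $f, f', f''$ around $m$ using $f'(m) = 0$, $f''(m) < 0$, and the continuity of $f''$ on a closed neighborhood of $m$, the symmetry $\int_{T_{++}^\tau}(x_1 - x_2)\, dx_1 dx_2 = 0$ annihilates the leading odd-parity contribution. Bookkeeping the remaining Taylor remainders delivers, uniformly in $y \in [-\tau,\tau]$,
\begin{equation*}
LLD(m+y,\tau) = \tau^2 f(m)^2 + O(\tau^4), \qquad \partial_x LLD(m+y,\tau) = 2 y f(m) f''(m) \tau^2 + O(\tau^4),
\end{equation*}
\begin{equation*}
\partial_x^2 LLD(m+y,\tau) = 2 f(m) f''(m) \tau^2 + O(\tau^3).
\end{equation*}
Substituting these into the formulas above yields, again uniformly in $y \in [-\tau,\tau]$,
\begin{equation*}
f_\tau'(m+y) = y f''(m) + O(\tau^2), \qquad f_\tau''(m+y) = f''(m) + O(\tau).
\end{equation*}

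The conclusion follows at once. For all $\tau$ below some $\tau^* > 0$, the second estimate gives $f_\tau''(m+y) \leq f''(m)/2 < 0$ throughout $I_\tau$, while the first yields $f_\tau'(m-\tau) > 0$ and $f_\tau'(m+\tau) < 0$. By the intermediate value theorem applied to the continuous function $f_\tau'$ on $I_\tau$, there exists $x^* \in (m-\tau, m+\tau)$ with $f_\tau'(x^*) = 0$; the uniform strict negativity of $f_\tau''$ on $I_\tau$ then makes $x^*$ a mode of $f_\tau$.

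The main technical obstacle is the uniform control in the three expansions of $LLD$ and its $x$-derivatives above. Because the evaluation point $m+y$ and the integration variables $x_1, x_2$ are all at distance $O(\tau)$ from $m$, every Taylor remainder appears as a power of $\tau$ whose exact order must be tracked, and one must verify that the $O(\tau)$ perturbations in the numerator and denominator of the ratios defining $f_\tau'$ and $f_\tau''$ conspire to preserve the leading-order slope and curvature. A minor preliminary point is that $I_\tau \subset S_\tau$ for $\tau$ small, which follows immediately from the first expansion and ensures that $f_\tau$ and its derivatives are well defined on $I_\tau$.
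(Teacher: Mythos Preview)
Your approach is correct and the argument goes through, but the error orders you state are slightly stronger than what the hypothesis $f\in C^2$ actually delivers. With only continuity of $f''$ you get $f'(m+z)=f''(m)z+o(z)$ rather than $O(z^2)$, so the uniform expansions should read $\partial_x LLD(m+y,\tau)=2yf(m)f''(m)\tau^2+o(\tau^3)$, $\partial_x^2 LLD(m+y,\tau)=2f(m)f''(m)\tau^2+o(\tau^2)$, and hence $f_\tau'(m+y)=yf''(m)+o(\tau)$, $f_\tau''(m+y)=f''(m)+o(1)$. These weaker bounds still suffice: at $y=\pm\tau$ the leading term $\pm\tau f''(m)$ dominates $o(\tau)$, and $f''(m)+o(1)$ is eventually negative. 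So the structure of your argument is intact; just downgrade the $O$'s to $o$'s.

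The paper takes a different, somewhat more direct route that avoids Taylor expansion and error tracking altogether. It observes that since $f''<0$ on $[m-\epsilon,m+\epsilon]$, one has $f'>0$ on $[m-\epsilon,m)$ and $f'<0$ on $(m,m+\epsilon]$; plugging $x=m\pm\tau$ into the explicit integral formula \eqref{derivative_f_tau} for $f_\tau'$ then makes the \emph{integrand itself} strictly positive (resp.\ negative), yielding $f_\tau'(m-\tau)>0>f_\tau'(m+\tau)$ without any asymptotics. For the second derivative the paper only checks negativity \emph{at the critical point} $m_\tau$ (where the quotient-rule correction vanishes), using the pointwise sign condition $f''(x_1)f(x_2)+f'(x_1)f'(x_2)<0$ on the integrand; this condition holds near $m$ by continuity since it equals $f''(m)f(m)<0$ at $(m,m)$. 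Your Taylor-expansion route gives more: uniform strict concavity of $f_\tau$ on all of $I_\tau$ and an explicit first-order description $f_\tau'(m+y)\approx yf''(m)$, which in particular localizes the mode to $m+o(1)$ inside $(m-\tau,m+\tau)$. The paper's route is shorter and exploits the integral structure directly; yours is more quantitative.
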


\begin{proof}[Proof of Proposition \ref{proposition_modes}]
 Since $m$ is a mode (resp.\ an antimode) for $f(\cdot)$ there exists $\epsilon > 0$ such that (i) $f^{\prime \prime}(x) < 0$ (resp.\ $>0$) for $x \in [m-\epsilon,m+\epsilon]$ and (ii) $f^{\prime \prime}(x) f(y) + f^{\prime}(x) f^{\prime}(y) <0$ (resp.\ $>0$) for $x,y \in [m-\epsilon,m+\epsilon]$. In particular, (i) implies that $f^{\prime}(x) > 0$ (resp.\ $<0$) for $x \in [m-\epsilon,m)$ and $f^{\prime}(x) < 0$ (resp.\ $>0$) for $x \in (m,m+\epsilon]$. \\
Let $\tau^{*} \coloneqq \frac{\epsilon}{2}$. As in the proof of Proposition \ref{proposition_local_depth} (iv) (see Appendix \ref{section_proof_proposition_local_depth}), it follows that $f_{\tau}(\cdot)$ is twice continuously differentiable in $[m-\tau^{*},m+\tau^{*}]$. In particular, its first derivative is given by \eqref{derivative_f_tau}. Since $T_{++}^{\tau} \subset [0,\tau] \times [0,\tau]$, for $(x_1,x_2) \in T_{++}^{\tau}$, $m-\tau+x_1 \in [m-\tau,m]$ and $m-\tau-x_2 \in [m-2\tau,m-\tau]$, yielding $f(m-\tau+x_1) > 0$ (resp.\ $<0$) and $f(m-\tau-x_2) > 0$ (resp.\ $<0$) \emph{a.e.\ }for $(x_1,x_2) \in T_{++}^{\tau}$. Hence $f_{\tau}^{\prime}(m-\tau) > 0$ (resp.\ $<0$). Similarly, $f^{\prime}_{\tau}(m+\tau) < 0$ (resp.\ $>0$). Therefore, there exists $m_{\tau} \in (m-\tau,m+\tau)$ such that $f_{\tau}^{\prime}(m_{\tau})=0$. Finally, by (ii)
\begin{align*}
  f_{\tau}^{\prime \prime}(m_{\tau}) = \frac{1}{\tau \sqrt{LLD(x,\tau)}} &\int_{T_{++}^{\tau}} f^{\prime \prime}(x+x_1) f(x-x_2) + f^{\prime}(x+x_1) f^{\prime}(x-x_2) \\
  &+ f^{\prime}(x+x_1) f^{\prime}(x-x_2) + f(x+x_1) f^{\prime \prime}(x-x_2) \, dx_1 dx_2 <0 
\end{align*}
(resp.\ $>0$) and $m_{\tau}$ is a mode (resp.\ an antimode) for $f_{\tau}(\cdot)$.
\end{proof}

\section{Local simplicial depth} \label{sm:section_local_simplicial_depth}
In this appendix we describe in detail the appropriate changes to be made in the statements and proofs about LLD for LSD.

\noindent { \bf{Properties of local simplicial depth:}}  We emphasize that for $p>1$, LLD and LSD are different, even though they coincide for $p=1$. However, as we explain below the results for the simplicial depth can be obtained from the proof of our results for LLD. In this appendix we suppress $S$ in $Z_{\tau}^S(x)$, $Z^S(x)$, $\Lambda_1^{S}$, $\Lambda_1^{*S}$ and other quantities related to LSD.

Proposition \ref{proposition_local_depth} remains valid for LSD, with the only difference being that
\begin{equation*}
    \lim_{\tau \to 0^{+}} LSD(x,P,\tau) = P^{(p+1)}(\{x\}) \text{ and } \lim_{\tau \to \infty} LSD(x,P,\tau) = SD(x,P),
\end{equation*}
where $Z(x) = \{ (x_1, \dots, x_{p+1}) \in (\mathbb{R}^p)^{(p+1)} \, : \, x \in \triangle[x_1, \dots, x_{p+1}] \}$, and $SD(x, P)$ is the simplicial depth of a point $x \in \mathbb{R}^p$ with respect to $P$ and is given by
\begin{equation*}
	SD(x, P) = \int_{Z(x)}  \, dP(x_1) \dots dP(x_{p+1}).
\end{equation*}
Next, if $P$ is absolutely continuous with respect to the Lebesgue measure with a density $f(\cdot)$, then
\begin{equation*}
\begin{split}
	LSD(x,\tau) &= \int_{ Z_{\tau}(x) } f(x_1) \dots \, f(x_{p+1}) \, d x_1 \dots \, d x_{p+1} \\
	 &=  \int_{Z_{\tau}(0)} f(x+x_1) \dots \, f(x+x_{p+1}) \, d x_1 \dots \, d x_{p+1} \\
	&= \int_{Z_{1}(0)} \tau^{p(p+1)} f(x+ \tau x_1) \dots \, f(x+\tau x_{p+1}) \, d x_1 \dots \, d x_{p+1}.
\end{split}
\end{equation*}
This then implies that Theorem \ref{theorem_local_depth_tau_to_0} (i) can be rephrased as follows: for almost every point $x \in \mathbb{R}^p$,
\begin{equation} \label{local_simplicial_depth_tau_to_0}
	\lim_{\tau \rightarrow 0^{+}} \frac{1}{\tau^{p(p+1)} \Lambda_1} LSD(x,\tau) = f^{(p+1)}(x).
\end{equation}
where $\Lambda_1 = \lambda^{\otimes p}(Z_{1}(0))$ is the Lebesgue measure of $Z_{1}(0)$. Furthermore, if $f(\cdot)$ is continuous then \eqref{local_simplicial_depth_tau_to_0} holds for all $x \in \mathbb{R}^p$.

Proceeding analogously, Theorem \ref{theorem_local_depth_tau_to_0} (ii) becomes
\begin{equation*}
\lim_{\tau \to 0^{+}} \frac{1}{\tau^2} \left( \frac{1}{\tau^{p(p+1)}} LSD(x,\tau) - \Lambda_1 f^{(p+1)}(x) \right) = h(x),
\end{equation*}
where now $h(\cdot)$ is given by
\begin{align*}
  h(x) &= \frac{p+1}{2} f^p(x) \int_{Z_1(0)} x_{1}^{\top} H_f(x) x_1 \, dx_1 \dots dx_{p+1} \\
  &+ \frac{p(p+1)}{2} f^{p-1}(x) \int_{Z_1(0)} \inp{\nabla f(x)}{x_1} \inp{\nabla f(x)}{x_2} \, dx_1 \dots dx_{p+1}.
\end{align*}
Finally, the $\tau$-approximation in Definition \ref{definition_tau_approximation} for LSD becomes
\begin{equation}
f_{\tau}(x) = \frac{1}{\tau^{p} \sqrt[(p+1)]{\Lambda_1}} \sqrt[(p+1)]{LSD(x,\tau)}
\end{equation}
and Theorem \ref{theorem_uniform_convergence_of_tau_approximation} holds as before. 

\noindent {\bf{Sample local simplicial depth}:} Turning to the sample version of LSD, again given $X_1, \dots, X_{n}$ i.i.d.\ random variables from $P$, LSD is estimated using the proportion
\begin{equation} \label{sample_local_simplicial_depth}
LSD_n(x,\tau) = \frac{1}{ {n \choose p+1}} \sum_{1 \leq i_1 < \dots i_{p+1} \leq n} \mathbf{I} ( (X_{i_1},\dots,X_{i_{p+1}}) \in Z_{\tau}(x)),
\end{equation}
which is a U-statistics with the kernel
\begin{equation*}
\mathcal{K}_{x,\tau}(x_1,\dots,x_{p+1}) = \mathbf{I}((x_1,\dots,x_{p+1}) \in Z_{\tau}(x))
\end{equation*}
and first projection
\begin{equation} \label{kernel_projection_simplicial_depth}
  \mathcal{J}_{x,\tau}(x_1) = \int \mathbf{I}((x_1,\dots,x_{p+1}) \in Z_{\tau}(x)) \, dP(x_2) \dots dP(x_{p+1}).
\end{equation}
The variance of the first projection, using the same notation as in the LLD case, is given by
\begin{equation} \label{b_square_simplicial_depth}
b^2(x,\tau) = Var[\mathcal{J}_{x,\tau}(X_1)] = \int (\mathcal{J}_{x,\tau}(x_1))^2 \, dP(x_1) - (LSD(x,\tau))^2.
\end{equation}
With this description, Corollary \ref{corollary_sample_local_depth} (i) can be stated as follows: for any fixed $x \in \mathbb{R}^p$, $LSD_n(x, \cdot)$ is a non-decreasing and right continuous function; {\it{i.e.}},
\begin{equation*}
\lim_{\tau \to 0^{+}} LSD_n(x,\tau) = \frac{1}{{n \choose p+1}} \sum_{1 \leq i_1 < \dots < i_{p+1} \leq n} \mathbf{I}((X_{i_1},\dots,X_{i_{p+1}})=(x,\dots,x))
\end{equation*}
and
\begin{equation*}
\lim_{\tau \to \infty} LSD_n(x,\tau) = \frac{1}{{n \choose p+1}} \sum_{1 \leq i_1 < \dots < i_{p+1} \leq n} \mathbf{I}((X_{i_1},\dots,X_{i_{p+1}}) \in Z(x)).
\end{equation*}
The other statements in Corollary \ref{corollary_sample_local_depth} and Theorem \ref{theorem_uniform_consistency} remain valid without any changes. Theorem \ref{theorem_uniform_asymptotic_normality_of_local_depth} requires a minor modification. Specifically, using the definition of LSD, \eqref{sample_local_simplicial_depth}, \eqref{kernel_projection_simplicial_depth} and \eqref{b_square_simplicial_depth}, if $\Gamma \subset \mathbb{R}^p \times [0,\infty]$ is such that $b^2(x,\tau) > 0$ for all $(x,\tau) \in \Gamma$, then
\begin{equation*}
	\sqrt{n} \left( LSD_n(\, \cdot \,, \, \cdot \,) - LSD(\, \cdot \, , \, \cdot \,) \right) \xrightarrow[ n \rightarrow \infty]{d} (p+1) \,  W(\, \cdot \, ,\, \cdot \,) \text{ in } \ell^{\infty} (\Gamma),
\end{equation*}
where $\{ W(x,\tau) \}_{(x, \tau) \in \Gamma }$ is a centered Gaussian process with covariance function $\gamma: \Gamma \times \Gamma \rightarrow \mathbb{R}$ given by
\begin{equation*}
\gamma((x,\tau),(y,\nu)) =  \int \mathcal{J}_{x,\tau}(x_1) \mathcal{J}_{y,\nu}(x_1) \, dP(x_1) - \, LSD(x,\tau) LSD(y,\nu).
\end{equation*}
The extreme case of this result, that is the case $\tau=\infty$, was studied in \citet{Duembgen-1992} and \citet{Arcones-Gine-1993}[Corollary 6.8]. We repeat here that uniformity in $\tau$ is not an issue in the other papers, but it is a critical issue in our work.

We now turn to the sample $\tau$-approximation, which is needed for clustering. Now, the sample $\tau$-approximation in \eqref{sample_tau_approximation} becomes
\begin{equation*}
f_{\tau,n}(x) = \frac{1}{\tau^{p} \sqrt[(p+1)]{\Lambda_1}} \sqrt[(p+1)]{LSD_n(x,\tau)},
\end{equation*}
and it is straightforward to see that the results in Corollary \ref{corollary_uniform_consistency_of_sample_tau_approximation} hold. As for extreme localization,  Theorem \ref{theorem_asymptotic_normality_of_sample_tau_approximation} continues to hold as before; {\it{viz.,}}
\begin{equation*}
  \sqrt{n} \, \tau_n^{\frac{p}{2}}  \left( f_{\tau_n,n}(x) - f_{\tau_n}(x)  \right) \xrightarrow[ n \rightarrow \infty]{d} N\left( 0, \frac{\Lambda_1^{*2}}{\Lambda_1^2} f(x) \right),
\end{equation*}
where the sequence $\{ \tau_n \}_{n=1}^{\infty}$ now satisfies $\tau_n \xrightarrow[n \to \infty]{} 0$ and $\sqrt{n} \, \tau_n^{(p^2+\frac{p}{2})} \xrightarrow[n \to \infty]{} \infty$. We notice that, while the limiting distribution is exactly the same of Theorem \ref{theorem_asymptotic_normality_of_sample_tau_approximation}, the quantities $\Lambda_1^2$ and $\Lambda_1^{*2} = \int \left( \lambda^{\otimes (p-1)} \right)^2(Z_1(0) \rvert_{x_1} ) \, d x_1$ are now based on $Z_1(0)=Z_1^S(0)$ and $Z_1(0) \rvert_{x_1} = Z_1^S(0) \rvert_{x_1}$. It is worth mentioning that $\sqrt{n} \, \tau_n^{(p^2+\frac{p}{2})} \xrightarrow[n \to \infty]{} \infty$ is a necessary condition for the limiting normality to hold. Finally, under these hypotheses, it follows that
\begin{equation*}
  \sqrt{n} \, \tau_n^{\frac{p}{2}}  \left( \sqrt{f_{\tau_n,n}(x)} - \sqrt{f_{\tau_n}(x)}  \right) \xrightarrow[ n \rightarrow \infty]{d} N\left( 0, \frac{\Lambda_1^{*2}}{4 \Lambda_1^2} \right).
\end{equation*}
\noindent {\bf{Clustering via local simplicial depth:}} We describe below how to adapt LSD for the clustering procedure described in Section \ref{section_clustering}. Since LSD satisfies Proposition \ref{proposition_local_depth} (ii) and Theorem \ref{theorem_uniform_consistency}, Proposition \ref{proposition_convergence_of_regions} still holds for LSD. This and the fact that $f_{\tau}(\cdot)$ satisfies Theorem \ref{theorem_uniform_convergence_of_tau_approximation} imply that the super level sets of Definition \ref{definition_level_sets_and_regions} based on LSD satisfy Proposition \ref{proposition_convergence_of_level_sets}. 

Since the algorithm for clustering involves identification of the stationary points and the modes, first, we need analogous of Theorems \ref{theorem_local_central_symmetry} and \ref{theorem_modes_local_central_symmetry}. Specifically, in Theorem \ref{theorem_local_central_symmetry} $\mu$ is a stationary point for $f_{\tau}(\cdot)$ if and only if
\begin{equation*}
	\int_{Z_1(0)} \nabla f(\mu+\tau x_1) f(\mu+\tau x_2) \dots f(\mu+\tau x_{p+1}) \, dx_1 dx_2  \dots dx_{p+1} = 0,
\end{equation*}
and $\nabla f_{\tau}(\mu)=0$ if $f(\cdot)$ is $\tau$-centrally symmetric about $\mu$. Next, turning to a mode (resp.\ an antimode) $m$ of $f(\cdot)$, Theorem \ref{theorem_modes_local_central_symmetry} can be described as follows: if for all $x_1, x_2 \in \overline{B}_{\tau}(m)$ the matrix 
\begin{equation*}
G_f(x_1,x_2) = H_{f}(x_1) f(x_2) + p \, \nabla f(x_1) \nabla f(x_2)^{\top}
\end{equation*}
is negative (resp.\ positive) definite, then $m$ is also a mode (resp.\ an antimode) for $f_{\tau}(\cdot)$. \\
Incidentally, it is worth noting here that Lemma \ref{lemma_support_S_and_S_tau}, Theorem \ref{theorem_first_and_second_partial_derivaties}, Lemma \ref{lemma_R_alpha_tau_bounded} and Theorem \ref{theorem_convergence_solution_estimated_gradient_system} all hold true for LSD. \\
\noindent {\bf{Numerical implementation:}} In this subsection we describe the changes to the numerical procedure required for clustering using LSD. First the quantity $d_{\tau,n}(\, \cdot \, ; \, \cdot \, )$ in \eqref{finite difference_sample_local_depth} is now replaced by
\begin{equation*}
    d_{\tau,n}(x;y) = \frac{\sqrt[(p+1)]{LSD_n(y,\tau)} - \sqrt[(p+1)]{LSD_n(x,\tau)}}{\norm{y-x}}.
\end{equation*}
As for LLD, $\tau$ can be chosen as the $q$-quantile of the empirical distribution of the ${n \choose p+1}$ maxima of the form $\max_{\substack{ j,l = 1, \dots, p+1 \\ j > l}} \norm{x_{i_j}-x_{i_l}}$ for all ${n \choose p+1}$ combinations of indices $i_1,\dots,i_{p+1}$ from $\{1,2,\dots,n\}$. 

\noindent {\bf{Computational complexity:}} We recall that $LLD_n$ is a U-statistics of order $2$, while $LSD_n$ is a U-statistics of order $(p+1)$. This means that the computational complexity of $LLD_n$ is of order $O({n \choose 2})$, while the computational complexity of the $LSD_n$ is of order $O({n \choose p+1})$, which makes a significant difference, especially in high dimensions. For large $p$ and $n$, an approximation to LSD can be made by considering  a large number of simplices sampled with replacement amongst all the ${n \choose p+1}$ simplices that define $LSD_n$; in our simulations we sample $10^8$ simplices to  reduce the computational cost.

\section{Local half-space depth} \label{sm:section_local_half-space_depth}
In this appendix we describe in detail the appropriate changes to be made in the statements about LLD that yield the corresponding results for LHD. Additionally, we also suppress $H$ in $Z_{\tau}^H(x)$ and write $Z_{\tau}(x)$.

\noindent { \bf{Properties of local half-space depth:}} We begin by showing that, for all fixed $\tau \in [0,\infty]$, $LHD(\cdot,\tau)$ is upper semicontinuous. This follows from the fact that the infimum of a collection of upper semicontinuous functions is upper semicontinuous and that the function $x \to P(Z_{\tau}(x,u))$ is upper semicontinuous for all $u \in S^{p-1}$. In fact, let  $\{ x_k \}_{k=1}^{\infty}$ be a sequence converging to $x \in \mathbb{R}^p$. Since $Z_{\tau}(x,u)$ is closed, for all $u \in S^{p-1}$
\begin{align*}
  \limsup_{k \to \infty} P(Z_{\tau}(x_k,u)) &= \limsup_{k \to \infty} \int \mathbf{I}(y \in Z_{\tau}(x_k,u)) \, dP(y) \\
  &\leq \int \limsup_{k \to \infty} \mathbf{I}(y \in Z_{\tau}(x_k,u)) \, dP(y) \\
  &\leq \int \mathbf{I}(y \in Z_{\tau}(x,u)) \, dP(y) = P(Z_{\tau}(x,u)).
\end{align*}
If $P$ assigns probability $0$ to all hyperplanes \citep{Masse-2004}[condition (S)] (for example if $P$ is absolutely continuous with respect to the Lebesgue measure), then the function $(x,u,\tau) \to P(Z_{\tau}(x,u))$ is jointly continuous. In fact, if $\{ \tau_k \}_{k=1}^{\infty} \subset [0,\infty]$, $\{ u_k \}_{k=1}^{\infty} \subset S^{p-1}$ and $\{ x_k \}_{k=1}^{\infty}$ are sequences converging to $\tau \in [0,\infty]$, $u \in S^{p-1}$ and $x \in \mathbb{R}^p$, respectively, then
\begin{equation*}
\abs{P(Z_{\tau_k}(x_k,u_k))-P(Z_{\tau}(x,u))} \leq \int \abs{\mathbf{I}(y \in Z_{\tau_k}(x_k,u_k)) - \mathbf{I}(y \in Z_{\tau}(x,u)) } \, dP(y) \xrightarrow[k \to \infty]{} 0
\end{equation*}
by the LDCT. This is because $\partial Z_{\tau}(x,u)$ has probability $0$, and if $y \notin \partial Z_{\tau}(x,u)$, then $\lim_{k \to \infty} \mathbf{I}(y \in Z_{\tau_k}(x_k,u_k)) = \mathbf{I}(y \in Z_{\tau}(x,u))$. In particular, since the infimum of a collection of continuous functions over a compact set is a continuous function, LHD is jointly continuous in $x$ and $\tau$. Hence Proposition \ref{proposition_local_depth} (iii) holds. The compactness of $S^{p-1}$ also implies that the infimum in \eqref{local_half-space_depth} is a minimum. The other statements in Proposition \ref{proposition_local_depth} remain true, the only change being that in part (i) we require $P$ to assign probability $0$ to all hyperplanes to obtain
\begin{equation*}
\lim_{\tau \to 0^{+}} LHD(x,P,\tau) = P(\{ x \}) = 0 \text{ and } \lim_{\tau \to \infty} LHD(x,P,\tau) = HD(x,P),
\end{equation*}
where $HD(x,P)$ is the half-space depth of $x \in \mathbb{R}^p$ with respect to $P$ given by
\begin{equation*}
HD(x,P) = \inf_{u \in S^{p-1}} P(Z(x,u)), \text{ with } Z(x,u) = \{ y \in \mathbb{R}^p \, : \, \inp{u}{x} \leq \inp{u}{y} \}.
\end{equation*}
If $P$ is absolutely continuous with respect to the Lebesgue measure with a density $f(\cdot)$, due to $Z_{\tau}(x,u)=x+Z_{\tau}(0,u)$ and $Z_{\tau}(0,u)=\tau Z_{1}(0,u)$, by a change of variable we see that
\begin{equation*}
LHD(x,\tau) = \min_{u \in S^{p-1}} \int_{Z_{\tau}(x,u))} f(y) \, dy = \tau^{p} \min_{u \in S^{p-1}} \int_{Z_{1}(0,u))} f(x+\tau y) \, dy.
\end{equation*}
This allows one to obtain in Theorem \ref{theorem_local_depth_tau_to_0}
\begin{equation*}
\lim_{\tau \to 0^{+}} \frac{1}{\tau^{p}} LHD(x,\tau) = f(x),
\end{equation*}
and for twice continuously differentiable $f(\cdot)$
\begin{equation*}
  \lim_{\tau \to 0^{+}} \frac{1}{\tau} \left( \frac{1}{\tau^{p}} LHD(x,\tau) - f(x) \right) = \min_{u \in S^{p-1}} \int_{Z_{1}(0,u)} \inp{\nabla f(x)}{y} \, dy.
\end{equation*}
Theorem  \ref{theorem_uniform_convergence_of_tau_approximation} remains valid for the $\tau$-approximation based on LHD given by
\begin{equation*}
f_\tau(x) = \frac{1}{\tau^p} LHD(x,\tau).
\end{equation*}
\noindent { \bf{Sample local half-space depth:} } Given $X_1, \dots, X_{n}$ i.i.d.\ random variables from $P$, LHD is estimated using the proportion
\begin{equation*}
LHD_n(x,\tau) = \inf_{u \in S^{p-1}} \frac{1}{n} \sum_{i=1}^n \mathbf{I} (X_i \in Z_{\tau}(x,u)).
\end{equation*}
With this definition, Corollary \ref{corollary_sample_local_depth} remains valid, but part (i) becomes
\begin{equation*}
\lim_{\tau \to 0^{+}} LHD_n(x,\tau) = \frac{1}{n} \, \sum_{i=1}^n \mathbf{I}(X_i=x)
\end{equation*}
and
\begin{equation*}
\lim_{\tau \to \infty} LHD_n(x,\tau) = \inf_{u \in S^{p-1}} \frac{1}{n} \, \sum_{i=1}^n \mathbf{I}(X_i \in Z(x,u)).
\end{equation*}
Since the classes of half-spaces and cubes in $\mathbb{R}^p$ are VC \citep{Despres-2014}, we also obtain Theorem \ref{theorem_uniform_consistency} and Corollary \ref{corollary_uniform_consistency_of_sample_tau_approximation}. Asymptotic normality results for the half-space depth are more involved \citep{Arcones-2006, Masse-2004} and are beyond the scope of this paper. The properties derived in this and in the previous sections allow to obtain all the results derived for clustering in Subsections \ref{subsection_mathematical_background} and \ref{subsection_convergence_gradient_system} for LHD. However, $Z_{\tau}(x,u)$ is not symmetric in the sense of \eqref{symmetry1_of_z_tau_x_general} (with $k=1$). Therefore, the results concerning the modes and stationary points of $f(\cdot)$ in Subsection \ref{subsection_modes_identification} do not hold for LHD.

\section{Proof of Proposition \ref{proposition_local_depth} (iv)} \label{section_proof_proposition_local_depth}
We provide in this section a detailed proof of Proposition \ref{proposition_local_depth} (iv). We first observe that if $\tau=0$ then $LLD(x,\tau)=0$ for all $x \in \mathbb{R}^p$ and the statement is trivial. Let $\tau>0$ and $k \geq 1$. We will show that, for all $0 \leq l \leq k$, the partial derivatives of LLD up to order $l$ exist and are given by the integral over $Z_1(0)$ of a $(k-l)$-times continuously differentiable function. To this end, notice that $Z_1(0)$ is bounded by definition and closed by Lemma \ref{lemma_interior_exterior_of_z_tau_x}, and hence compact. For $l=0$, by \eqref{local_depth_for_absolutely_continuous_distribution}, we have that
\begin{equation*}
LLD(x,\tau) = \int_{Z_1(0)} g_{*}(x;x_1,x_2) \, dx_1 dx_2,
\end{equation*}
where for $x,x_1,x_2 \in \mathbb{R}^p$
\begin{equation*}
g_{*}(x;x_1,x_2) \coloneqq \tau^{2p} g(x+\tau x_1,x+\tau x_2)
\end{equation*}
and for $y_1,y_2 \in \mathbb{R}^p$
\begin{equation*}
g(y_1,y_2) \coloneqq f(y_1) f(y_2).
\end{equation*}
Since $f(\cdot)$ is  $k$-times continuously differentiable, $g_{*}(x; x_1, x_2)$ is $k$-times continuously differentiable with respect to $x$, $x_1$, and $x_2$. 

Given a function $g : D \subset \mathbb{R}^p \rightarrow \mathbb{R}$ for $j=1, \dots, p$ we denote by $\partial_j g$ its partial derivative with respect to the $j$-component. Suppose by induction that the partial derivatives of the local depth up to order $l-1$ ($1 \leq l \leq k$) exist and for some choice of indices $1 \leq i_1, \dots, i_{l-1} \leq p$ are given by
\begin{equation*}
\partial_{i_{l-1}} \dots \partial_{i_1} LLD(x,\tau) = \int_{Z_1(0)} g_{*}^{i_{l-1} \dots i_1}(x;x_1,x_2) \, dx_1 dx_2,
\end{equation*}
where $g_{*}^{i_{l-1} \dots i_1}(x;x_1,x_2)=\tau^{2p} g^{i_{l-1} \dots i_1}(x+\tau x_1, x+\tau x_2)$ is $(k-(l-1))$-times continuously differentiable with respect to $x$, $x_1$ and $x_2$. In particular, since $Z_1(0)$ and $\overline{B}_1(x)$ are compact, for all $1 \leq i_l \leq p$, $z \in \overline{B}_1(x)$ and $(x_1,x_2) \in Z_1(0)$,
\begin{equation} \label{partial_derivative_dominating_constant}
\partial_{i_l} g_{*}^{i_{l-1} \dots i_1}(z;x_1,x_2) \leq C_x^{i_l \dots i_1} \coloneqq \sup_{(y_1,y_2) \in Z_1(0)} \sup_{y \in \overline{B}_1(x) } \tau^{2p} \partial_{i_l} g^{i_{l-1} \dots i_1}(y+\tau y_1,y+\tau y_2) < \infty.
\end{equation}
Let $\{ h_n \}_{n=1}^{\infty}$ be a sequence of scalars with $0 < h_n \leq 1$ and $h_n \xrightarrow[n \to \infty]{} 0$. For $x \in \mathbb{R}^p$ and $h>0$ we define the $i$-th partial finite difference of a function $g_*: \mathbb{R}^p \to \mathbb{R}$ by
\begin{equation*}
\partial_i^{h} g_*(x) = \frac{g_*(x+he_i)-g_*(x)}{h},
\end{equation*}
where $\{ e_i \, : \, i=1,\dots,p \}$ is the standard basis of $\mathbb{R}^p$. Clearly, $\partial_{i_l}^{h_n} g_{*}^{i_{l-1} \dots i_1}(x;x_1,x_2) \xrightarrow[n \to \infty]{} \partial_{i_l} g_{*}^{i_{l-1} \dots i_1}(x;x_1,x_2) = \tau^{2p} \partial_{i_l} g^{i_{l-1} \dots i_1}(x+\tau x_1, x+\tau x_2)$. Now, for $y_1,y_2 \in \mathbb{R}^p$, define
\begin{equation*}
g^{i_{l} \dots i_1}(y_1,y_2) \coloneqq \partial_{i_l} g^{i_{l-1} \dots i_1}(y_1,y_2)
\end{equation*}
and for $x,x_1,x_2 \in \mathbb{R}^p$ let
\begin{equation*}
g_{*}^{i_{l} \dots i_1}(x;x_1,x_2) \coloneqq \tau^{2p} g^{i_{l} \dots i_1}(x+\tau x_1,x+\tau x_2).
\end{equation*}
$g_{*}^{i_{l} \dots i_1}$ is $(k-l)$-times continuously differentiable with respect to $x$, $x_1$ and $x_2$ and by the mean value theorem there are scalars $0 \leq c_n \leq 1$ such that
\begin{equation*}
\partial_{i_l} g_{*}^{i_{l-1} \dots i_1}(x+c_n h_n e_i;x_1,x_2) = \partial_{i_l}^{h_n} g_{*}^{i_{l-1} \dots i_1}(x;x_1,x_2).
\end{equation*}
Since $x+c_n h_n e_i \in \overline{B}_1(x)$ for all $n \in \mathbb{N}$, the sequence of functions $\{ \partial_{i_l}^{h_n} g_{*}^{i_{l-1} \dots i_1}(x+c_n h_n e_i;\cdot,\cdot) \}_{n=1}^{\infty}$ is dominated on $Z_1(0)$ by the constant $C_x^{i_l \dots i_1}$ (see \eqref{partial_derivative_dominating_constant}). By LDCT it follows that
\begin{align*}
  \partial_{i_l} \dots \partial_{i_1} LLD(x,\tau) &= \lim_{n \to \infty} \partial_{i_l}^{h_n} \partial_{i_{l-1}} \dots \partial_{i_1} LLD(x,\tau) \\
  &= \lim_{n \to \infty} \int_{Z_1(0)} \partial_{i_l}^{h_n} g_{*}^{i_{l-1} \dots i_1}(x;x_1,x_2) \, dx_1 dx_2 \\
  &= \int_{Z_1(0)} \partial_{i_l} g_{*}^{i_{l-1} \dots i_1}(x;x_1,x_2) \, dx_1 dx_2 \\
  &= \int_{Z_1(0)} g_{*}^{i_{l} \dots i_1}(x;x_1,x_2) \, dx_1 dx_2,
\end{align*}
where $g_{*}^{i_{l} \dots i_1}$ is $(k-l)$-continuously differentiable with respect to $x$, $x_1$ and $x_2$. \\
In particular, for $l=k$ this means that for all $1 \leq i_1, \dots, i_k \leq p$ the partial derivatives $\partial_{i_k} \dots \partial_{i_1} LLD(x,\tau)$ exist for all $x \in \mathbb{R}^p$ and are continuous since they are given by the integral over the compact set $Z_1(0)$ of the continuous function $g_{*}^{i_{k} \dots i_1}$. Hence the statement. We finally observe that the functions $g_{*}^{i_{k} \dots i_1}$ can be computed explicitly as it is done, for instance, in the proof of Theorem \ref{theorem_first_and_second_partial_derivaties} for $k=1$ and $k=2$.

\section{Additional simulations}
\label{sm:section_additional_simulations}

\subsection{True clusters} \label{sm:subsection_true_clusters}

In this subsection we provide the analytical expression for the distributions Bimodal, (H) Bimodal IV, (K) Trimodal III, \#10 Fountain, Mult.\ Bimodal, and Mult.\ Quadrimodal considered in Section \ref{section_simulations_and_data_analysis} and the corresponding true clusters. We also consider two additional distributions: one is (L) Quadrimodal distribution in \citet{Wand-1993} and a four-mixture, which is defined below, and referred to as Quadrimodal (without (L)). We now describe these distributions.

(i) The Bimodal density is a two-mixture of normal distributions with equal weights, identity covariance matrix and means $(-2,0)$ and $(2,0)$. \\
(ii) The Quadrimodal density is a mixture of four normal distributions with means $(-2,2)$, $(-2,-2)$, $(2,-2)$ and $(2,2)$, and again equal weights and identity covariance matrix. \\
(iii) The (H) Bimodal IV density is a mixture of two normal distributions with equal weights, means $\mu_1=(1,-1)^{\top}$, $\mu_2=(-1,1)^{\top}$ and covariances
\begin{equation*}
  \Sigma_1 = \frac{4}{9} \begin{pmatrix}
    1 & \frac{7}{10} \\
    \frac{7}{10} & 1
  \end{pmatrix}
  \quad \text{ and } \quad
  \Sigma_2 = \frac{4}{9} \begin{pmatrix}
    1 & 0 \\
    0 & 1
  \end{pmatrix}.
\end{equation*}
(iv) The (K) Trimodal III density is a mixture of three normal distributions with weights $w_1=w_2=\frac{3}{7}$ and $w_3=\frac{1}{7}$; means $\mu_1=(-1,0)^{\top}$, $\mu_2=(1,2 \cdot \frac{\sqrt{3}}{3})^{\top}$ and $\mu_3=(1,-2 \cdot \frac{\sqrt{3}}{3})^{\top}$; and covariances
\begin{equation*}
  \Sigma_1 = \begin{pmatrix}
    \frac{9}{25} & \frac{7}{10} \cdot \frac{9}{25} \\
    \frac{7}{10} \cdot \frac{9}{25} & \frac{49}{100}
  \end{pmatrix}
  \quad \text{ and } \quad
  \Sigma_2 = \Sigma_3 = \begin{pmatrix}
    \frac{9}{25} & 0 \\
    0 & \frac{49}{100}
  \end{pmatrix}.
\end{equation*}
(v) The (L) Quadrimodal density is a mixture of four normal distributions with weights $w_1=w_3=\frac{1}{8}$ and $w_2=w_4=\frac{3}{8}$; means $\mu_1=(-1,1)^{\top}$, $\mu_2=(-1,-1)^{\top}$, $\mu_3=(1,-1)^{\top}$ and $\mu_4=(1,1)^{\top}$; and covariances
\begin{align*}
  \Sigma_1 &= \begin{pmatrix}
    \frac{4}{9} & \frac{2}{5} \cdot \frac{4}{9} \\
    \frac{2}{5} \cdot \frac{4}{9} & \frac{4}{9}
  \end{pmatrix}, \quad
  \Sigma_2 = \begin{pmatrix}
    \frac{4}{9} & \frac{3}{5} \cdot \frac{4}{9} \\
    \frac{3}{5} \cdot \frac{4}{9} & \frac{4}{9}
  \end{pmatrix}, \\
  \Sigma_3 &= \begin{pmatrix}
    \frac{4}{9} & -\frac{7}{10} \cdot \frac{4}{9} \\
    -\frac{7}{10} \cdot \frac{4}{9} & \frac{4}{9}
  \end{pmatrix}
  \quad \text{ and } \quad
  \Sigma_4 = \begin{pmatrix}
    \frac{4}{9} & -\frac{1}{2} \cdot \frac{4}{9} \\
     -\frac{1}{2} \cdot \frac{4}{9} & \frac{4}{9}
  \end{pmatrix}.
\end{align*}
(vi) The \#10 Fountain density is a mixture of six normal distributions with weights $w_1=\frac{1}{2}$ and $w_2=w_3=w_4=w_5=w_6=\frac{1}{10}$; means $\mu_1=\mu_2=(0,0)^{\top}$, $\mu_3=(-1,1)^{\top}$, $\mu_4=(-1,-1)^{\top}$, $\mu_5=(1,-1)^{\top}$ and $\mu_6=(1,1)^{\top}$; and covariances
\begin{align*}
  \Sigma_1 = \begin{pmatrix}
    1 & 0 \\
    0 & 1
  \end{pmatrix}, \quad
  \Sigma_2 = \Sigma_3 = \Sigma_4 = \Sigma_5 = \Sigma_6 = \begin{pmatrix}
    \frac{1}{16} & 0 \\
    0 & \frac{1}{16}
  \end{pmatrix}.
\end{align*}
The true clusters corresponding to these densities are in Figure \ref{figure_true_clusters_ldc_kde} (first row), in the main paper, and Figure \ref{sm:figure_true_clusters_ldc_kde} (first row), in Appendix \ref{sm:subsection_illustrative_examples}. Finally, the Mult.\ Bimodal and Mult.\ Quadrimodal densities are obtained as mixtures of normal densities with identity covariance matrix and equal weights. In particular, (vii) the Mult.\ Bimodal density is a mixture of two normal distributions with means $(-2,0,0,0,0)$ and $(2,0,0,0,0)$ and (viii) the  Mult.\ Quadrimodal density is a mixture of four normal distributions with means $(-2,2,0,0,0)$, $(-2,-2,0,0,0)$, $(2,-2,0,0,0)$ and $(2,2,0,0,0)$. The true clusters for these distributions can be deduced from those of the Bimodal and Quadrimodal densities, respectively.

In Section \ref{sm:subsection_numerical_experiments} we perform additional simulations on four more challenging circular densities, referred to as Circular 2, Circular 2 Cauchy, Circular 3 and Circular 4 Cauchy. They have densities proportional to $f_1(\cdot)$, $f_2(\cdot)$, $f_3(\cdot)$ and $f_4(\cdot)$ (respectively), where
\begin{align*}
f_1(x) &= 0.5 \exp\left(-12.5 \left(-2 +\norm{x}\right)^2\right) \left(1.1 - \frac{x^{(1)}}{\norm{x}}\right) \\
&+ 0.5 \exp\left(-12.5 \left(-0.5 +\norm{x}\right)^2\right) \left(1.1 + \frac{x^{(1)}}{\norm{x}}\right), \\
f_2(x) &= \left. \left( 0.5 \left( 1.1 + \frac{x^{(1)}}{\norm{x}}\right)\right) \right/ \left( 1 + 25 \left(-2 +\norm{x} \right)^2 \right) \\
&+ \left. \left(0.5 \left(1.1 + \frac{x^{(1)}}{\norm{x}}\right)\right) \right/ \left(1 + 25 \left(-0.5 +\norm{x} \right)^2 \right), \\
f_3(x) &= 0.3 \exp \left(-\frac{200}{9} \left(-1.5 +\norm{x}\right)^2\right) \left(1.1 - \frac{x^{(1)}}{\norm{x}}\right) \\
 &+ 0.15 \exp \left(-\frac{200}{9} \left(-2.5 +\norm{x}\right)^2\right) \left(1.1 + \frac{x^{(1)}}{\norm{x}}\right) \\
 &+ 0.55 \exp \left(-\frac{200}{9} \left(-0.5 +\norm{x}\right)^2\right) \left(1.1 + \frac{x^{(1)}}{\norm{x}}\right), \\
f_4(x) &= \left. \left(2 + \cos \left( 4 \arccos \left( \frac{x^{(1)}}{\norm{x}} \right) \right) \right) \right/ \left(1 + \left(-2 +\norm{x}\right)^2\right). \\
\end{align*}
Figure \ref{sm:figure_circular_densities} shows the functions $f_1(\cdot)$, $f_2(\cdot)$, $f_3(\cdot)$ and $f_4(\cdot)$. The true clusters associated with these densities are shown in Figure \ref{figure_true_clusters_circular}. Although the density Circular 4 Cauchy has a circular structure, it does not have clusters of a circular form, which makes it easier to identify the true clusters in the simulations.

\begin{figure}
\centering
\begin{subfigure}{0.56 \linewidth}
\centering
\includegraphics[width=\linewidth]{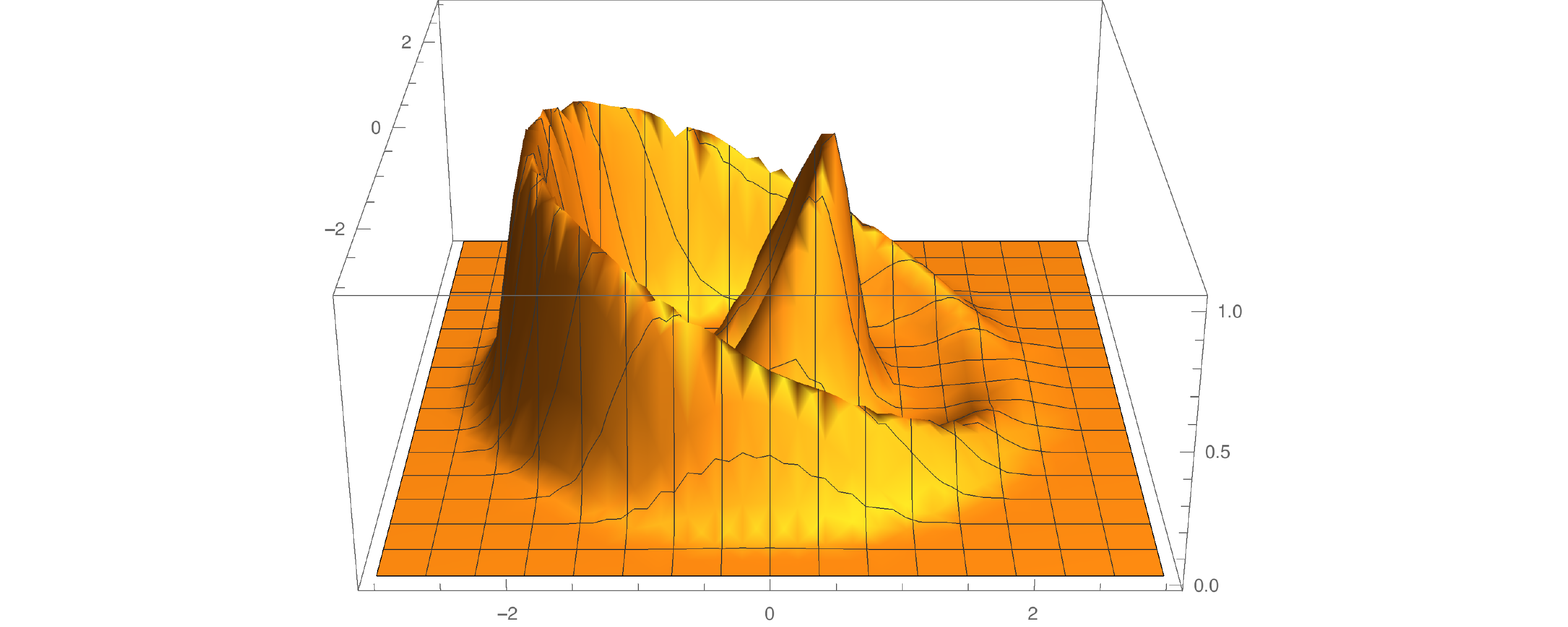}
\caption*{Circular 2}
\end{subfigure}
\begin{subfigure}{0.36 \linewidth}
\centering
\includegraphics[width=\linewidth]{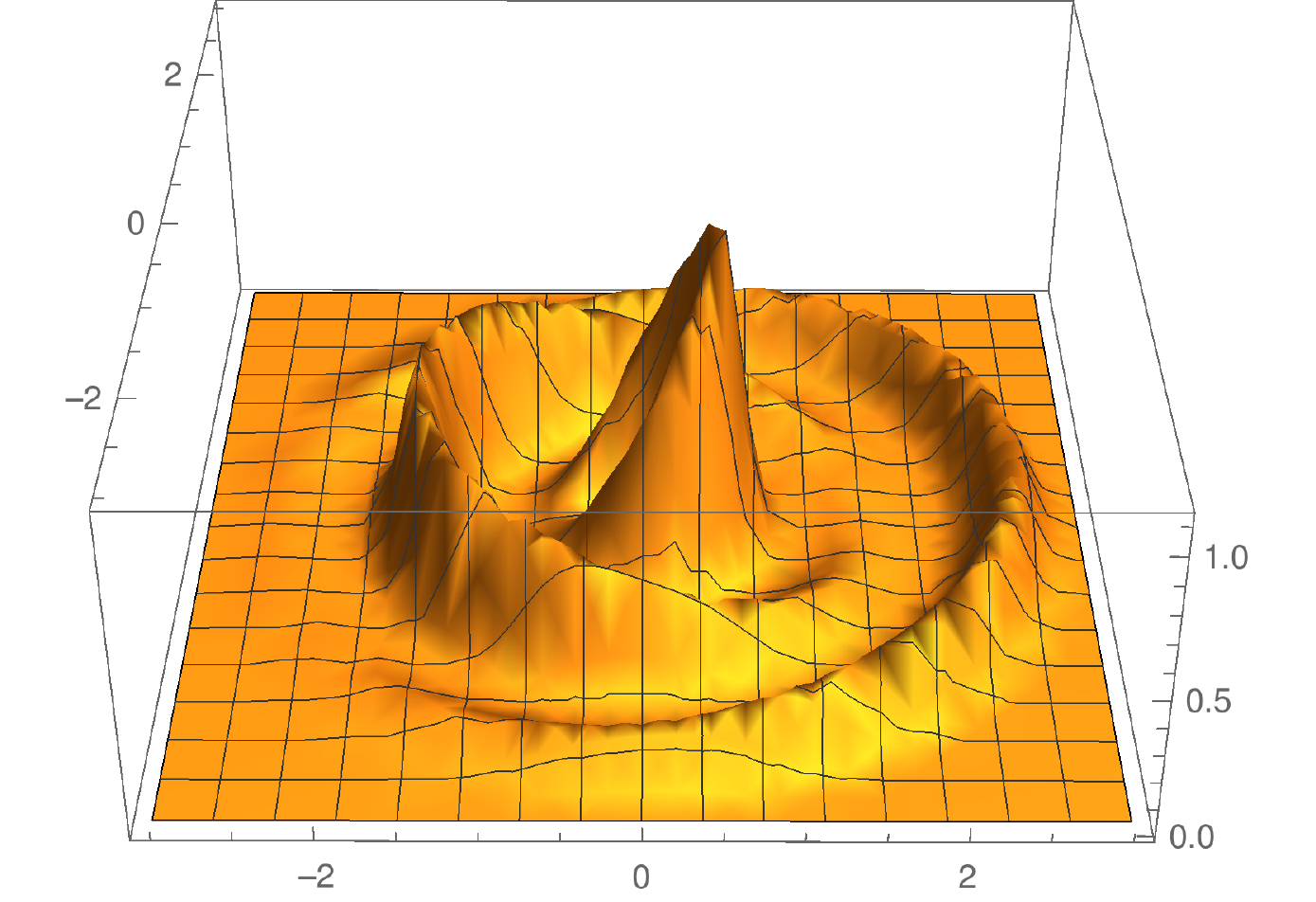}
\caption*{Circular 3}
\end{subfigure}
\break
\begin{subfigure}{0.64 \linewidth}
\centering
\includegraphics[width=\linewidth]{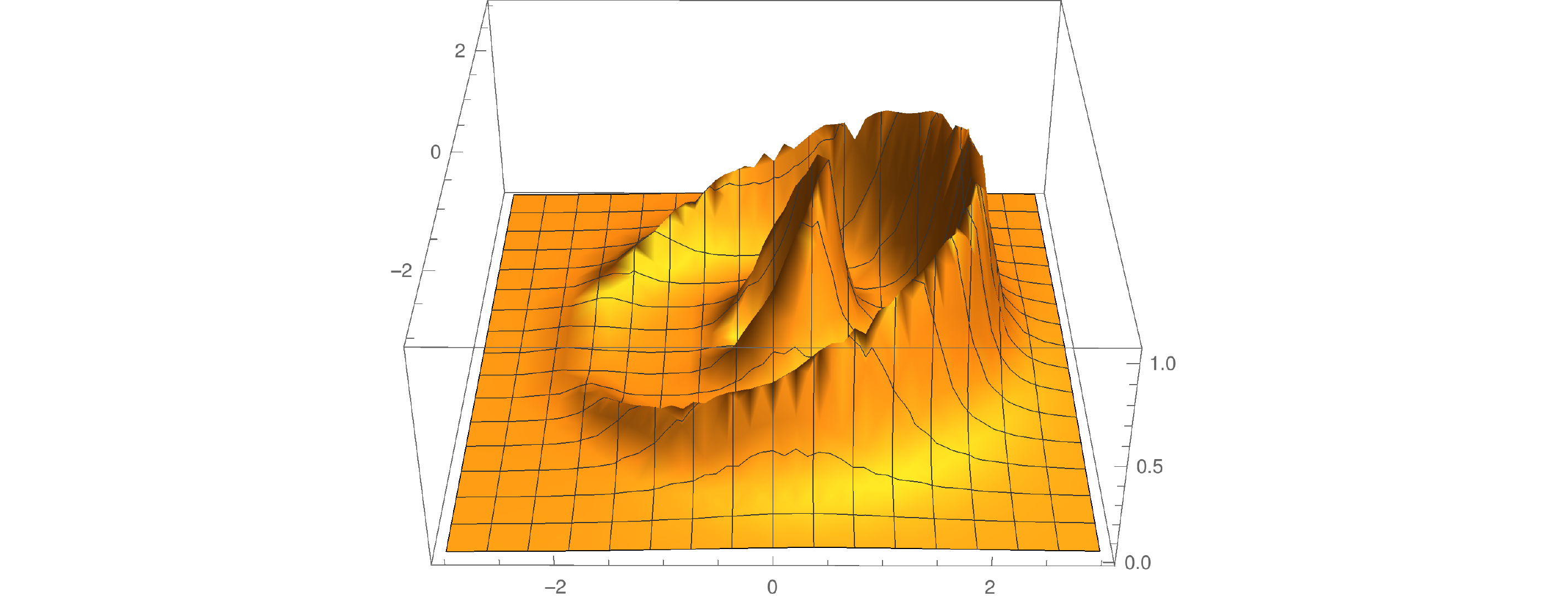}
\caption*{Circular 2 Cauchy}
\end{subfigure}
\begin{subfigure}{0.32 \linewidth}
\centering
\includegraphics[width=\linewidth]{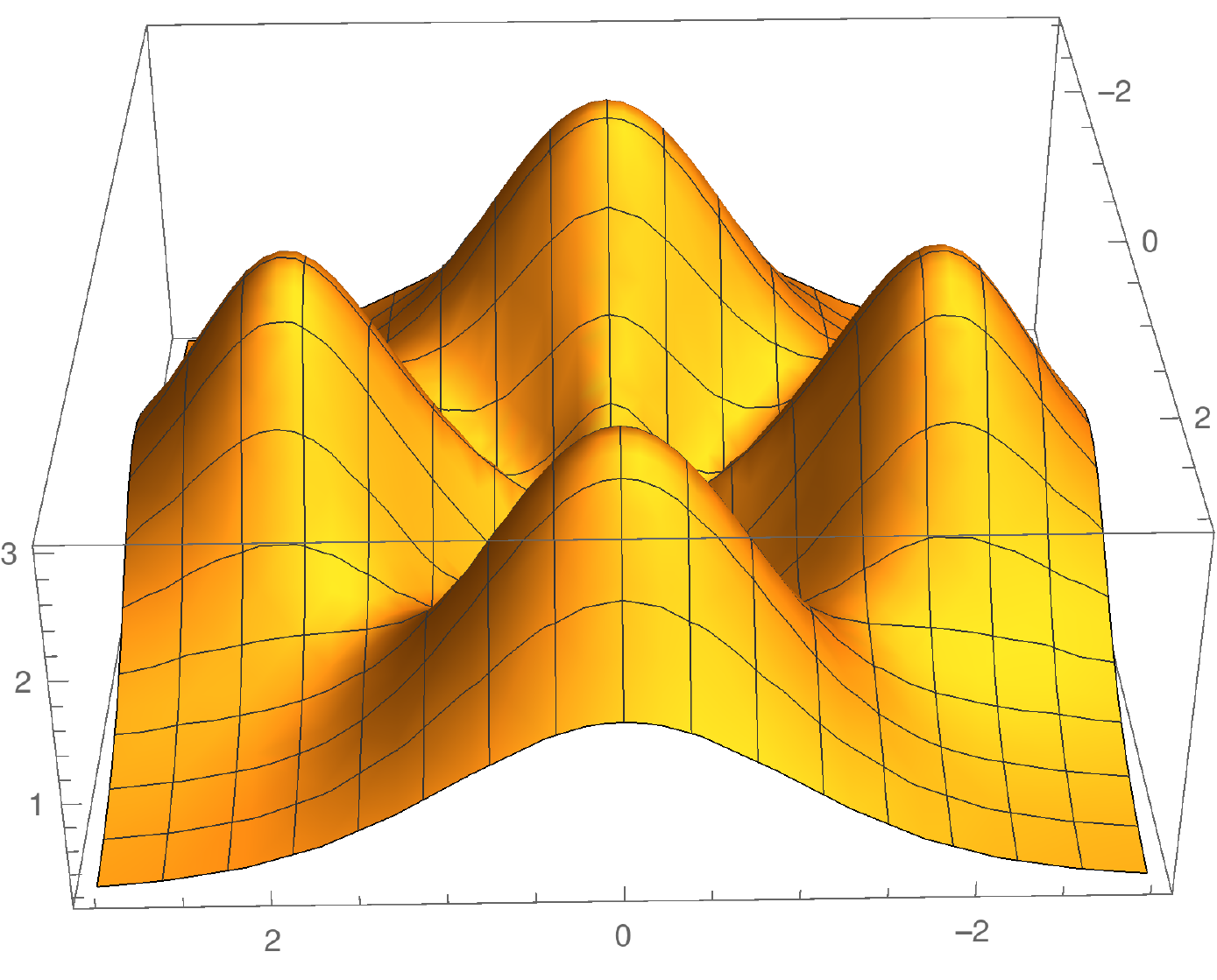}
\caption*{Circular 4 Cauchy}
\end{subfigure}
\begin{subfigure}{0.32 \linewidth}
\centering
\end{subfigure}

\caption{Plots of the functions $f_1(\cdot)$, $f_2(\cdot)$, $f_3(\cdot)$ and $f_4(\cdot)$ proportional to the Circular 2, Circular 2 Cauchy, Circular 3 and Circular 4 Cauchy densities, respectively.}
\label{sm:figure_circular_densities}
\end{figure}

\begin{figure}
\centering
\begin{subfigure}{0.32 \linewidth}
\centering
\includegraphics[width=\linewidth]{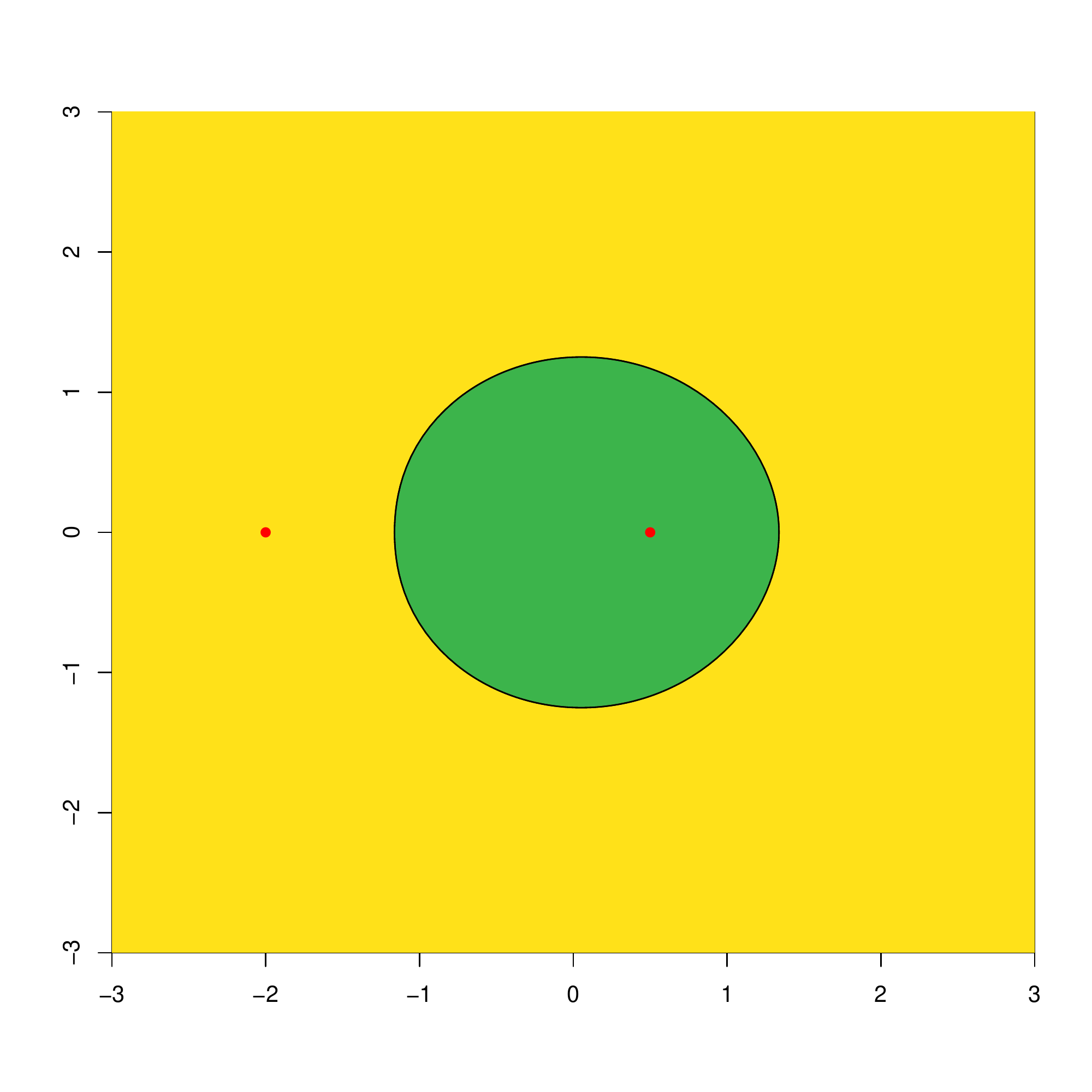}
\caption*{Circular 2}
\end{subfigure}
\begin{subfigure}{0.32 \linewidth}
\centering
\includegraphics[width=\linewidth]{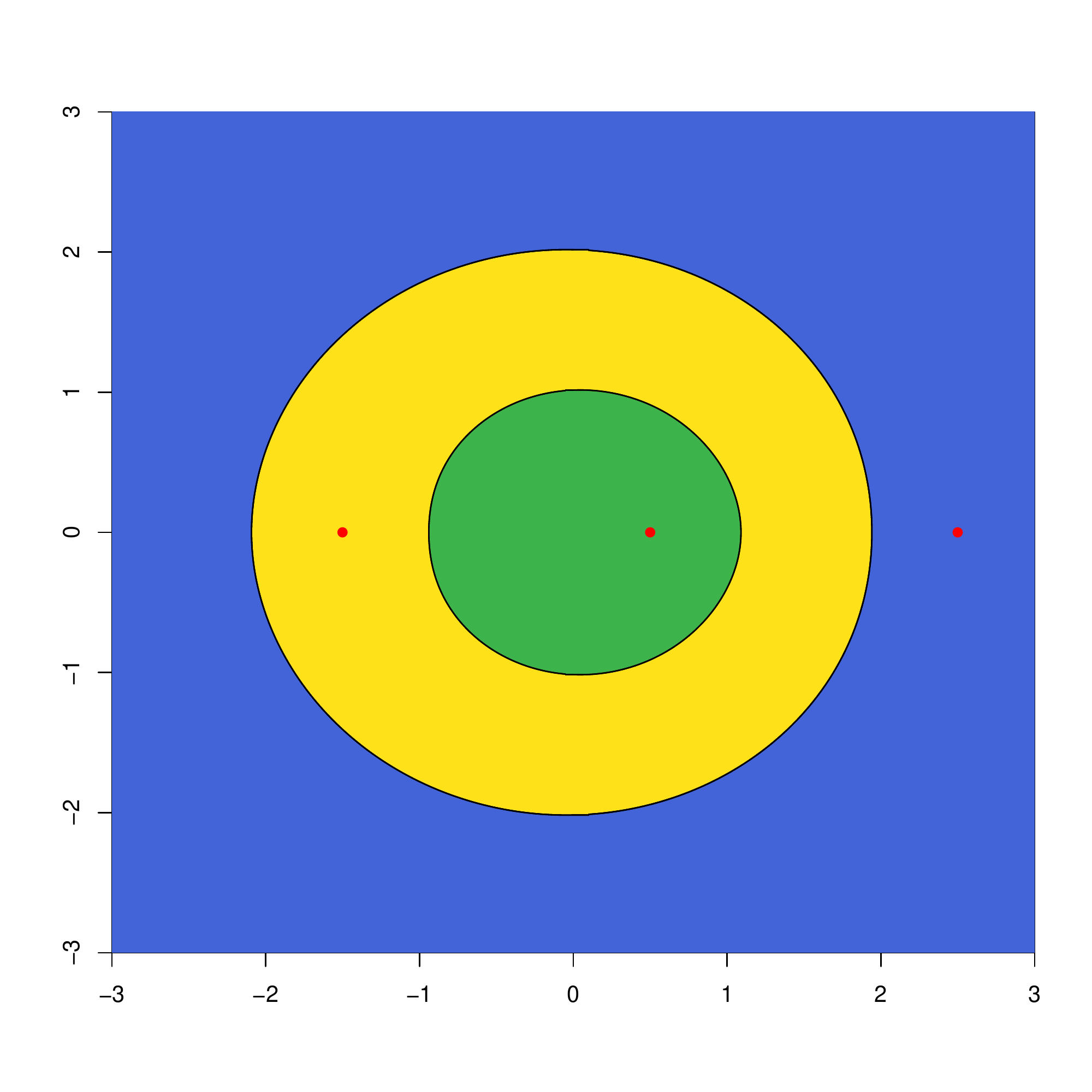}
\caption*{Circular 3}
\end{subfigure}
\break
\begin{subfigure}{0.32 \linewidth}
\centering
\includegraphics[width=\linewidth]{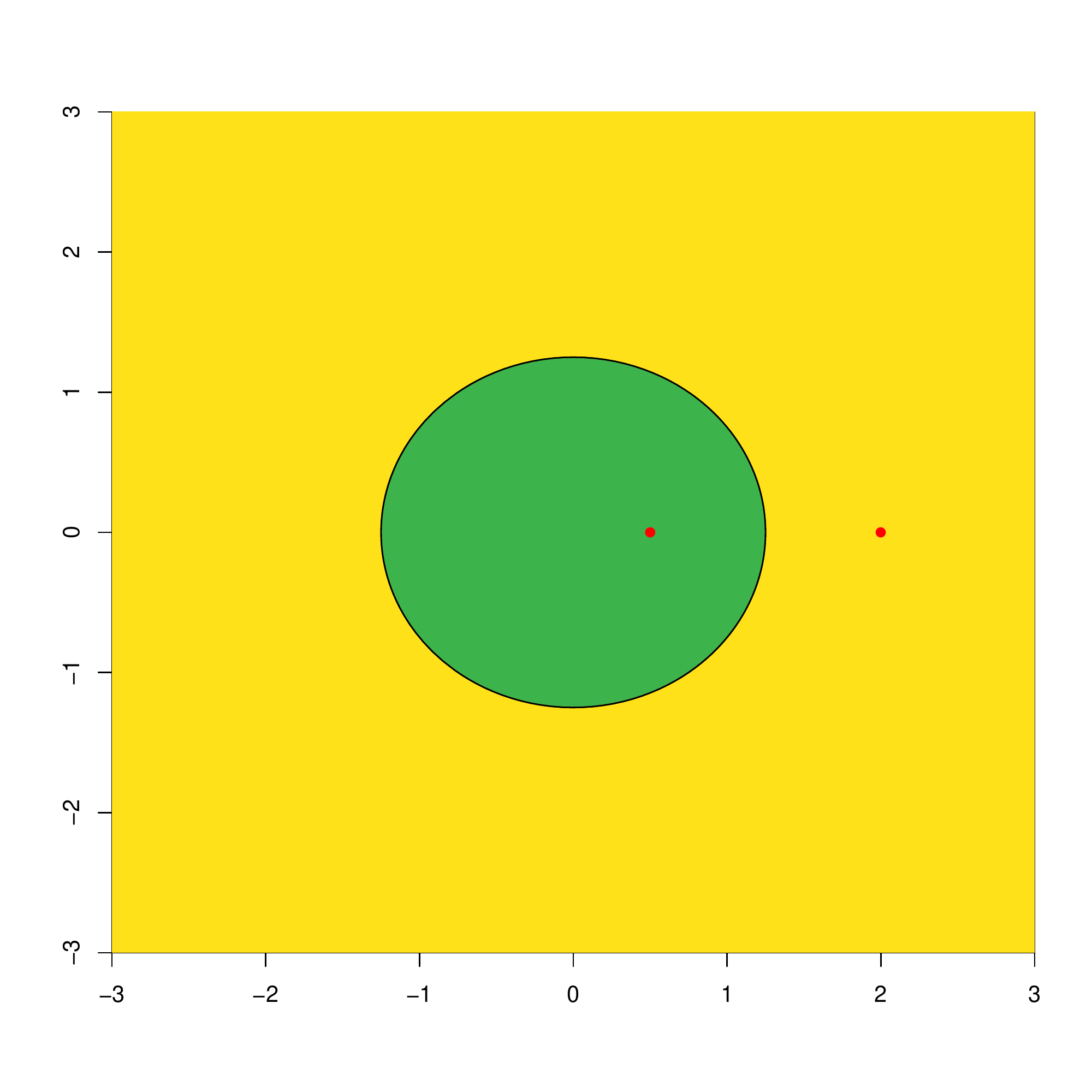}
\caption*{Circular 2 Cauchy}
\end{subfigure}
\begin{subfigure}{0.32 \linewidth}
\centering
\includegraphics[width=\linewidth]{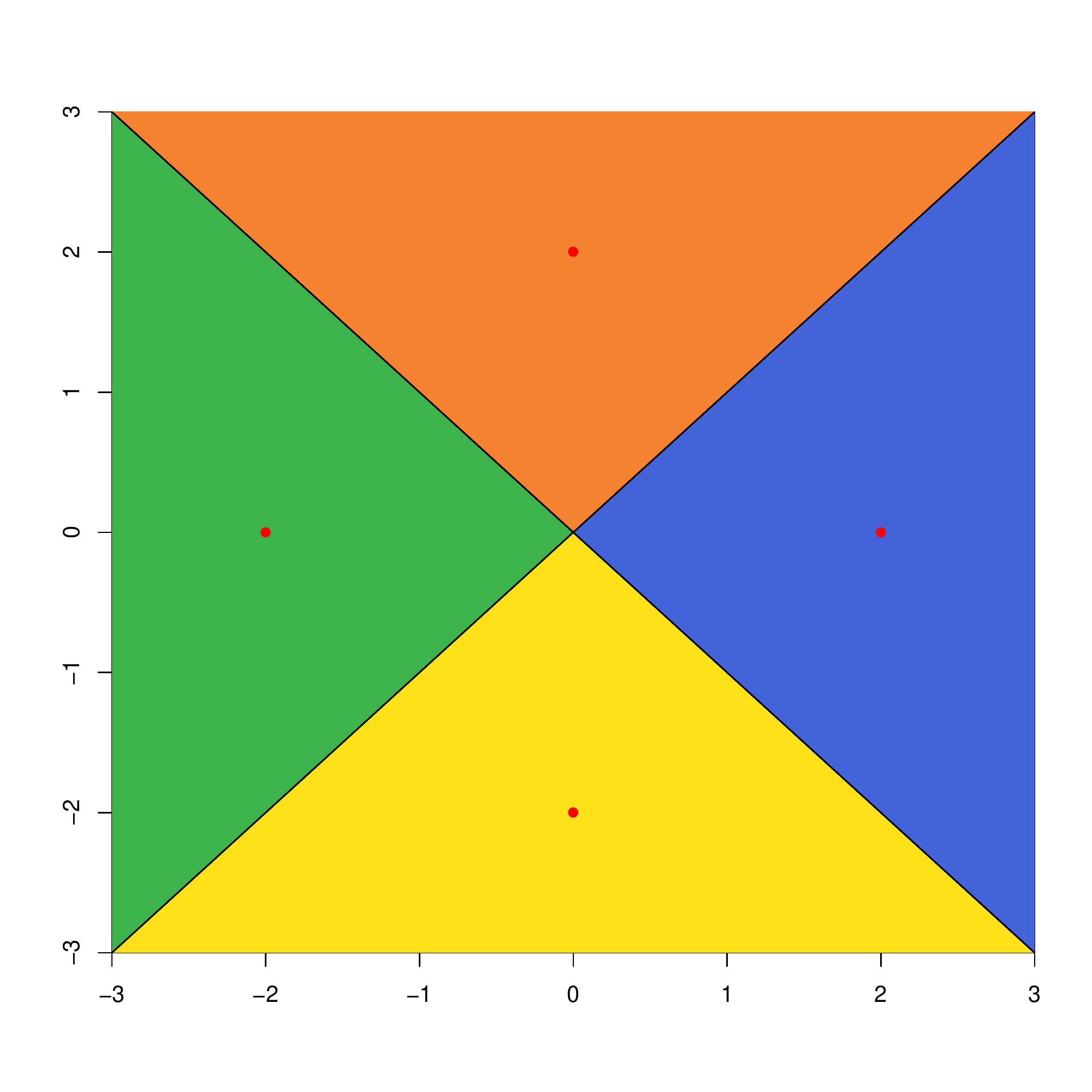}
\caption*{Circular 4 Cauchy}
\end{subfigure}
\begin{subfigure}{0.32 \linewidth}
\centering
\end{subfigure}

\caption{True clusters associated with the Circular 2, Circular 2 Cauchy, Circular 3 and Circular 4 Cauchy densities. The modes are plotted in red.}
\label{figure_true_clusters_circular}
\end{figure}

\subsection{Illustrative examples}
\label{sm:subsection_illustrative_examples}

In this subsection, we provide additional illustrations of clustering (as in Figure \ref{figure_true_clusters_ldc_kde}, Section \ref{section_simulations_and_data_analysis}), by considering the distributions (K) Trimodal III, Quadrimodal, and (L) Quadrimodal. For this, we compare the second and the third row in Figure \ref{sm:figure_true_clusters_ldc_kde} with the true clusters in the first row. Based on this comparison, we observe that the cluster estimates based on the proposed LLD method (second row) are better than those based on KDE (third row). Also, Figures \ref{sm:plot_clusters_bimodal_location_prob_all_points} through \ref{sm:plot_clusters_fountain_prob_all_points} provide further illustrations of clustering for different choices of $s$ and $q$, similar to the second rows of Figure \ref{figure_true_clusters_ldc_kde} and Figure \ref{sm:figure_true_clusters_ldc_kde}. These may be regarded as the bivariate analogue of Figure \ref{mixture_normals_plot2} for the densities Bimodal, (H) Bimodal IV, (K) Trimodal III, Quadrimodal, (L) Quadrimodal and \#10 Fountain. Since the parameter r does not affect the output of Algorithm 1, we leave it fixed at $r = 0.05$. However, the choice of $q$ affects the estimated clusters and a recommendation on its choice is  given in Section \ref{section_clustering}.
\begin{figure}
\centering
\begin{minipage}[c]{0.1\textwidth}
   \caption*{ True \\ clusters}
\end{minipage}
\begin{minipage}[c]{0.88\textwidth}
\begin{subfigure}{0.32 \linewidth}
\centering
\caption*{(K) Trimodal III}
\includegraphics[width=\linewidth]{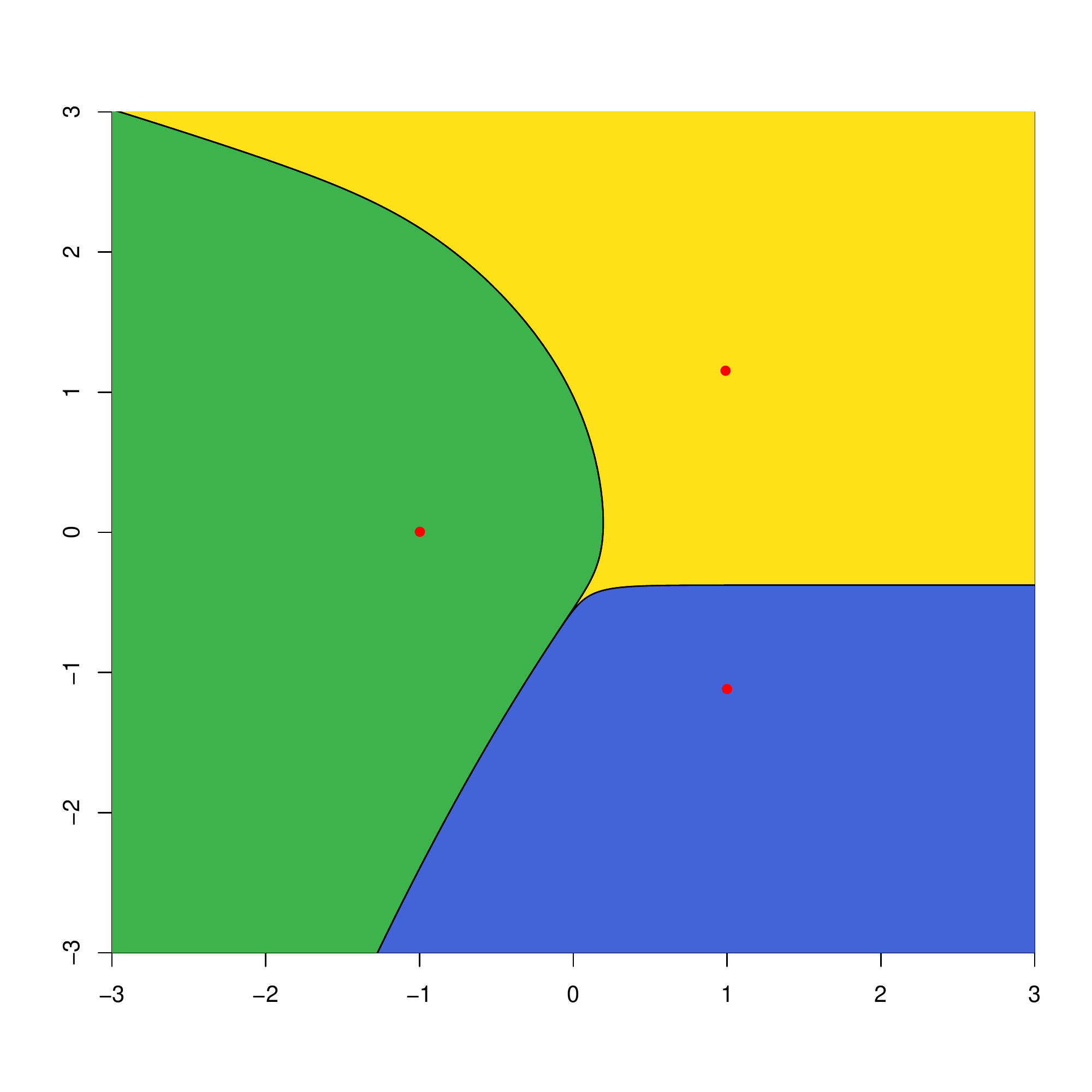}
\end{subfigure}
\begin{subfigure}{0.32 \linewidth}
\centering
\caption*{Quadrimodal}
\includegraphics[width=\linewidth]{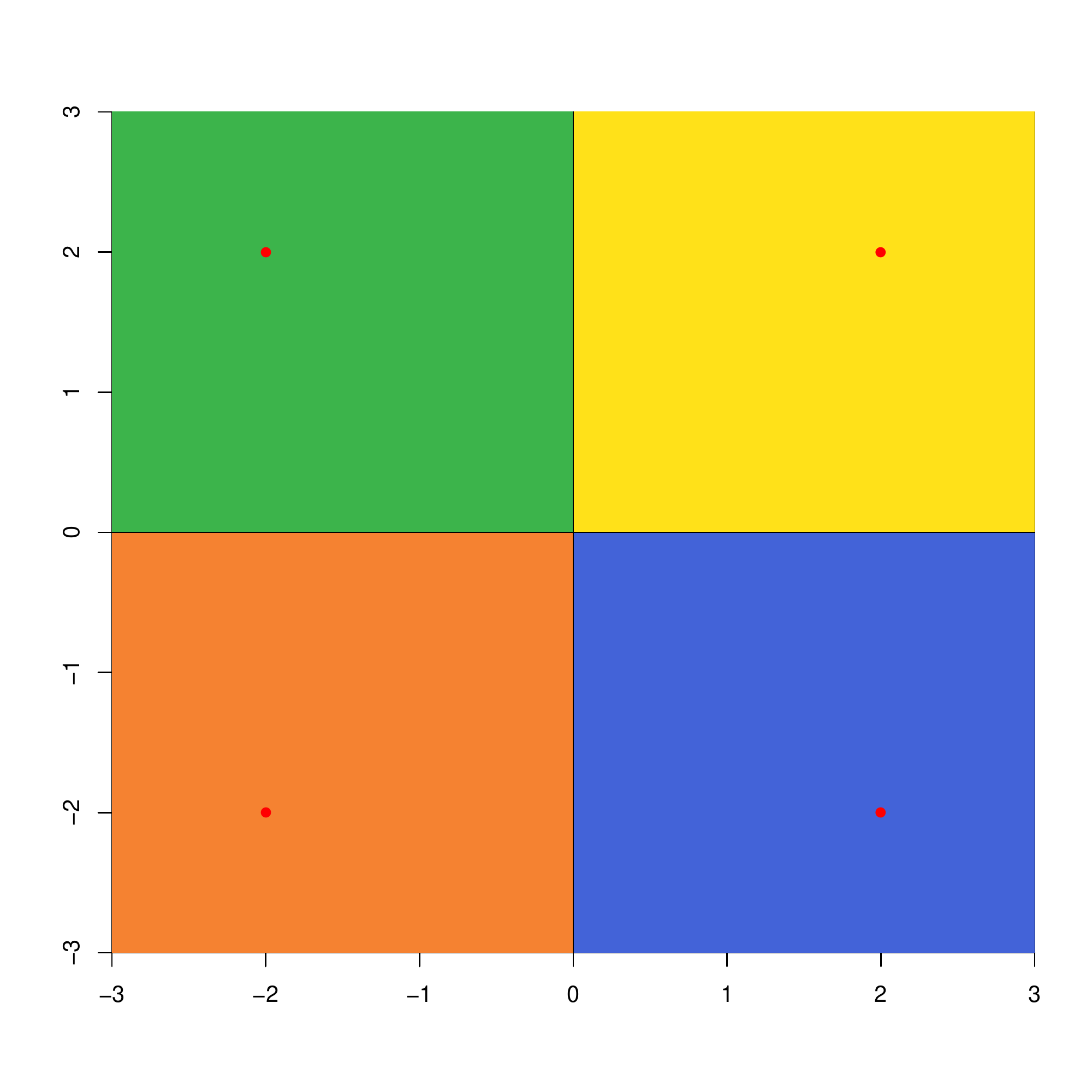}
\end{subfigure}
\centering
\begin{subfigure}{0.32 \linewidth}
\caption*{(L) Quadrimodal}
\includegraphics[width=\linewidth]{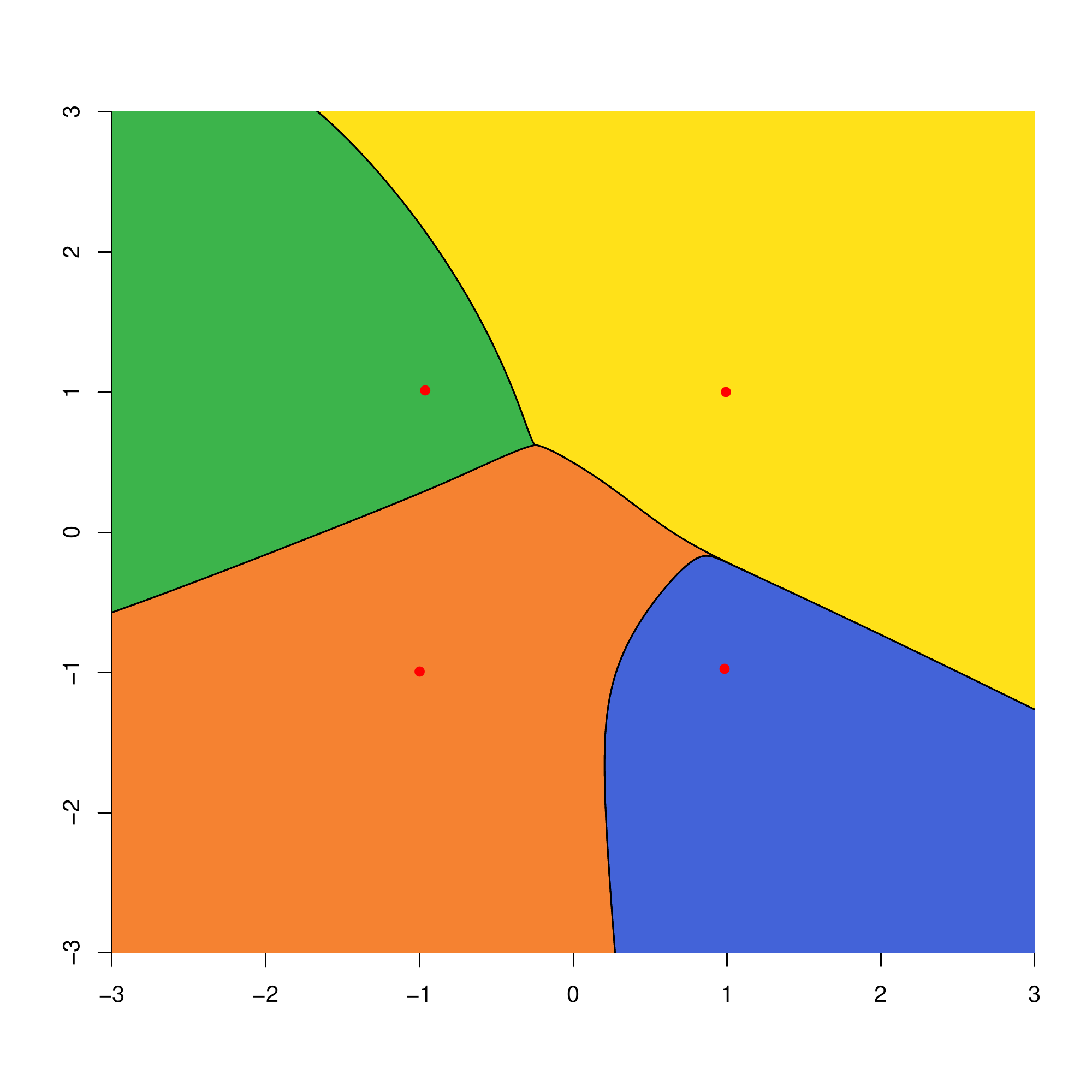}
\end{subfigure}
\end{minipage}
\begin{minipage}[c]{0.1\textwidth}
   \caption*{LLD \\ estimated \\ clusters}
\end{minipage}
\begin{minipage}[c]{0.88\textwidth}
\centering
\begin{subfigure}{0.32 \linewidth}
\centering
\includegraphics[width=\linewidth]{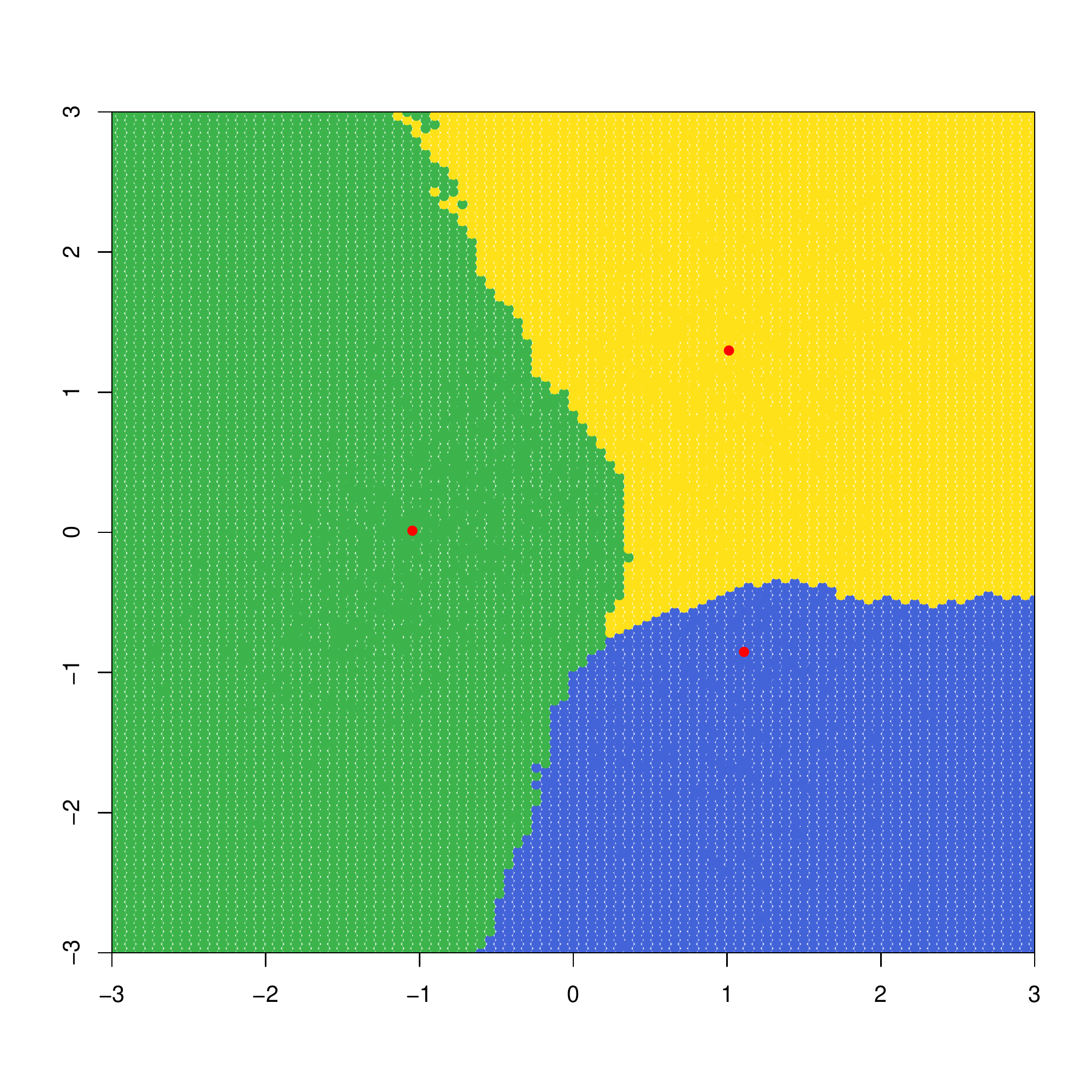}
\end{subfigure}
\begin{subfigure}{0.32 \linewidth}
\centering
\includegraphics[width=\linewidth]{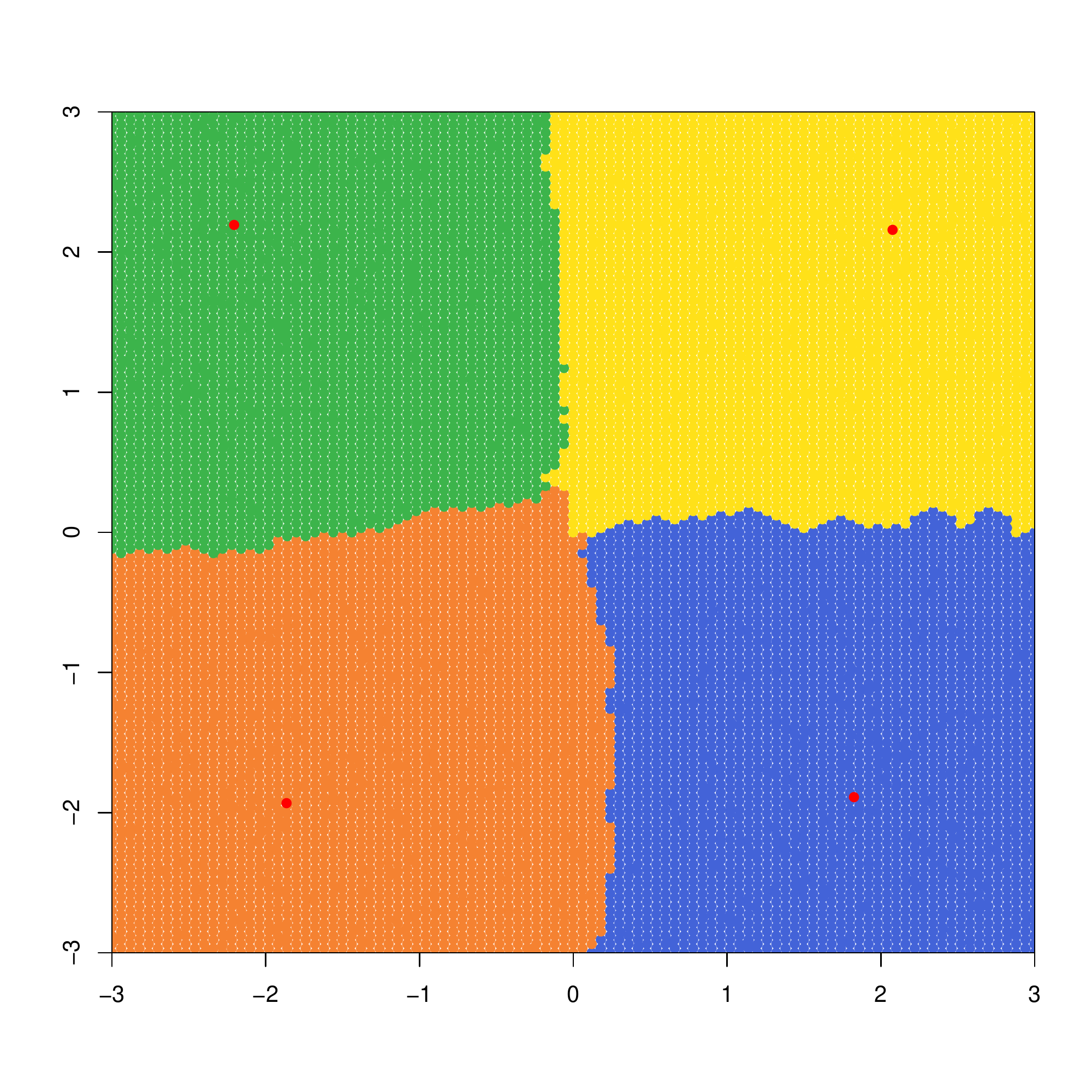}
\end{subfigure}
\begin{subfigure}{0.32 \linewidth}
\centering
\includegraphics[width=\linewidth]{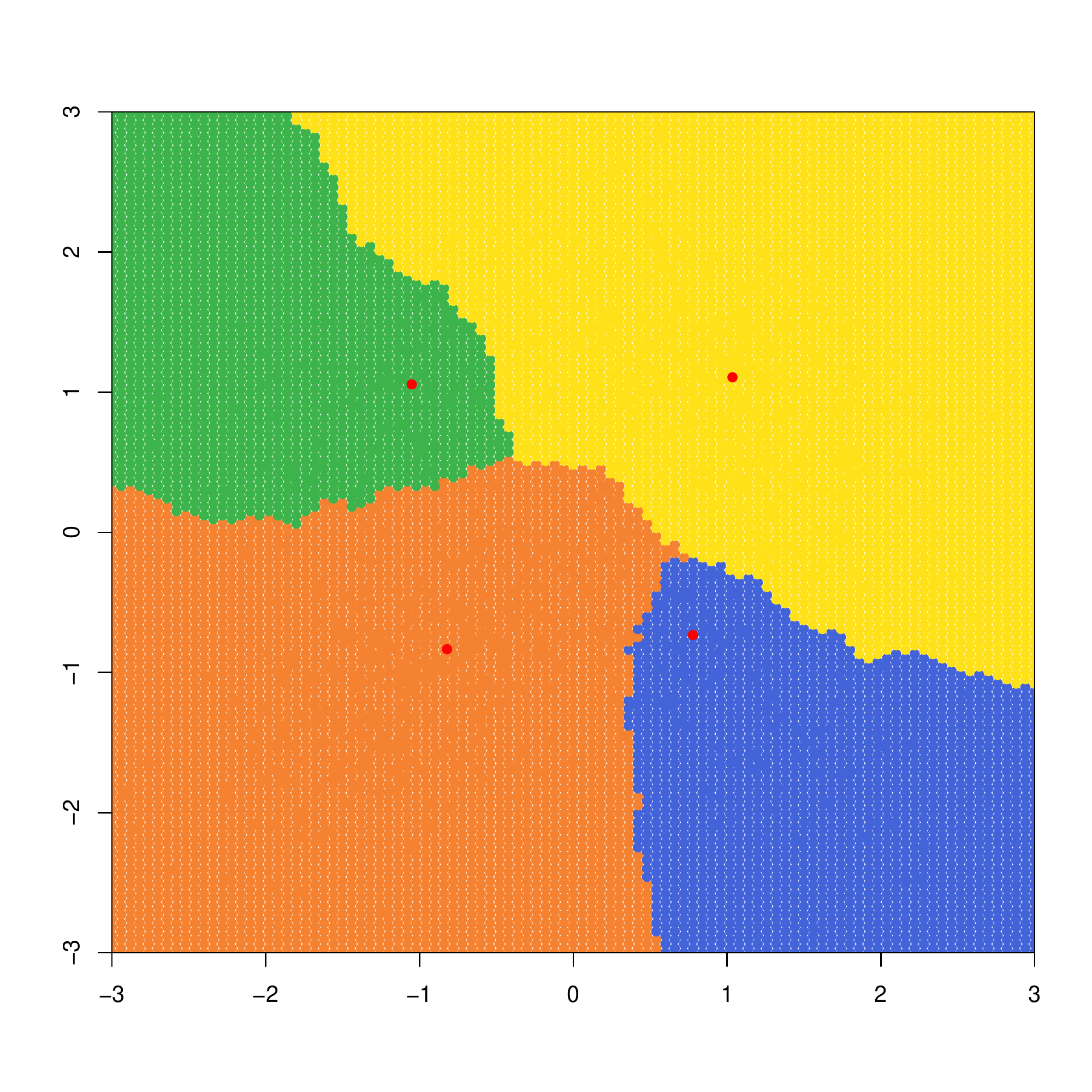}
\end{subfigure}
\end{minipage}
\begin{minipage}[c]{0.1\textwidth}
   \caption*{KDE \\ estimated \\ clusters}
\end{minipage}
\begin{minipage}[c]{0.88\textwidth}
\centering
\begin{subfigure}{0.32 \linewidth}
\centering
\includegraphics[width=\linewidth]{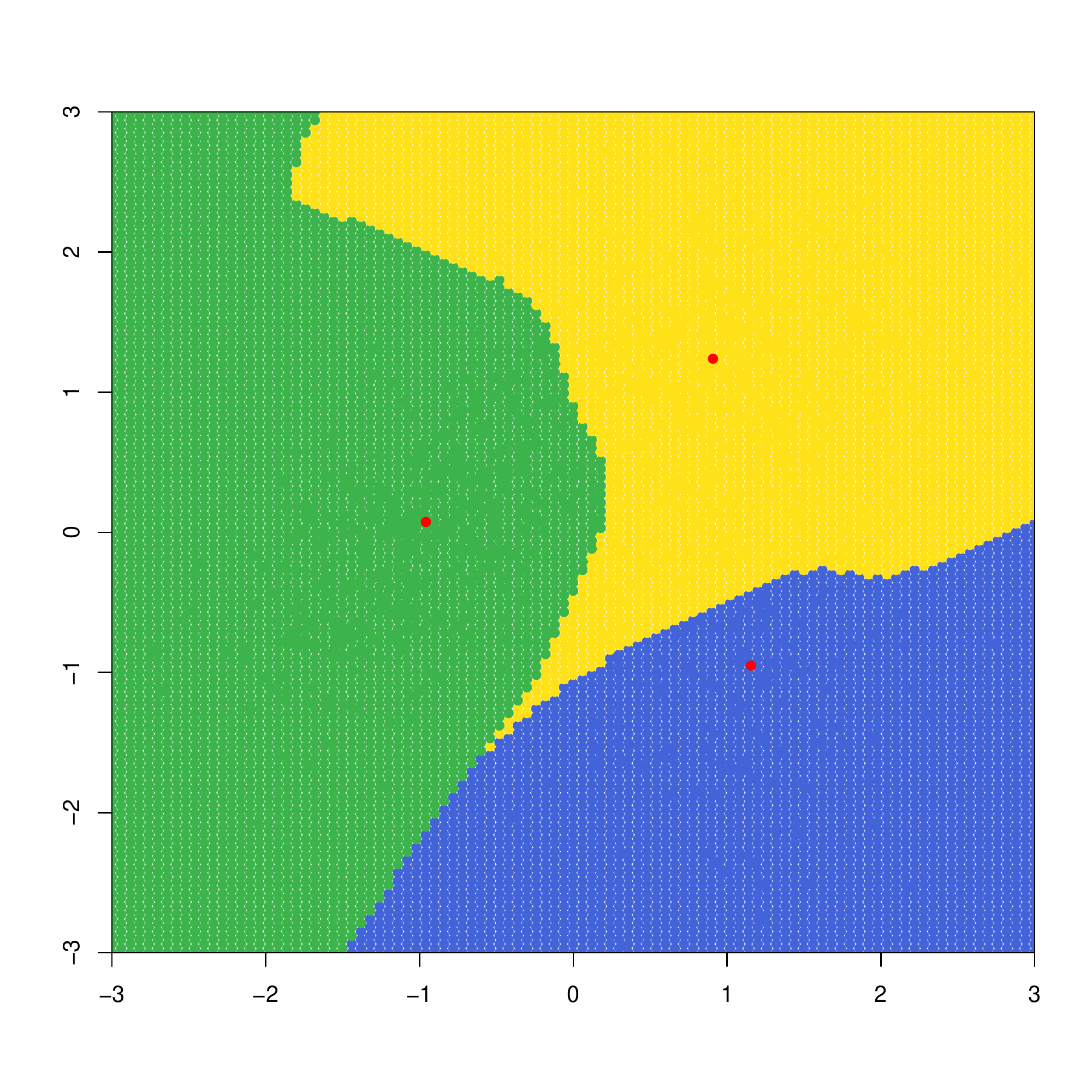}
\end{subfigure}
\begin{subfigure}{0.32 \linewidth}
\centering
\includegraphics[width=\linewidth]{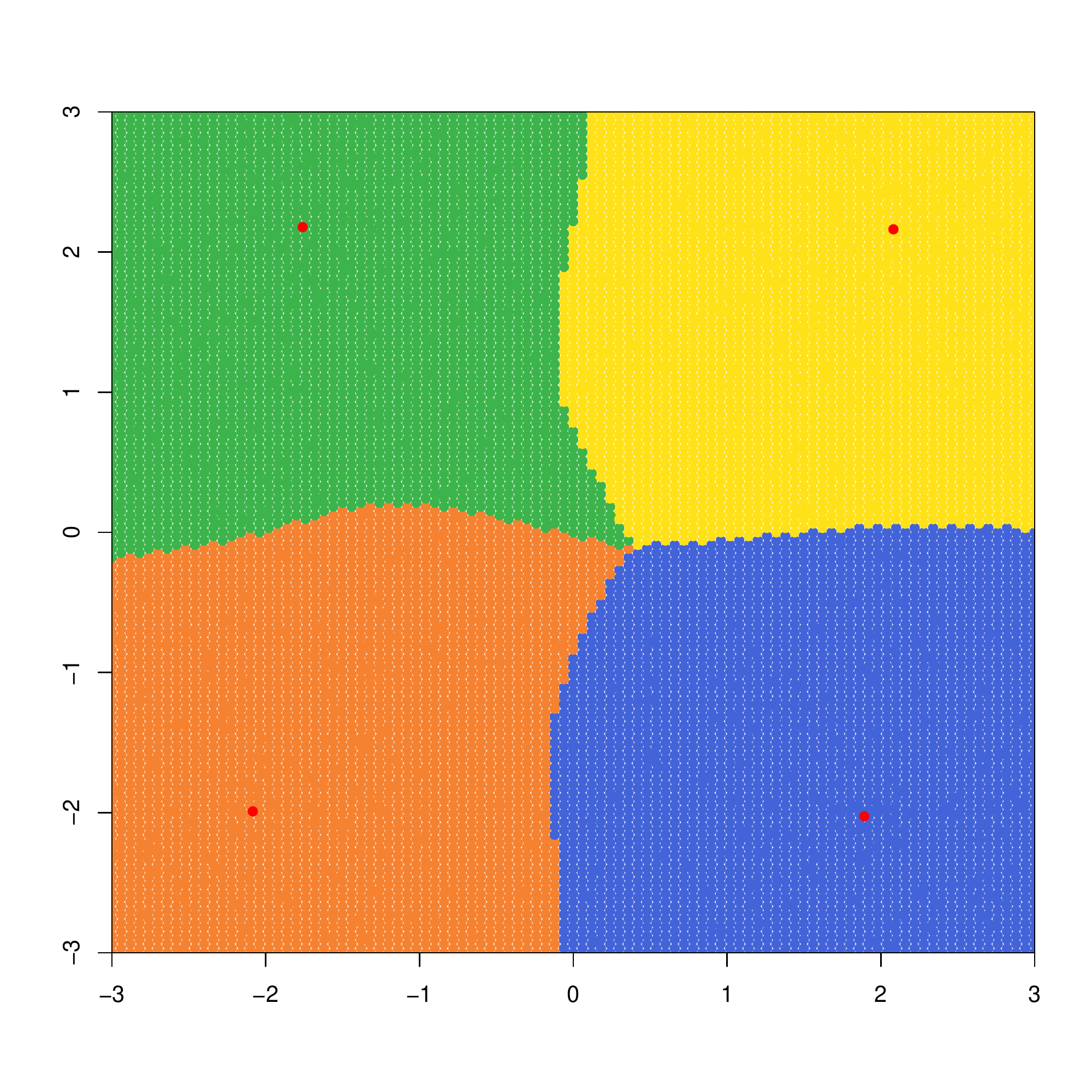}
\end{subfigure}
\begin{subfigure}{0.32 \linewidth}
\centering
\includegraphics[width=\linewidth]{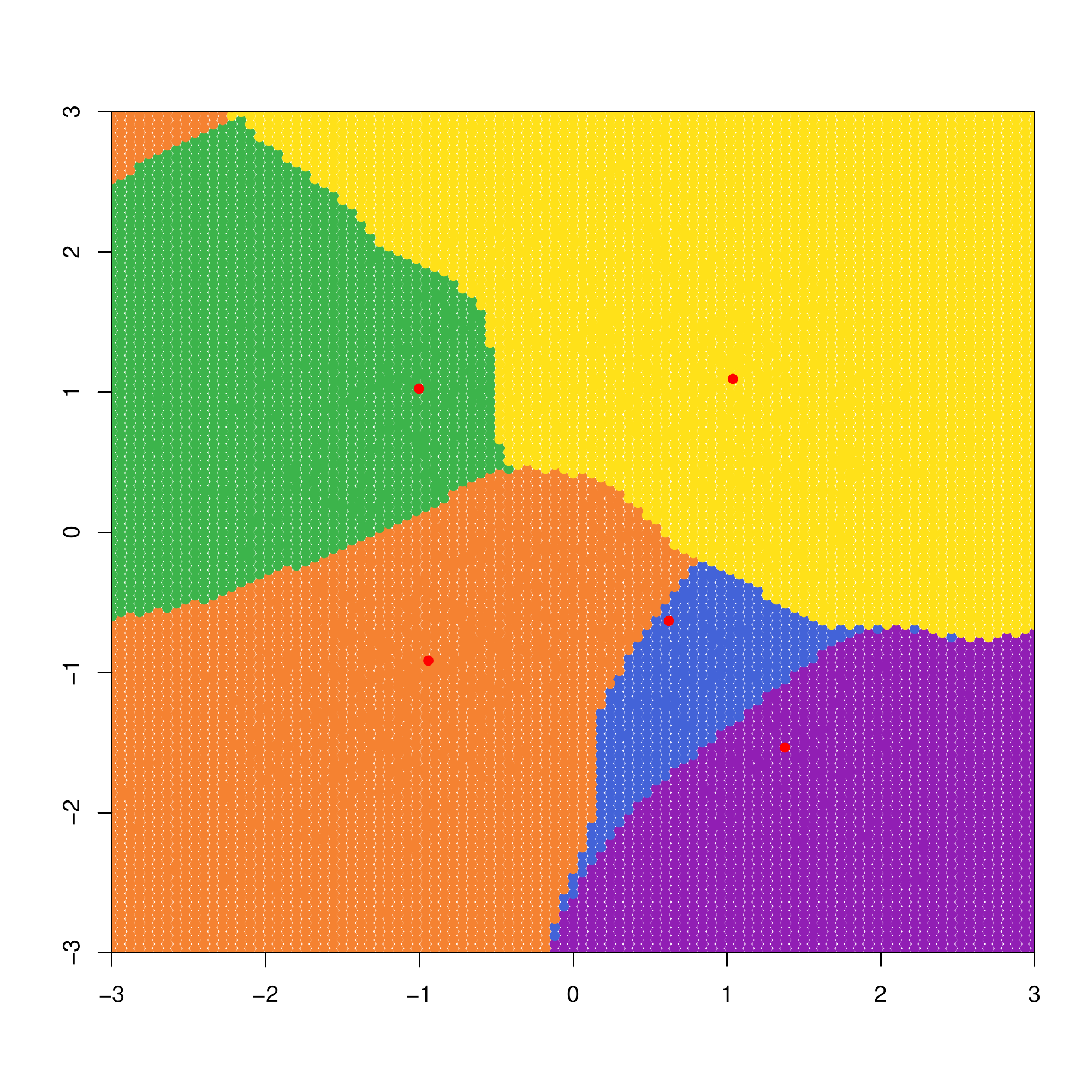}
\end{subfigure}
\end{minipage}

\caption{Clusters associated with the (K) Trimodal III (left), Quadrimodal (middle) and (L) Quadrimodal (right) densities. True clusters (first row). Local depth clustering based on $n=1000$ samples from these densities and parameters $q=0.05$, $s=50$ and $r=0.05$ (second row). Kernel density estimator clustering (third row). The true modes (first row) and the predicted modes (second and third rows) are plotted in red.}
\label{sm:figure_true_clusters_ldc_kde}
\end{figure}

\begin{figure}
\centering
\includegraphics[width=0.32\linewidth]{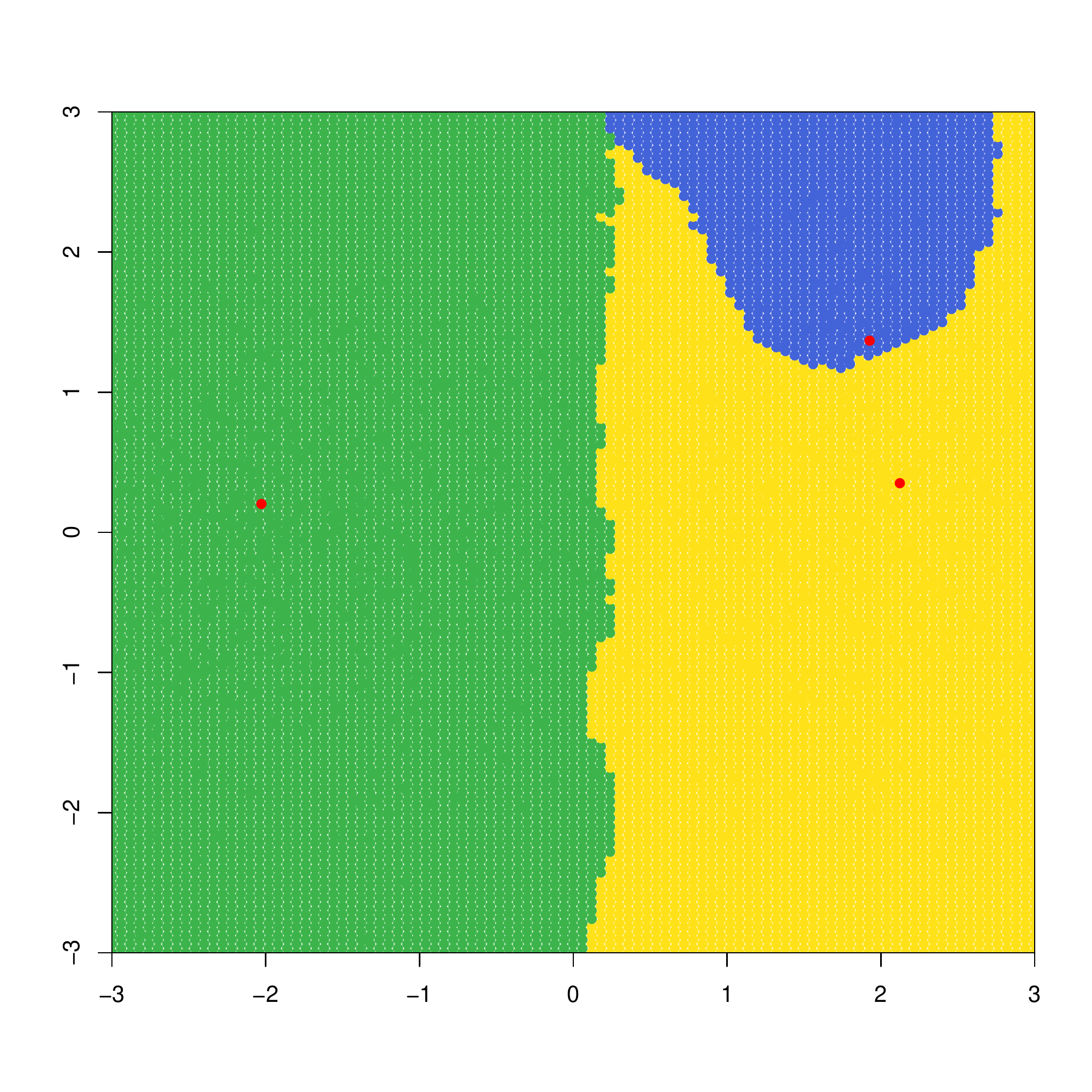}
\includegraphics[width=0.32\linewidth]{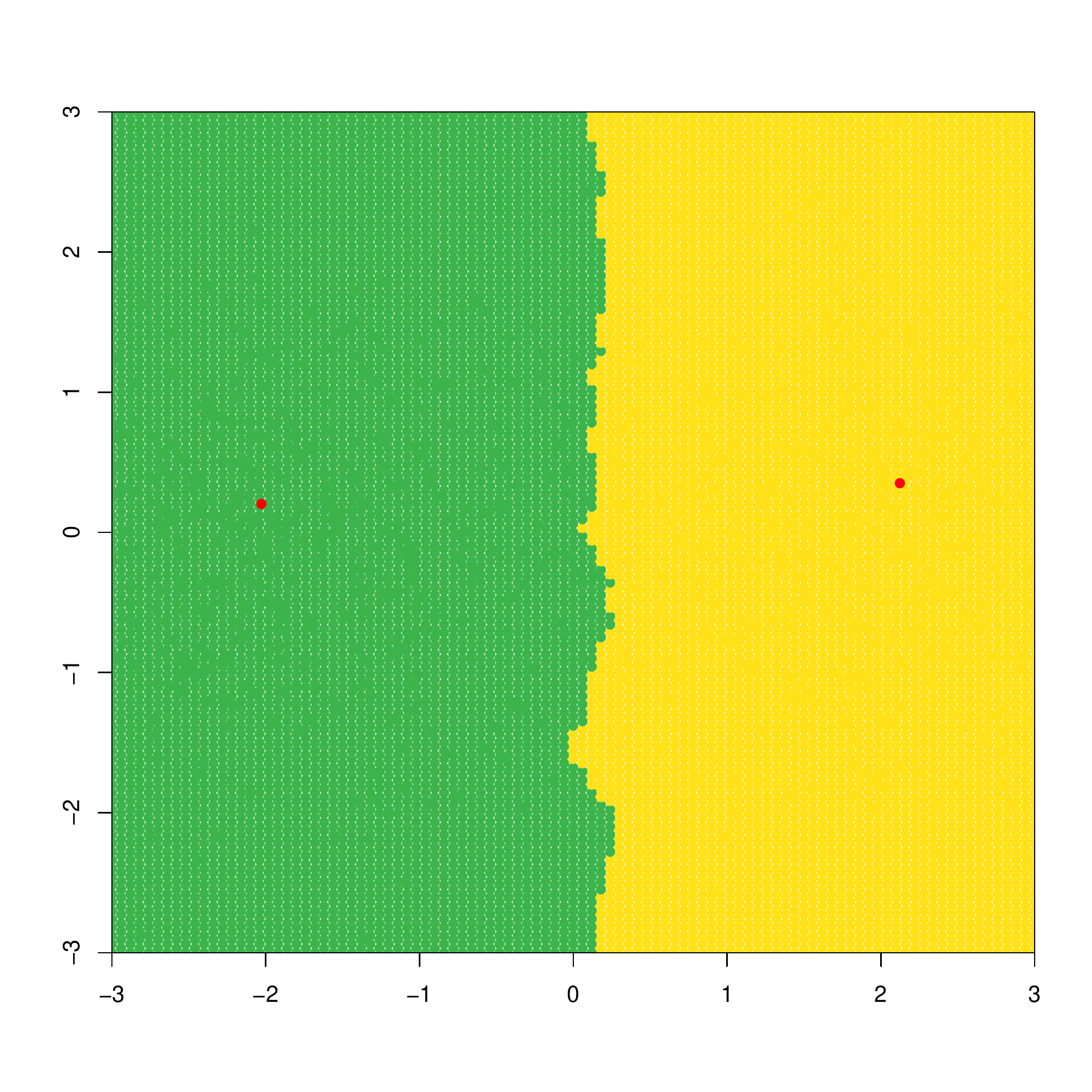}
\includegraphics[width=0.32\linewidth]{{lens-bimodal_location-prob-0.63-0.05-50-0.05-1000-ldc-all}.pdf}
\includegraphics[width=0.32\linewidth]{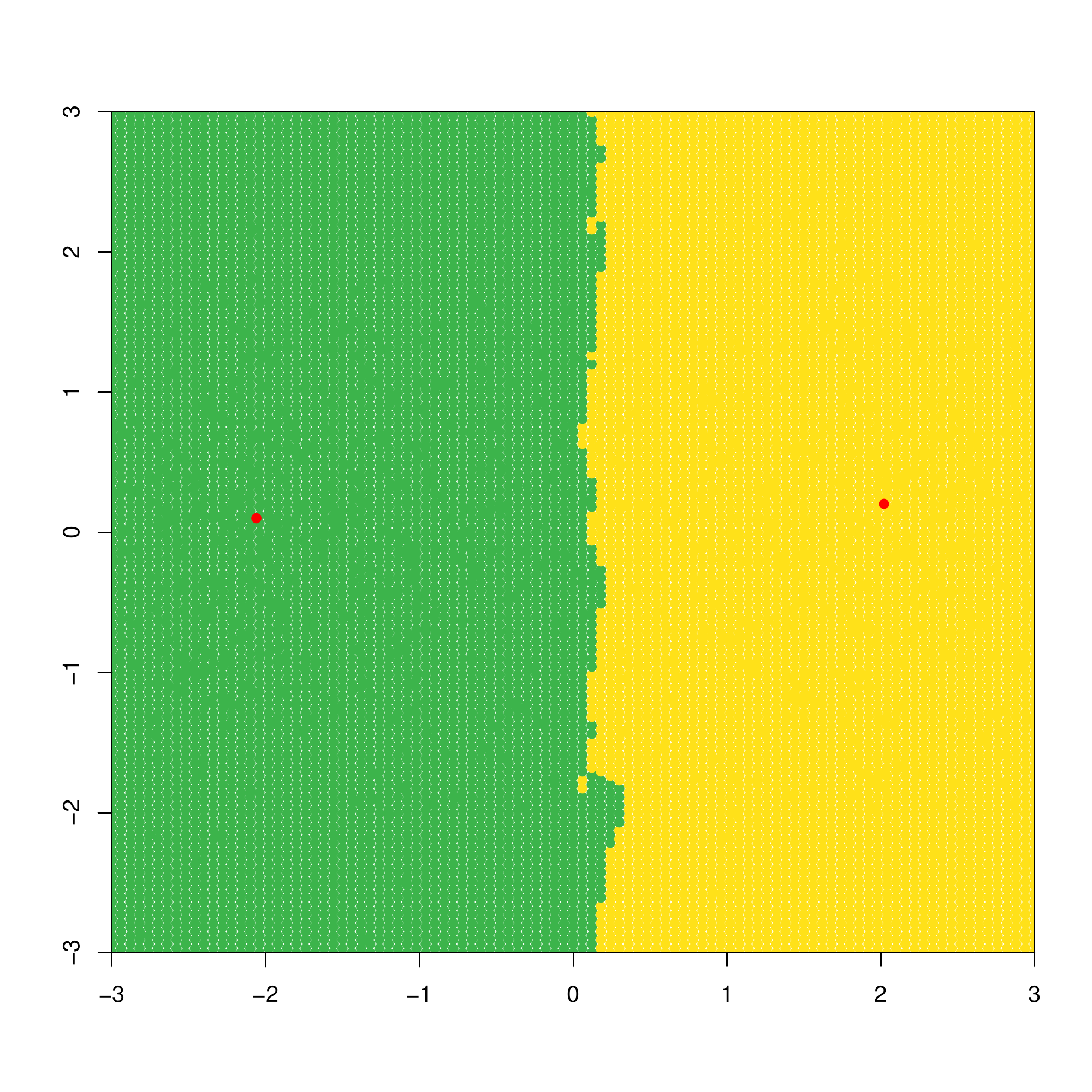}
\includegraphics[width=0.32\linewidth]{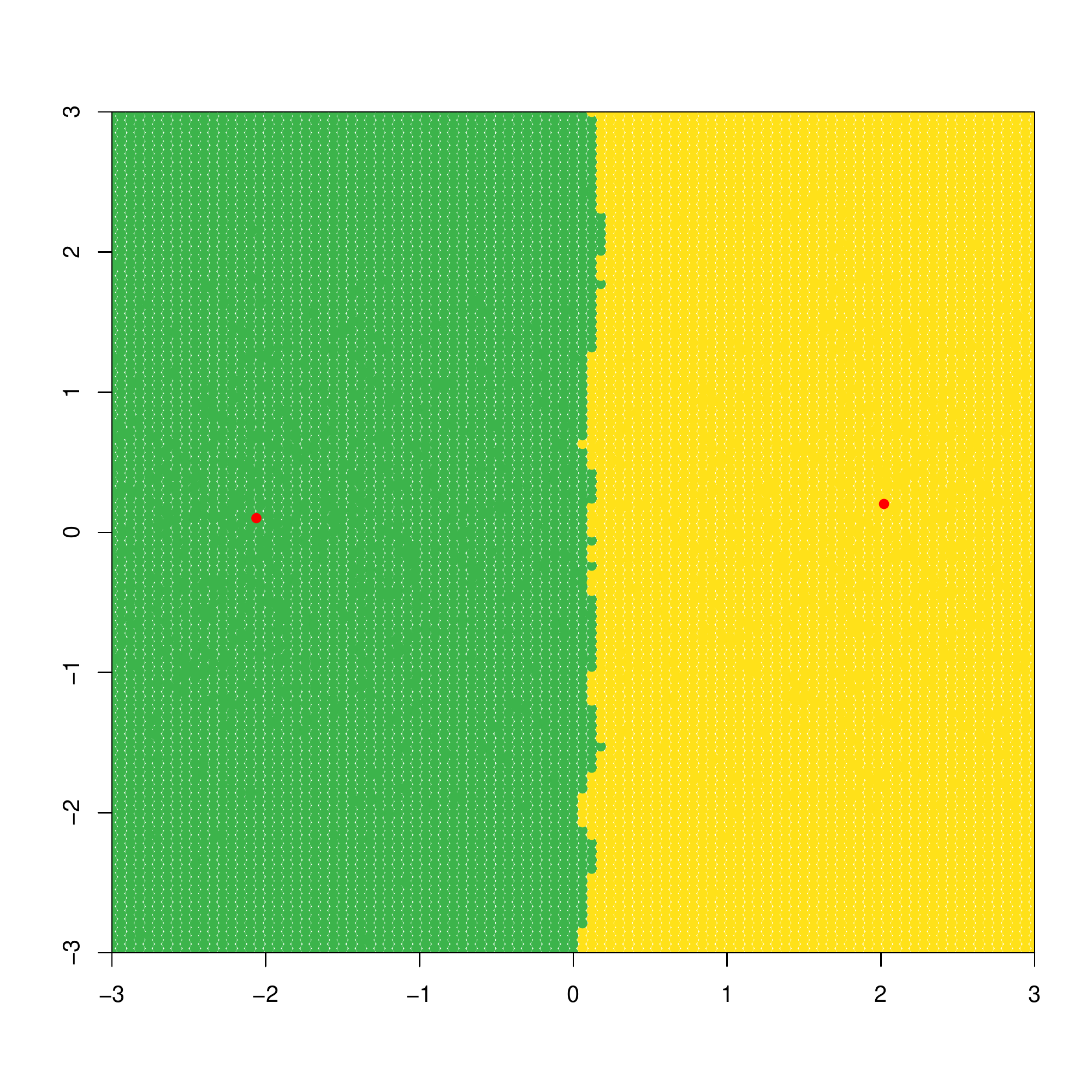}
\includegraphics[width=0.32\linewidth]{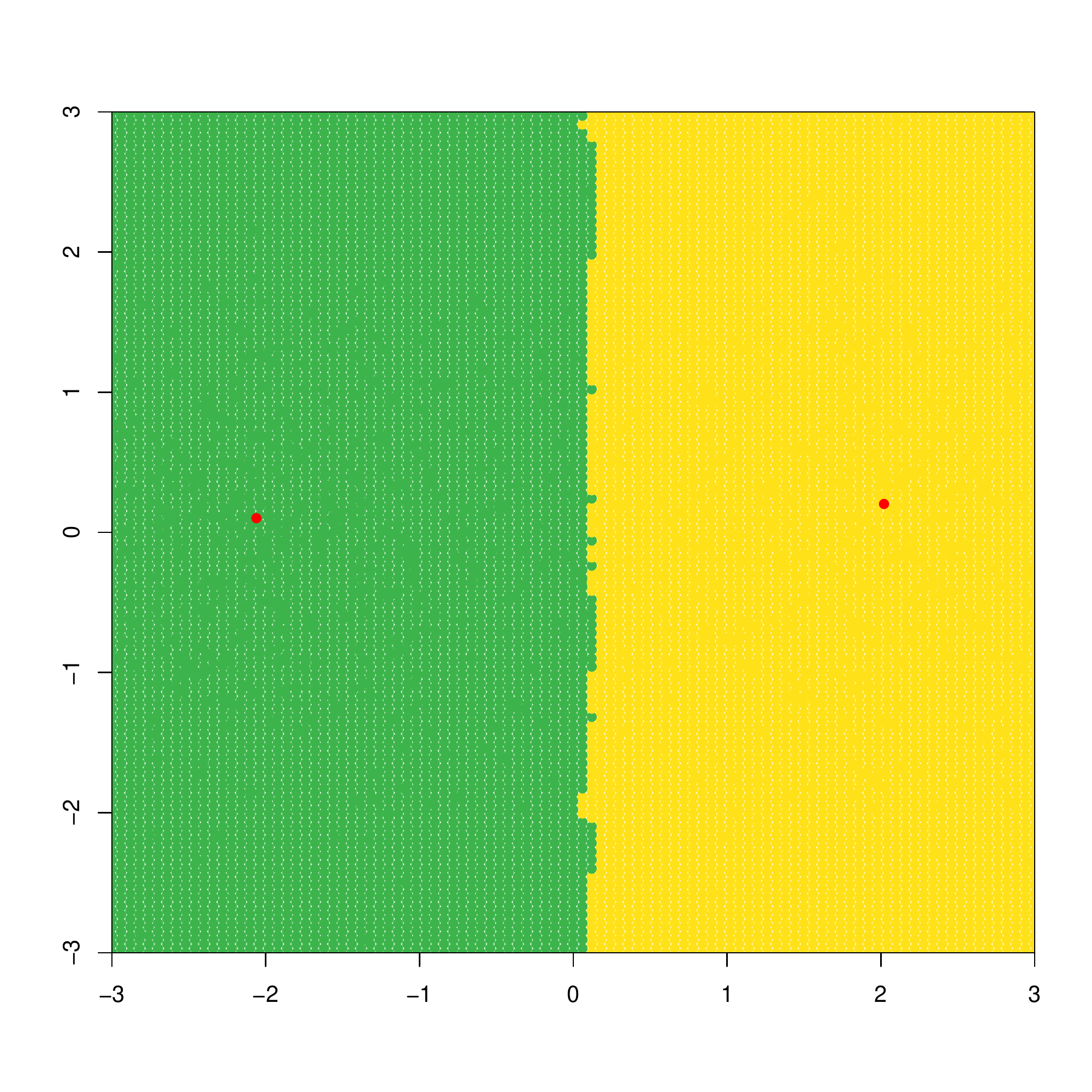}
\includegraphics[width=0.32\linewidth]{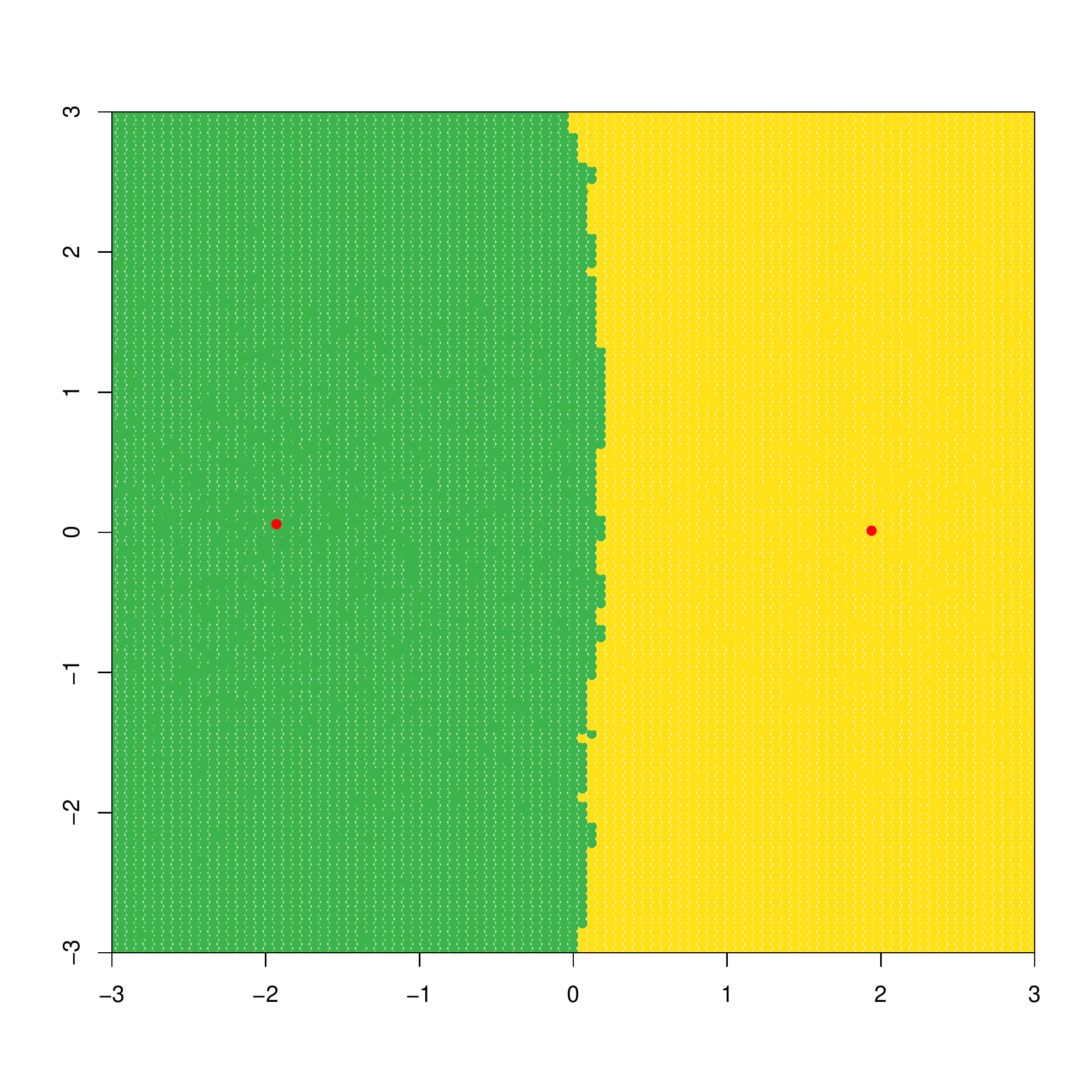}
\includegraphics[width=0.32\linewidth]{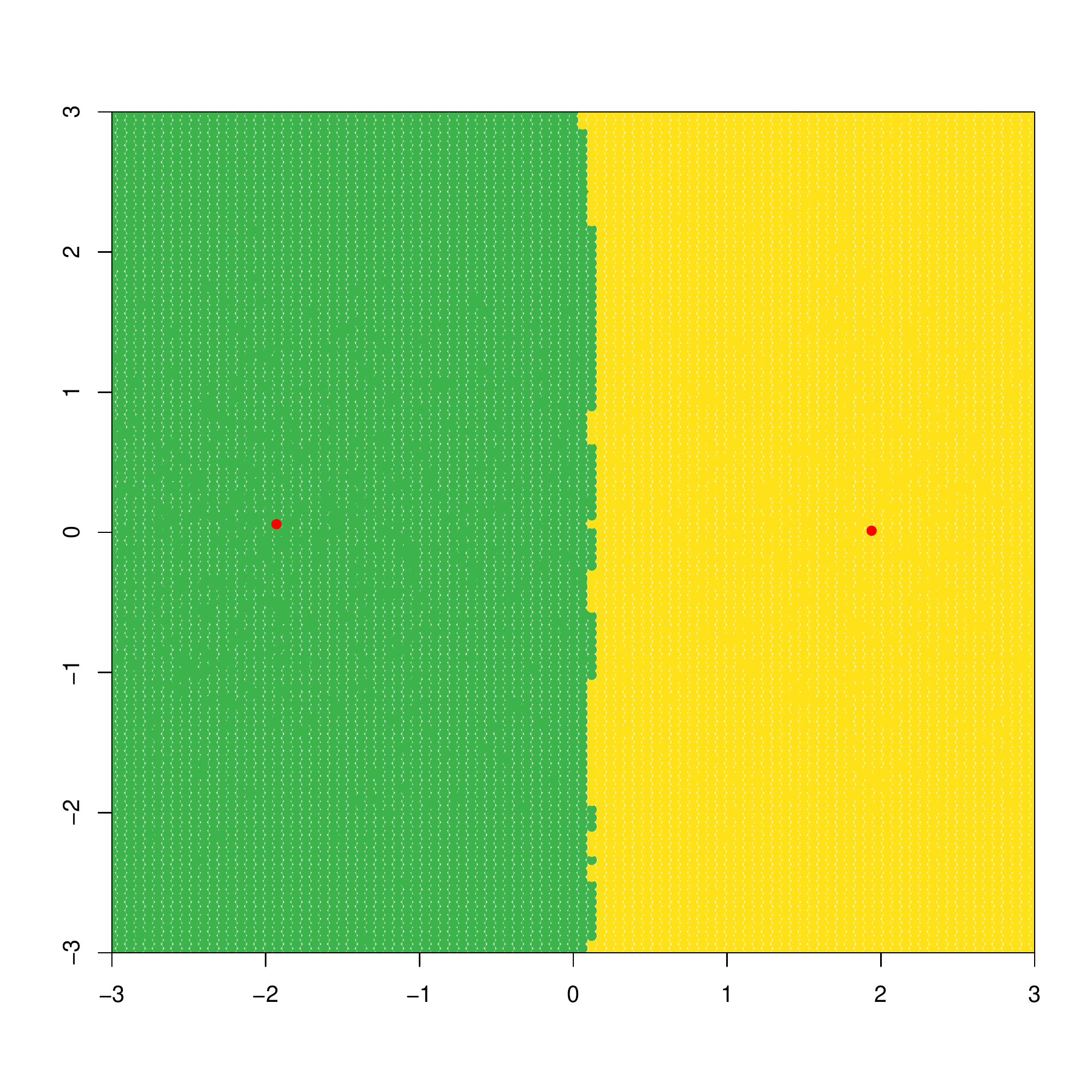}
\includegraphics[width=0.32\linewidth]{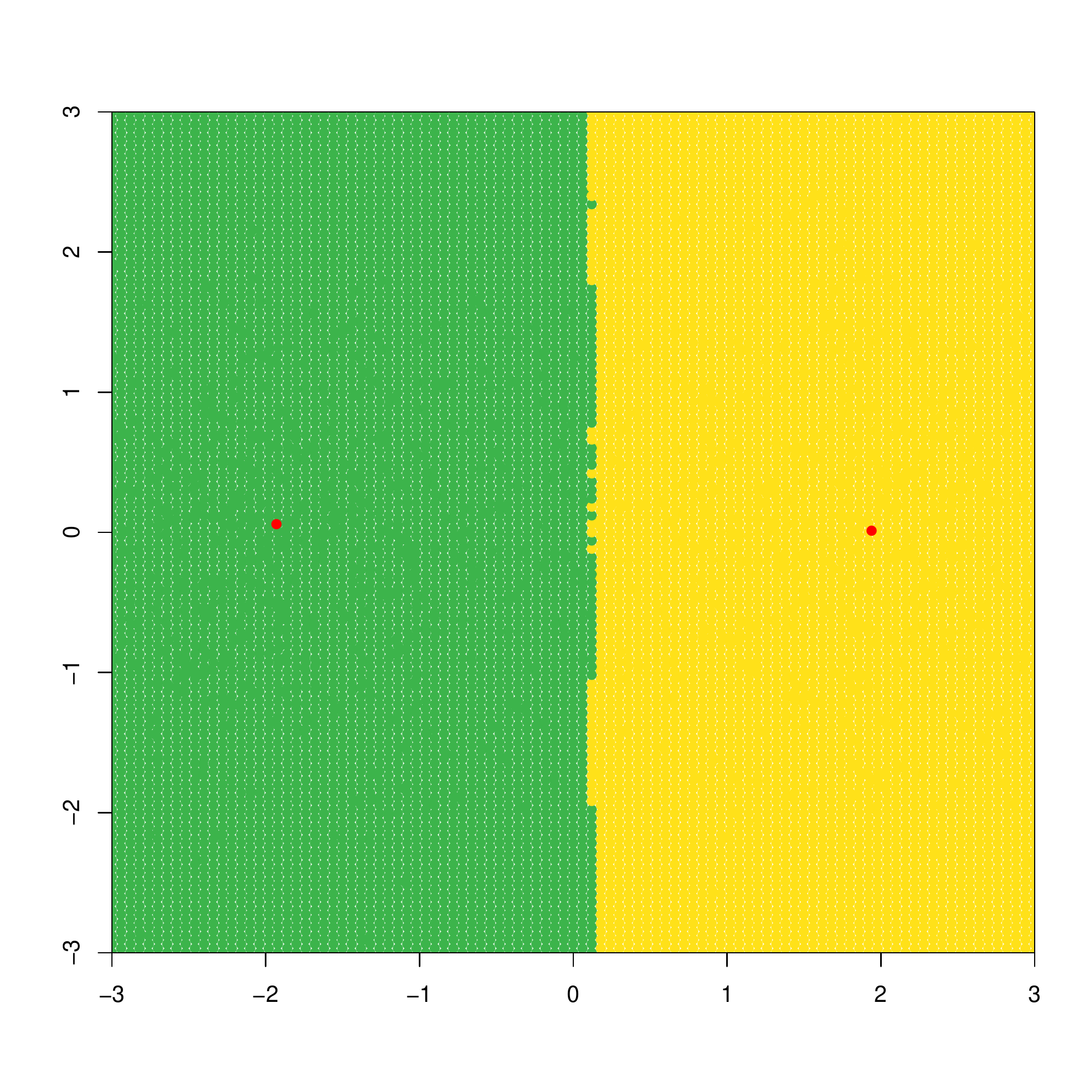}
\includegraphics[width=0.32\linewidth]{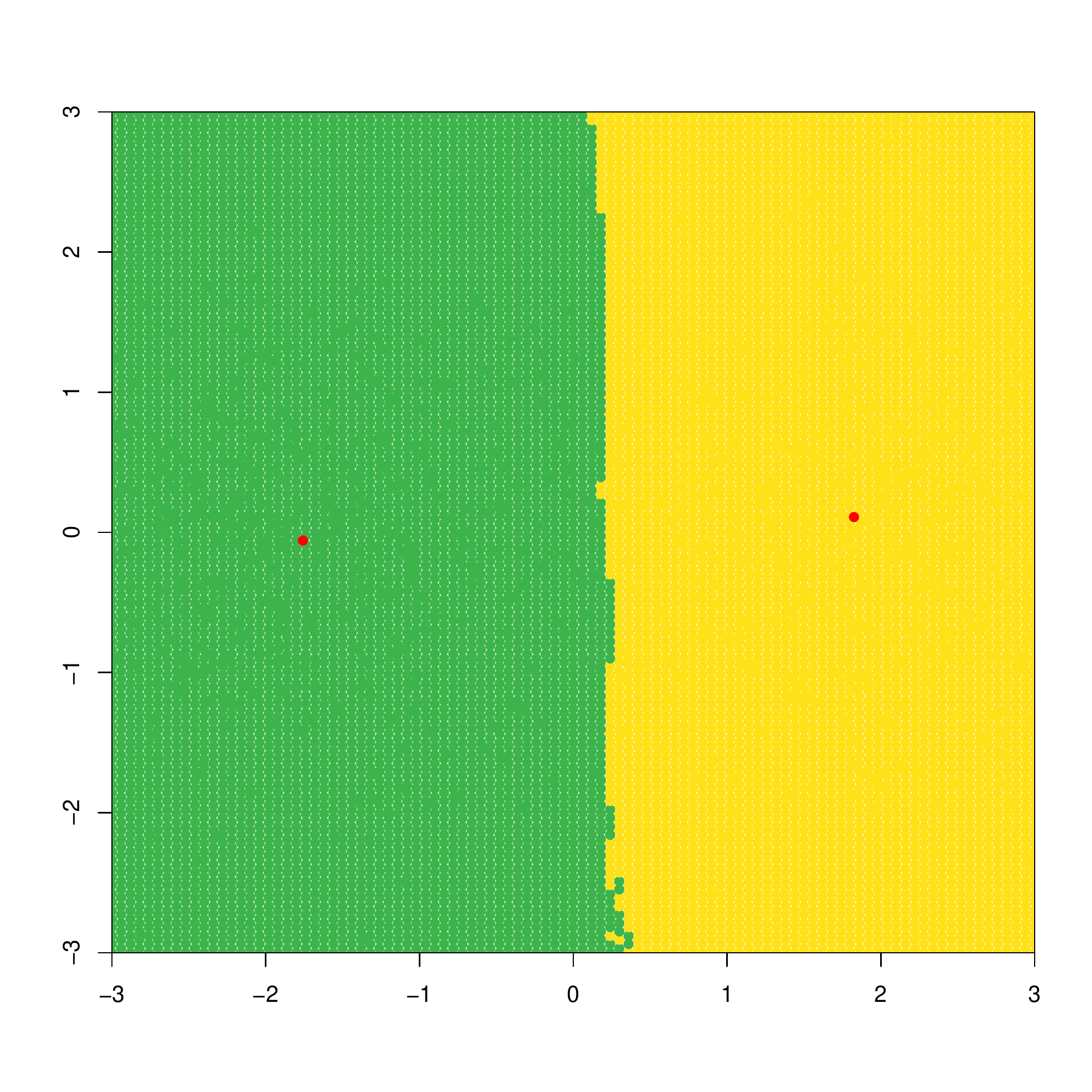}
\includegraphics[width=0.32\linewidth]{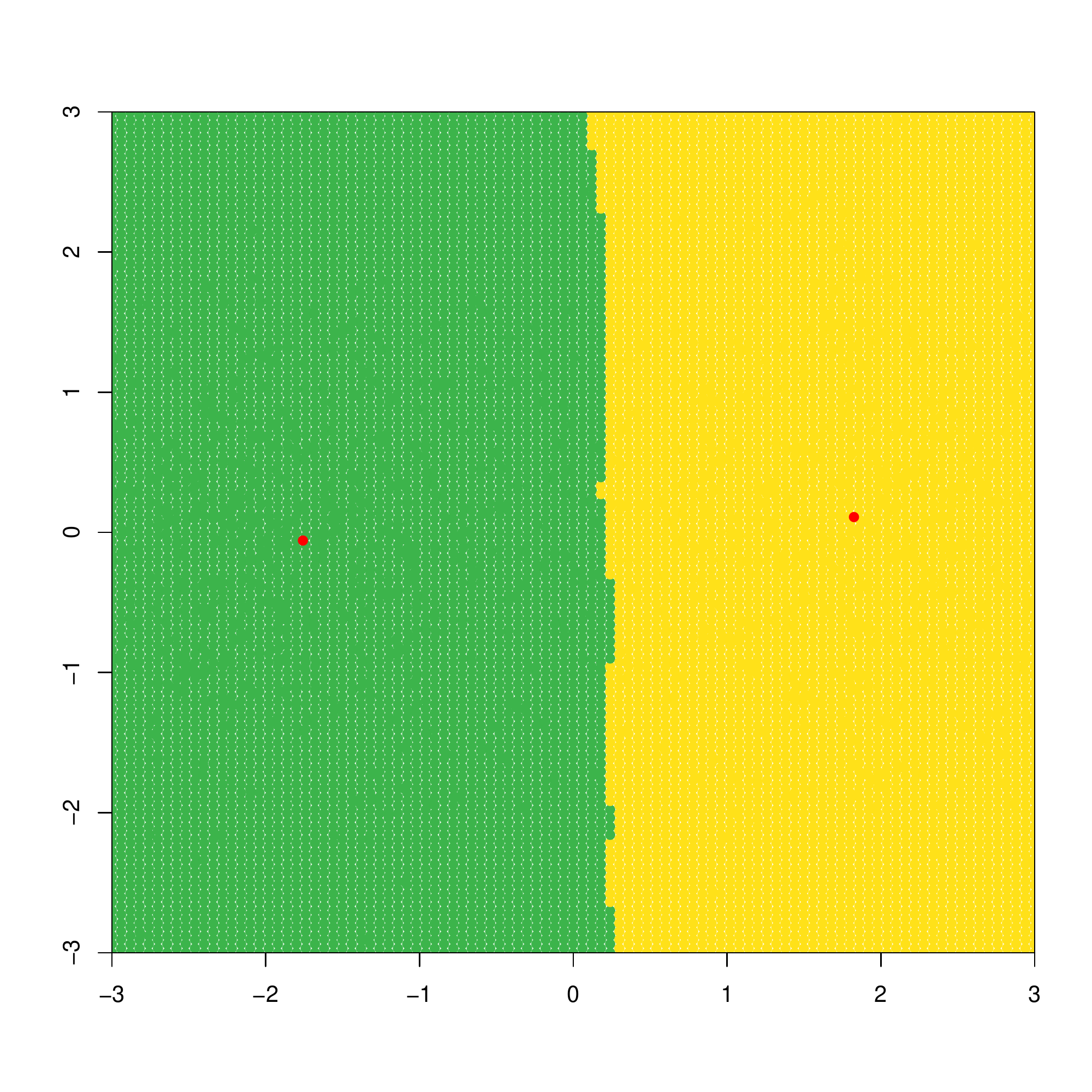}
\includegraphics[width=0.32\linewidth]{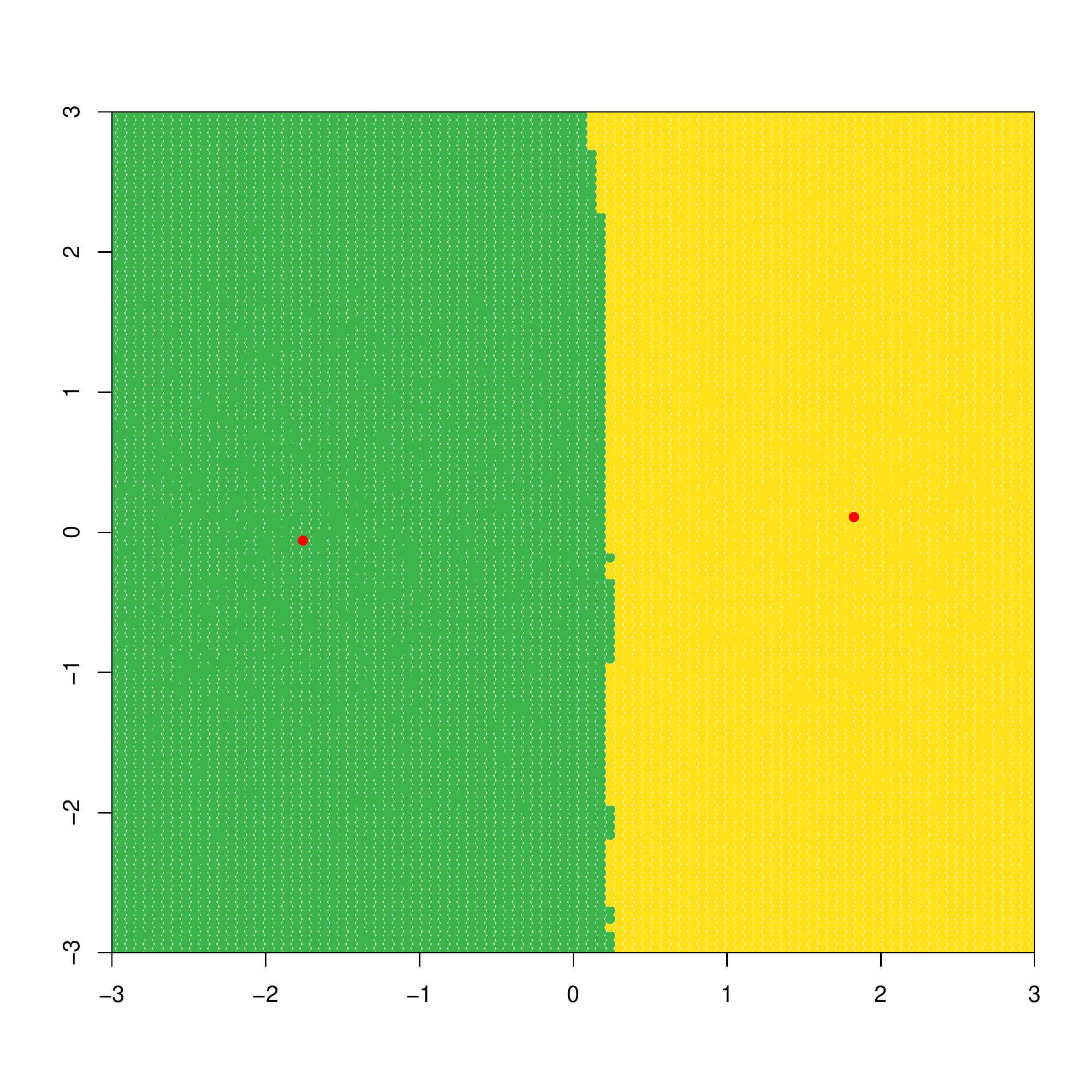}

\caption{Local depth clustering of $n=1000$ samples from the Bimodal density. The predicted local maxima are plotted in red. The parameters are $r=0.05$, $s=10,30,50$ in each column (from left to right) and $q=0.05,0.10,0.25,0.50$ in each row (from the top down).}
\label{sm:plot_clusters_bimodal_location_prob_all_points}
\end{figure}

\begin{figure}
\centering
\includegraphics[width=0.32\linewidth]{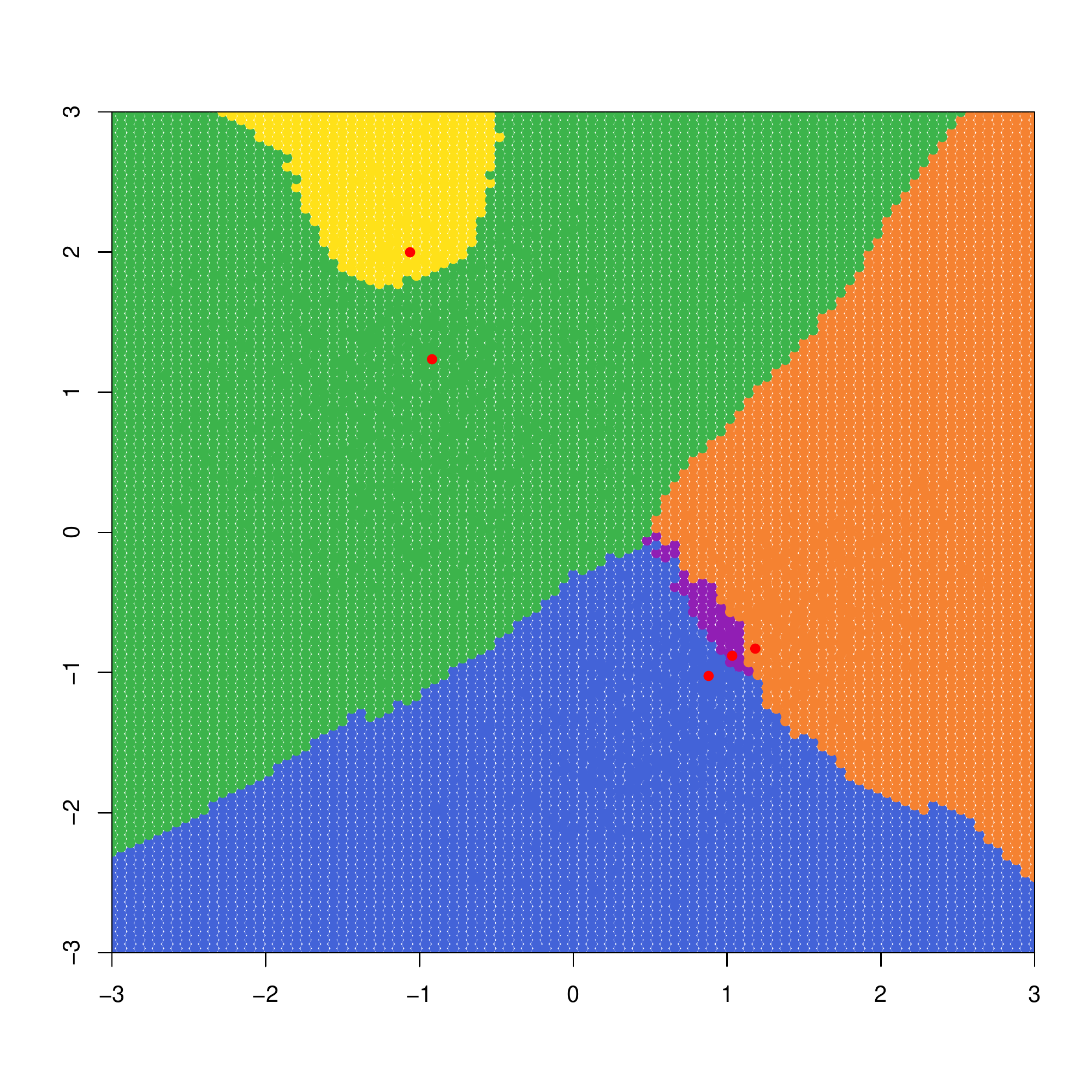}
\includegraphics[width=0.32\linewidth]{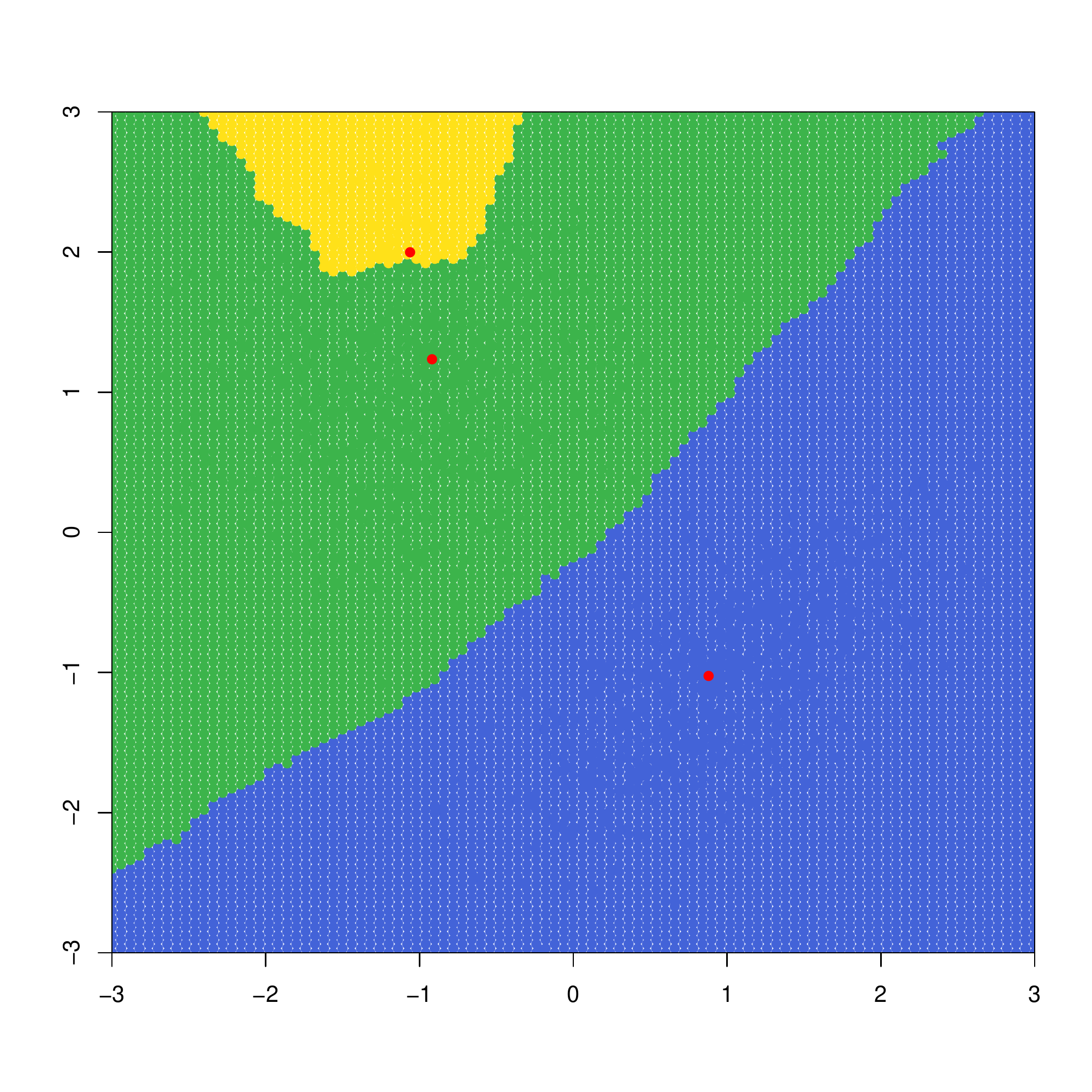}
\includegraphics[width=0.32\linewidth]{{lens-bimodal_4-prob-0.39-0.05-50-0.05-1000-ldc-all}.pdf}
\includegraphics[width=0.32\linewidth]{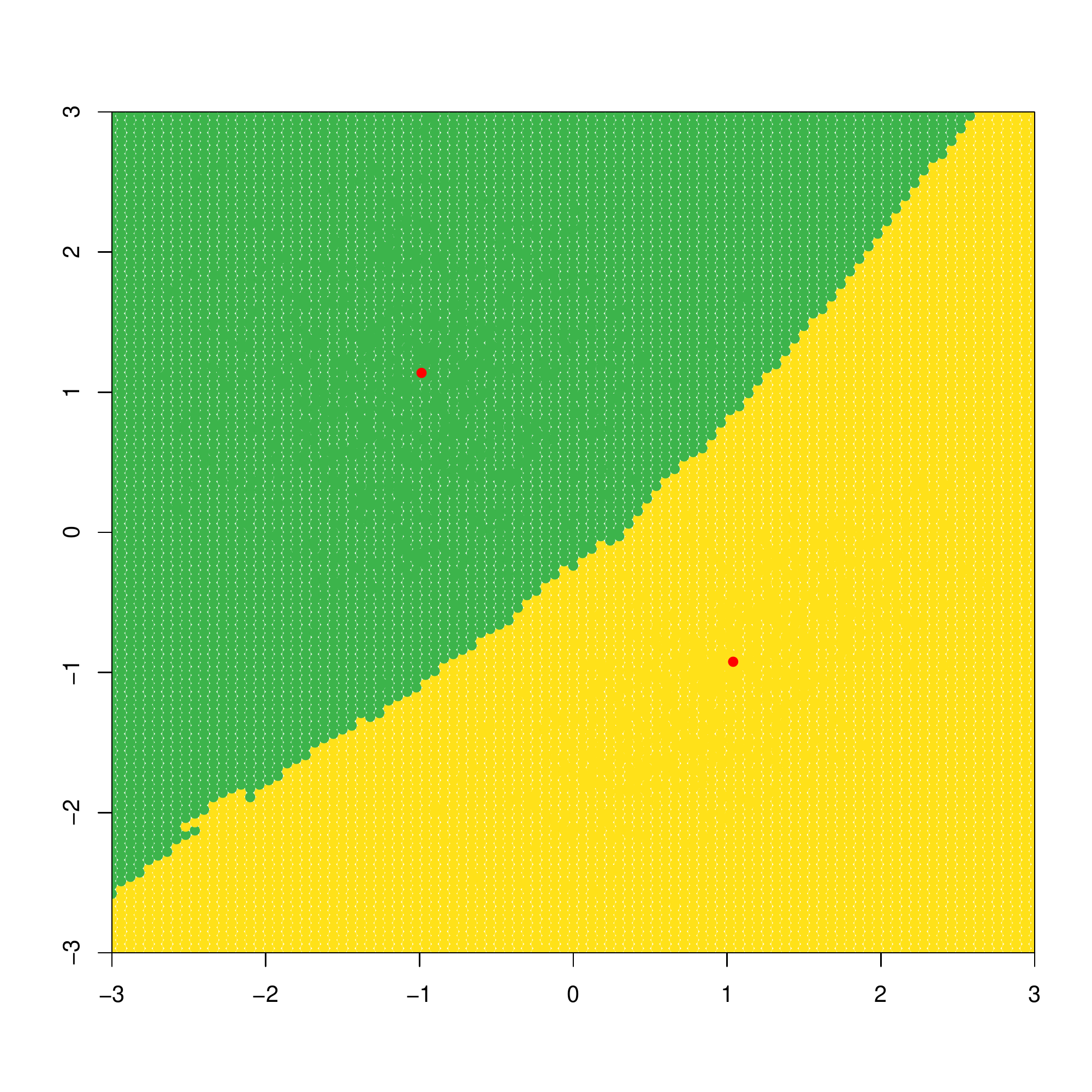}
\includegraphics[width=0.32\linewidth]{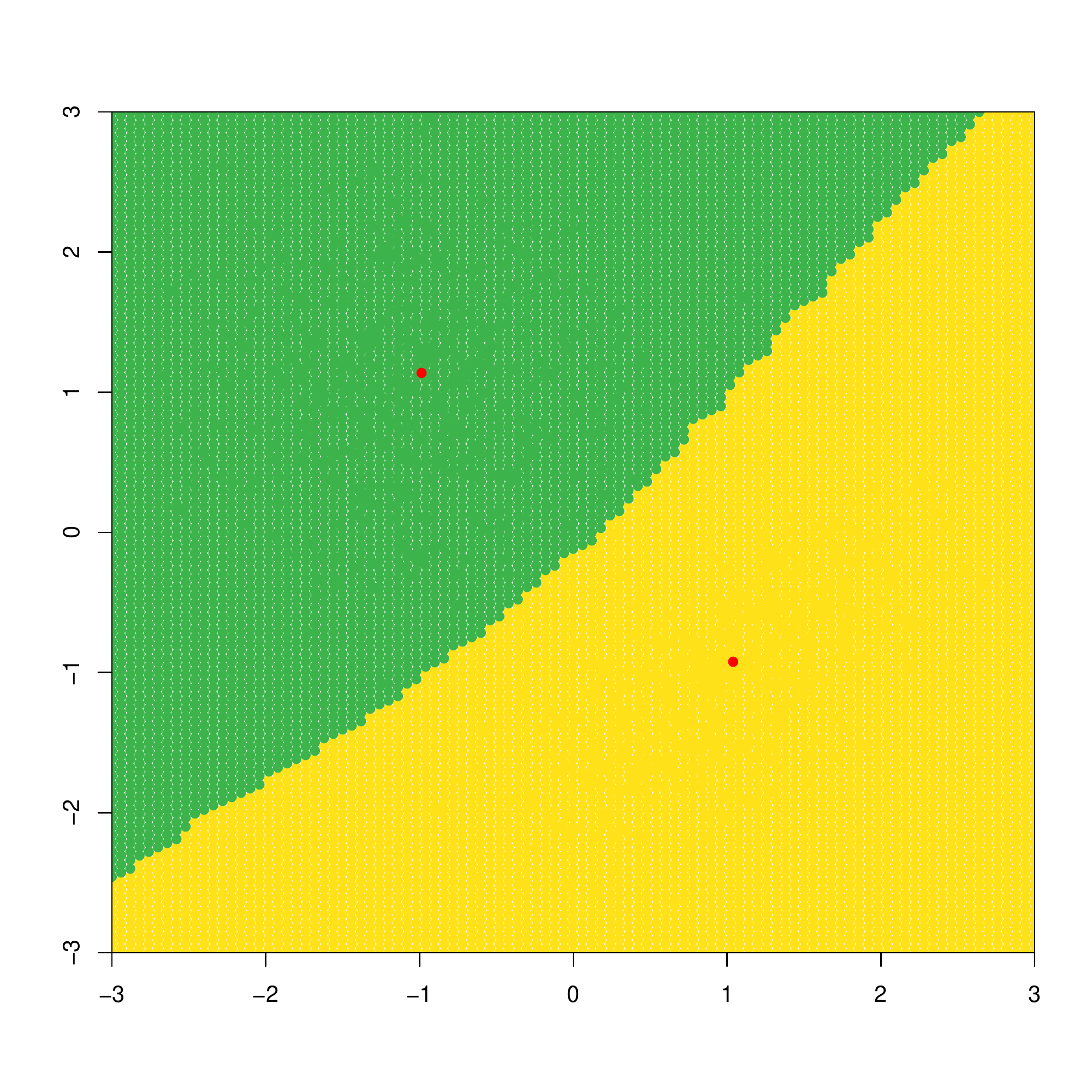}
\includegraphics[width=0.32\linewidth]{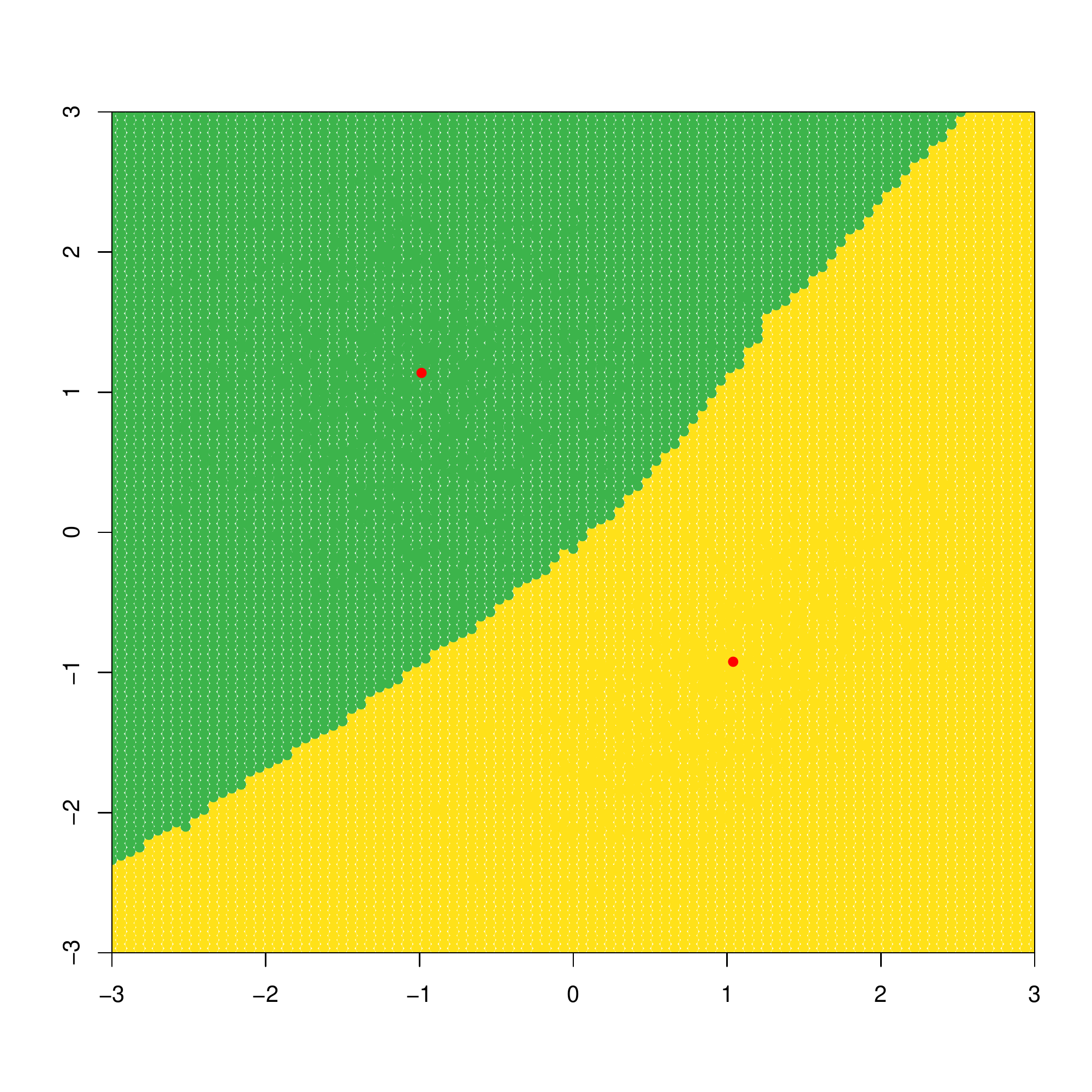}
\includegraphics[width=0.32\linewidth]{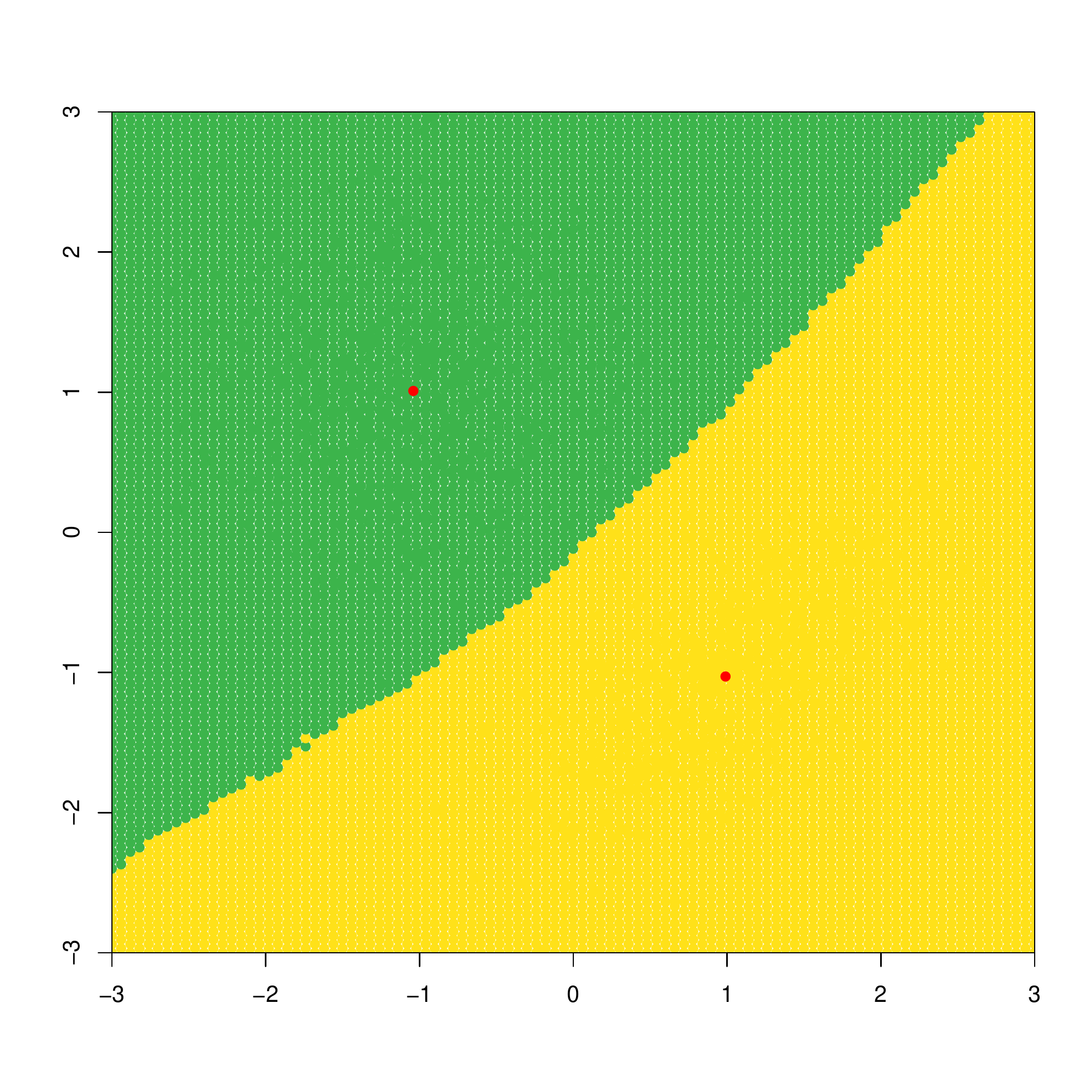}
\includegraphics[width=0.32\linewidth]{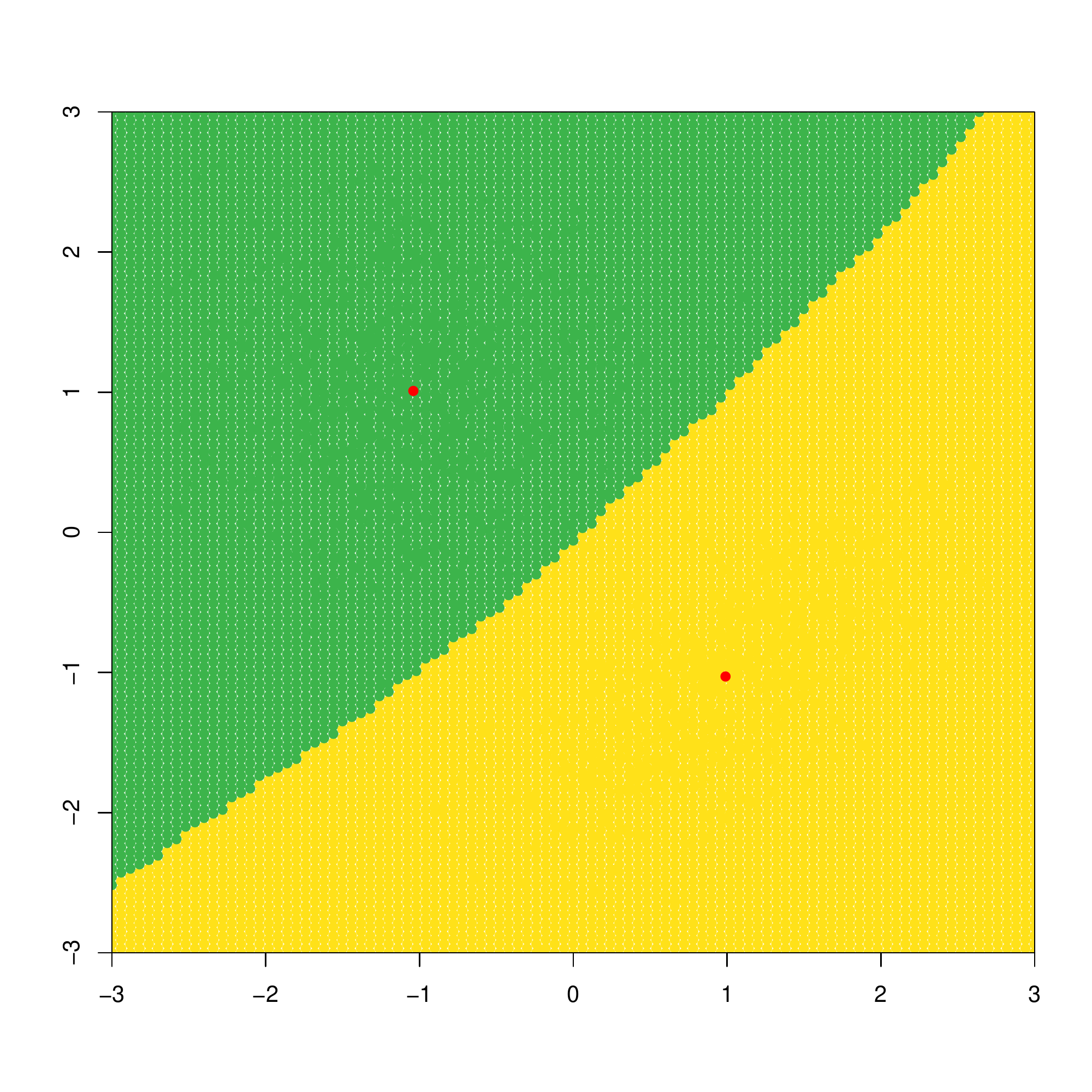}
\includegraphics[width=0.32\linewidth]{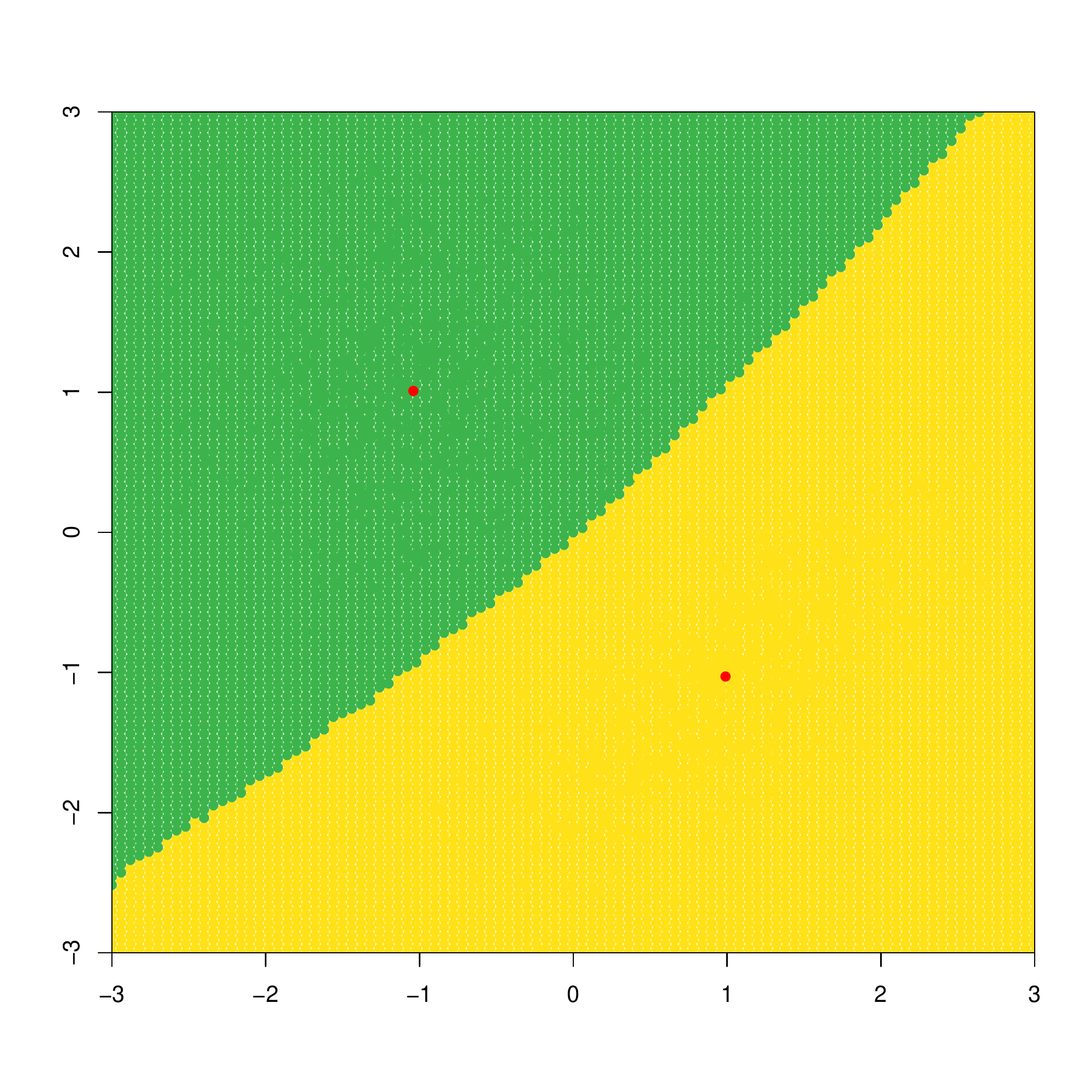}
\includegraphics[width=0.32\linewidth]{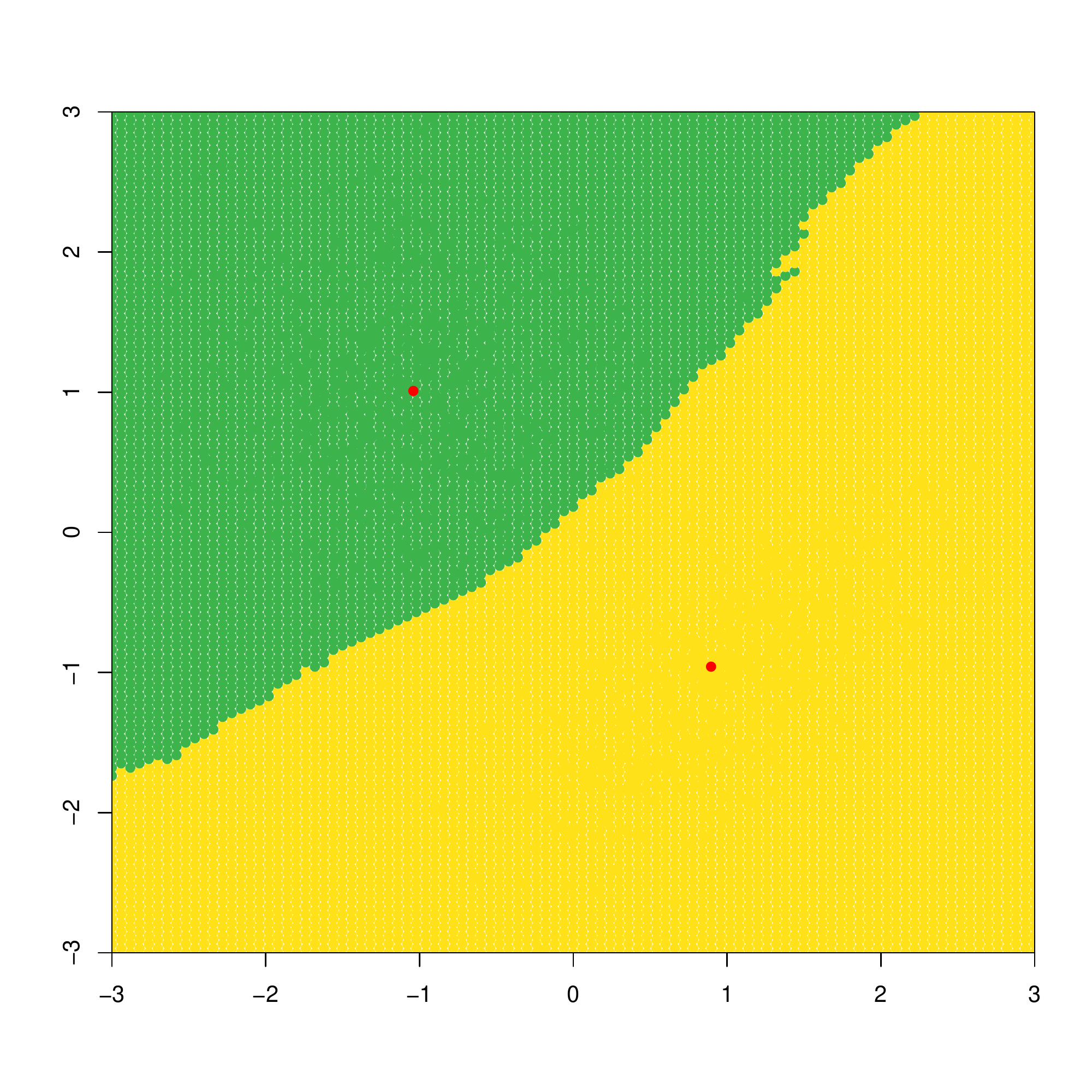}
\includegraphics[width=0.32\linewidth]{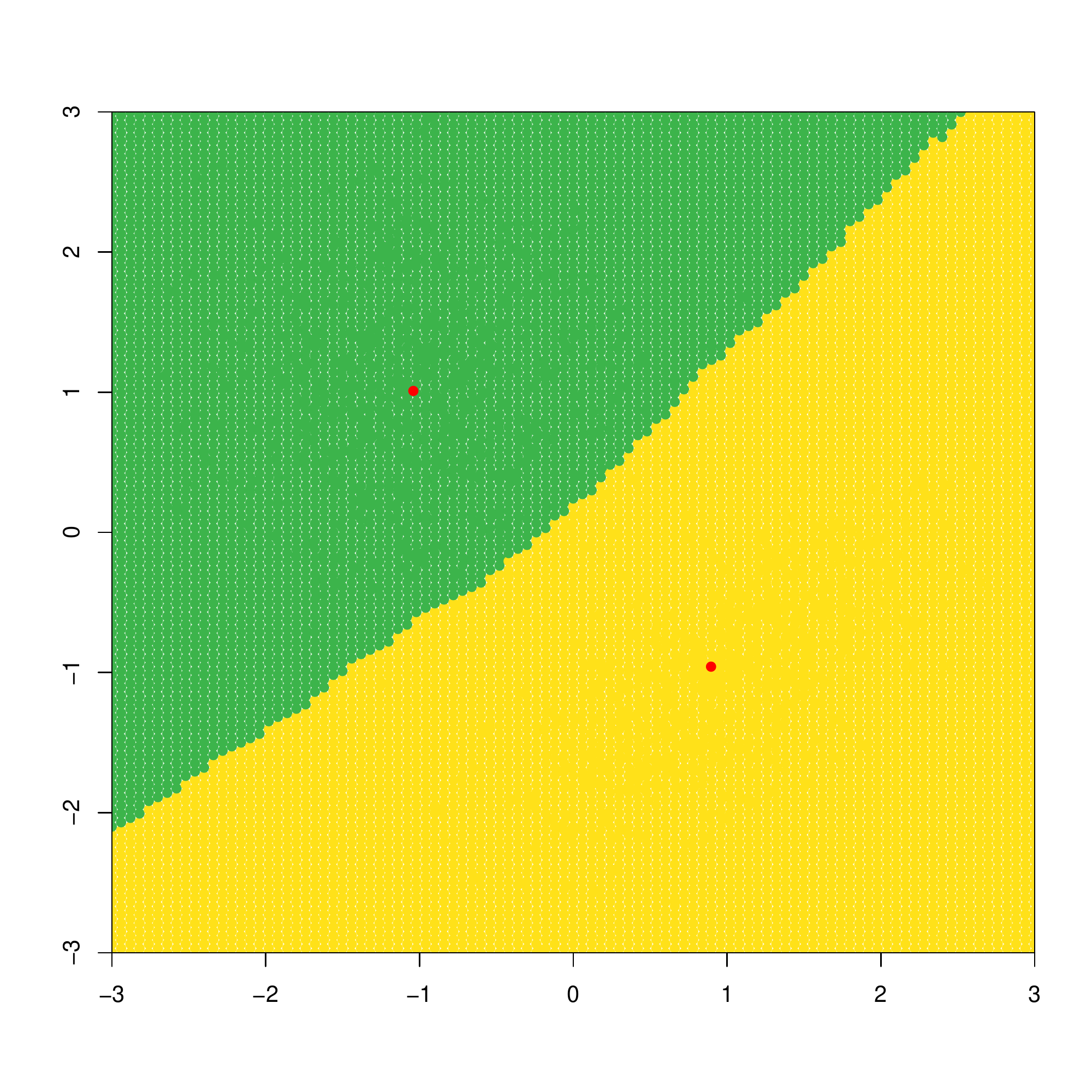}
\includegraphics[width=0.32\linewidth]{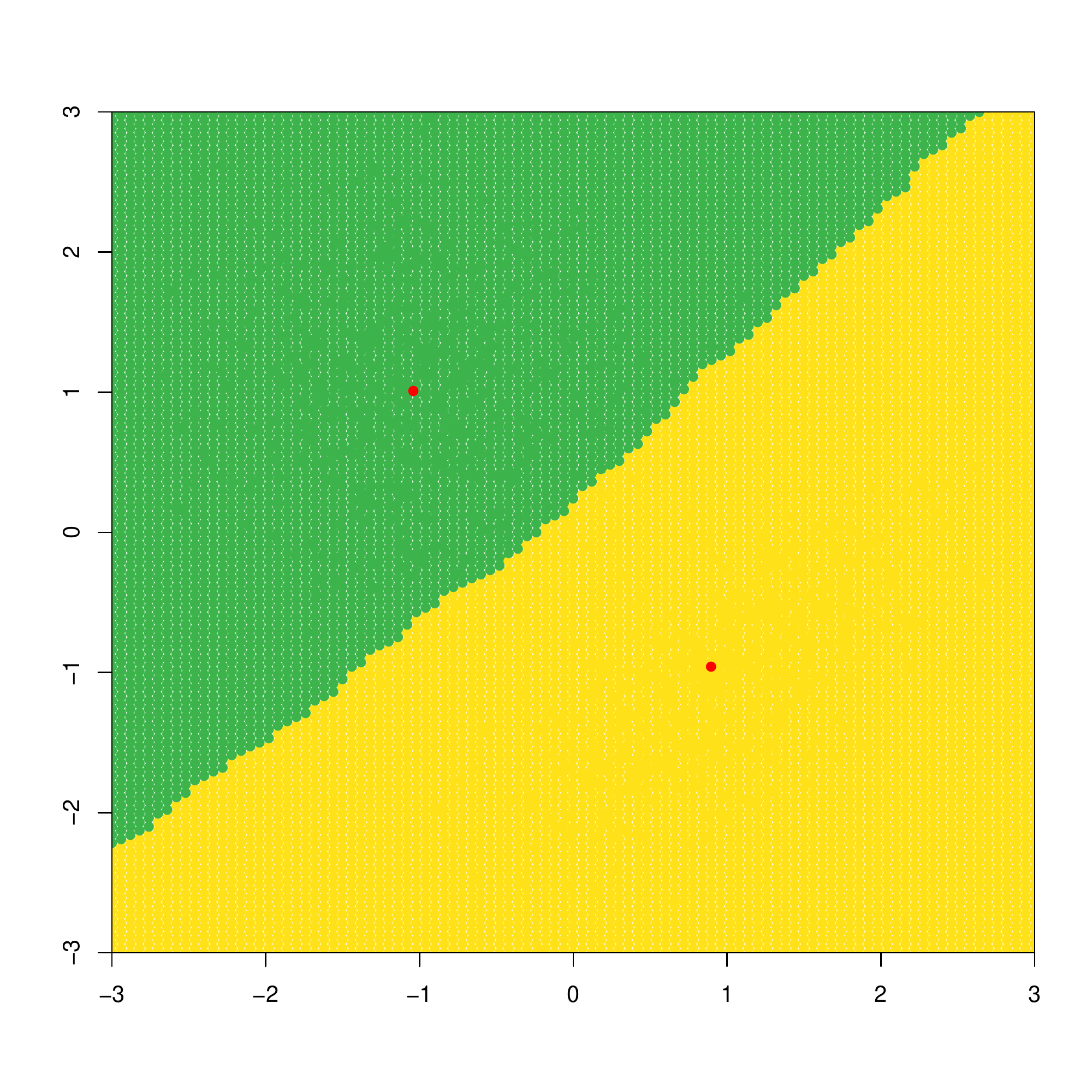}

\caption{Local depth clustering of $n=1000$ samples from the (H) Bimodal IV density. The predicted local maxima are plotted in red. The parameters are $r=0.05$, $s=10,30,50$ in each column (from left to right) and $q=0.05,0.10,0.25,0.50$ in each row (from the top down).}
\label{sm:plot_clusters_bimodal4_prob_all_points}
\end{figure}

\begin{figure}
\centering
\includegraphics[width=0.32\linewidth]{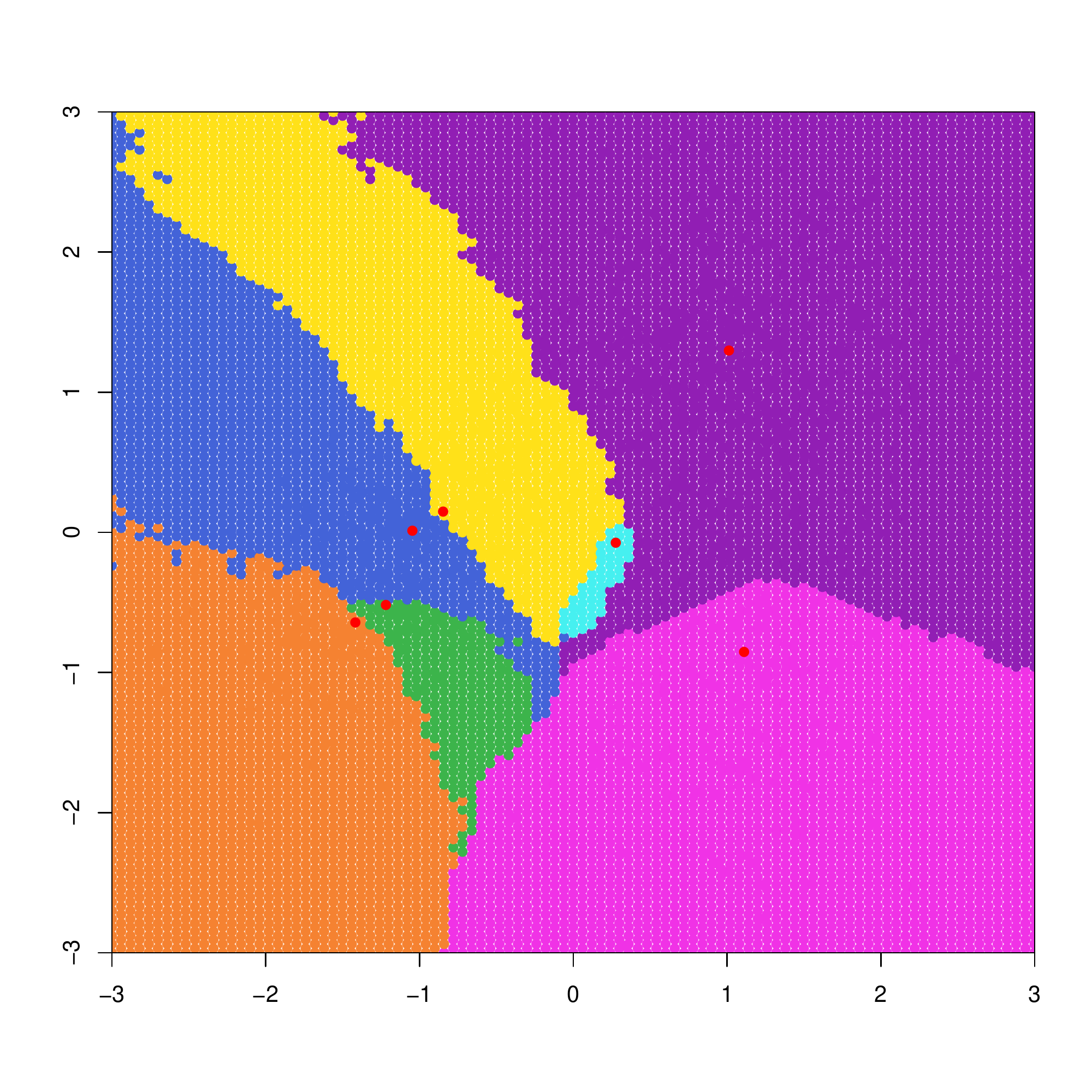}
\includegraphics[width=0.32\linewidth]{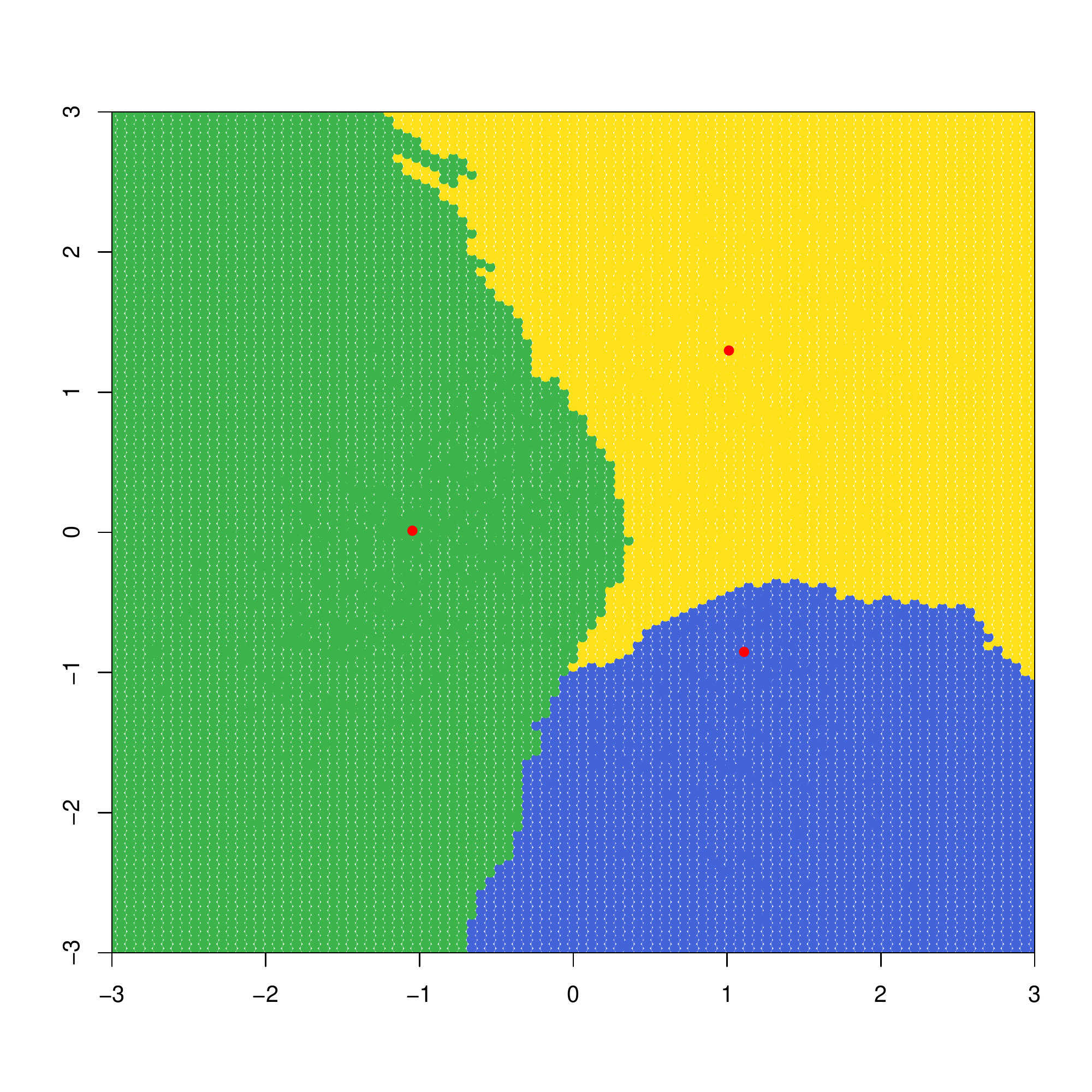}
\includegraphics[width=0.32\linewidth]{{lens-trimodal_3-prob-0.44-0.05-50-0.05-1000-ldc-all}.pdf}
\includegraphics[width=0.32\linewidth]{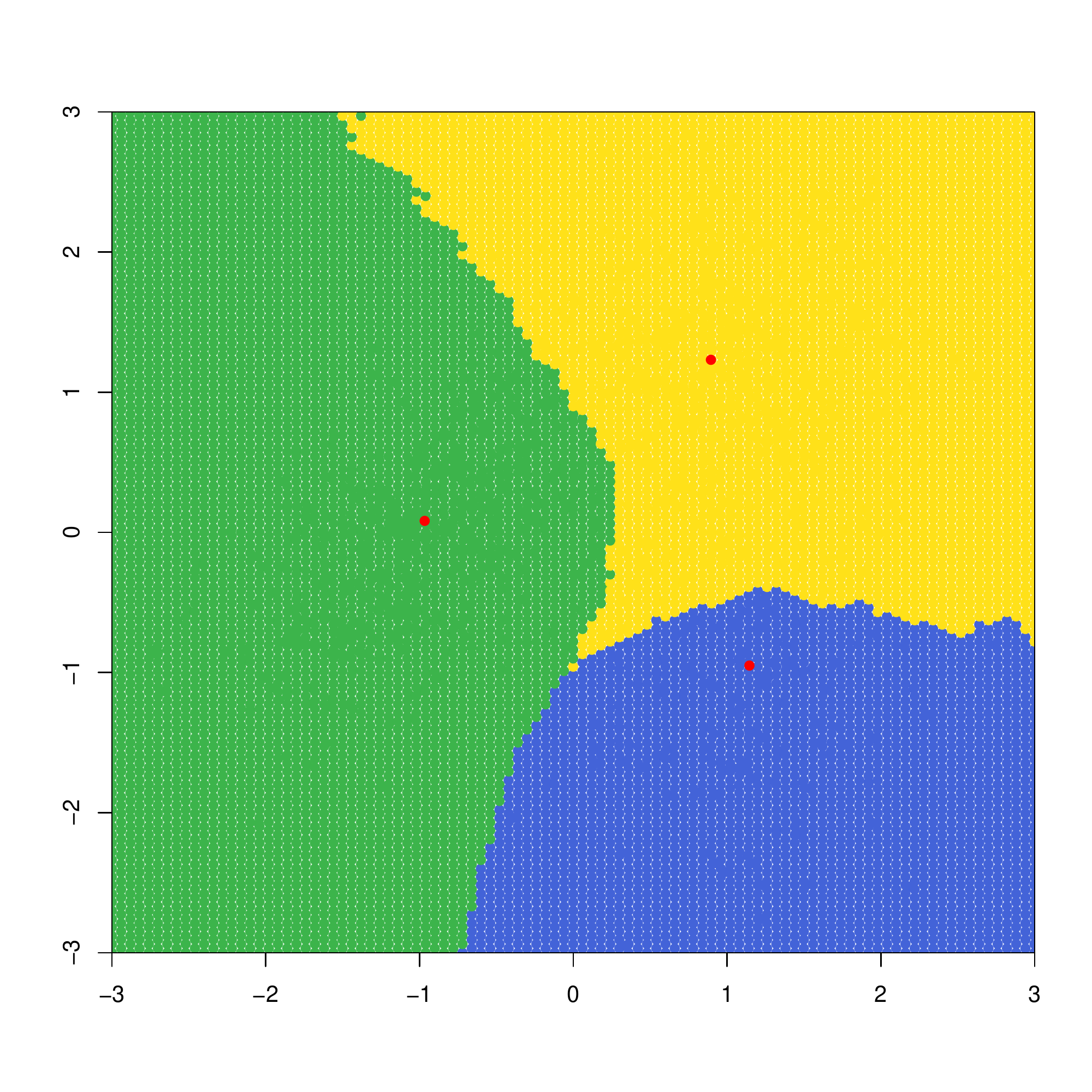}
\includegraphics[width=0.32\linewidth]{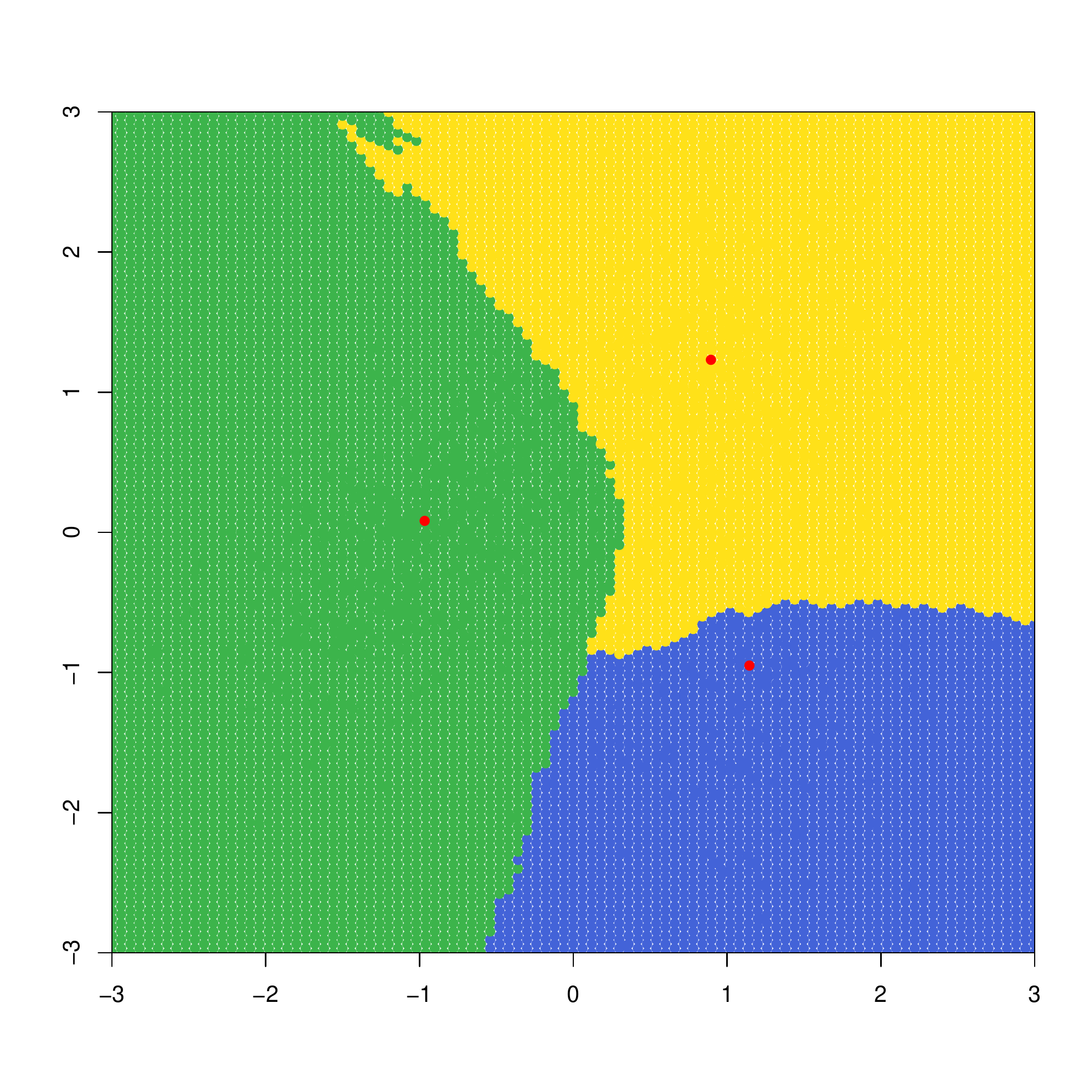}
\includegraphics[width=0.32\linewidth]{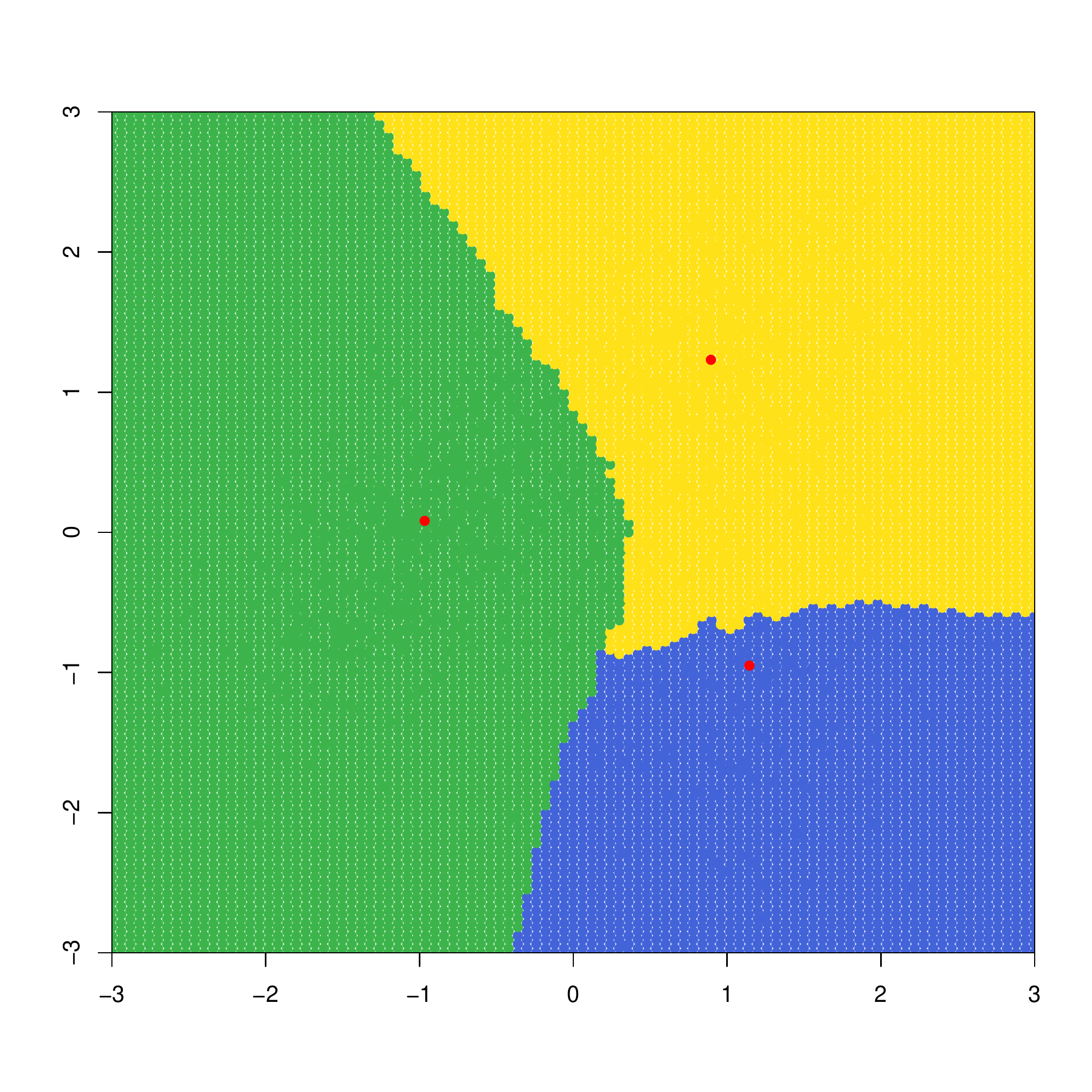}
\includegraphics[width=0.32\linewidth]{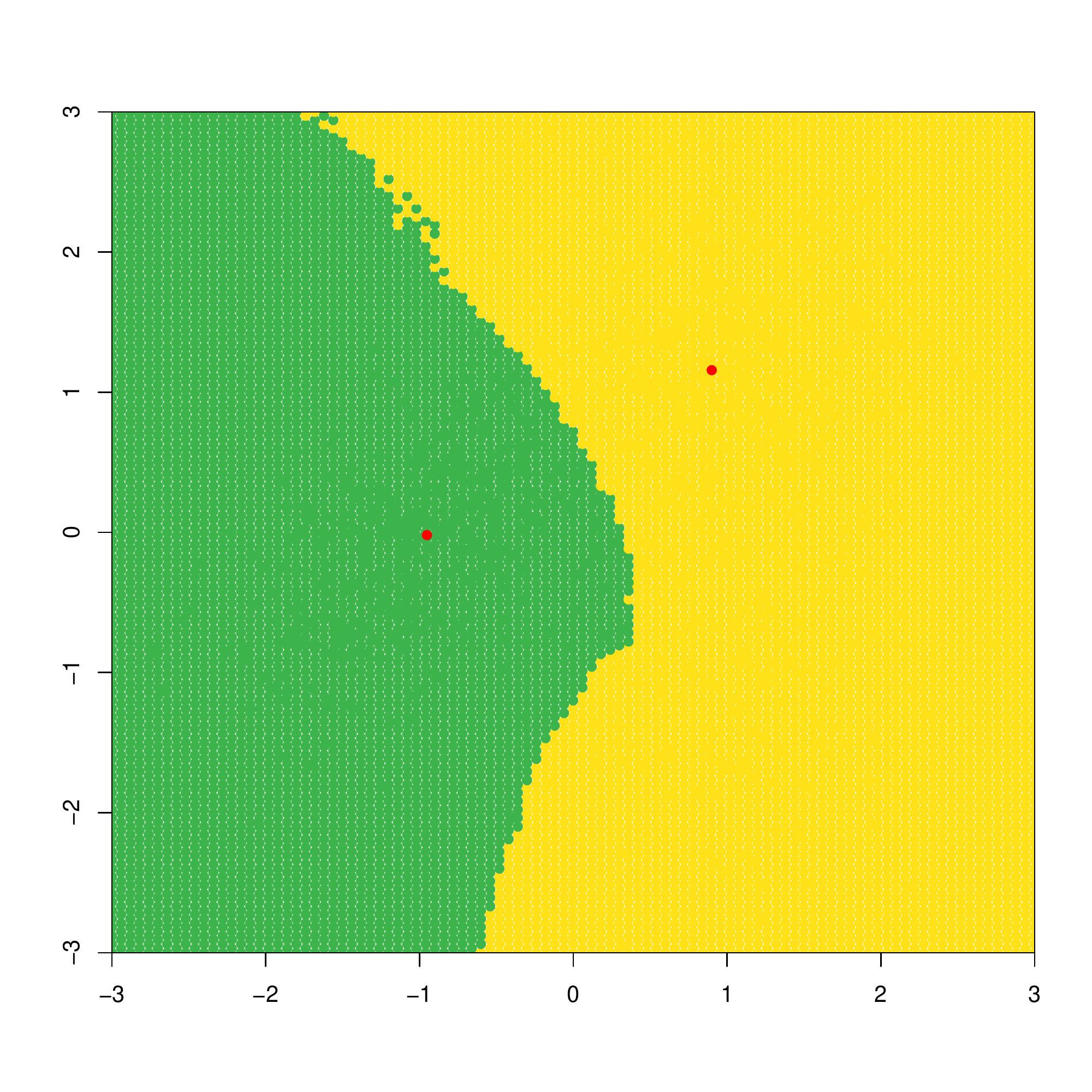}
\includegraphics[width=0.32\linewidth]{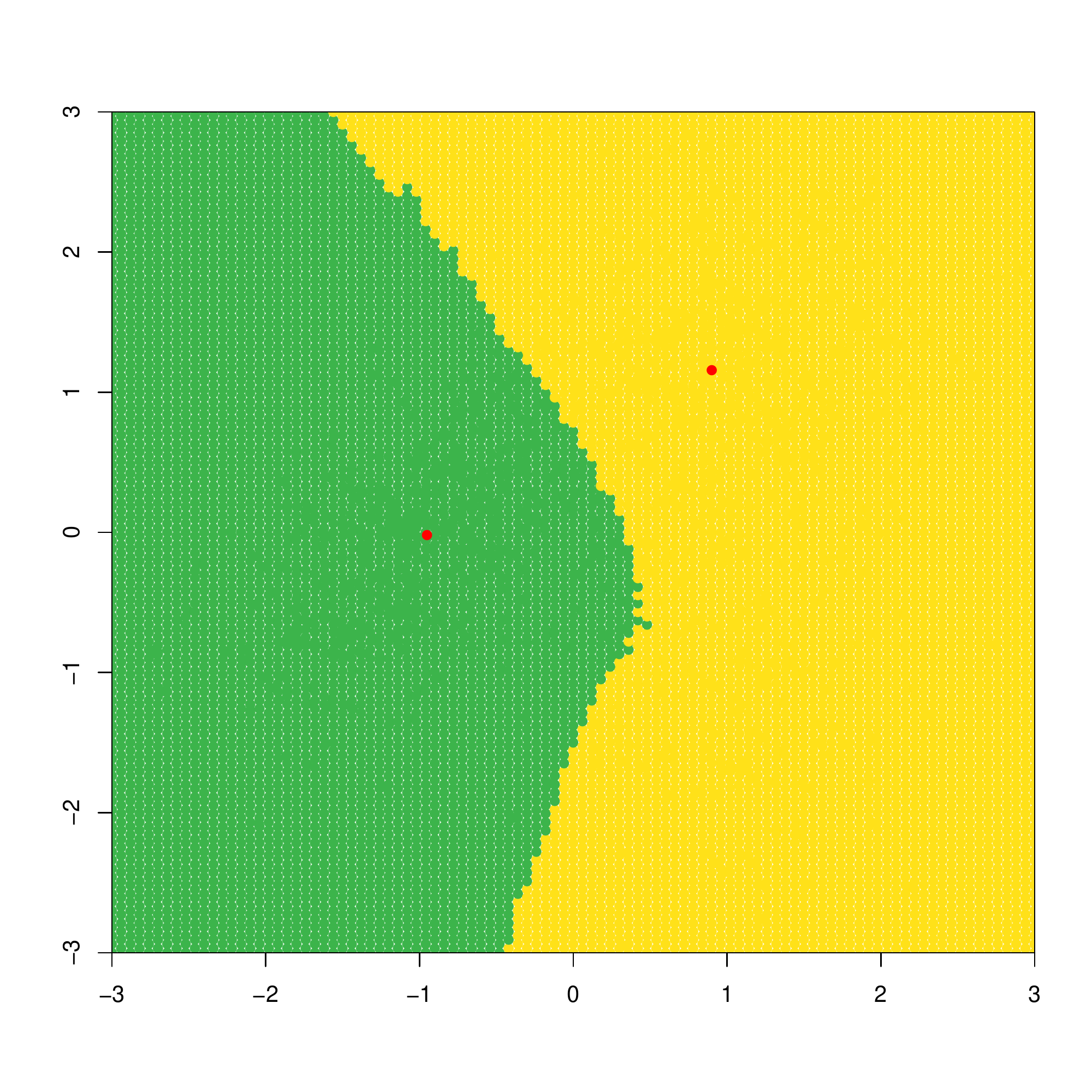}
\includegraphics[width=0.32\linewidth]{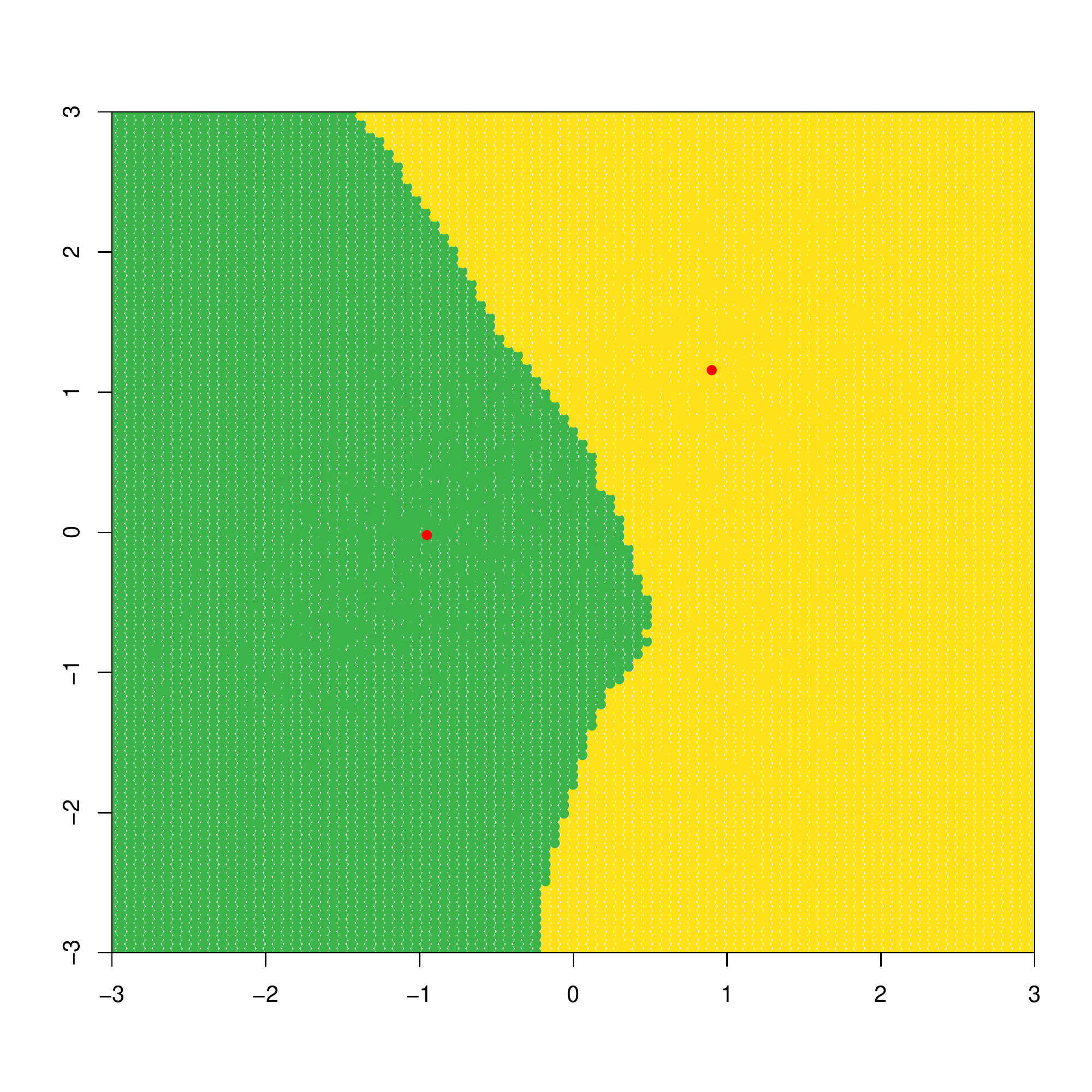}
\includegraphics[width=0.32\linewidth]{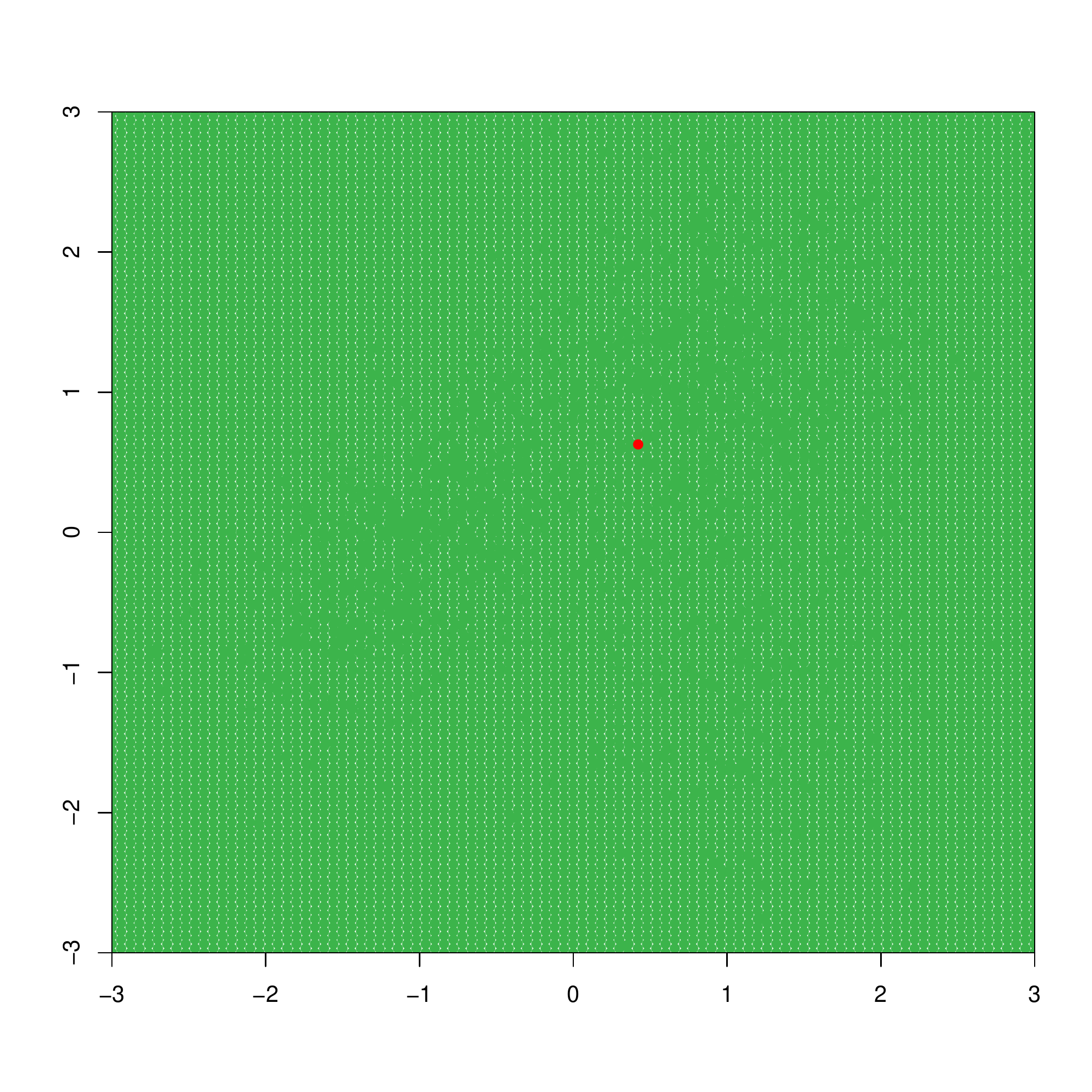}
\includegraphics[width=0.32\linewidth]{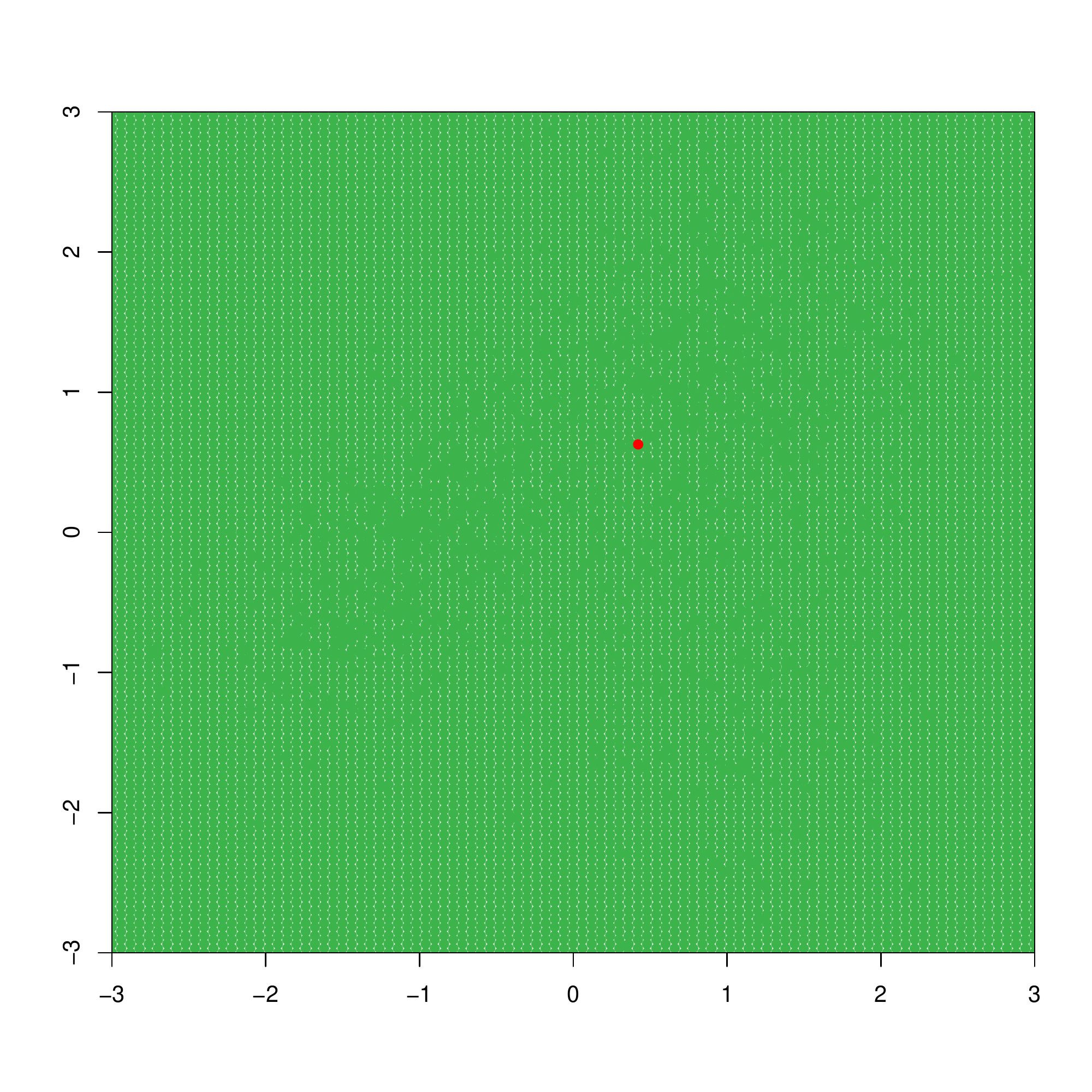}
\includegraphics[width=0.32\linewidth]{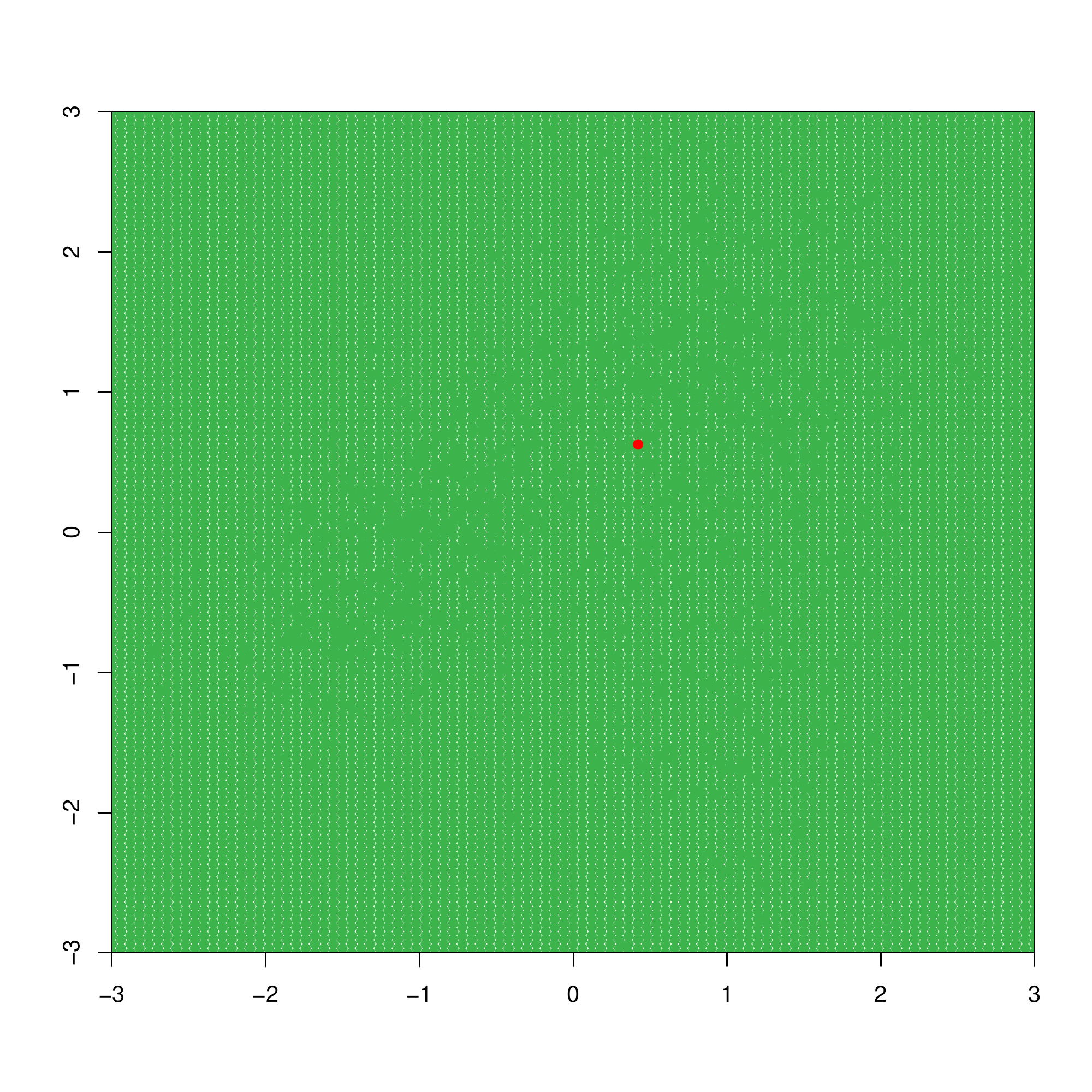}

\caption{Local depth clustering of $n=1000$ samples from the (K) Trimodal III density. The predicted local maxima are plotted in red. The parameters are $r=0.05$, $s=10,30,50$ in each column (from left to right) and $q=0.05,0.10,0.25,0.50$ in each row (from the top down).}
\label{sm:plot_clusters_trimodal3_prob_all_points}
\end{figure}

\begin{figure}
\centering
\includegraphics[width=0.32\linewidth]{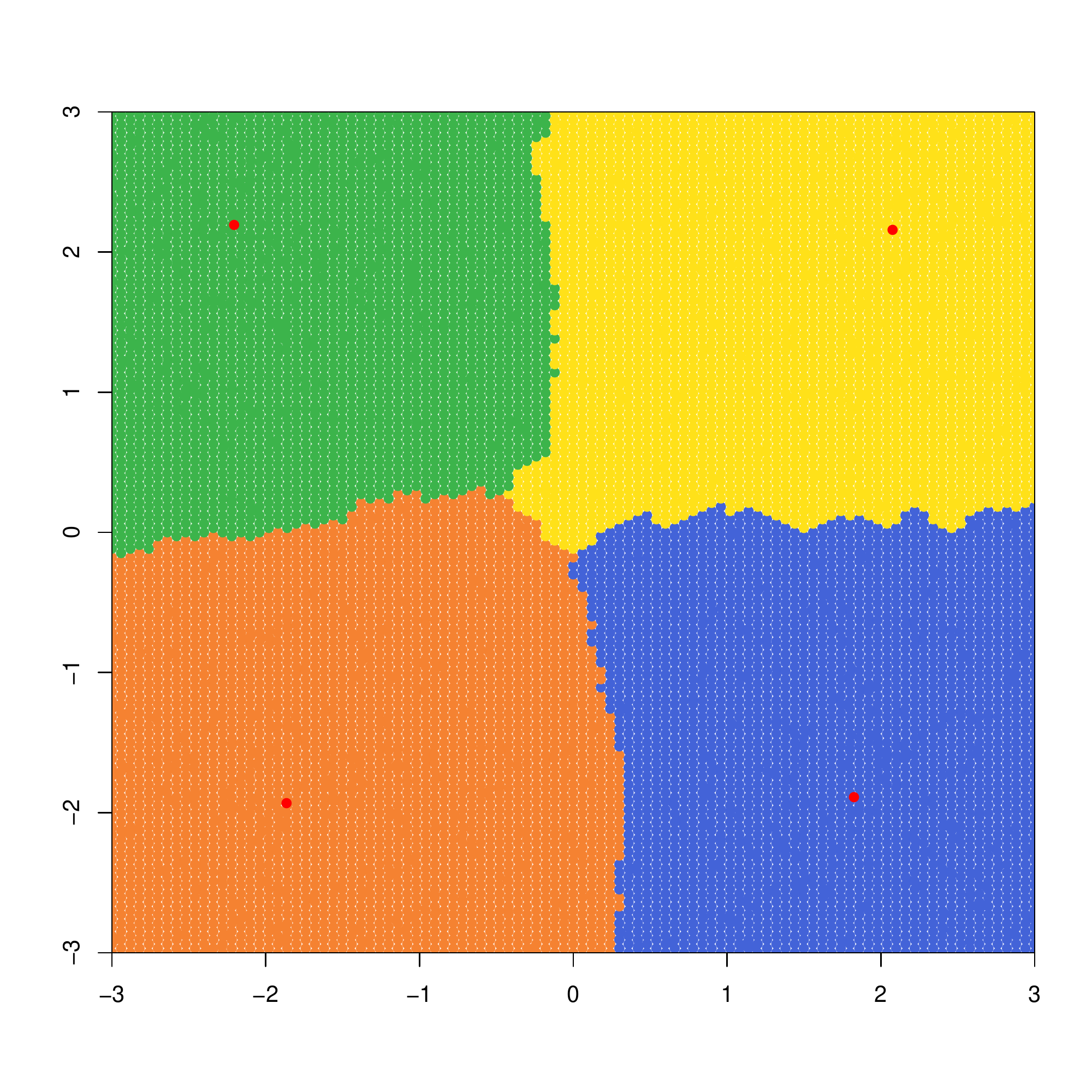}
\includegraphics[width=0.32\linewidth]{{lens-quadrimodal_location-prob-0.92-0.05-30-0.05-1000-ldc-all}.pdf}
\includegraphics[width=0.32\linewidth]{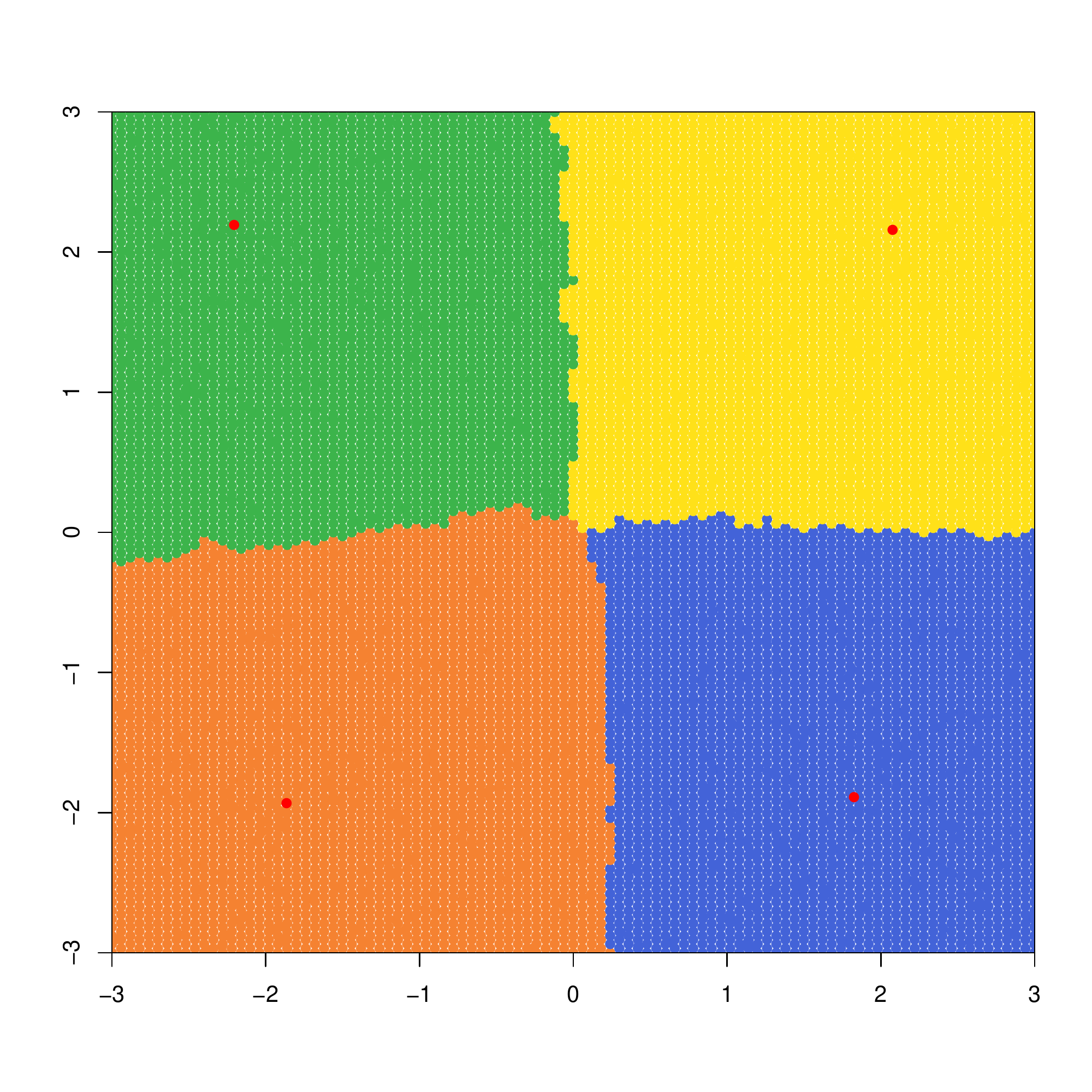}
\includegraphics[width=0.32\linewidth]{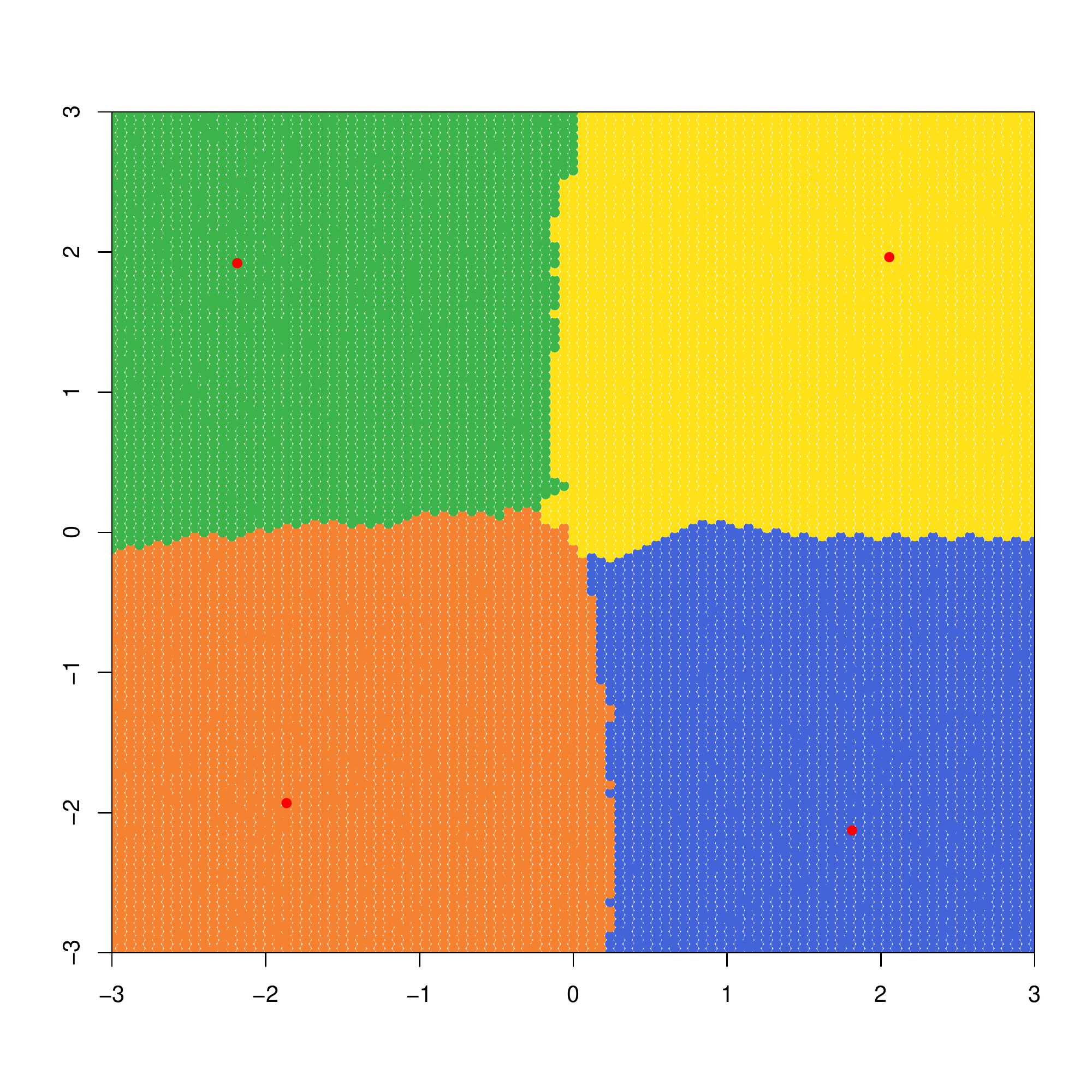}
\includegraphics[width=0.32\linewidth]{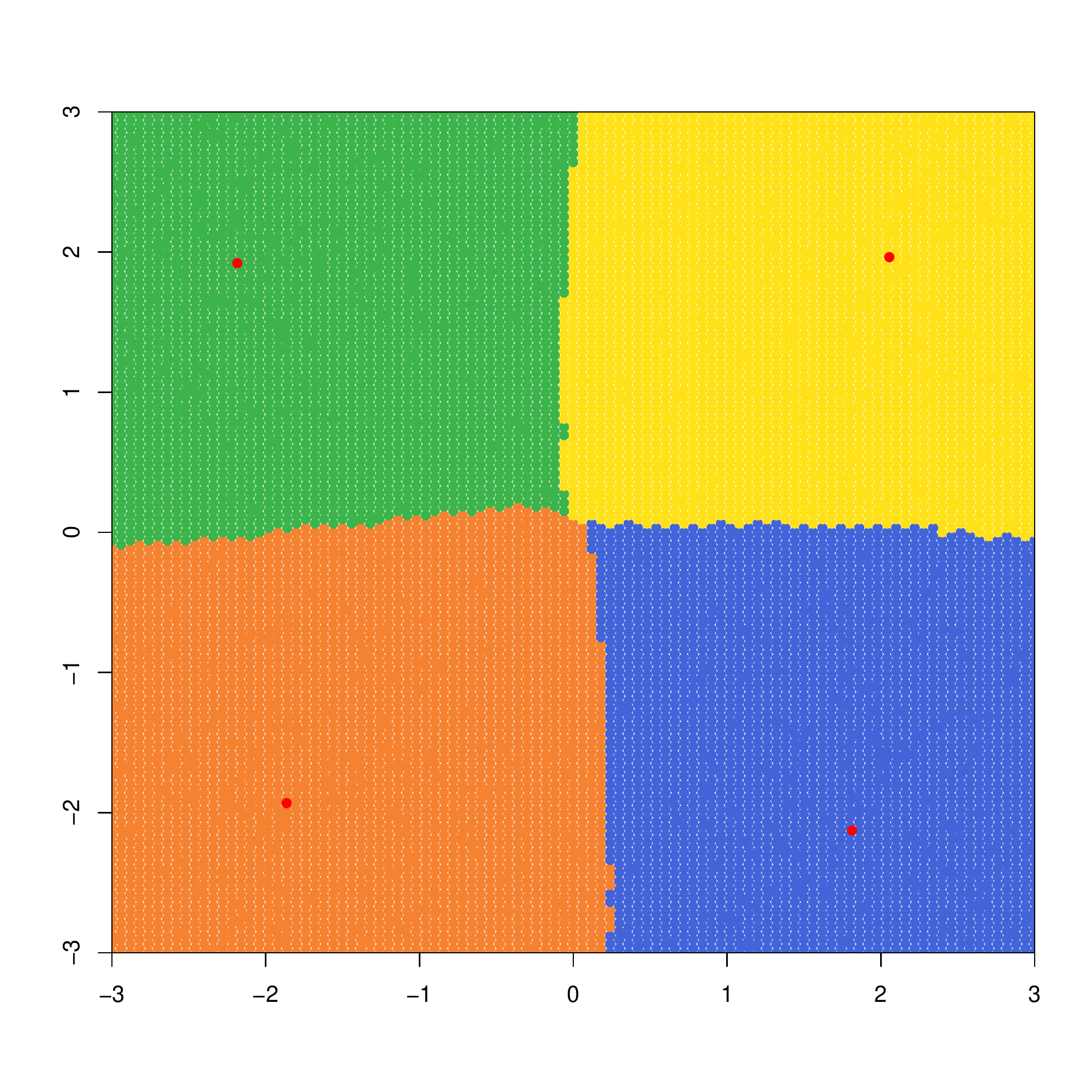}
\includegraphics[width=0.32\linewidth]{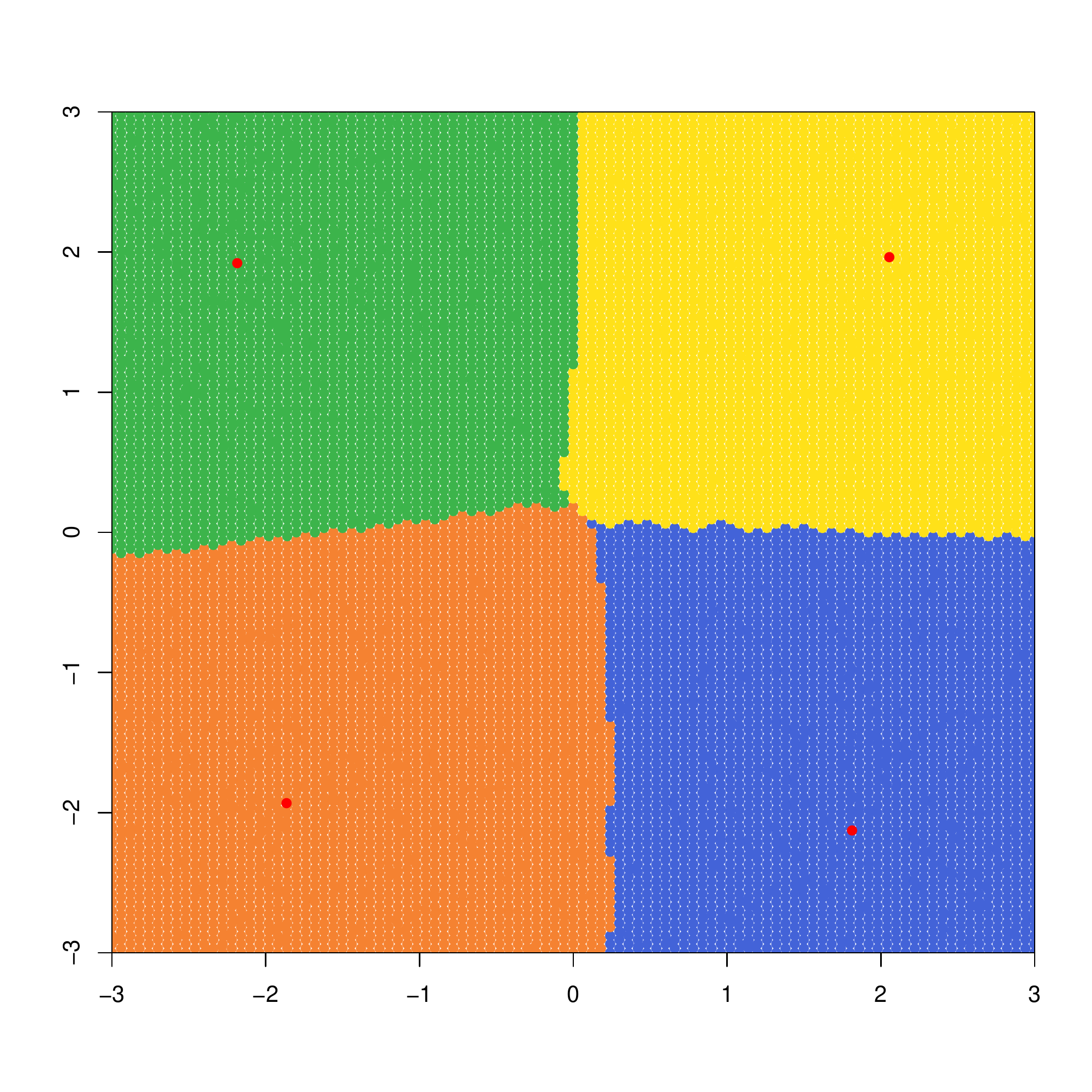}
\includegraphics[width=0.32\linewidth]{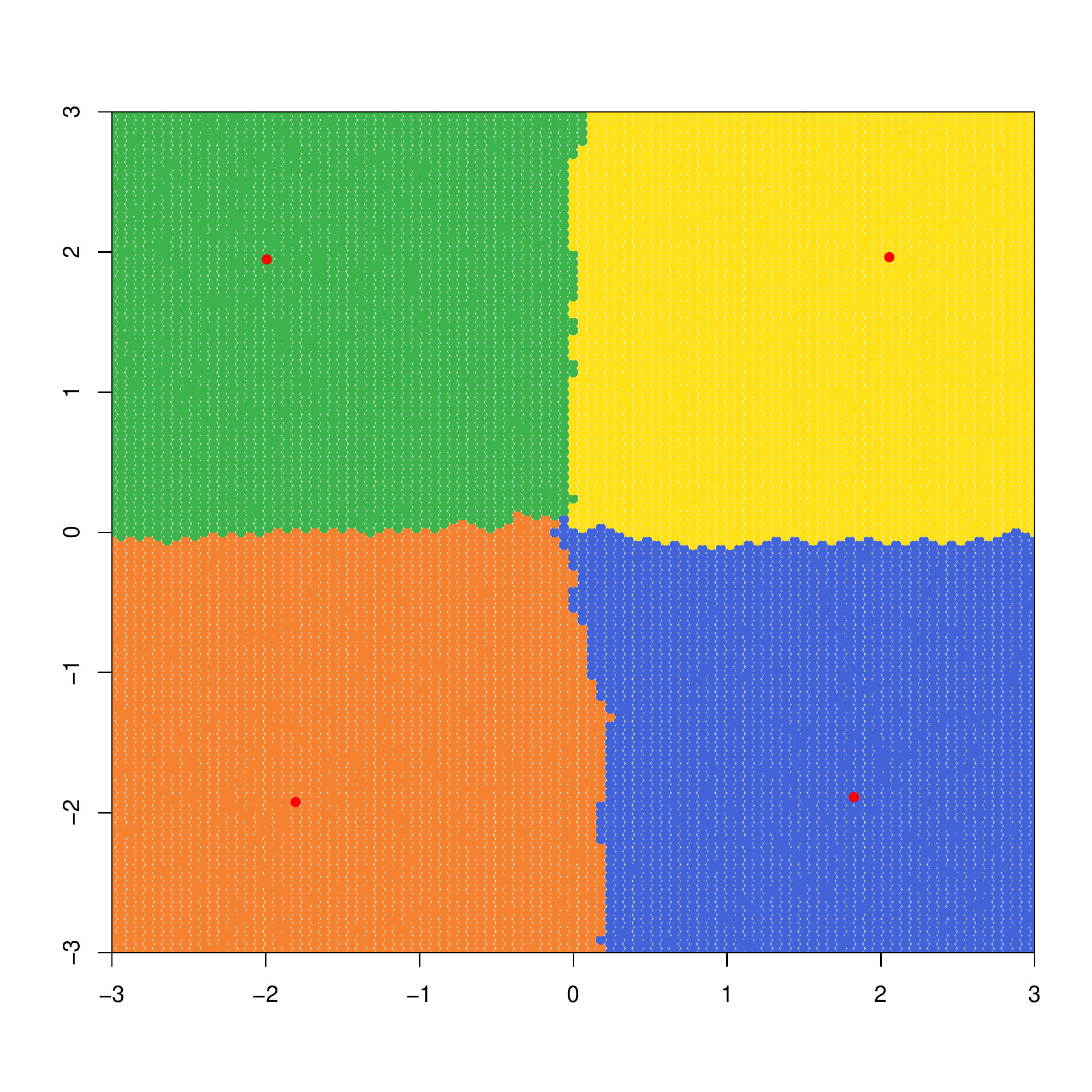}
\includegraphics[width=0.32\linewidth]{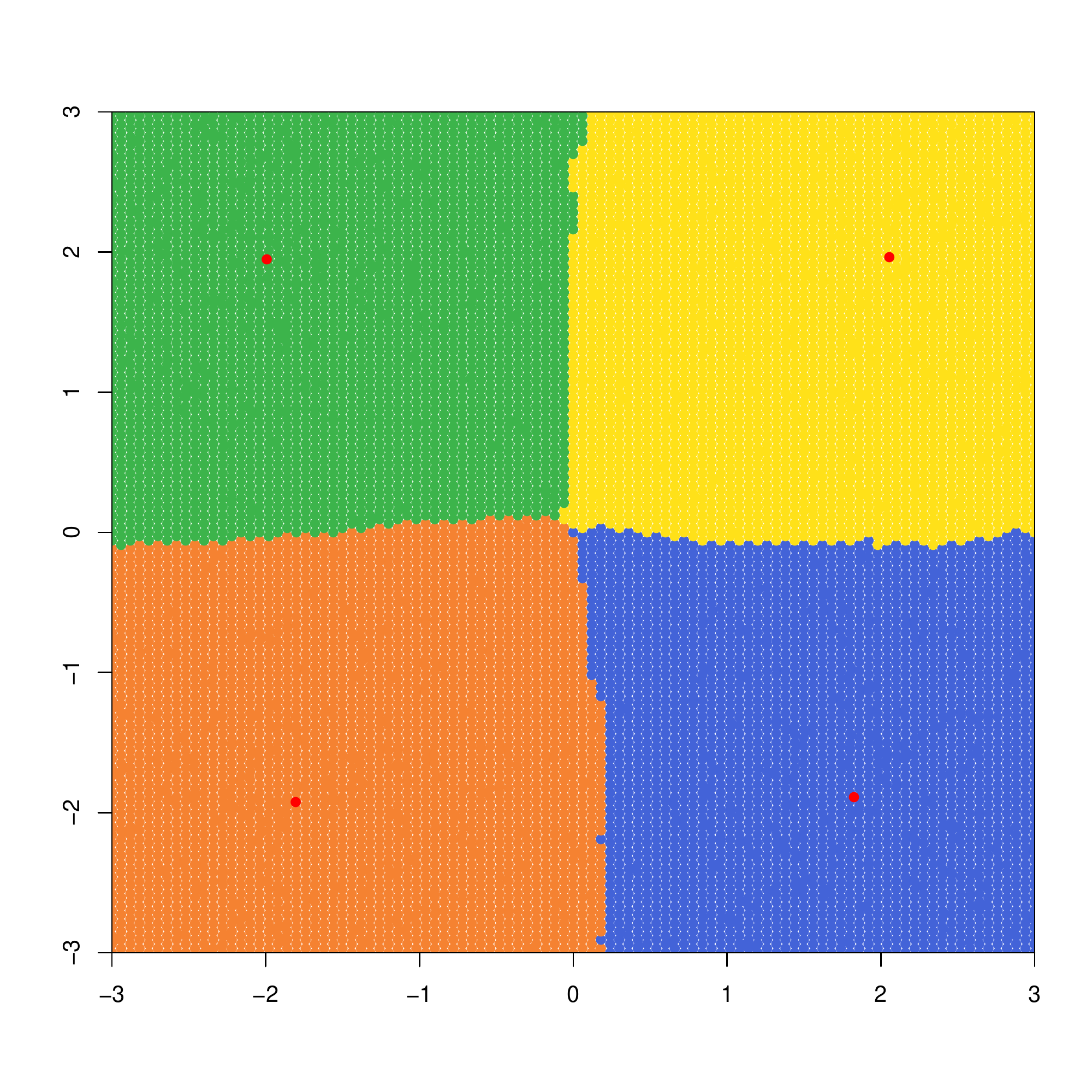}
\includegraphics[width=0.32\linewidth]{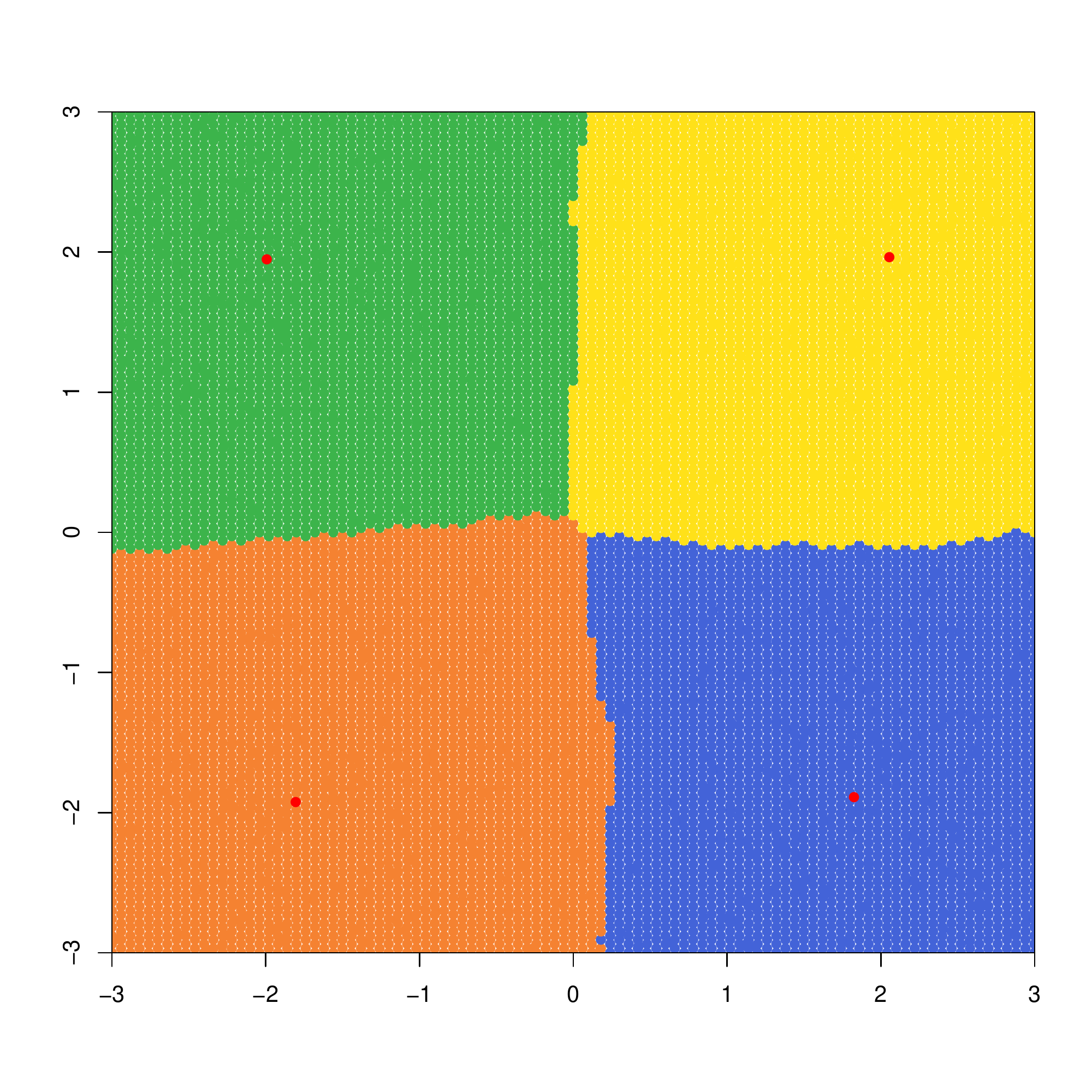}
\includegraphics[width=0.32\linewidth]{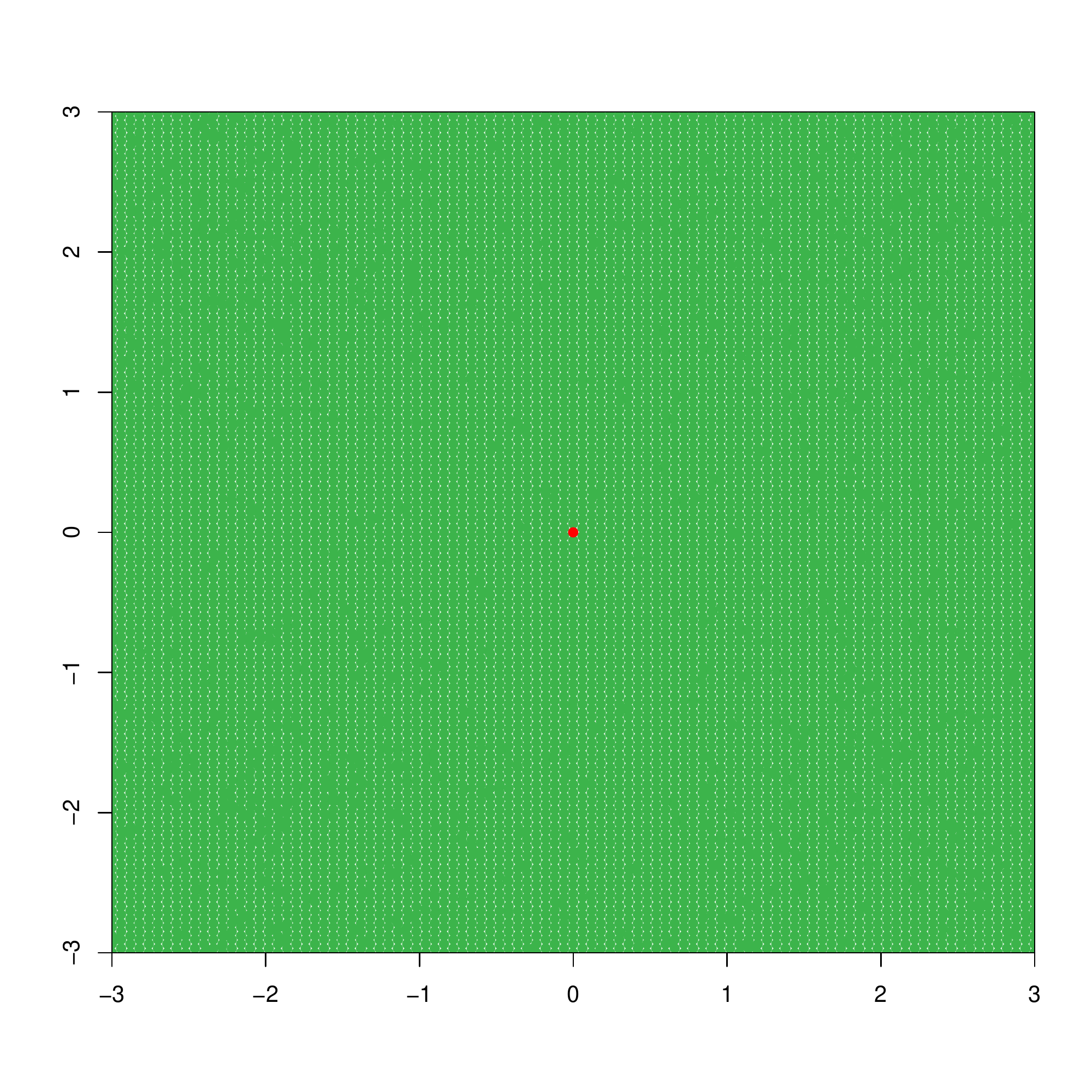}
\includegraphics[width=0.32\linewidth]{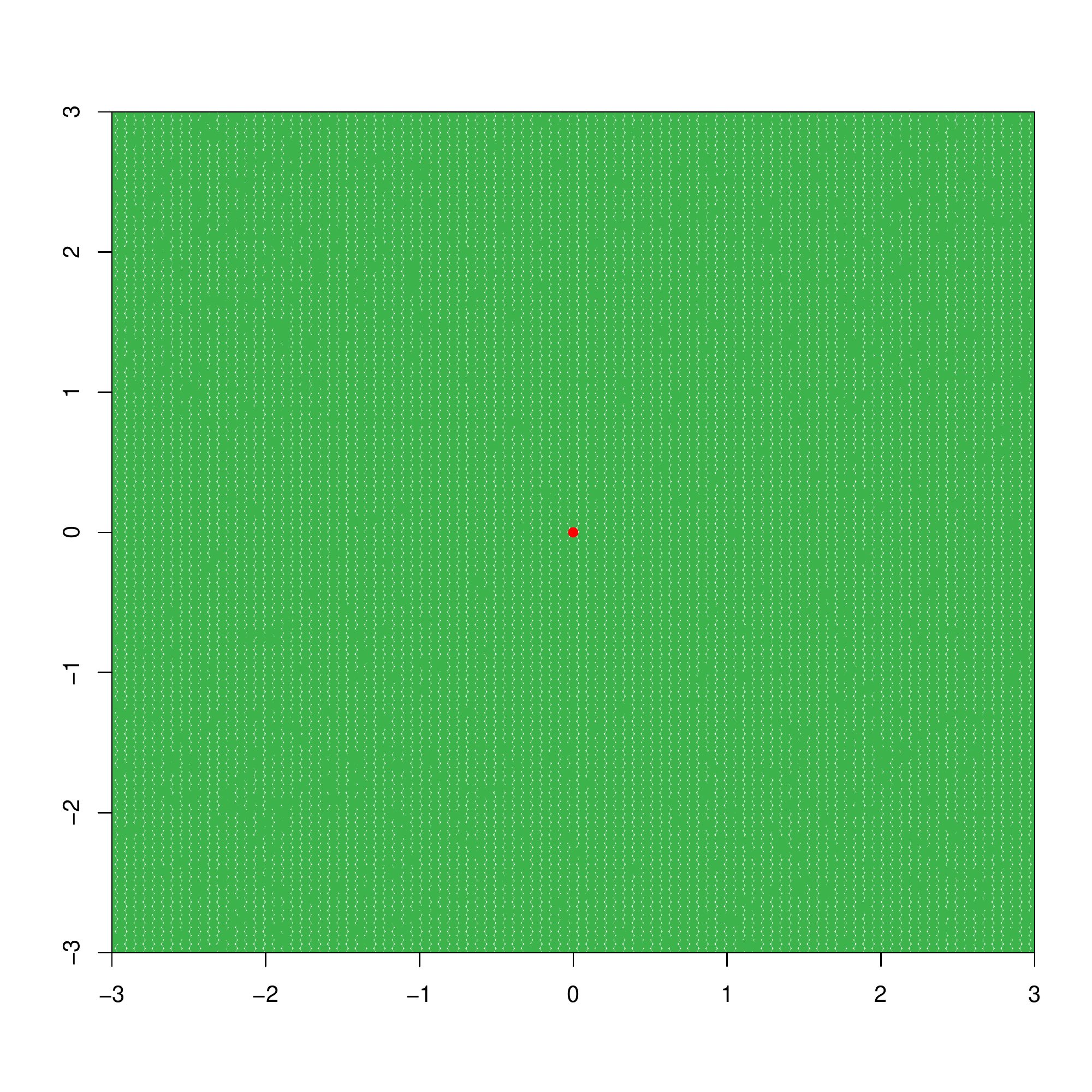}
\includegraphics[width=0.32\linewidth]{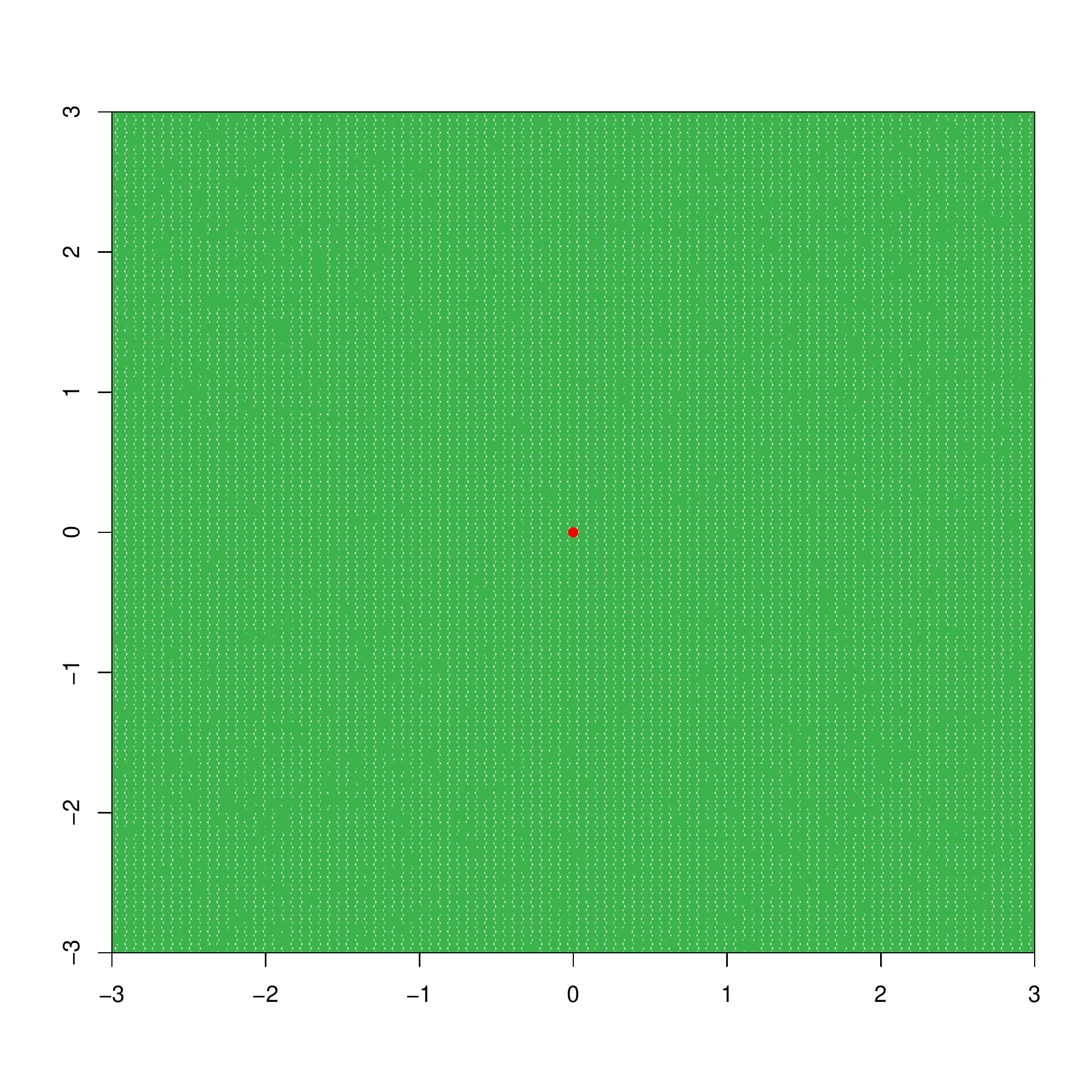}

\caption{Local depth clustering of $n=1000$ samples from the Quadrimodal density. The predicted local maxima are plotted in red. The parameters are $r=0.05$, $s=10,30,50$ in each column (from left to right) and $q=0.05,0.10,0.25,0.50$ in each row (from the top down).}
\label{sm:plot_clusters_quadrimodal_location_prob_all_points}
\end{figure}

\begin{figure}
\centering
\includegraphics[width=0.32\linewidth]{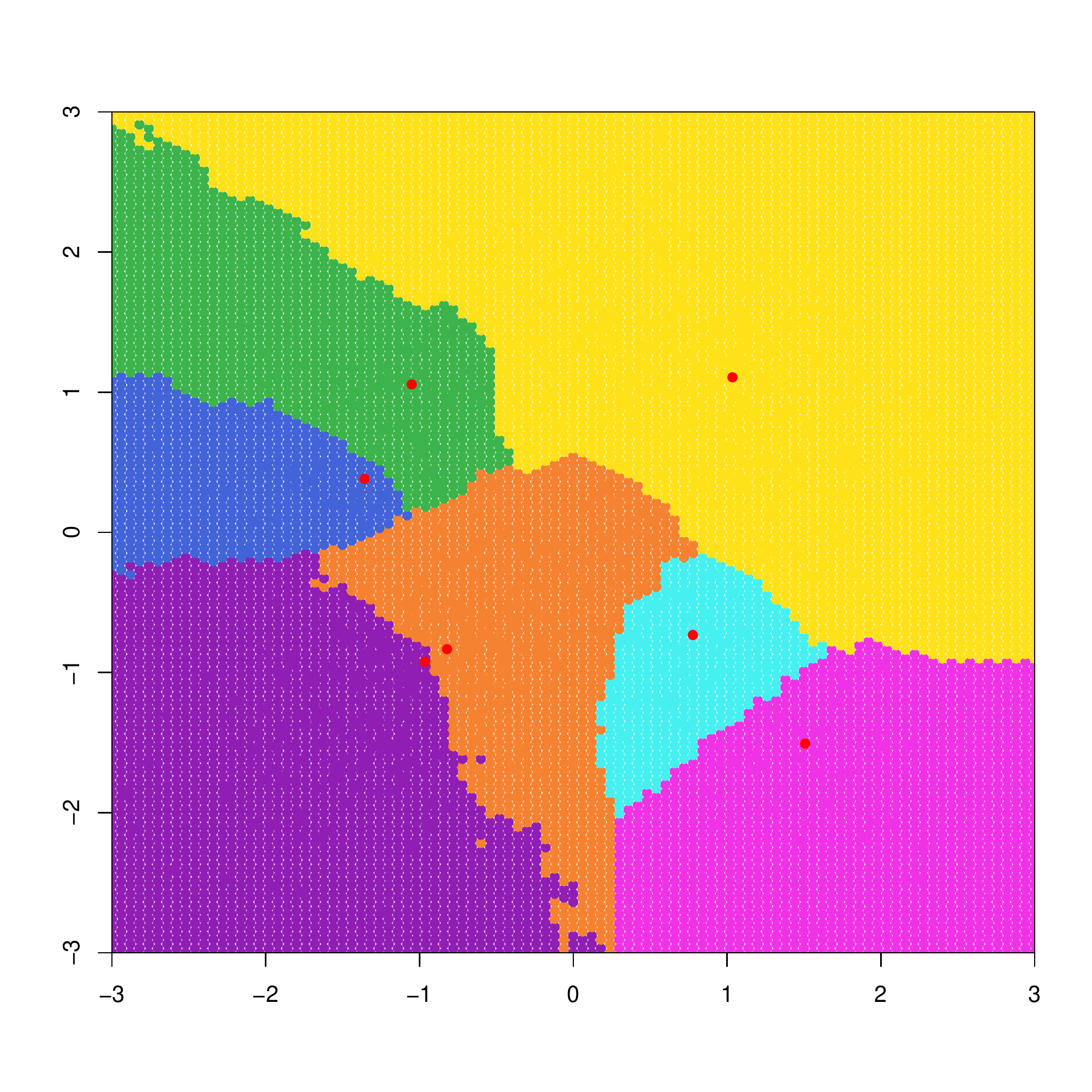}
\includegraphics[width=0.32\linewidth]{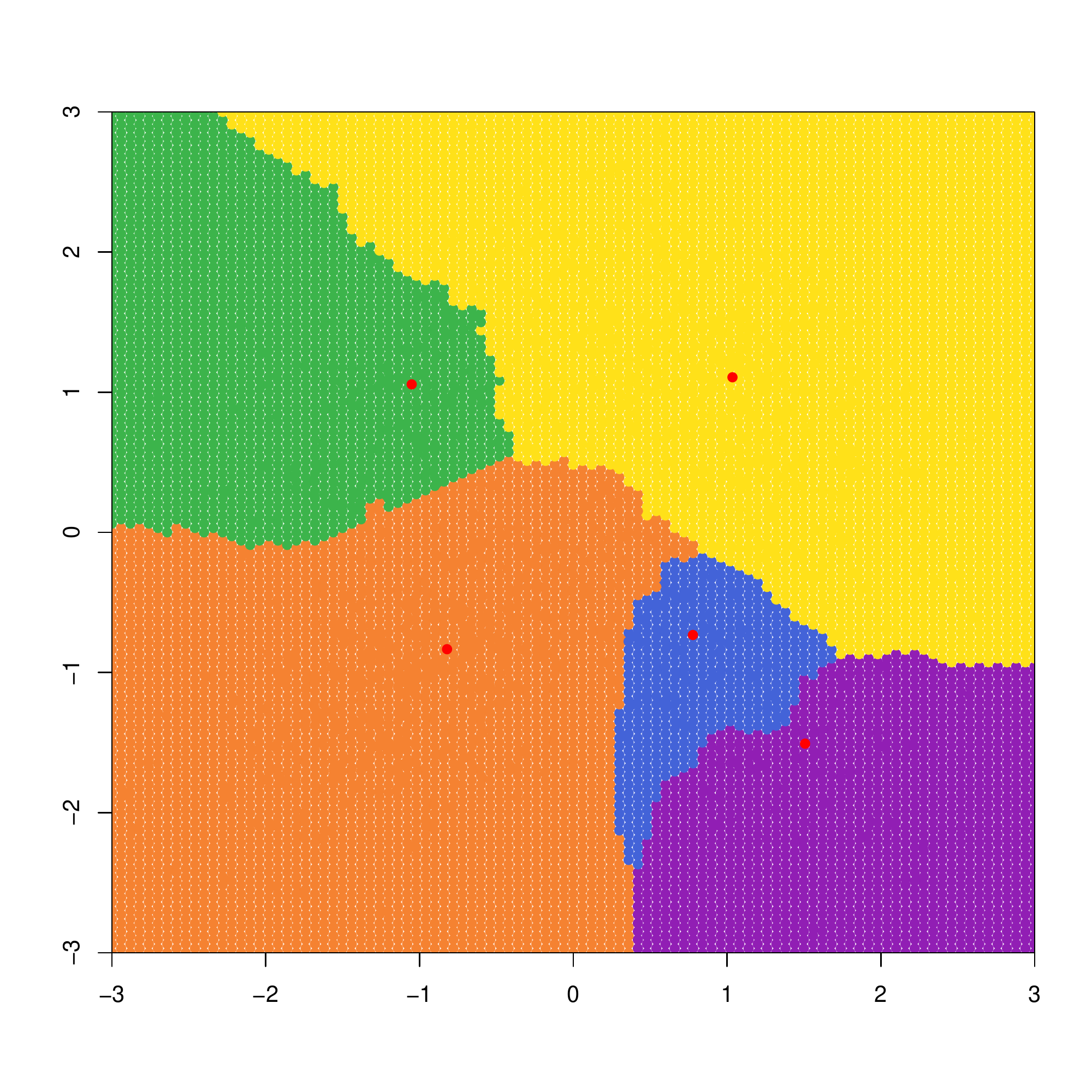}
\includegraphics[width=0.32\linewidth]{{lens-quadrimodal-prob-0.48-0.05-50-0.05-1000-ldc-all}.pdf}
\includegraphics[width=0.32\linewidth]{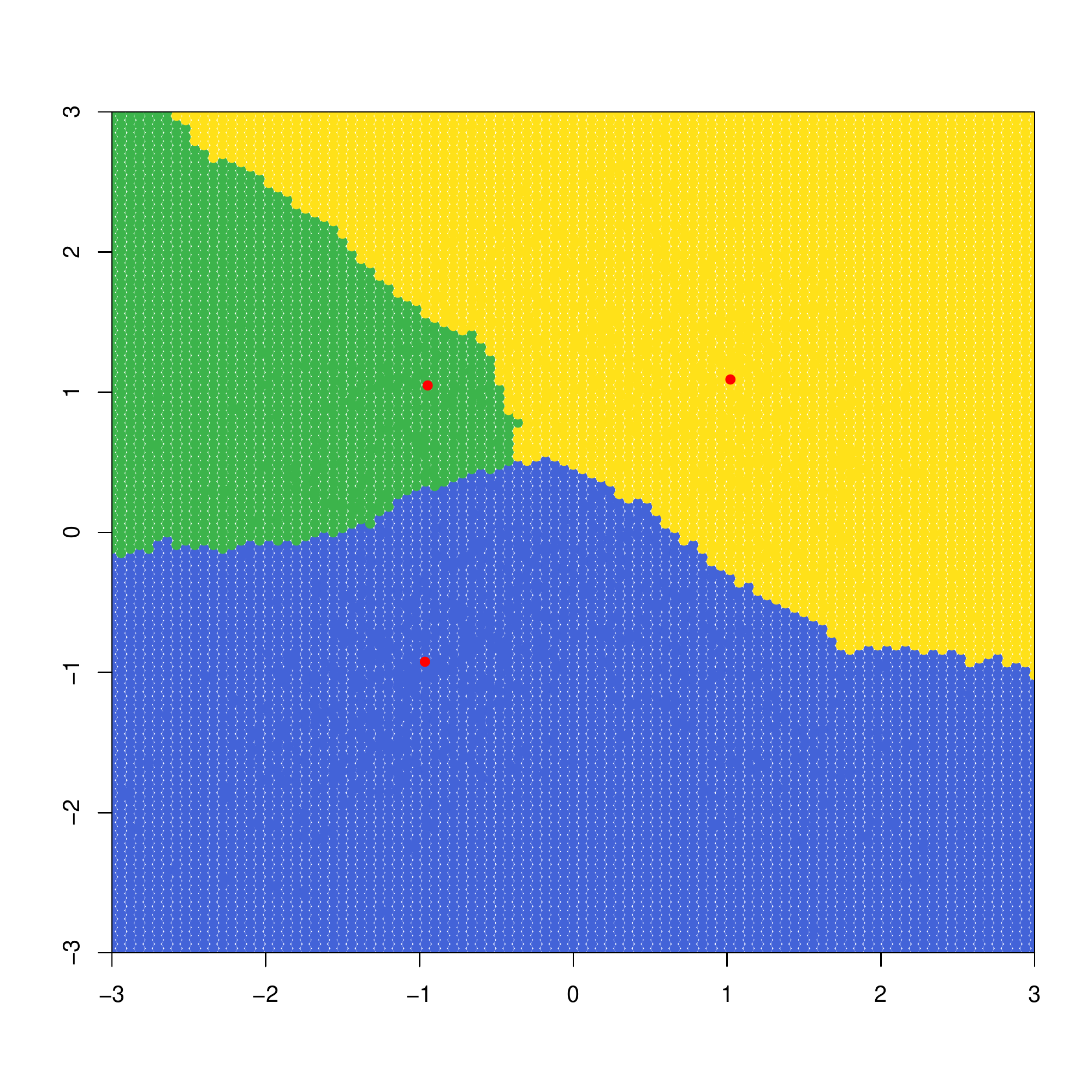}
\includegraphics[width=0.32\linewidth]{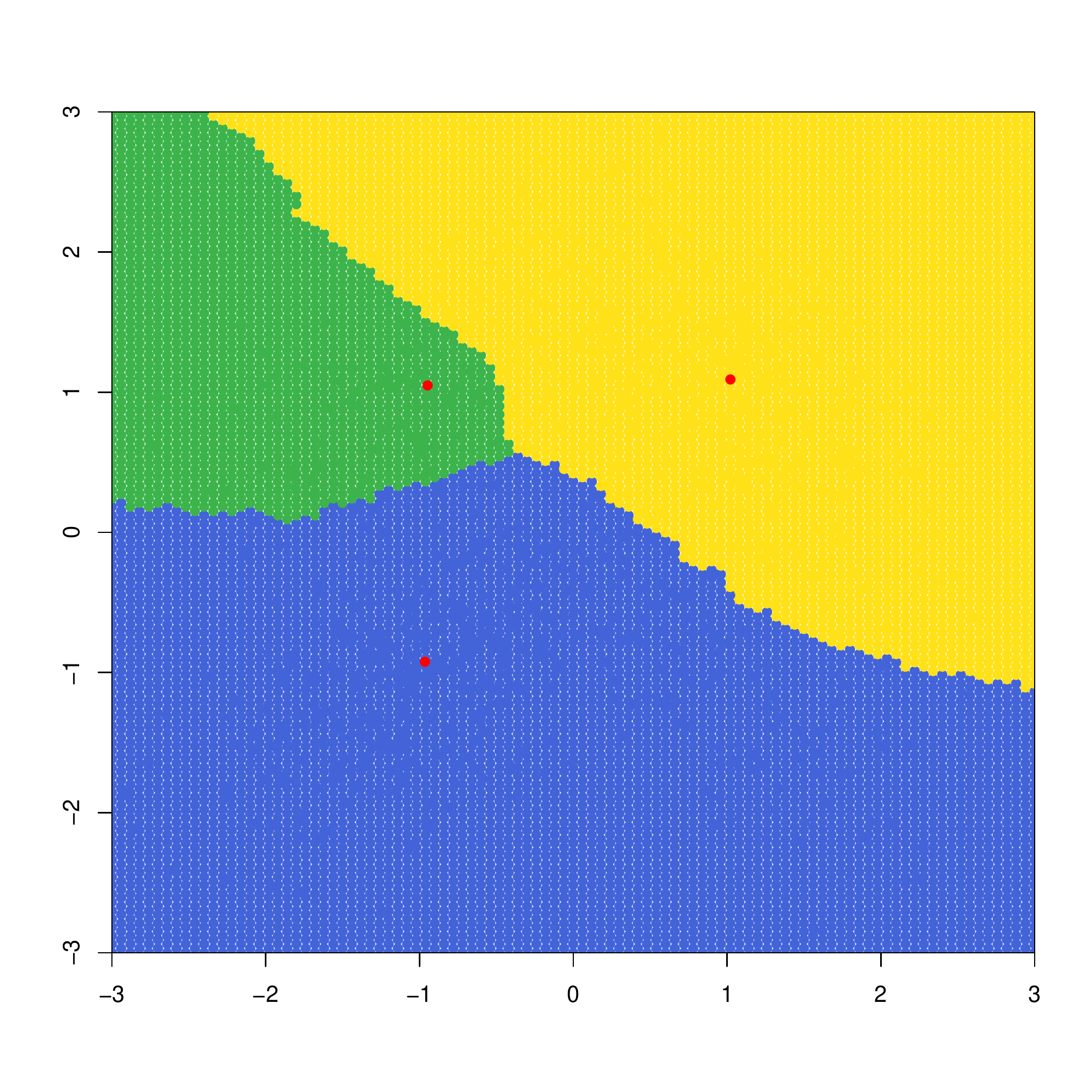}
\includegraphics[width=0.32\linewidth]{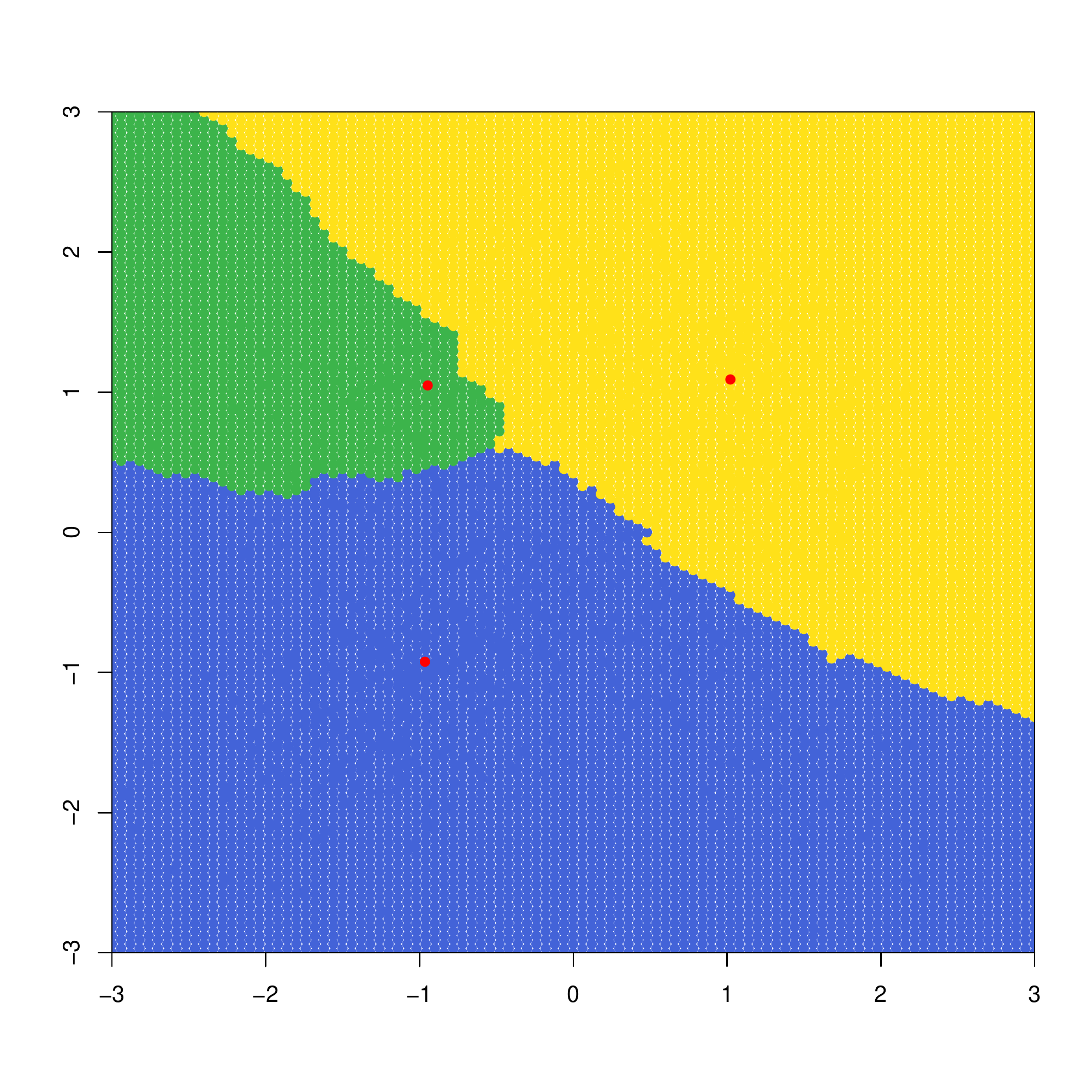}
\includegraphics[width=0.32\linewidth]{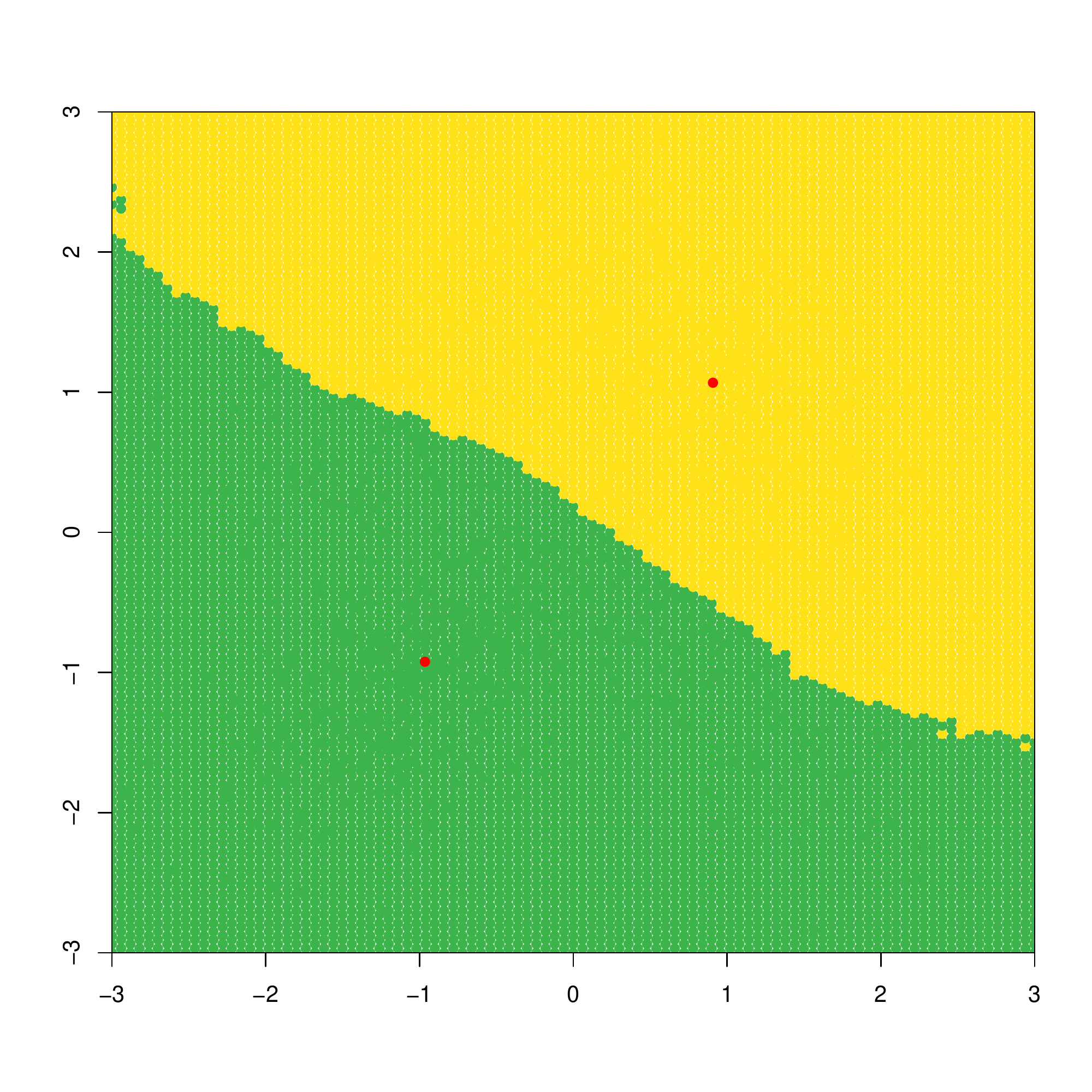}
\includegraphics[width=0.32\linewidth]{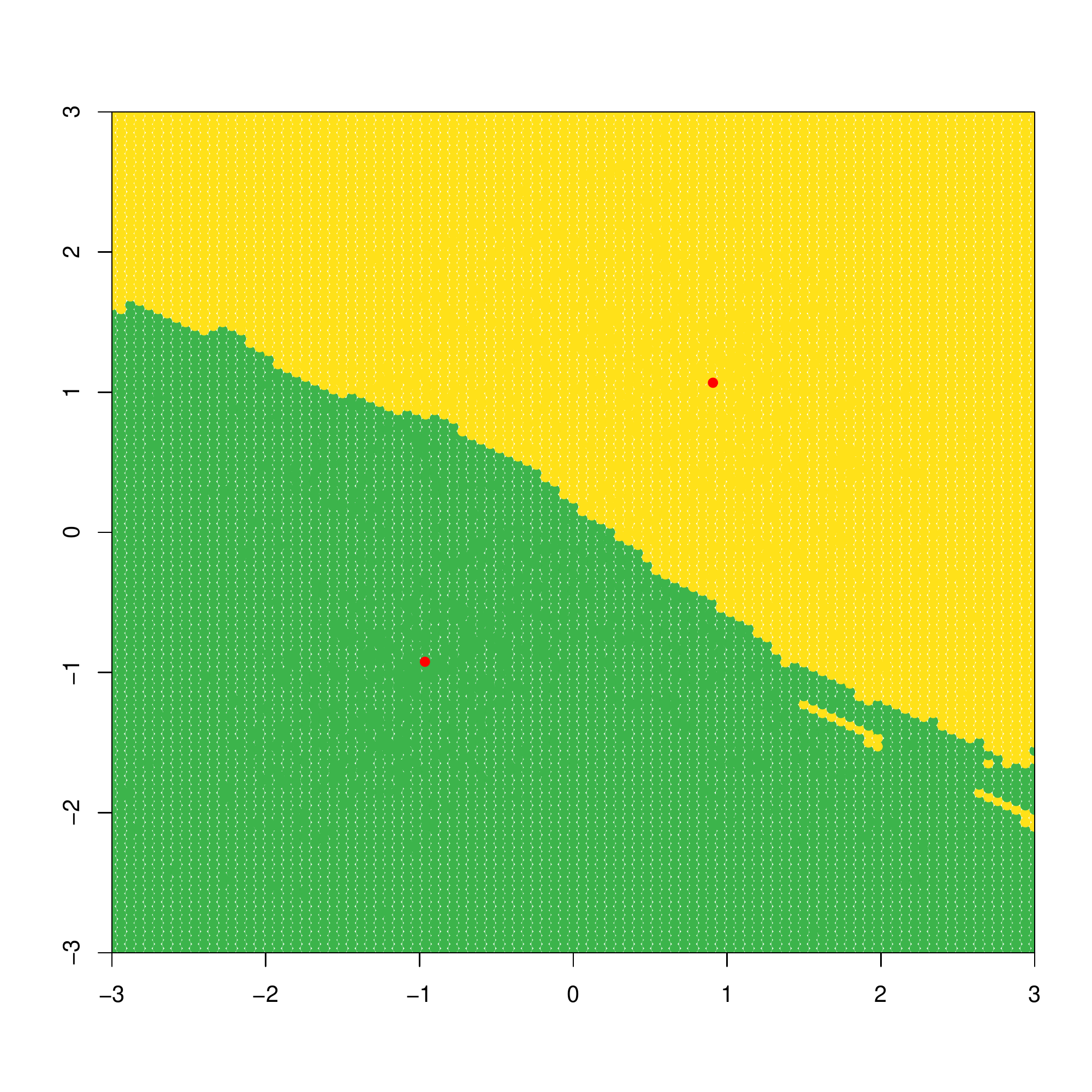}
\includegraphics[width=0.32\linewidth]{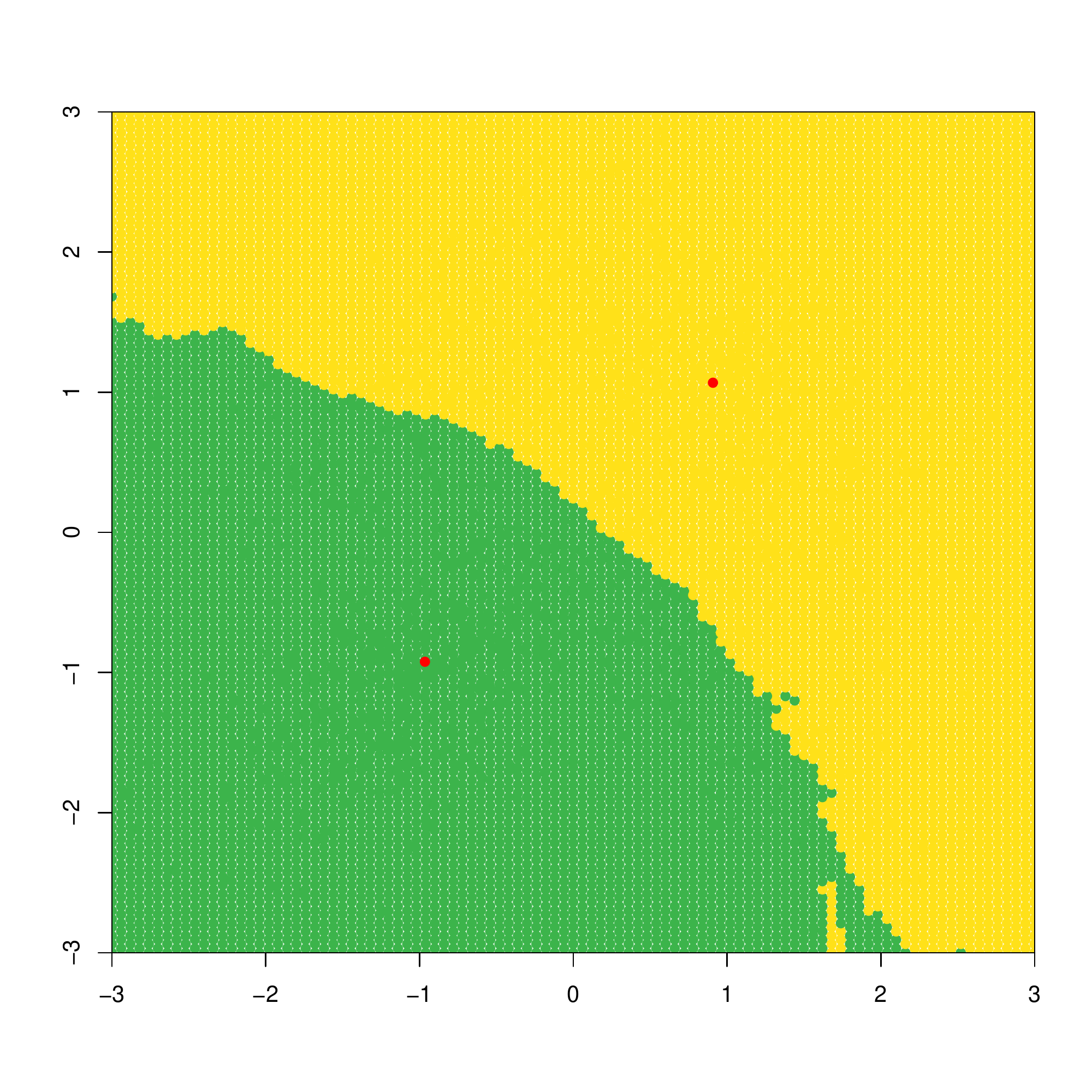}
\includegraphics[width=0.32\linewidth]{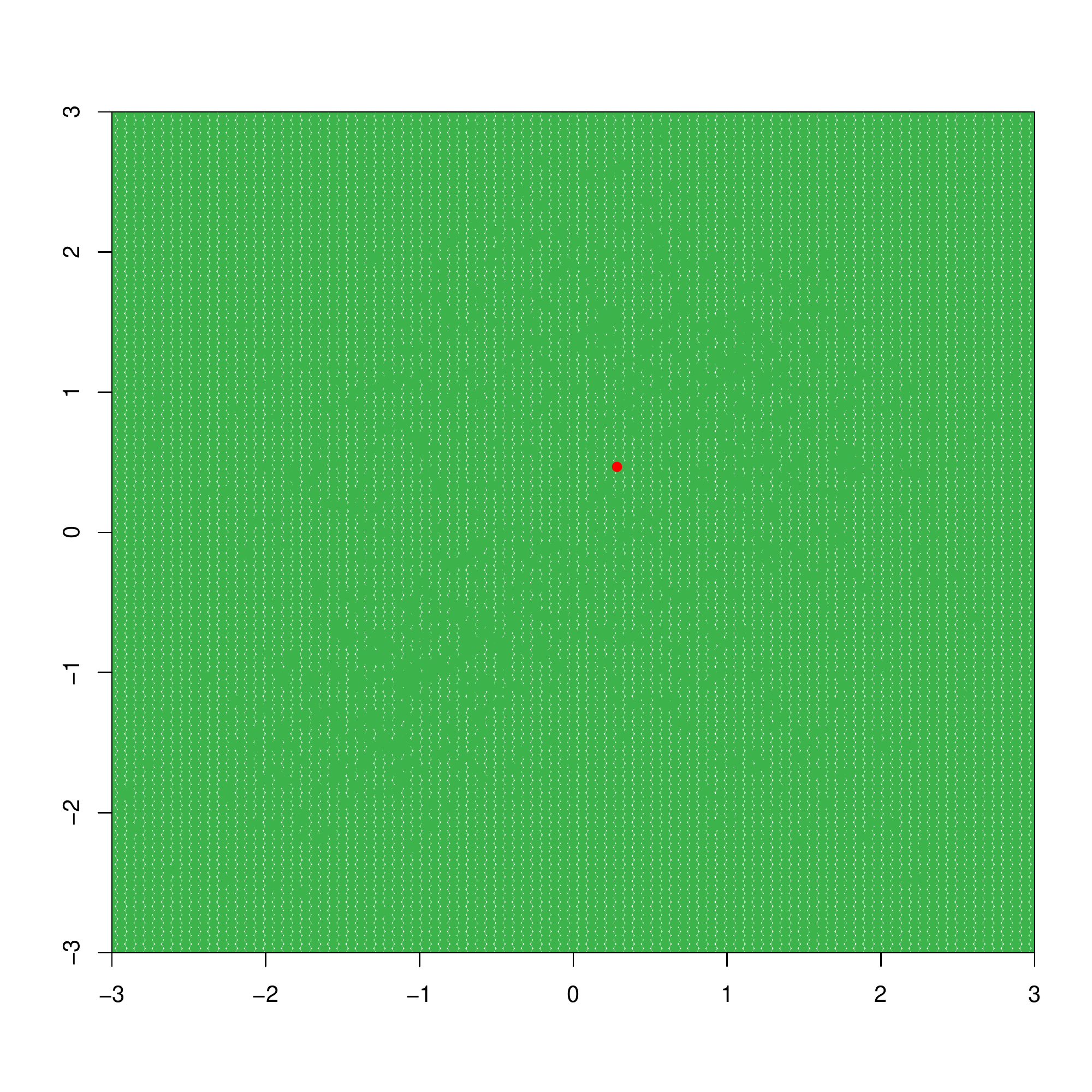}
\includegraphics[width=0.32\linewidth]{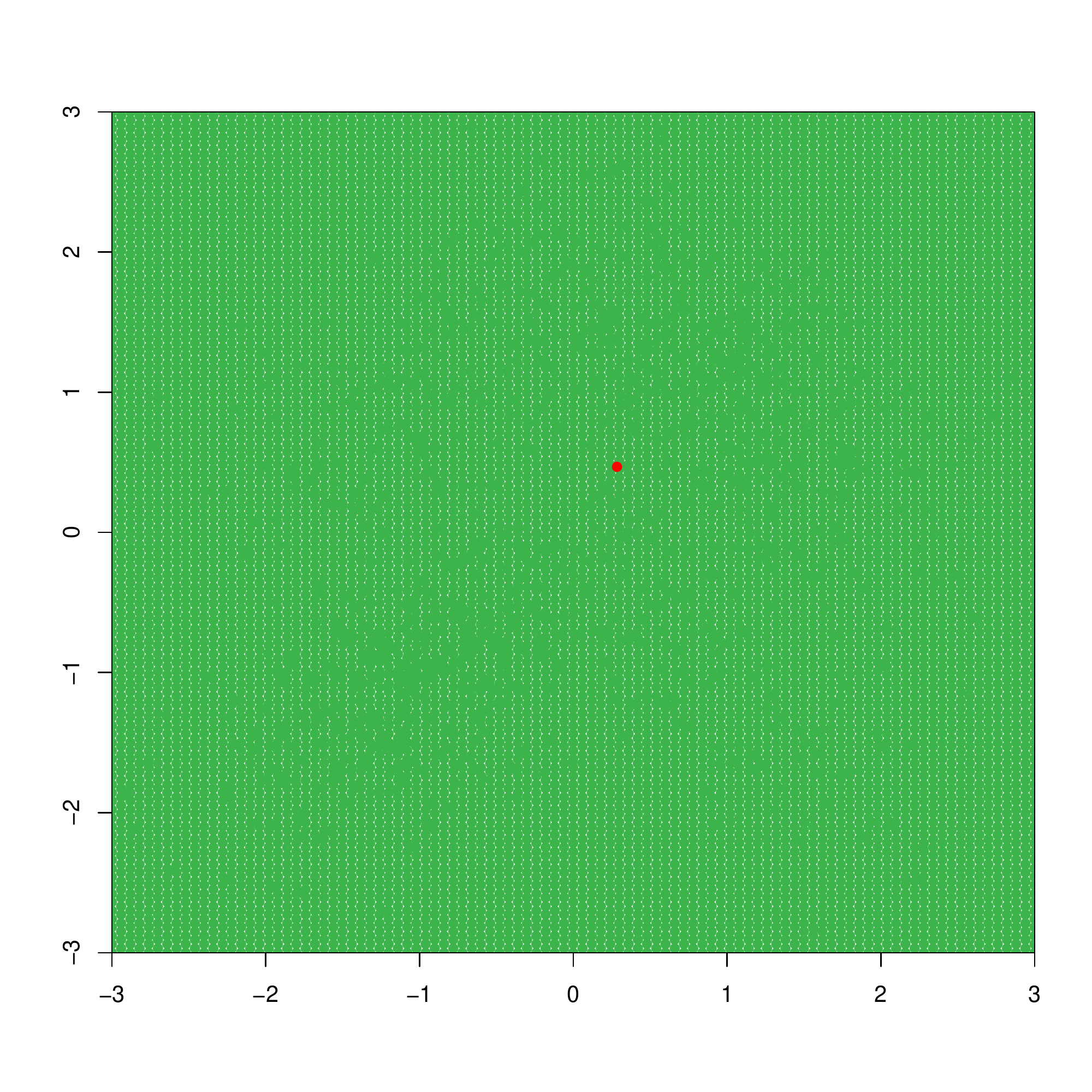}
\includegraphics[width=0.32\linewidth]{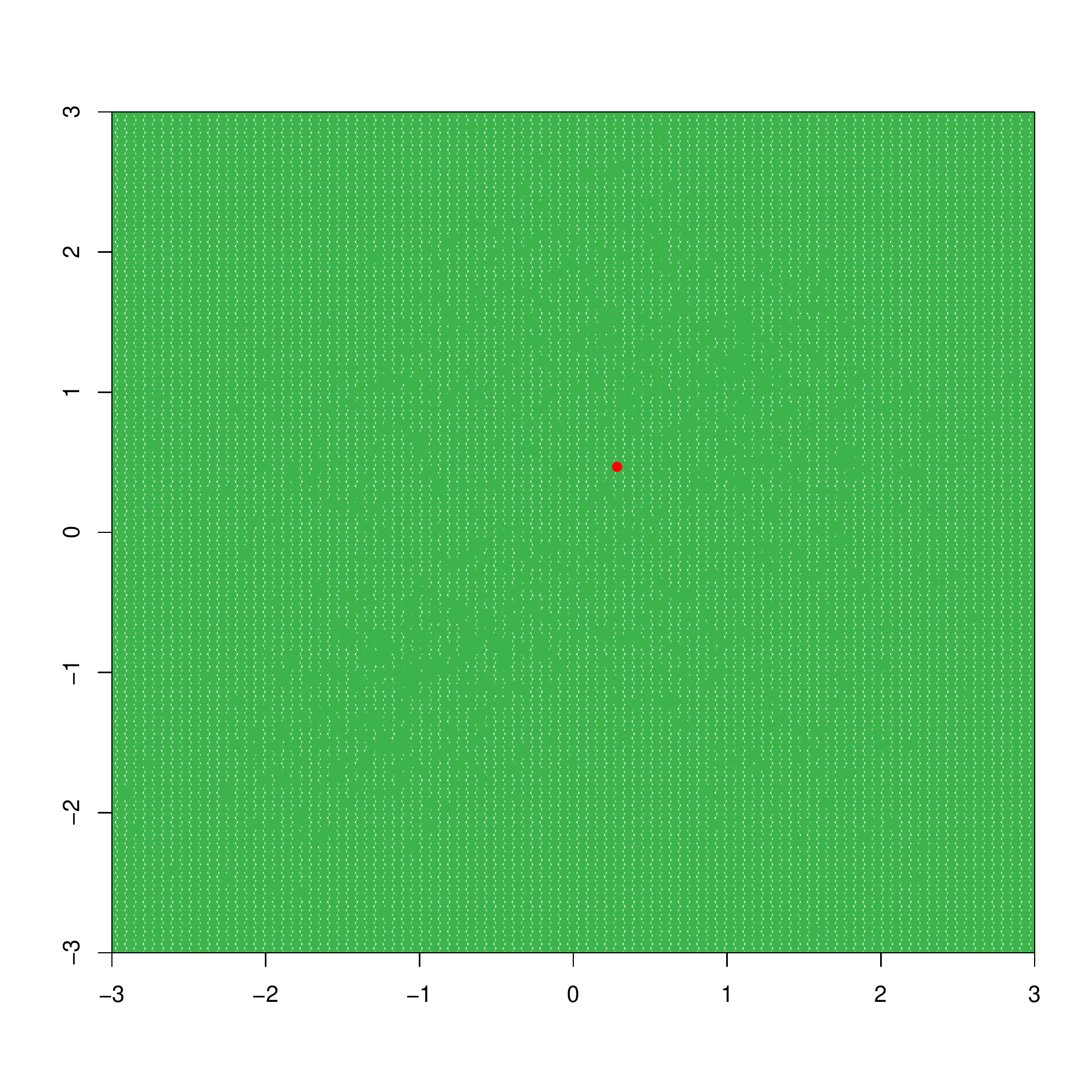}

\caption{Local depth clustering of $n=1000$ samples from the (L) Quadrimodal density. The predicted local maxima are plotted in red. The parameters are $r=0.05$, $s=10,30,50$ in each column (from left to right) and $q=0.05,0.10,0.25,0.50$ in each row (from the top down).}
\label{sm:plot_clusters_quadrimodal_prob_all_points}
\end{figure}

\begin{figure}
\centering
\includegraphics[width=0.32\linewidth]{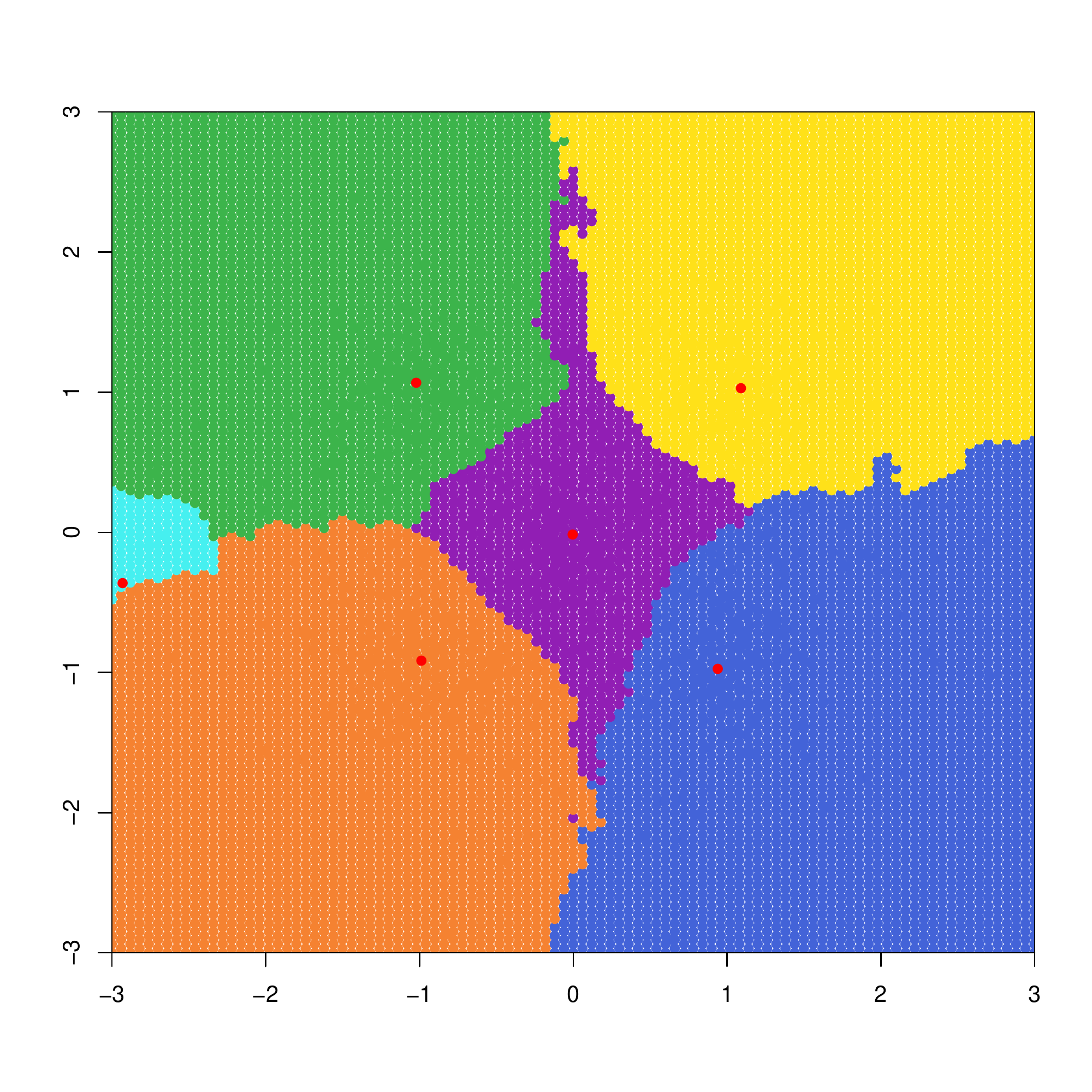}
\includegraphics[width=0.32\linewidth]{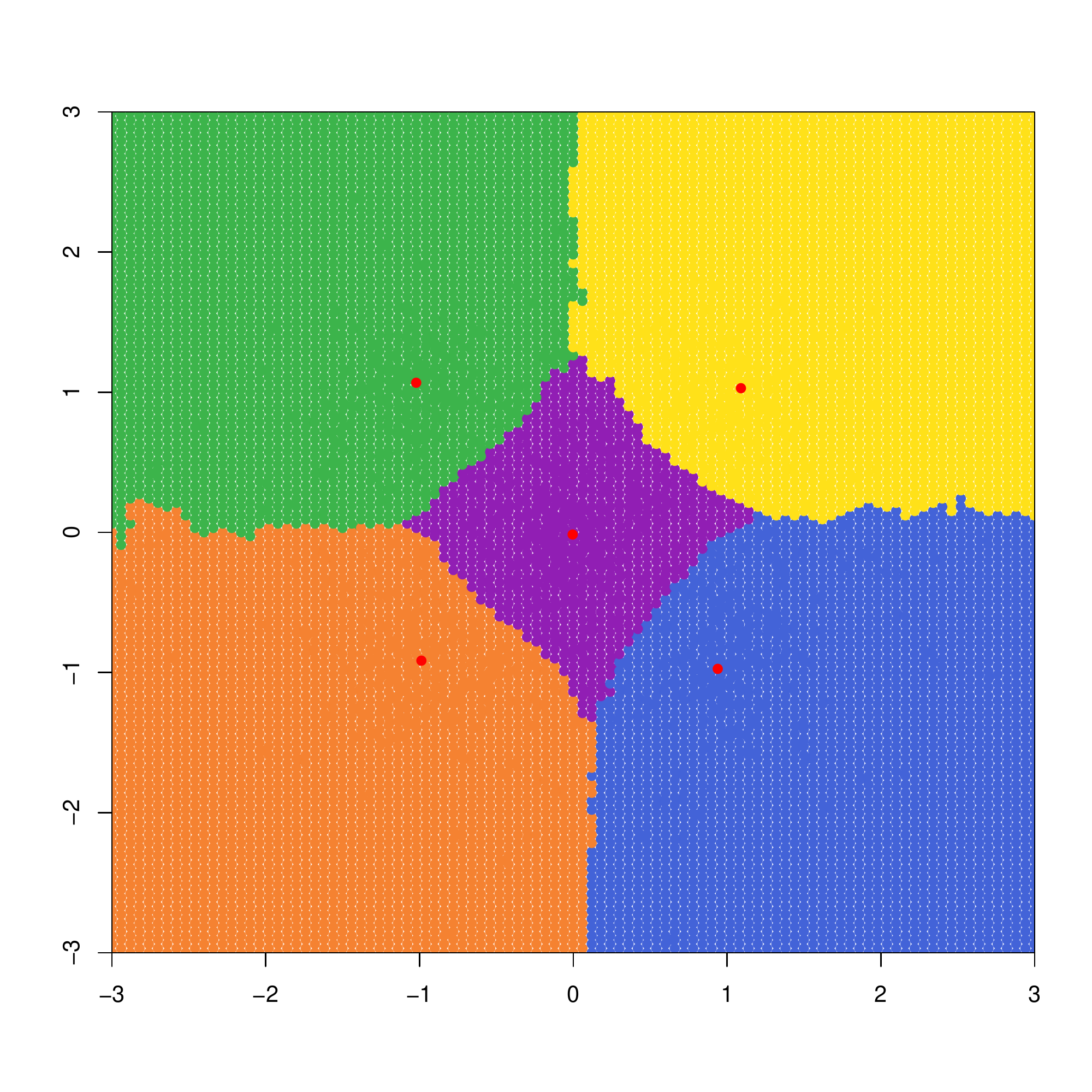}
\includegraphics[width=0.32\linewidth]{{lens-10_fountain-prob-0.38-0.05-50-0.05-1000-ldc-all}.pdf}
\includegraphics[width=0.32\linewidth]{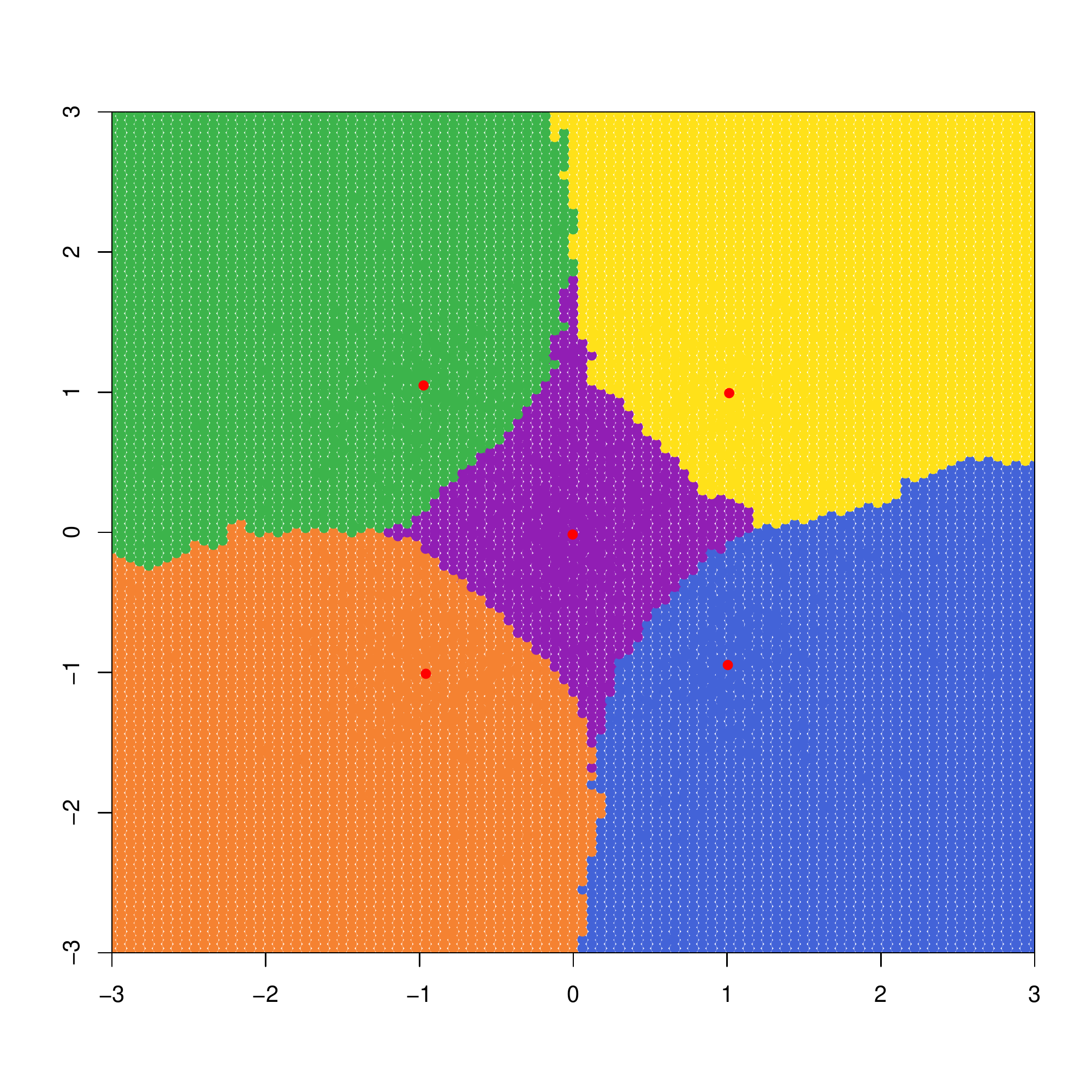}
\includegraphics[width=0.32\linewidth]{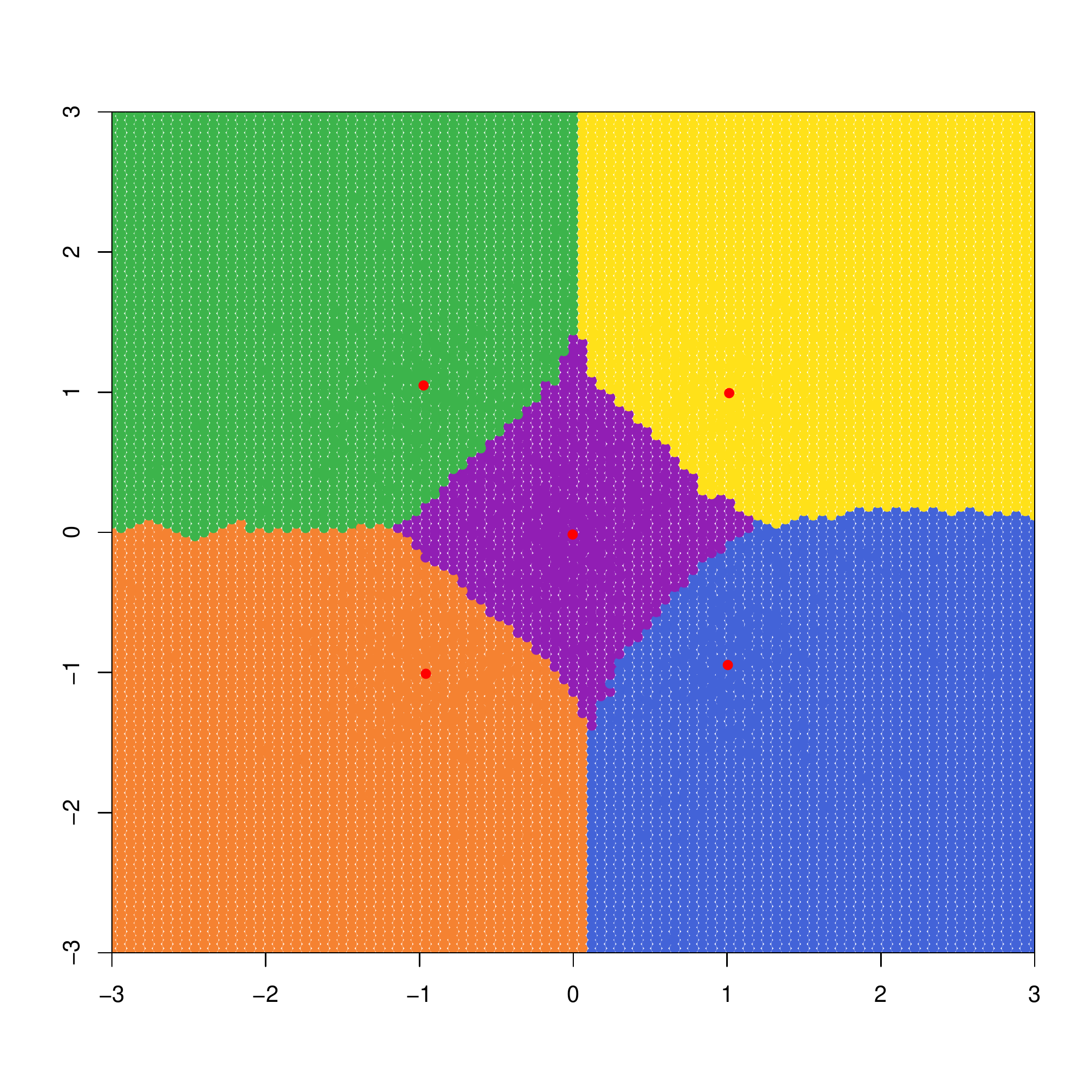}
\includegraphics[width=0.32\linewidth]{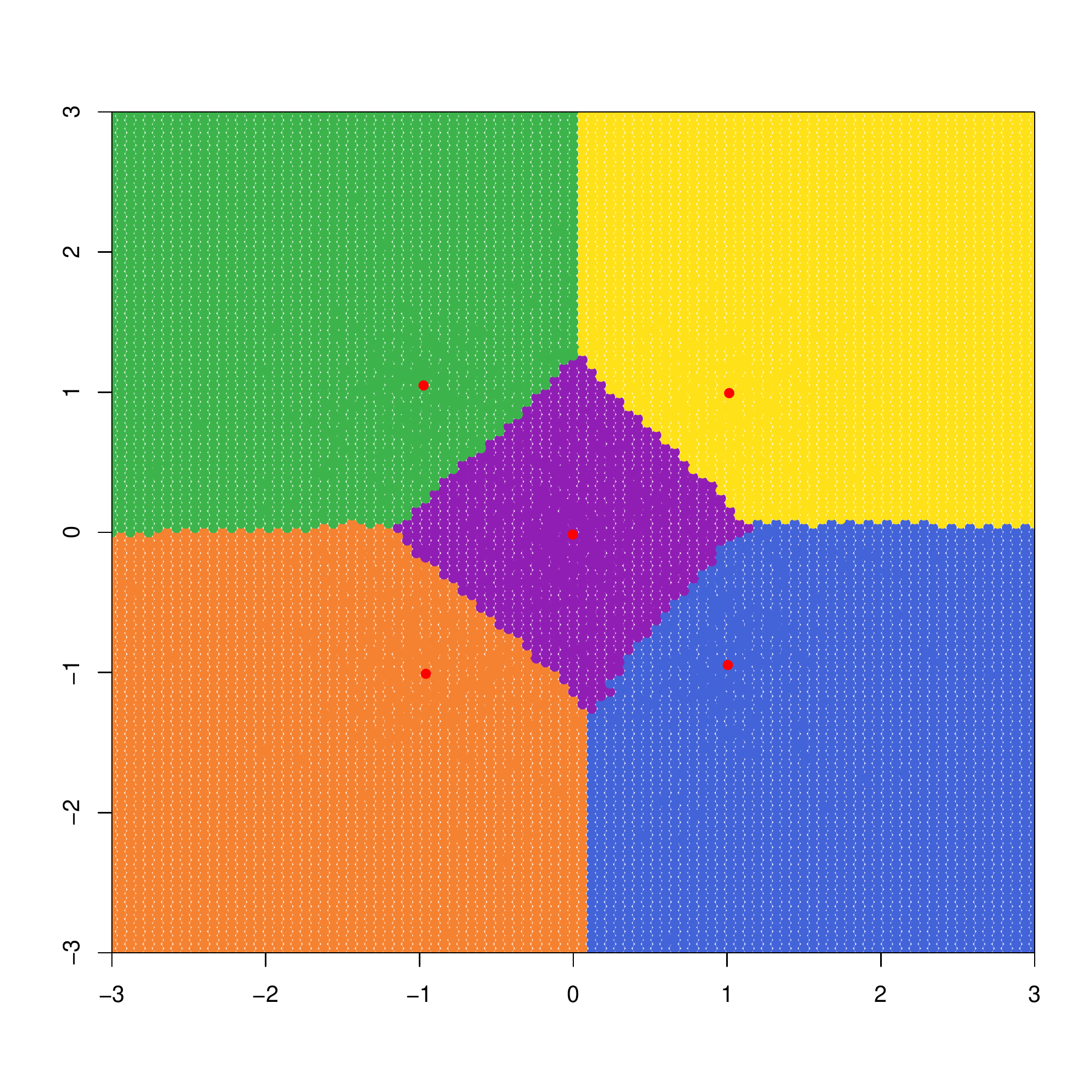}
\includegraphics[width=0.32\linewidth]{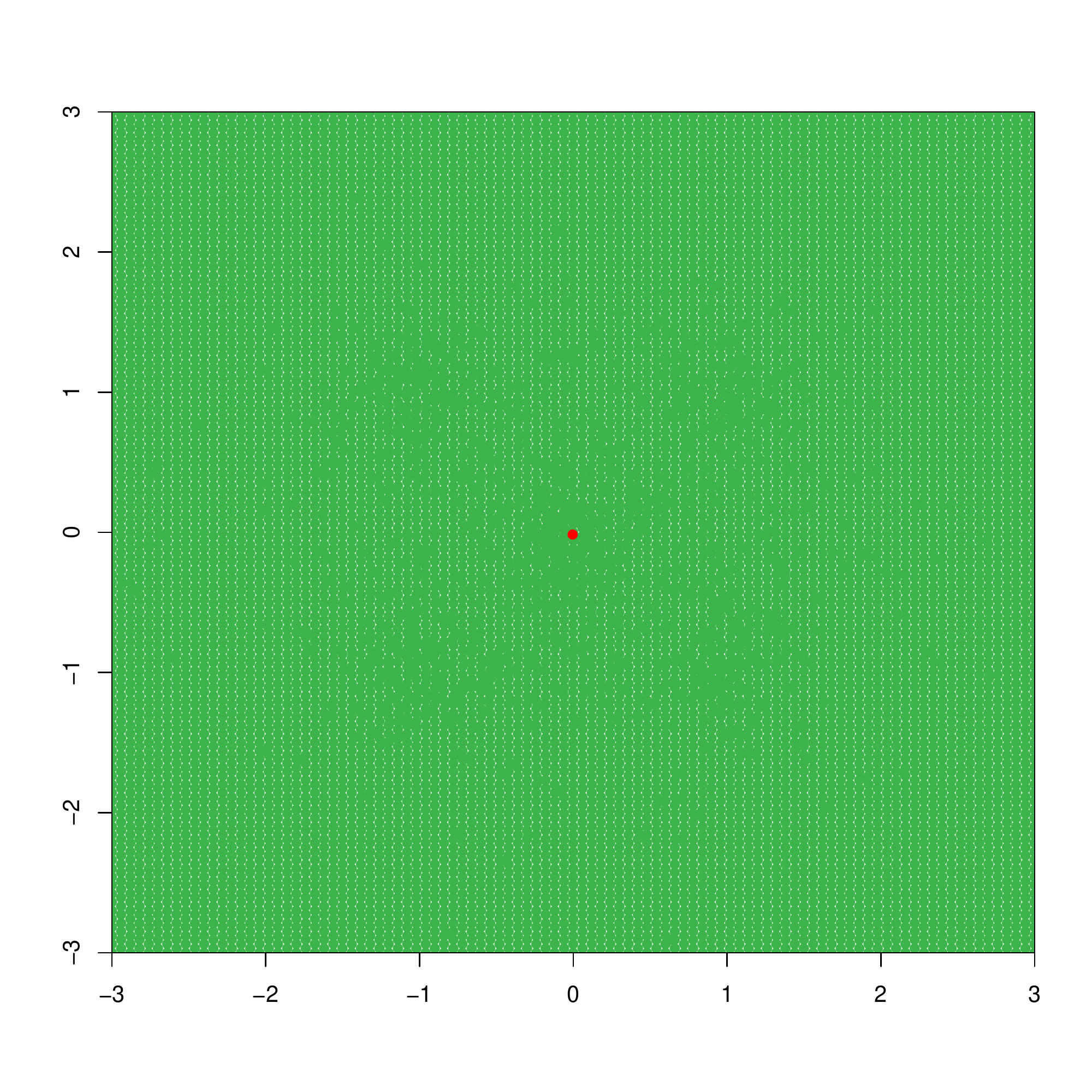}
\includegraphics[width=0.32\linewidth]{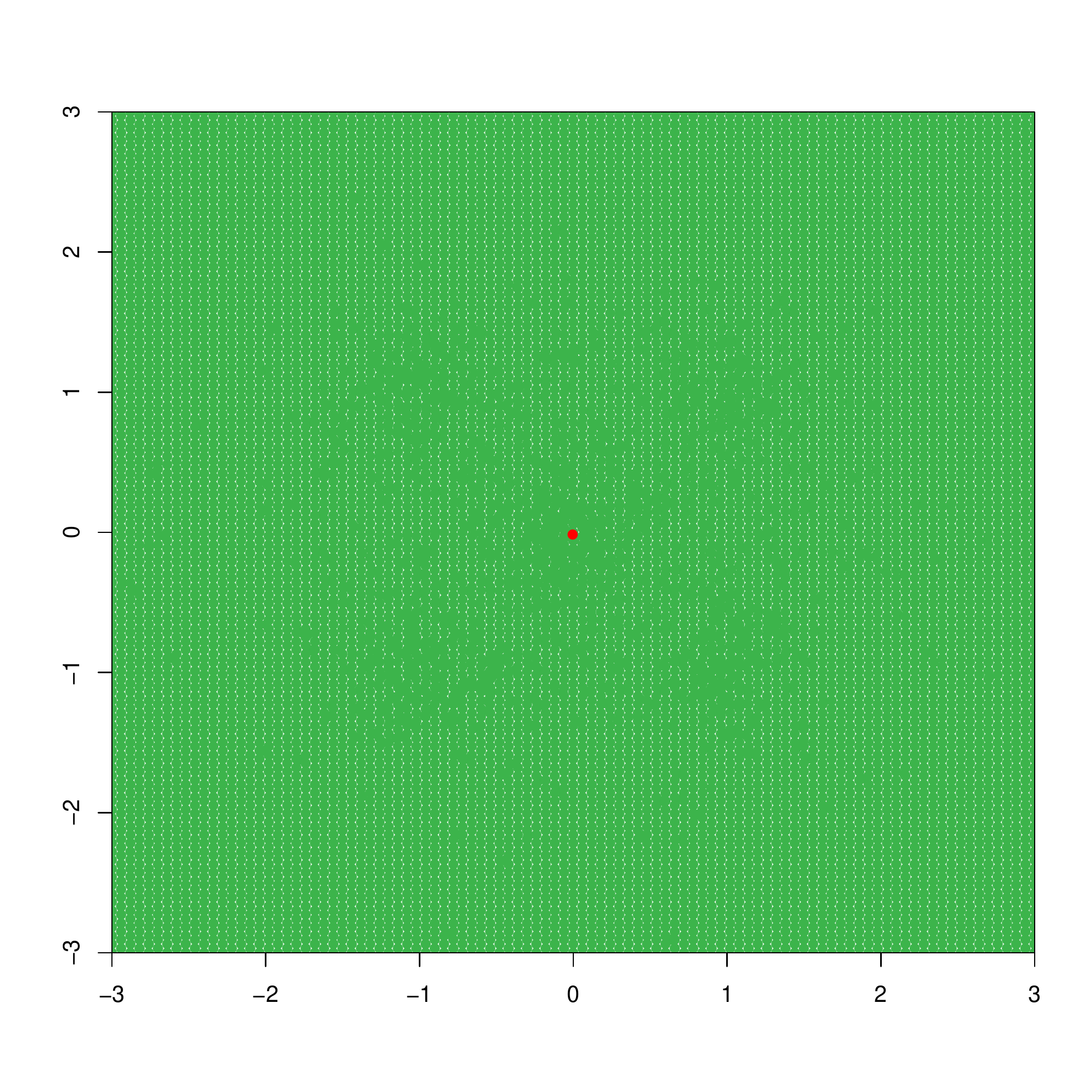}
\includegraphics[width=0.32\linewidth]{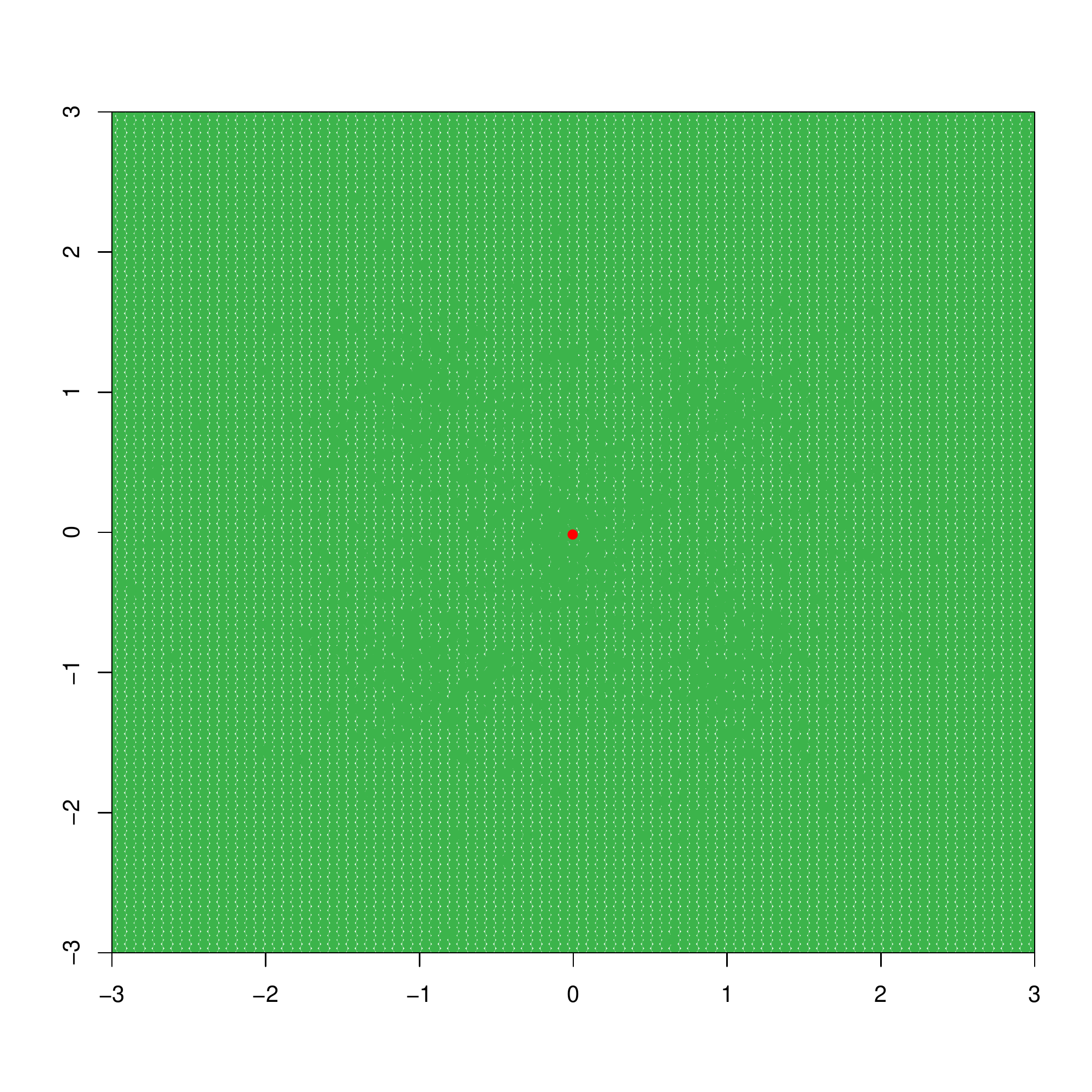}
\includegraphics[width=0.32\linewidth]{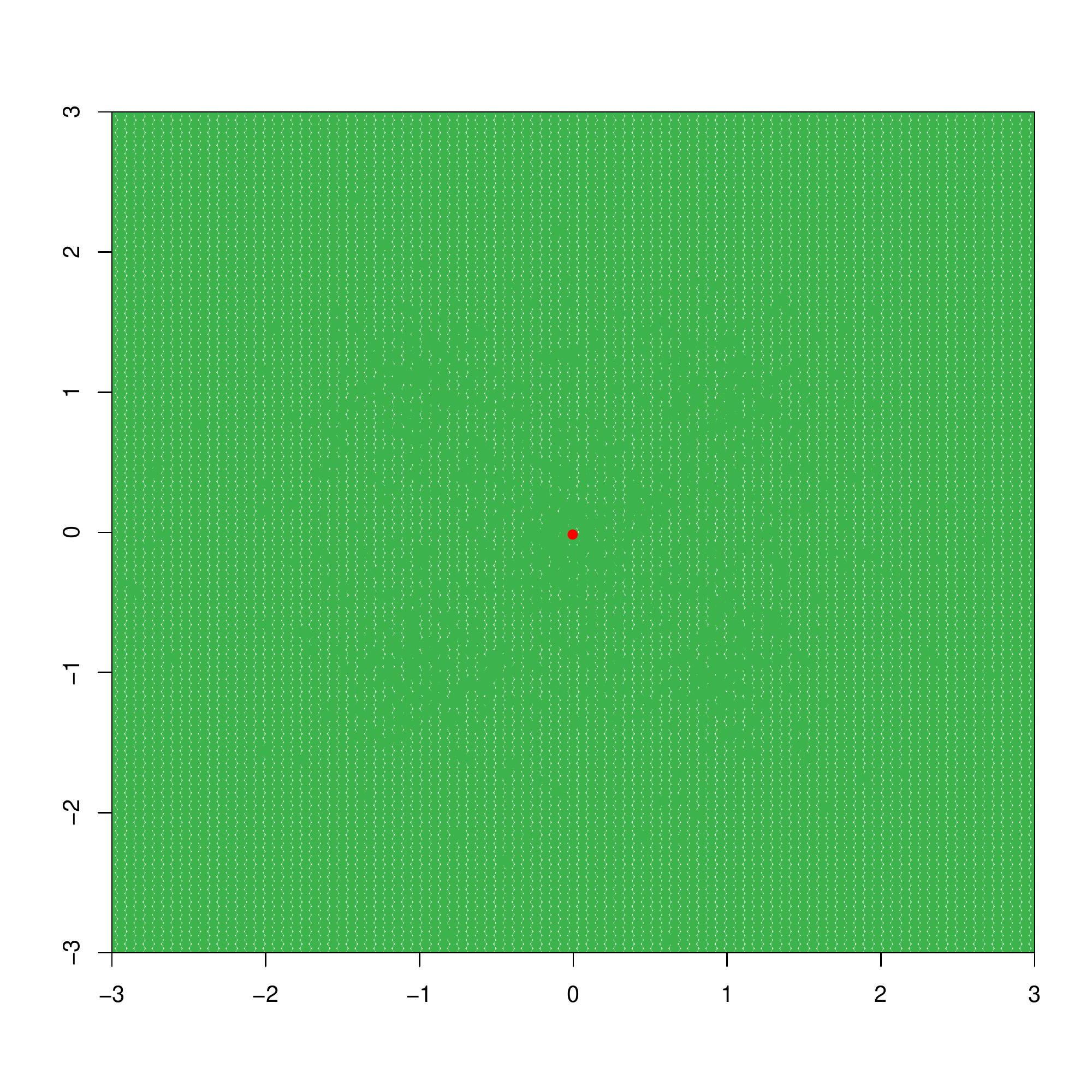}
\includegraphics[width=0.32\linewidth]{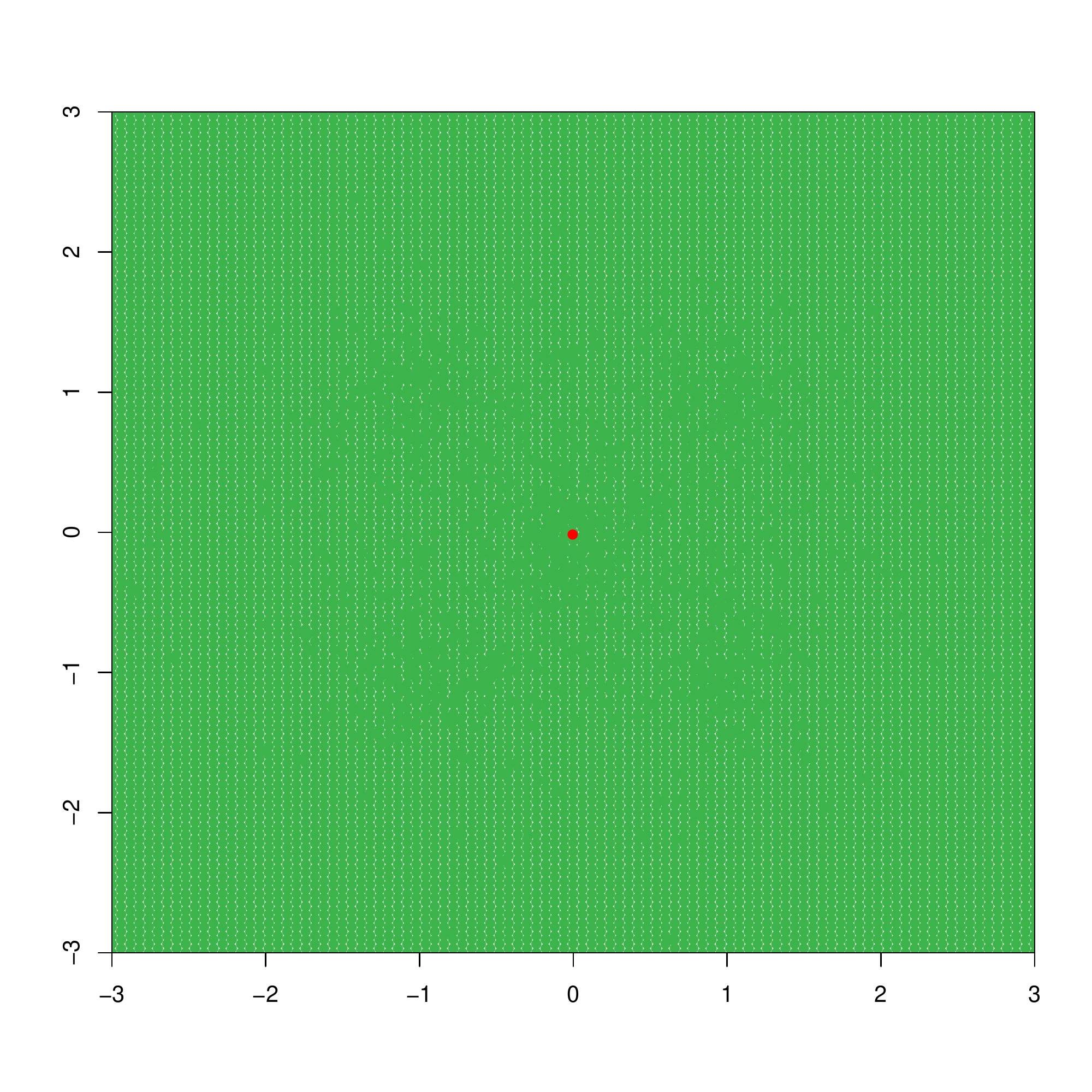}
\includegraphics[width=0.32\linewidth]{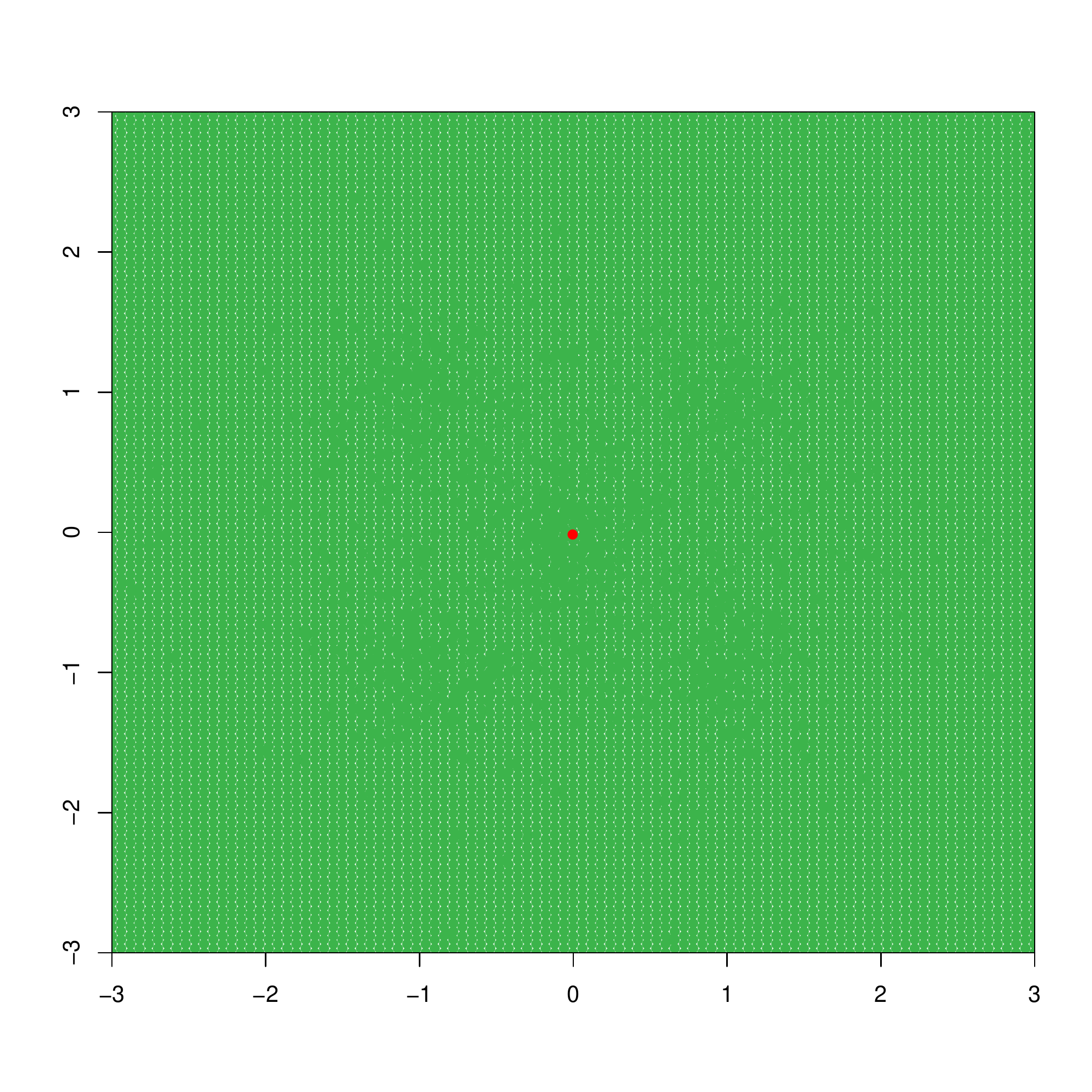}

\caption{Local depth clustering of $n=1000$ samples from the \#10 Fountain density. The predicted local maxima are plotted in red. The parameters are $r=0.05$, $s=10,30,50$ in each column (from left to right) and $q=0.05,0.10,0.25,0.50$ in each row (from the top down).}
\label{sm:plot_clusters_fountain_prob_all_points}
\end{figure}

\subsection{Numerical experiments}
\label{sm:subsection_numerical_experiments}

In this subsection, we provide additional simulation results complementing those in Section \ref{section_simulations_and_data_analysis} of the main paper. Tables \ref{sm:table_Chacon_1000} through \ref{sm:table_number_times_1000} contain results  for all the distributions in Appendix \ref{sm:subsection_true_clusters} for several choices of $q$ and $s$ beyond what is described in the main paper. Since the parameter $r$ does not affect the output of Algorithm \ref{algorithm_clusters}, we leave it fixed at $r=0.05$. The expressions LLD-$q$-$s$ and LSD-$q$-$s$ refer to LLD and LSD with parameters $q$ and $s$. To investigate the effect of sample size, in Tables \ref{sm:table_Chacon_500} through \ref{sm:table_number_times_500} $n$ is set at 500, instead of 1000. In the second part of Tables \ref{sm:table_Chacon_1000} through \ref{sm:table_normal_symmetric_1000} and \ref{sm:table_Chacon_500} through \ref{sm:table_normal_symmetric_500}, the first row refers to the case $\eta=0$ and the second row to the case $\eta=1$. From these two values, it is possible to compute the distance in probability for all values of $\eta$. In all the tables the best results are in bold face. For the probability distance, the best results for the case $\eta=1$ are in bold face. 

From the tables, we can observe that generally the best results are obtained for our proposed LLD and LSD cluster methodologies. In particular, for the distributions Mult.\ Bimodal and Mult.\ Quadrimodal, the best results are always obtained by LLD, see Tables \ref{sm:table_normal_symmetric_1000}, \ref{sm:table_number_times_1000}, \ref{sm:table_normal_symmetric_500} and \ref{sm:table_number_times_500}. For the distributions (H) Bimodal IV and Circular 2, LSD outperforms all the other procedures with the LLD being the second best, as can be seen from Tables \ref{sm:table_Chacon_1000}, \ref{sm:table_circular_1000}, \ref{sm:table_number_times_1000}, \ref{sm:table_Chacon_500}, \ref{sm:table_circular_500} and \ref{sm:table_number_times_500}. From the tables, it also clear that the best clustering results for the distributions \#10 Fountain, Circular 2 Cauchy, Circular 4 Cauchy, Bimodal, and Quadrimodal, are shared by LLD and LSD. Furthermore, the best results for the other three distributions, namely, (K) Trimodal III, (L) Quadrimodal, and Circular 3, are provided by LLD, LSD, and KDE. Turning to the true number of clusters, the best results for (K) Trimodal III are obtained using LLD while the best results for Bimodal and Circular 3 are obtained using LSD. Finally, we notice that the merging algorithm in \citet{Chazal-2013} may be used to improve the results of KDE, LLD and LSD for the circular densities in Tables \ref{sm:table_circular_1000}, \ref{sm:table_number_times_1000}, \ref{sm:table_circular_500} and \ref{sm:table_number_times_500}. As explained previously in Subsection \ref{subsection_numerical_experiments}, one can improve the performance of LSD using smaller values of $q$.

\begin{ThreePartTable}
\begin{tabularx}{10cm}{ | p{2.26cm} | p{2.8cm} | p{2.8cm} | p{2.8cm} | p{2.8cm} | }
\hline
\multicolumn{5}{|c|}{\textbf{Clustering errors (Hausdorff distance)}} \\
\hline
& (H) Bimodal IV & (K) Trimodal III & (L) Quadrimodal & \#10 Fountain \\
\hline
KDE & 0.19 (0.21) & {\bf{0.10 (0.11)}} & {\bf{0.14 (0.10)}} & 0.12 (0.06) \\
\hline
LLD-0.05-30 & 0.27 (0.17) & 0.18 (0.10) & 0.15 (0.08) & {\bf{0.06 (0.02)}} \\
\hline
LLD-0.05-50 & 0.22 (0.17) & 0.17 (0.12) & 0.16 (0.10) & {\bf{0.06 (0.01)}} \\
\hline
LLD-0.1-30  & 0.05 (0.11) & {\bf{0.10 (0.15)}} & 0.22 (0.16) & {\bf{0.06 (0.01)}} \\
\hline
LLD-0.1-50  & 0.02 (0.08) & 0.12 (0.17) & 0.26 (0.16) & {\bf{0.06 (0.01)}} \\
\hline
LSD-$0.01$-30  & 0.05 (0.11) & {\bf{0.10 (0.15)}} & 0.20 (0.15) & {\bf{0.06 (0.01)}} \\
\hline
LSD-$0.01$-50  & 0.04 (0.09) & 0.13 (0.17) & 0.26 (0.16) & {\bf{0.06 (0.01)}} \\
\hline
LSD-$0.05$-30  & {\bf{0.00 (0.00)}} & 0.25 (0.21) & 0.48 (0.02) & 0.35 (0.15) \\
\hline
LSD-$0.05$-50  & {\bf{0.00 (0.00)}} & 0.29 (0.21) & 0.48 (0.01) & 0.38 (0.15) \\
\hline
Hclust \tnotex{notetrueclusters} & 0.05 (0.09) & 0.15 (0.09) & 0.22 (0.08) & 0.29 (0.05) \\
\hline
\multicolumn{5}{|c|}{\textbf{Clustering errors (distance in probability)}} \\
\hline
& (H) Bimodal IV & (K) Trimodal III & (L) Quadrimodal & \#10 Fountain \\
\hline
KDE & 0.24 (0.28) \newline 0.37 (0.41) & 0.05 (0.04) \newline 0.08 (0.08) & 0.21 (0.20) \newline 0.29 (0.28) & 0.35 (0.30) \newline 0.42 (0.36) \\
\hline
LLD-0.05-30 & 0.33 (0.21) \newline 0.53 (0.33) & 0.10 (0.07) \newline 0.18 (0.14) & 0.31 (0.22) \newline 0.42 (0.30) & 0.07 (0.05) \newline 0.07 (0.06) \\
\hline
LLD-0.05-50 & 0.31 (0.25) \newline 0.47 (0.37) & 0.09 (0.07) \newline 0.15 (0.13) & 0.21 (0.21) \newline 0.26 (0.26) & 0.06 (0.01) \newline {\bf{0.06 (0.01)}} \\
\hline
LLD-0.1-30  & 0.09 (0.19) \newline 0.13 (0.28) & 0.04 (0.04) \newline {\bf{0.06 (0.07)}} & 0.09 (0.06) \newline {\bf{0.14 (0.08)}} & 0.06 (0.01) \newline {\bf{0.06 (0.01)}} \\
\hline
LLD-0.1-50  & 0.04 (0.12) \newline 0.05 (0.18) & 0.04 (0.03) \newline {\bf{0.06 (0.05)}} & 0.10 (0.06) \newline 0.15 (0.08) & 0.06 (0.01) \newline {\bf{0.06 (0.01)}} \\
\hline
LSD-$0.01$-30  & 0.08 (0.18) \newline 0.12 (0.27) & 0.05 (0.06) \newline 0.07 (0.09) & 0.10 (0.07) \newline {\bf{0.14 (0.10)}} & 0.06 (0.01) \newline {\bf{0.06 (0.01)}} \\
\hline
LSD-$0.01$-50  & 0.06 (0.15) \newline 0.08 (0.23) & 0.05 (0.03) \newline {\bf{0.06 (0.06)}} & 0.10 (0.07) \newline 0.16 (0.09) & 0.06 (0.01) \newline {\bf{0.06 (0.00)}} \\
\hline
LSD-$0.05$-30  & 0.00 (0.00) \newline {\bf{0.00 (0.00)}} & 0.06 (0.02) \newline 0.09 (0.05) & 0.11 (0.01) \newline 0.20 (0.01) & 0.19 (0.07) \newline 0.32 (0.14) \\
\hline
LSD-$0.05$-50  & 0.00 (0.00) \newline {\bf{0.00 (0.00)}} & 0.06 (0.02) \newline 0.10 (0.05) & 0.11 (0.01) \newline 0.20 (0.01) & 0.20 (0.06) \newline 0.35 (0.14) \\
\hline
Hclust \tnotex{notetrueclusters} & 0.05 (0.09) & 0.16 (0.09) & 0.29 (0.11) & 0.35 (0.07) \\
\hline
\caption{\label{sm:table_Chacon_1000} Mean of the clustering errors based on Hausdorff distance and distance in probability over $100$ replications with $n=1000$ samples for the densities (H) Bimodal IV, (K) Trimodal III, (L) Quadrimodal and \#10 Fountain. In parentheses the standard deviation.}
\end{tabularx}
\end{ThreePartTable}

\begin{ThreePartTable}
\begin{tabularx}{10cm}{ | p{2.26cm} | p{2.8cm} | p{2.8cm} | p{2.8cm} | p{2.8cm} | }
\hline
\multicolumn{5}{|c|}{\textbf{Clustering errors (Hausdorff distance)}} \\
\hline
& Circular 2 & Circular 2 Cauchy & Circular 3 & Circular 4 Cauchy \\
\hline
KDE & 0.57 (0.07) & 0.54 (0.08) & 0.32 (0.01) & 0.08 (0.10) \\
\hline
LLD-0.05-30 & 0.61 (0.05) & 0.56 (0.07) & 0.33 (0.01) & 0.02 (0.03) \\
\hline
LLD-0.05-50 & 0.59 (0.05) & 0.52 (0.07) & 0.34 (0.02) & 0.02 (0.01) \\
\hline
LLD-0.1-30  & 0.52 (0.06) & 0.45 (0.08) & 0.32 (0.02) & {\bf{0.01 (0.01)}} \\
\hline
LLD-0.1-50  & 0.49 (0.07) & 0.43 (0.10) & {\bf{0.31 (0.03)}} & {\bf{0.01 (0.01)}} \\
\hline
LLD-0.15-30  & 0.43 (0.12) & 0.35 (0.20) & 0.32 (0.03) & 0.02 (0.01) \\
\hline
LLD-0.15-50  & 0.39 (0.16) & {\bf{0.30 (0.21)}} & {\bf{0.31 (0.04)}}  & 0.02 (0.01) \\
\hline
LLD-0.2-30  & 0.35 (0.19) & 0.39 (0.31) & 0.34 (0.05) & 0.04 (0.05) \\
\hline
LLD-0.2-50  & 0.28 (0.21) & 0.49 (0.34) & 0.40 (0.09) & 0.04 (0.07) \\
\hline
LSD-$0.01$-30  & 0.50 (0.06) & 0.43 (0.12) & {\bf{0.31 (0.03)}} & {\bf{0.01 (0.01)}} \\
\hline
LSD-$0.01$-50  & 0.48 (0.08) & 0.41 (0.14) & {\bf{0.31 (0.03)}} & {\bf{0.01 (0.01)}} \\
\hline
LSD-$0.05$-30  & 0.26 (0.21) & 0.43 (0.31) & {\bf{0.31 (0.04)}} & 0.04 (0.06) \\
\hline
LSD-$0.05$-50  & {\bf{0.16 (0.20)}} & 0.49 (0.33) & 0.32 (0.07) & 0.04 (0.06) \\
\hline
LSD-$0.1$-30  & 0.38 (0.15) & 0.78 (0.10) & 0.36 (0.07) & 0.25 (0.20) \\
\hline
LSD-$0.1$-50  & 0.20 (0.19) & 0.80 (0.07) & 0.49 (0.12) & 0.28 (0.21) \\
\hline
Hclust \tnotex{notetrueclusters}  & 0.34 (0.12) & 0.38 (0.08) & 0.37 (0.05) & 0.24 (0.06) \\
\hline
\multicolumn{5}{|c|}{\textbf{Clustering errors (distance in probability)}} \\
\hline
& Circular 2 & Circular 2 Cauchy & Circular 3 & Circular 4 Cauchy \\
\hline
KDE & 0.31 (0.04) \newline 0.62 (0.08) & 0.32 (0.05) \newline 0.63 (0.10) & 0.29 (0.03) \newline 0.58 (0.06) & 0.17 (0.25) \newline 0.21 (0.30) \\
\hline
LLD-0.05-30 & 0.34 (0.03) \newline 0.66 (0.07) & 0.32 (0.05) \newline 0.65 (0.09) & 0.32 (0.03) \newline 0.59 (0.07) & 0.04 (0.09) \newline 0.04 (0.11) \\
\hline
LLD-0.05-50 & 0.33 (0.04) \newline 0.64 (0.07) & 0.33 (0.07) \newline 0.62 (0.09) & 0.32 (0.05) \newline 0.57 (0.08) & 0.03 (0.04) \newline 0.03 (0.05) \\
\hline
LLD-0.1-30  & 0.31 (0.06) \newline 0.59 (0.10) & 0.32 (0.12) \newline 0.55 (0.15) & 0.32 (0.06) \newline 0.55 (0.11) & 0.02 (0.01) \newline {\bf{0.02 (0.01)}} \\
\hline
LLD-0.1-50  & 0.32 (0.10) \newline 0.58 (0.12) & 0.33 (0.15) \newline 0.53 (0.18) & 0.34 (0.08) \newline 0.54 (0.14) & 0.02 (0.01) \newline {\bf{0.02 (0.01)}} \\
\hline
LLD-0.15-30  & 0.30 (0.13) \newline 0.51 (0.18) & 0.27 (0.20) \newline 0.40 (0.26) & 0.35 (0.07) \newline 0.55 (0.11) & 0.02 (0.01) \newline {\bf{0.02 (0.01)}} \\
\hline
LLD-0.15-50  & 0.30 (0.18) \newline 0.47 (0.23) & 0.24 (0.20) \newline 0.35 (0.26) & 0.38 (0.09) \newline 0.54 (0.14) & 0.03 (0.01) \newline 0.03 (0.01) \\
\hline
LLD-0.2-30  & 0.25 (0.18) \newline 0.40 (0.25) & 0.19 (0.19) \newline 0.25 (0.22) & 0.41 (0.09) \newline 0.55 (0.10) & 0.04 (0.03) \newline 0.04 (0.05) \\
\hline
LLD-0.2-50  & 0.20 (0.18) \newline 0.30 (0.26) & 0.14 (0.15) \newline 0.20 (0.17) & 0.46 (0.09) \newline 0.51 (0.10) & 0.04 (0.03) \newline 0.05 (0.06) \\
\hline
LSD-$0.01$-30  & 0.30 (0.07) \newline 0.57 (0.10) & 0.32 (0.14) \newline 0.52 (0.18) & 0.31 (0.07) \newline 0.54 (0.12) & 0.02 (0.01) \newline {\bf{0.02 (0.01)}} \\
\hline
LSD-$0.01$-50  & 0.32 (0.10) \newline 0.56 (0.13) & 0.32 (0.16) \newline 0.50 (0.20) & 0.33 (0.08) \newline 0.54 (0.14) & 0.02 (0.01) \newline {\bf{0.02 (0.01)}} \\
\hline
LSD-$0.05$-30  & 0.19 (0.18) \newline 0.29 (0.26) & 0.20 (0.18) \newline 0.26 (0.20) & 0.38 (0.08) \newline 0.54 (0.13) & 0.04 (0.03) \newline 0.05 (0.05) \\
\hline
LSD-$0.05$-50  & 0.13 (0.17) \newline {\bf{0.18 (0.23)}} & 0.15 (0.15) \newline 0.21 (0.16) & 0.40 (0.09) \newline 0.46 (0.13) & 0.04 (0.03) \newline 0.05 (0.05) \\
\hline
LSD-$0.1$-30  & 0.33 (0.13) \newline 0.57 (0.22) & 0.13 (0.11) \newline 0.21 (0.09) & 0.44 (0.08) \newline 0.56 (0.11) & 0.16 (0.09) \newline 0.28 (0.20) \\
\hline
LSD-$0.1$-50  & 0.21 (0.20) \newline 0.31 (0.31) & 0.10 (0.06) \newline {\bf{0.19 (0.05)}} & 0.43 (0.09) \newline 0.53 (0.08) & 0.18 (0.09) \newline 0.32 (0.21) \\
\hline
Hclust \tnotex{notetrueclusters} & 0.34 (0.12) & 0.38 (0.08) & {\bf{0.43 (0.05)}} & 0.34 (0.09) \\
\hline
\caption{\label{sm:table_circular_1000} Mean of the clustering errors based on Hausdorff distance and distance in probability over $100$ replications with $n=1000$ samples for the densities Circular 2, Circular 2 Cauchy, Circular 3 and Circular 4 Cauchy. In parentheses the standard deviation.}
\end{tabularx}
\end{ThreePartTable}

\begin{ThreePartTable}
\begin{tabularx}{10cm}{ | p{2.26cm} | p{2.8cm} | p{2.8cm} | p{2.8cm} | p{2.8cm} | }
\hline
\multicolumn{5}{|c|}{\textbf{Clustering errors (Hausdorff distance)}} \\
\hline
& Bimodal & Quadrimodal & Mult.\ Bimodal & Mult.\ Quadrimodal \\
\hline
KDE & 0.09 (0.15) & 0.05 (0.08) & 0.20 (0.21) & 0.09 (0.09) \\
\hline
LLD-0.05-30 & 0.25 (0.19) & 0.04 (0.06) & 0.05 (0.11) & 0.03 (0.04) \\
\hline
LLD-0.05-50 & 0.16 (0.18) & 0.02 (0.04) & {\bf{0.01 (0.04)}} & {\bf{0.02 (0.01)}} \\
\hline
LLD-0.1-30  & 0.01 (0.04) & {\bf{0.01 (0.02)}} & 0.06 (0.13) & 0.03 (0.05) \\
\hline
LLD-0.1-50  & 0.01 (0.03) & {\bf{0.01 (0.00)}} & {\bf{0.01 (0.04)}} & {\bf{0.02 (0.01)}} \\
\hline
LSD-$0.01$-30  & 0.02 (0.06) & {\bf{0.01 (0.00)}} & 0.31 (0.15) & 0.55 (0.19) \\
\hline
LSD-$0.01$-50  & 0.01 (0.04) & {\bf{0.01 (0.00)}} & 0.31 (0.18) & 0.64 (0.17) \\
\hline
LSD-$0.05$-30  & {\bf{0.00 (0.00)}} & {\bf{0.01 (0.00)}} & 0.23 (0.18) & 0.38 (0.18) \\
\hline
LSD-$0.05$-50  & {\bf{0.00 (0.00)}} & {\bf{0.01 (0.00)}} & 0.23 (0.20) & 0.48 (0.18) \\
\hline
Hclust \tnotex{notetrueclusters} & 0.06 (0.05) & 0.10 (0.05) &  0.05 (0.03) & 0.07 (0.03) \\
\hline
\multicolumn{5}{|c|}{\textbf{Clustering errors (distance in probability)}} \\
\hline
& Bimodal & Quadrimodal & Mult.\ Bimodal & Mult.\ Quadrimodal \\
\hline
KDE & 0.03 (0.04) \newline 0.05 (0.09) & 0.08 (0.15) \newline 0.11 (0.20) & 0.05 (0.06) \newline 0.08 (0.12) & 0.25 (0.29) \newline 0.31 (0.36) \\
\hline
LLD-0.05-30 & 0.06 (0.06) \newline 0.12 (0.12) & 0.08 (0.16) \newline 0.11 (0.21) & 0.024 (0.06) \newline 0.04 (0.10) & 0.08 (0.19) \newline 0.09 (0.22) \\
\hline
LLD-0.05-50 & 0.05 (0.05) \newline 0.09 (0.11) & 0.05 (0.14) \newline 0.06 (0.17) & 0.01 (0.01) \newline {\bf{0.01 (0.01)}} & 0.03 (0.01) \newline {\bf{0.03 (0.01)}} \\
\hline
LLD-0.1-30  & 0.01 (0.02) \newline 0.01 (0.03) & 0.02 (0.06) \newline 0.02 (0.07) & 0.02 (0.06) \newline 0.04 (0.10) & 0.08 (0.19) \newline 0.09 (0.22) \\
\hline
LLD-0.1-50  & 0.01 (0.01) \newline 0.01 (0.02) & 0.01 (0.00) \newline {\bf{0.01 (0.01)}} & 0.01 (0.01) \newline {\bf{0.01 (0.01)}} & 0.03 (0.01) \newline {\bf{0.03 (0.01)}} \\
\hline
LSD-$0.01$-30  & 0.01 (0.02) \newline 0.01 (0.04) & 0.01 (0.00) \newline {\bf{0.01 (0.00)}} & 0.17 (0.07) \newline 0.26 (0.14) & 0.32 (0.07) \newline 0.57 (0.15) \\
\hline
LSD-$0.01$-50  & 0.01 (0.02) \newline 0.01 (0.03) & 0.01 (0.00) \newline {\bf{0.01 (0.00)}} & 0.18 (0.07) \newline 0.28 (0.17) & 0.33 (0.05) \newline 0.64 (0.13) \\
\hline
LSD-$0.05$-30  & 0.00 (0.00) \newline {\bf{0.00 (0.00)}} & 0.01 (0.00) \newline {\bf{0.01 (0.00)}} & 0.15 (0.11) \newline 0.21 (0.17) & 0.29 (0.11) \newline 0.45 (0.17) \\
\hline
LSD-$0.05$-50  & 0.00 (0.00) \newline {\bf{0.00 (0.00)}} & 0.01 (0.01) \newline {\bf{0.01 (0.01)}} & 0.14 (0.10) \newline 0.22 (0.20) & 0.28 (0.07) \newline 0.52 (0.16) \\
\hline
Hclust \tnotex{notetrueclusters} & 0.06 (0.05) & 0.14 (0.07) & 0.05 (0.03) & 0.10 (0.04) \\
\hline
\caption{\label{sm:table_normal_symmetric_1000} Mean of the clustering based on Hausdorff distance and distance in probability over $100$ replications with $n=1000$ samples for the densities Bimodal, Quadrimodal, Mult.\ Bimodal and Mult.\ Quadrimodal. In parentheses the standard deviation.}
\end{tabularx}
\end{ThreePartTable}

\begin{ThreePartTable}
\begin{tabularx}{10cm}{ | p{2.26cm} | p{2.8cm} | p{2.8cm} | p{2.8cm} | p{2.8cm} | }
\hline
\multicolumn{5}{|c|}{\textbf{Number of times the true clusters are detected correctly}} \\
\hline
& (H) Bimodal IV & (K) Trimodal III & (L) Quadrimodal & \#10 Fountain \\
\hline
KDE & (0) 54 (46) & (5) 60 (35) & (28) 39 (33) & (0) 47 (53) \\
\hline
LLD-0.05-30 & (0) 27 (73) & (0) 24 (76) & (7) 32 (61) & (0) 99 (1) \\
\hline
LLD-0.05-50 & (0) 38 (62) & (5) 39 (56) & {\bf{(29) 45 (26)}} & {\bf{(0) 100 (0)}} \\
\hline
LLD-0.1-30 & (0) 83 (17) & {\bf{(14) 79 (7)}} & (69) 29 (2) & {\bf{(0) 100 (0)}} \\
\hline
LLD-0.1-50 & (0) 93 (7) & (20) 78 (2) & (82) 16 (2) & {\bf{(0) 100 (0)}} \\
\hline
LSD-$0.01$-30  & (0) 85 (15) & (13) 75 (12) & (65) 33 (2) & {\bf{(0) 100 (0)}} \\
\hline
LSD-$0.01$-50  & (0) 89 (11) & (21) 74 (5) & (80) 18 (2) & {\bf{(0) 100 (0)}} \\
\hline
LSD-$0.05$-30  & {\bf{(0) 100 (0)}} & (51) 49 (0) & (100) 0 (0) & (89) 11 (0) \\
\hline
LSD-$0.05$-50  & {\bf{(0) 100 (0)}} & (61) 39 (0) & (100) 0 (0) & (94) 6 (0) \\
\hline
& Circular 2 & Circular 2 Cauchy & Circular 3 & Circular 4 Cauchy \\
\hline
KDE & (0) 0 (100) & (0) 0 (100) & (0) 0 (100) & 0) 68 (32) \\
\hline
LLD-0.05-30 & (0) 0 (100) & (0) 0 (100) & (0) 0 (100) & (0) 97 (3) \\
\hline
LLD-0.05-50 & (0) 0 (100) & (0) 0 (100) & (0) 0 (100) & (0) 99 (1) \\
\hline
LLD-0.1-30  & (0) 0 (100) & (0) 2 (98) & (0) 0 (100) & {\bf{(0) 100 (0)}} \\
\hline
LLD-0.1-50  & (0) 0 (100) & (0) 5 (95) & (0) 1 (99) & {\bf{(0) 100 (0)}} \\
\hline
LLD-0.15-30  & (0) 7 (93) & (0) 27 (73) & (0) 0 (100) & {\bf{(0) 100 (0)}} \\
\hline
LLD-0.15-50  & (0) 15 (85) & (0) 37 (63) & (0) 1 (99) & {\bf{(0) 100 (0)}} \\
\hline
LLD-0.2-30  & (0) 24 (76) & {\bf{(26) 49 (25)}} & (0) 2 (98) & (4) 96 (0) \\
\hline
LLD-0.2-50  & (0) 42 (58) & (46) 41 (13) & (7) 47 (46) & (7) 93 (0) \\
\hline
LSD-$0.01$-30  & (0) 0 (100) & (0) 6 (94) & (0) 0 (100) & {\bf{(0) 100 (0)}} \\
\hline
LSD-$0.01$-50  & (0) 1 (99) & (0) 10 (90) & (0) 0 (100) & {\bf{(0) 100 (0)}} \\
\hline
LSD-$0.05$-30  & (0) 44 (56) & (30) 47 (23) & (0) 5 (95) & (5) 95 (0) \\
\hline
LSD-$0.05$-50  & {\bf{(0) 69 (31)}} & (44) 43 (13) & {\bf{(4) 51 (45)}} & (6) 94 (0) \\
\hline
LSD-$0.1$-30  & (0) 5 (95) & (91) 8 (1) & (2) 13 (85) & (64) 36 (0) \\
\hline
LSD-$0.1$-50  & (1) 54 (45) & (97) 3 (0) & (45) 36 (19) & (71) 29 (0) \\
\hline
& Bimodal & Quadrimodal & Mult.\ Bimodal & Mult.\ Quadrimodal \\
\hline
KDE & (0) 76 (24) & (0) 77 (23) & (0) 56 (44) & (0) 63 (37) \\
\hline
LLD-0.05-30 & (0) 36 (64) & (0) 80 (20) & (0) 88 (12) & (0) 92 (8) \\
\hline
LLD-0.05-50 & (0) 55 (45) & (0) 91 (9) & {\bf{(0) 99 (1)}} & {\bf{(0) 100 (0)}} \\
\hline
LLD-0.1-30 & (0) 98 (2) & (0) 99 (1) & (0) 85 (15) & (0) 93 (7) \\
\hline
LLD-0.1-50 & (0) 99 (1) & {\bf{(0) 100 (0)}} & {\bf{(0) 99 (1)}} & {\bf{(0) 100 (0)}} \\
\hline
LSD-$0.01$-30  & (0) 96 (4) & {\bf{(0) 100 (0)}} & (13) 48 (39) & (93) 6 (1) \\
\hline
LSD-$0.01$-50  & (0) 98 (2) & {\bf{(0) 100 (0)}} & (32) 49 (19) & (99) 1 (0) \\
\hline
LSD-$0.05$-30  & {\bf{(0) 100 (0)}} & {\bf{(0) 100 (0)}} & (12) 63 (25) & (77) 18 (5) \\
\hline
LSD-$0.05$-50  & {\bf{(0) 100 (0)}} & {\bf{(0) 100 (0)}} & (29) 66 (5) & (97) 3 (0) \\
\hline
\caption{\label{sm:table_number_times_1000} Number of times over $100$ replications with $n=1000$ samples that the procedure identifies the true number of clusters for the densities (H) Bimodal IV, (K) Trimodal III, (L) Quadrimodal, \#10 Fountain, Circular 2, Circular 2 Cauchy, Circular 3, Circular 4 Cauchy, Bimodal, Quadrimodal, Mult.\ Bimodal and Mult.\ Quadrimodal. In parentheses the number of times the procedure identifies a lower number of clusters (on the left) and a higher number of clusters (on the right).}
\end{tabularx}
\end{ThreePartTable}

\begin{ThreePartTable}
\begin{tabularx}{10cm}{ | p{2.26cm} | p{2.8cm} | p{2.8cm} | p{2.8cm} | p{2.8cm} | }
\hline
\multicolumn{5}{|c|}{\textbf{Clustering errors (Hausdorff distance)}} \\
\hline
& (H) Bimodal IV & (K) Trimodal III & (L) Quadrimodal & \#10 Fountain \\
\hline
KDE & 0.23 (0.20) & {\bf{0.09 (0.10)}} & {\bf{0.16 (0.12)}} & 0.14 (0.05) \\
\hline
LLD-0.05-30 & 0.30 (0.14) & 0.18 (0.11) & 0.20 (0.09) & 0.07 (0.02) \\
\hline
LLD-0.05-50 & 0.17 (0.15) & 0.20 (0.16) & 0.29 (0.14) & 0.07 (0.01) \\
\hline
LLD-0.1-30 & 0.06 (0.11) & 0.11 (0.16) & 0.29 (0.16) & {\bf{0.06 (0.01)}} \\
\hline
LLD-0.1-50 & 0.03 (0.08) & 0.20 (0.20) & 0.38 (0.14) & 0.07 (0.02) \\
\hline
LSD-$0.01$-30  & 0.07 (0.12) & 0.14 (0.17) & 0.31 (0.16) & {\bf{0.06 (0.01)}} \\
\hline
LSD-$0.01$-50  & 0.04 (0.10) & 0.23 (0.20) & 0.40 (0.13) & 0.07 (0.03) \\
\hline
LSD-$0.05$-30  & {\bf{0.01 (0.00)}} & 0.32 (0.20) & 0.47 (0.02) & 0.47 (0.18) \\
\hline
LSD-$0.05$-50  & {\bf{0.01 (0.00)}} & 0.40 (0.15) & 0.48 (0.02) & 0.54 (0.20) \\
\hline
Hclust \tnotex{notetrueclusters} & 0.03 (0.07) & 0.15 (0.09) & 0.23 (0.09) & 0.27 (0.07) \\
\hline
\multicolumn{5}{|c|}{\textbf{Clustering errors (distance in probability)}} \\
\hline
& (H) Bimodal IV & (K) Trimodal III & (L) Quadrimodal & \#10 Fountain \\
\hline
KDE & 0.30 (0.27) \newline 0.46 (0.40) & 0.05 (0.04) \newline {\bf{0.07 (0.07)}} & 0.15 (0.14) \newline 0.20 (0.20) & 0.42 (0.26) \newline 0.53 (0.32) \\
\hline
LLD-0.05-30 & 0.11 (0.06) \newline 0.21 (0.13) & 0.10 (0.07) \newline 0.17 (0.12) & 0.28 (0.20) \newline 0.36 (0.27) & 0.07 (0.08) \newline 0.08 (0.09) \\
\hline
LLD-0.05-50 & 0.06 (0.06) \newline 0.12 (0.12) & 0.10 (0.09) \newline 0.14 (0.12) & 0.15 (0.13) \newline 0.21 (0.13) & 0.07 (0.01) \newline 0.07 (0.01) \\
\hline
LLD-0.1-30 & 0.02 (0.04) \newline 0.04 (0.08) & 0.05 (0.05) \newline {\bf{0.07 (0.07)}} & 0.10 (0.05) \newline {\bf{0.16 (0.06)}} & 0.06 (0.01) \newline {\bf{0.06 (0.01)}} \\
\hline
LLD-0.1-50 & 0.01 (0.03) \newline 0.02 (0.06) & 0.07 (0.06) \newline 0.09 (0.07) & 0.10 (0.03) \newline 0.18 (0.04) & 0.07 (0.01) \newline 0.07 (0.02) \\
\hline
LSD-$0.01$-30  & 0.10 (0.19) \newline 0.15 (0.30) & 0.06 (0.05) \newline 0.08 (0.08) & 0.10 (0.03) \newline {\bf{0.16 (0.05)}} & 0.06 (0.01) \newline {\bf{0.06 (0.01)}} \\
\hline
LSD-$0.01$-50  & 0.06 (0.16) \newline 0.09 (0.24) & 0.07 (0.05) \newline 0.10 (0.07) & 0.10 (0.02) \newline 0.18 (0.04) & 0.07 (0.02) \newline 0.07 (0.02) \\
\hline
LSD-$0.05$-30  & 0.00 (0.00) \newline {\bf{0.01 (0.00)}} & 0.07 (0.02) \newline 0.11 (0.05) & 0.11 (0.01) \newline 0.20 (0.02) & 0.24 (0.06) \newline 0.44 (0.15) \\
\hline
LSD-$0.05$-50  & 0.01 (0.00) \newline {\bf{0.01 (0.00)}} & 0.08 (0.02) \newline 0.13 (0.04) & 0.11 (0.01) \newline 0.20 (0.02) & 0.26 (0.06) \newline 0.49 (0.14) \\
\hline
Hclust \tnotex{notetrueclusters} & 0.03 (0.07) & 0.15 (0.09) & 0.28 (0.10) & 0.34 (0.10) \\
\hline
\caption{\label{sm:table_Chacon_500} Mean of the clustering errors based on Hausdorff distance and distance in probability over $100$ replications with $n=500$ samples for the densities (H) Bimodal IV, (K) Trimodal III, (L) Quadrimodal and \#10 Fountain. In parentheses the standard deviation.}
\end{tabularx}
\end{ThreePartTable}

\begin{ThreePartTable}
\begin{tabularx}{10cm}{ | p{2.26cm} | p{2.8cm} | p{2.8cm} | p{2.8cm} | p{2.8cm} | }
\hline
\multicolumn{5}{|c|}{\textbf{Clustering errors (Hausdorff distance)}} \\
\hline
& Circular 2 & Circular 2 Cauchy & Circular 3 & Circular 4 Cauchy \\
\hline
KDE & 0.55 (0.06) & 0.52 (0.08) & 0.32 (0.02) & 0.10 (0.10) \\
\hline
LLD-0.05-30 & 0.58 (0.05) & 0.53 (0.07) & 0.34 (0.02) & 0.05 (0.06) \\
\hline
LLD-0.05-50 & 0.53 (0.06) & 0.49 (0.06) & 0.33 (0.04) & {\bf{0.03 (0.03)}} \\
\hline
LLD-0.1-30 & 0.51 (0.06) & 0.46 (0.09) & 0.32 (0.04) & {\bf{0.03 (0.03)}} \\
\hline
LLD-0.1-50 & 0.49 (0.06) & 0.41 (0.15) & {\bf{0.30 (0.05)}} & {\bf{0.03 (0.03)}} \\
\hline
LLD-0.15-30 & 0.45 (0.10) & 0.37 (0.17) & 0.32 (0.04) & 0.04 (0.04) \\
\hline
LLD-0.15-50 & 0.40 (0.15) & {\bf{0.30 (0.20)}} & 0.34 (0.08) & 0.05 (0.06) \\
\hline
LLD-0.2-30 & 0.39 (0.16) & 0.51 (0.28) & 0.38 (0.08) & 0.12 (0.12) \\
\hline
LLD-0.2-50 & 0.31 (0.19) & 0.59 (0.28) & 0.50 (0.14) & 0.16 (0.13) \\
\hline
LSD-$0.01$-30  & 0.50 (0.06) & 0.45 (0.08) & 0.33 (0.03) & {\bf{0.03 (0.03)}} \\
\hline
LSD-$0.01$-50  & 0.48 (0.08) & 0.43 (0.12) & 0.31 (0.06) & {\bf{0.03 (0.03)}} \\
\hline
LSD-$0.05$-30  & 0.28 (0.21) & 0.56 (0.29) & 0.36 (0.08) & 0.15 (0.14) \\
\hline
LSD-$0.05$-50  & {\bf{0.20 (0.20)}} & 0.60 (0.27) & 0.49 (0.14) & 0.19 (0.14) \\
\hline
LSD-$0.1$-30  & 0.31 (0.20) & 0.77 (0.13) & 0.46 (0.10) & 0.38 (0.19) \\
\hline
LSD-$0.1$-50  & 0.25 (0.26) & 0.80 (0.07) & 0.65 (0.13) & 0.44 (0.20) \\
\hline
Hclust \tnotex{notetrueclusters} & 0.33 (0.12) & 0.37 (0.08) & 0.37 (0.06) & 0.22 (0.07) \\
\hline
\multicolumn{5}{|c|}{\textbf{Clustering errors (distance in probability)}} \\
\hline
& Circular 2 & Circular 2 Cauchy & Circular 3 & Circular 4 Cauchy \\
\hline
KDE & 0.31 (0.04) \newline 0.62 (0.08) & 0.32 (0.06) \newline 0.61 (0.10) & 0.29 (0.03) \newline 0.56 (0.07) & 0.18 (0.23) \newline 0.23 (0.28) \\
\hline
LLD-0.05-30 & 0.34 (0.04) \newline 0.65 (0.07) & 0.33 (0.07) \newline 0.62 (0.09) & 0.34 (0.05) \newline 0.59 (0.09) & 0.09 (0.17) \newline 0.11 (0.21) \\
\hline
LLD-0.05-50 & 0.33 (0.07) \newline 0.59 (0.10) & 0.34 (0.10) \newline 0.58 (0.10) & 0.37 (0.09) \newline 0.56 (0.13) & 0.05 (0.07) \newline 0.05 (0.09) \\
\hline
LLD-0.1-30 & 0.32 (0.06) \newline 0.59 (0.10) & 0.34 (0.14) \newline 0.54 (0.15) & 0.35 (0.08) \newline 0.54 (0.13) & 0.03 (0.02) \newline {\bf{0.03 (0.02)}} \\
\hline
LLD-0.1-50 & 0.34 (0.10) \newline 0.57 (0.12) & 0.32 (0.17) \newline 0.48 (0.20) & 0.39 (0.12) \newline 0.52 (0.18) & 0.04 (0.02) \newline 0.04 (0.03) \\
\hline
LLD-0.15-30  & 0.32 (0.12) \newline 0.54 (0.16) & 0.31 (0.19) \newline 0.45 (0.24) & 0.38 (0.09) \newline 0.53 (0.14) & 0.05 (0.03) \newline 0.05 (0.04) \\
\hline
LLD-0.15-50  & 0.30 (0.16) \newline 0.47 (0.22) & 0.25 (0.19) \newline 0.34 (0.25) & 0.40 (0.09) \newline 0.46 (0.12) & 0.05 (0.03) \newline 0.06 (0.04) \\
\hline
LLD-0.2-30  & 0.29 (0.17) \newline 0.46 (0.23) & 0.23 (0.19) \newline 0.30 (0.20) & 0.44 (0.09) \newline 0.51 (0.10) & 0.09 (0.05) \newline 0.12 (0.10) \\
\hline
LLD-0.2-50  & 0.26 (0.20) \newline 0.37 (0.27) & 0.19 (0.17) \newline 0.25 (0.16) & 0.37 (0.07) \newline 0.50 (0.08) & 0.10 (0.06) \newline 0.16 (0.12) \\
\hline
LSD-$0.01$-30  & 0.32 (0.09) \newline 0.58 (0.11) & 0.34 (0.13) \newline 0.54 (0.15) & 0.34 (0.09) \newline 0.54 (0.13) & 0.03 (0.02) \newline 0.04 (0.02) \\
\hline
LSD-$0.01$-50  & 0.34 (0.11) \newline 0.56 (0.13) & 0.33 (0.16) \newline 0.51 (0.18 & 0.37 (0.11) \newline 0.50 (0.17) & 0.04 (0.02) \newline 0.04 (0.03) \\
\hline
LSD-$0.05$-30  & 0.22 (0.19) \newline 0.33 (0.26) & 0.20 (0.18) \newline 0.25 (0.16) & 0.43 (0.10) \newline 0.51 (0.13) & 0.10 (0.06) \newline 0.16 (0.12) \\
\hline
LSD-$0.05$-50  & 0.16 (0.18) \newline {\bf{0.22 (0.25)}} & 0.19 (0.17) \newline 0.25 (0.15) & 0.36 (0.08) \newline 0.48 (0.08) & 0.12 (0.06) \newline 0.19 (0.13) \\
\hline
LSD-$0.1$-30  & 0.27 (0.17) \newline 0.41 (0.28) & 0.13 (0.11) \newline 0.21 (0.08) & 0.48 (0.10) \newline 0.54 (0.10) & 0.23 (0.08) \newline 0.43 (0.17) \\
\hline
LSD-$0.1$-50  & 0.16 (0.15) \newline {\bf{0.22 (0.23)}} & 0.10 (0.06) \newline {\bf{0.19 (0.05)}} & 0.34 (0.07) \newline 0.56 (0.06) & 0.25 (0.07) \newline 0.49 (0.16) \\
\hline
Hclust \tnotex{notetrueclusters} & 0.33 (0.12) & 0.37 (0.08) & {\bf{0.44 (0.06)}} & 0.31 (0.11) \\
\hline
\caption{\label{sm:table_circular_500} Mean of the clustering errors based on Hausdorff distance and distance in probability over $100$ replications with $n=500$ samples for the densities Circular 2, Circular 2 Cauchy, Circular 3 and Circular 4 Cauchy. In parentheses the standard deviation.}
\end{tabularx}
\end{ThreePartTable}

\begin{ThreePartTable}
\begin{tabularx}{10cm}{ | p{2.26cm} | p{2.8cm} | p{2.8cm} | p{2.8cm} | p{2.8cm} | }
\hline
\multicolumn{5}{|c|}{\textbf{Clustering errors (Hausdorff distance)}} \\
\hline
& Bimodal & Quadrimodal & Mult.\ Bimodal & Mult.\ Quadrimodal \\
\hline
KDE & 0.13 (0.17) & 0.04 (0.06) & 0.45 (0.01) & 0.20 (0.06) \\
\hline
LLD-0.05-30 & 0.26 (0.17) & 0.05 (0.06) & 0.03 (0.06) & {\bf{0.04 (0.03)}} \\
\hline
LLD-0.05-50 & 0.12 (0.16) & 0.02 (0.04) & {\bf{0.02 (0.03)}} & 0.05 (0.06) \\
\hline
LLD-0.1-30 & 0.05 (0.10) & {\bf{0.01 (0.01)}} & 0.03 (0.07) & 0.05 (0.06) \\
\hline
LLD-0.1-50 & 0.02 (0.06) & {\bf{0.01 (0.01)}} & {\bf{0.02 (0.03)}} & 0.06 (0.08) \\
\hline
LSD-$0.01$-30  & 0.04 (0.09) & {\bf{0.01 (0.01)}} & 0.40 (0.17) & 0.67 (0.15) \\
\hline
LSD-$0.01$-50  & 0.02 (0.07) & {\bf{0.01 (0.01)}} & 0.46 (0.14) & 0.74 (0.10) \\
\hline
LSD-$0.05$-30  & {\bf{0.01 (0.00)}} & {\bf{0.01 (0.01)}} & 0.35 (0.21) & 0.52 (0.19) \\
\hline
LSD-$0.05$-50  & {\bf{0.01 (0.01)}} & 0.02 (0.03) & 0.40 (0.19) & 0.62 (0.19) \\
\hline
Hclust \tnotex{notetrueclusters} & 0.08 (0.07) & 0.08 (0.05) & 0.05 (0.02) & 0.07 (0.04) \\
\hline
\multicolumn{5}{|c|}{\textbf{Clustering errors (distance in probability)}} \\
\hline
& Bimodal & Quadrimodal & Mult.\ Bimodal & Mult.\ Quadrimodal \\
\hline
KDE & 0.04 (0.05) \newline 0.07 (0.10) & 0.06 (0.11) \newline 0.08 (0.15) & 0.08 (0.07) \newline 0.13 (0.12) & 0.50 (0.22) \newline 0.67 (0.27) \\
\hline
LLD-0.05-30 & 0.08 (0.06) \newline 0.14 (0.11) & 0.09 (0.17) \newline 0.12 (0.21) & 0.02 (0.02) \newline {\bf{0.02 (0.04)}} & 0.04 (0.02) \newline {\bf{0.04 (0.02)}} \\
\hline
LLD-0.05-50 & 0.04 (0.05) \newline 0.08 (0.11) & 0.03 (0.05) \newline 0.04 (0.08) & 0.02 (0.01) \newline {\bf{0.02 (0.02)}} & 0.06 (0.03) \newline 0.06 (0.06) \\
\hline
LLD-0.1-30 & 0.02 (0.03) \newline 0.03 (0.07) & 0.02 (0.01) \newline {\bf{0.02 (0.01)}} & 0.02 (0.03) \newline {\bf{0.02 (0.05)}} & 0.05 (0.03) \newline 0.06 (0.06) \\
\hline
LLD-0.1-50 & 0.01 (0.02) \newline 0.02 (0.04) & 0.02 (0.01) \newline {\bf{0.02 (0.01)}} & 0.01 (0.01) \newline {\bf{0.02 (0.02)}} & 0.06 (0.04) \newline 0.07 (0.07) \\
\hline
LSD-$0.01$-30  & 0.02 (0.03) \newline 0.03 (0.07) & 0.02 (0.01) \newline {\bf{0.02 (0.01)}} & 0.22 (0.06) \newline 0.37 (0.15) & 0.34 (0.05) \newline 0.66 (0.10) \\
\hline
LSD-$0.01$-50  & 0.01 (0.02) \newline 0.02 (0.05) & 0.02 (0.01) \newline {\bf{0.02 (0.01)}} & 0.23 (0.05) \newline 0.43 (0.12) & 0.36 (0.03) \newline 0.71 (0.06) \\
\hline
LSD-$0.05$-30  & 0.01 (0.00) \newline {\bf{0.01 (0.00)}} & 0.02 (0.01) \newline {\bf{0.02 (0.01)}} & 0.19 (0.08) \newline 0.32 (0.19) & 0.30 (0.07) \newline 0.54 (0.15) \\
\hline
LSD-$0.05$-50  & 0.01 (0.01) \newline {\bf{0.01 (0.01)}} & 0.02 (0.01) \newline {\bf{0.02 (0.02)}} & 0.20 (0.07) \newline 0.38 (0.17) & 0.32 (0.05) \newline 0.63 (0.12) \\
\hline
Hclust \tnotex{notetrueclusters} & 0.08 (0.07) & 0.11 (0.07) & 0.05 (0.02) & 0.10 (0.05) \\
\hline
\caption{\label{sm:table_normal_symmetric_500} Mean of the clustering errors based on Hausdorff distance and distance in probability over $100$ replications with $n=500$ samples for the densities Bimodal, Quadrimodal, Mult.\ Bimodal and Mult.\ Quadrimodal. In parentheses the standard deviation.}
\end{tabularx}
\end{ThreePartTable}

\begin{ThreePartTable}
\begin{tabularx}{10cm}{ | p{2.26cm} | p{2.8cm} | p{2.8cm} | p{2.8cm} | p{2.8cm} | }
\hline
\multicolumn{5}{|c|}{\textbf{Number of times the true clusters are detected correctly}} \\
\hline
& (H) Bimodal IV & (K) Trimodal III & (L) Quadrimodal & \#10 Fountain \\
\hline
KDE & (0) 42 (58) & (3) 66 (31) & {\bf{(47) 35 (18)}} & (0) 29 (71) \\
\hline
LLD-0.05-30 & (0) 15 (85) & (0) 34 (66) & (24) 32 (44) & (0) 99 (1) \\
\hline
LLD-0.05-50 & (0) 45 (55) & (17) 50 (33) & (79) 16 (5) & {\bf{(0) 100 (0)}} \\
\hline
LLD-0.1-30 & (0) 82 (18) & {\bf{(14) 78 (8)}} & (88) 11 (1) & {\bf{(0) 100 (0)}} \\
\hline
LLD-0.1-50 & (0) 92 (8) & (37) 63 (0) & (99) 1 (0) & (1) 99 (0) \\
\hline
LSD-$0.01$-30  & (0) 80 (20) & (18) 69 (13) & (89) 11 (0) & {\bf{(0) 100 (0)}} \\
\hline
LSD-$0.01$-50  & (0) 88 (12) & (41) 55 (4) & (100) 0 (0) & (2) 98 (0) \\
\hline
LSD-$0.05$-30  & {\bf{(0) 100 (0)}} & (67) 33 (0) & (100) 0 (0) & (96) 4 (0) \\
\hline
LSD-$0.05$-50  & {\bf{(0) 100 (0)}} & (84) 16 (0) & (100) 0 (0) & (99) 1 (0) \\
\hline
& Circular 2 & Circular 2 Cauchy & Circular 3 & Circular 4 Cauchy \\
\hline
KDE & (0) 0 (100) & (0) 0 (100) & (0) 0 (100) & (0) 64 (36) \\
\hline
LLD-0.05-30 & (0) 0 (100) & (0) 0 (100) & (0) 0 (100) & (1) 86 (13) \\
\hline
LLD-0.05-50 & (0) 0 (100) & (0) 0 (100) & (0) 1 (99) & (1) 97 (2) \\
\hline
LLD-0.1-30 & (0) 0 (100) & (0) 3 (97) & (0) 2 (98) & {\bf{(1) 99 (0)}} \\
\hline
LLD-0.1-50 & (0) 0 (100) & (0) 13 (87) & (0) 11 (89) & {\bf{(1) 99 (0)}} \\
\hline
LLD-0.15-30 & (0) 3 (97) & (0) 21 (79) & (0) 7 (93) & (3) 97 (0) \\
\hline
LLD-0.15-50 & (0) 15 (85) & {\bf{(0) 43 (57)}} & (8) 47 (45) & (4) 96 (0) \\
\hline
LLD-0.2-30 & (0) 15 (85) & (35) 38 (27) & (10) 38 (52) & (32) 68 (0) \\
\hline
LLD-0.2-50 & (0) 35 (65) & (53) 36 (11) & (71) 24 (5) & (44) 56 (0) \\
\hline
LSD-$0.01$-30  & (0) 0 (100) & (0) 2 (98) & (0) 1 (99) & {\bf{(1) 99 (0)}} \\
\hline
LSD-$0.01$-50  & (0) 1 (99) & (0) 8 (92) & (0) 12 (88) & {\bf{(1) 99 (0)}} \\
\hline
LSD-$0.05$-30  & (0) 41 (59) & {\bf{(48) 43 (9)}} & (9) 35 (56) & (43) 57 (0) \\
\hline
LSD-$0.05$-50  & (0) 63 (37) & (55) 38 (7) & (71) 29 (0) & (54) 46 (0) \\
\hline
LSD-$0.1$-30  & (3) 34 (63) & (89) 11 (0) & {\bf{(22) 51 (27)}} & (92) 8 (0) \\
\hline
LSD-$0.1$-50  & {\bf{(12) 65 (23)}} & (97) 3 (0) & (94) 6 (0) & (93) 7 (0) \\
\hline
& Bimodal & Quadrimodal & Mult.\ Bimodal & Mult.\ Quadrimodal \\
\hline
KDE & (0) 67 (33) & (0) 86 (14) & (0) 5 (95) & (0) 13 (87) \\
\hline
LLD-0.05-30 & (0) 29 (71) & (0) 78 (22) & (0) 95 (5) & {\bf{(0) 97 (3)}} \\
\hline
LLD-0.05-50 & (0) 65 (35) & (0) 94 (6) & {\bf{(0) 99 (1)}} & (6) 94 (0) \\
\hline
LLD-0.1-30 & (0) 87 (13) & {\bf{(0) 100 (0)}} & (0) 94 (6) & (6) 94 (0) \\
\hline
LLD-0.1-50 & (0) 96 (4) & {\bf{(0) 100 (0)}} & {\bf{(0) 99 (1)}} & (10) 90 (0) \\
\hline
LSD-$0.01$-30  & (0) 90 (10) & {\bf{(0) 100 (0)}} & (61) 36 (3) & (100) 0 (0) \\
\hline
LSD-$0.01$-50  & (0) 95 (5) & {\bf{(0) 100 (0)}} & (85) 15 (0) & (100) 0 (0) \\
\hline
LSD-$0.05$-30  & {\bf{(0) 100 (0)}} & {\bf{(0) 100 (0)}} & (56) 42 (2) & (96) 4 (0) \\
\hline
LSD-$0.05$-50  & {\bf{(0) 100 (0)}} & (1) 99 (0) & (73) 27 (0) & (100) 0 (0) \\
\hline
\caption{\label{sm:table_number_times_500} Number of times over $100$ replications with $n=500$ samples that the procedure identifies the true number of clusters for the densities (H) Bimodal IV, (K) Trimodal III, (L) Quadrimodal, \#10 Fountain, Circular 2, Circular 2 Cauchy, Circular 3, Circular 4 Cauchy, Bimodal, Quadrimodal, Mult.\ Bimodal and Mult.\ Quadrimodal. In parentheses the number of times the procedure identifies a lower number of clusters (on the left) and a higher number of clusters (on the right).}
\end{tabularx}
\end{ThreePartTable}

\end{document}